\documentclass[reqno, 11pt, a4paper]{amsart}

%
%
\usepackage{latexsym,ifthen,xspace}
\usepackage{amsmath,amssymb,amsthm}
\usepackage{enumerate, calc}
\usepackage[colorlinks=true, pdfstartview=FitV, linkcolor=blue,
  citecolor=blue, urlcolor=blue, pagebackref=false]{hyperref}
\usepackage[text={33pc,605pt},centering]{geometry}    
\usepackage{graphicx,color}

%
%

%
%
\linespread{1.1}

%
%
\newtheorem{theorem}{Theorem}[section]
\newtheorem{lemma}[theorem]{Lemma}
\newtheorem{proposition}[theorem]{Proposition}
\newtheorem{assumption}[theorem]{Assumption}
\newtheorem{corollary}[theorem]{Corollary}

\theoremstyle{definition}
\newtheorem{definition}[theorem]{Definition}
\newtheorem{example}[theorem]{Example}

\theoremstyle{remark}
\newtheorem{remark}[theorem]{Remark}

\numberwithin{equation}{section}


\DeclareMathAlphabet{\mathsl}{OT1}{cmss}{m}{sl}
\SetMathAlphabet{\mathsl}{bold}{OT1}{cmss}{bx}{sl}

%
%
\newcommand{\al}{\ensuremath{\alpha}}

\newcommand{\ga}{\ensuremath{\gamma}}
\newcommand{\de}{\ensuremath{\delta}}

\renewcommand{\th}{\ensuremath{\theta}}

\newcommand{\la}{\ensuremath{\lambda}}

\newcommand{\si}{\ensuremath{\sigma}}

\newcommand{\om}{\ensuremath{\omega}}

\newcommand{\vr}{\ensuremath{\varrho}}

\newcommand{\Si}{\ensuremath{\Sigma}}

\newcommand{\Om}{\ensuremath{\Omega}}
%
%
\newcommand{\cA}{\ensuremath{\mathcal A}}
\newcommand{\cB}{\ensuremath{\mathcal B}}

\newcommand{\cE}{\ensuremath{\mathcal E}}
\newcommand{\cF}{\ensuremath{\mathcal F}}

\newcommand{\cL}{\ensuremath{\mathcal L}}

\newcommand{\cN}{\ensuremath{\mathcal N}}

\newcommand{\cP}{\ensuremath{\mathcal P}}

\newcommand{\cS}{\ensuremath{\mathcal S}}

%
%

\newcommand{\bbE}{\ensuremath{\mathbb E}}

\newcommand{\bbN}{\ensuremath{\mathbb N}} 
 
\newcommand{\bbP}{\ensuremath{\mathbb P}} 
 
\newcommand{\bbR}{\ensuremath{\mathbb R}}

\newcommand{\bbZ}{\ensuremath{\mathbb Z}} 
%
%

%
%

%
%

%
%

\overfullrule=5pt 
\definecolor {orange} {rgb} {0.569, 0.259, 0.0}

\newcommand{\me}{\ensuremath{\mathrm{e}}}

\let\norm\undefined
\newcommand{\norm}[3]{%
  \ensuremath{%
    \big\lVert
      #1
    \big\rVert_{\raisebox{-.0ex}{$\scriptstyle \ell^{\raisebox{.2ex}{$\scriptscriptstyle #2$}} (#3)$}}
  }
}
\newcommand{\Norm}[2]{%
  \ensuremath{%
    \big\lVert
      #1
    \big\rVert_{\raisebox{-.0ex}{$\scriptstyle #2$}}
  }
}

\DeclareMathOperator{\mean}{\mathbb{E}}

\DeclareMathOperator{\prob}{\mathbb{P}}

\DeclareMathOperator{\var}{\mathbb{V}ar}
\DeclareMathOperator{\cov}{\mathbb{C}ov}

\DeclareMathOperator{\supp}{\mathrm{supp}}

\DeclareMathOperator{\osr}{\mathrm{osr}}
\DeclareMathOperator{\osc}{\mathrm{osc}}

\DeclareMathOperator{\lip}{\mathrm{lip}}


\def\indicator{{\mathchoice {1\mskip-4mu\mathrm l}%
{1\mskip-4mu\mathrm l}{1\mskip-4.5mu\mathrm l}%
{1\mskip-5mu\mathrm l}}}



\newcommand{\eps}{\epsilon}
\newcommand{\R}{\mathbb{R}}
\newcommand{\Z}{\mathbb{Z}}

\newcommand{\E}{\mathbb{E}}
\newcommand{\N}{\mathbb{N}}
\newcommand{\bP}{\mathbb{P}}
\newcommand{\bR}{\mathbb{R}}

\newcommand{\bZ}{\mathbb{Z}}
\newcommand{\threepartdef}[6]
{
	\left\{
		\begin{array}{lll}
			#1 & \mbox{if } #2 \\
			#3 & \mbox{if } #4 \\
			#5 & \mbox{if } #6
		\end{array}
	\right.
}
\usepackage{mathtools}
\DeclarePairedDelimiter\ceil{\lceil}{\rceil}
\DeclarePairedDelimiter\floor{\lfloor}{\rfloor}
\usepackage{commath}
\usepackage{color}
\usepackage{cite}
\usepackage{graphicx,bbm,enumerate,amsthm}

\let\norm\undefined
\newcommand{\norm}[1]{%
  \ensuremath{%
    \big\lVert
      #1
    \big\rVert}
}
\let\onorm\undefined
\newcommand{\onorm}[1]{%
  \ensuremath{%
    \big\lvert
      #1
    \big\rvert}
}

\begin{document}

\title[Local Limit Theorems for the RCM]{Local Limit Theorems for the Random Conductance Model and Applications to the Ginzburg-Landau $\nabla\phi$ Interface Model}


\author{Sebastian Andres}
\address{The University of Manchester}
\curraddr{Department of Mathematics,
Oxford Road, Manchester M13 9PL}
\email{sebastian.andres@manchester.ac.uk}
\thanks{}

\author{Peter A.\ Taylor}
\address{University of Cambridge}
\curraddr{Centre for Mathematical Sciences, Wilberforce Road, Cambridge CB3 0WB}
\email{pat47@cam.ac.uk}
\thanks{}

\subjclass[2000]{60K37; 60J35; 60F17; 82C41; 39A12}

\keywords{Random conductance model, local limit theorem, De Giorgi iteration, ergodic, stochastic interface model.}

\date{\today}

\dedicatory{}

\begin{abstract}
We study a continuous-time random walk on $\Z^d$ in an environment of random conductances taking values in $(0,\infty)$. For a static environment, we extend the quenched local limit theorem to the case of a general speed measure, given suitable ergodicity and moment conditions on the conductances and on the speed measure. Under stronger moment conditions, an annealed local limit theorem is also derived. Furthermore, an annealed local limit theorem is exhibited in the case of time-dependent conductances, under analogous moment and ergodicity assumptions. This dynamic local limit theorem is then applied to prove a scaling limit result for the space-time covariances  in the Ginzburg-Landau $\nabla\phi$ model. We also show that the associated Gibbs distribution scales to a Gaussian free field. These results apply to convex potentials for which the second derivative may be unbounded. 
\end{abstract}

\maketitle


\section{Introduction}
\label{section:intro}

\subsection{The Model}
We consider the graph $G=(\mathbb{Z}^d,E_d)$ of the hypercubic lattice with the set of nearest-neighbour edges $E_d:=\{\{x,y\}:x,y\in\mathbb{Z}^d,\lvert x-y\rvert =1\}$ in dimension $d\geq 2$. We place upon $G$ positive weights $\omega=\{\omega(e)\in(0,\infty) : e\in E_d\}$, and define two measures on $\mathbb{Z}^d$,
\begin{align*}
\mu^\omega(x):=\sum_{y\sim x}\omega(x,y),\qquad
\nu^\omega(x):=\sum_{y\sim x}\frac{1}{\omega(x,y)}.
\end{align*}

Let $(\Omega,\mathcal{F}):=(\mathbb{R}_+^{E_d},\mathcal{B}(\mathbb{R}_+)^{\otimes E_d})$ be the measurable space of all possible environments. We denote by $\mathbb{P}$ an arbitrary probability measure on $(\Omega,\mathcal{F})$ and $\mathbb{E}$ the respective expectation. The measure space $(\Omega, \cF)$ is naturally equipped with a group of space shifts $\big\{\tau_z : z \in \bbZ^d\big\}$, which act on $\Omega$ as
\begin{align} \label{eq:def:space_shift}
  (\tau_z \om)(x, y) \; := \; \om(x+z, y+z),
 \qquad \forall\, \{x,y\} \in E_d.
\end{align}

Let $\theta^\omega: \Z^d\rightarrow (0,\infty)$ be a positive function which may depend upon the \textit{environment} $\omega\in\Omega$. The random walk $(X_t)_{t\geq0}$ defined by the following generator,
$$\mathcal{L}^\omega_\theta f(x):=\frac{1}{\theta^\om (x)}\sum_{y\sim x}\omega(x,y)\left(f(y)-f(x)\right),$$
acting on bounded functions $f:\Z^d\rightarrow\R$, is reversible with respect to $\theta^\omega$, and we call this process the \textit{random conductance model (RCM)} with \textit{speed measure} $\theta^\omega$. We denote $P_x^\omega$ the law of this process started at $x\in\mathbb{Z}^d$ and $E_x^\omega$ the corresponding expectation. There are two natural laws on the path space that are considered in the literature - the quenched law $P_x^\omega(\cdot)$ which concerns $\mathbb{P}$-almost sure phenomena, and the annealed law $\mathbb{E} P_x^\omega(\cdot)$. 

If the random walk $X$ is currently at $x$, it will next move to $y$ with probability $\omega(x,y)/\mu^\omega(x)$, after waiting an exponential time with mean $\theta^\omega(x)/\mu^\omega(x)$ at the vertex $x$.
The main results of this paper are statements about the heat kernel of $X$, 
\begin{align*}
p_\theta^\omega(t,x,y) \; := \; \frac{P_x^\omega\left(X_t=y\right)}{\theta^\om(y)}, \qquad t\geq 0, \, x,y\in\Z^d.
\end{align*}
Perhaps the most natural choice for the speed measure is $\theta^\omega\equiv\mu^\omega$, for which we obtain the constant speed random walk (CSRW) that spends i.i.d. $\text{Exp}(1)$-distributed waiting times at all vertices it visits. Another well-studied process, the variable speed random walk (VSRW), is recovered by setting $\theta^\omega\equiv 1$, so called because as opposed to the CSRW, the waiting time at a vertex $x$ does indeed depend on the location; it is an $\text{Exp}(\mu^\omega(x))$-distributed random variable.

\subsection{Main Results on the Static RCM} \label{sec:intro_static}
As our first main results we obtain quenched and annealed local limit theorems for the static random conductance model.  A general assumption required  is stationarity and ergodicity of the environment.

\begin{assumption}
\label{ass:ergodicity general speed measure}
\begin{enumerate}[(i)]
\item $\mathbb{P}[0<\omega (e)<\infty]=1$ and $\mean[\om(e)]<\infty$ for all $e\in E_d$. 
\item $\mathbb{P}$ is ergodic with respect to spatial translations of $\mathbb{Z}^d$, i.e.\ $\mathbb{P}\circ \tau_x^{-1}=\mathbb{P}$ for all $x\in\mathbb{Z}^d$ and $\mathbb{P}(A)\in\{0,1\} \text{ for any } A\in\mathcal{F} \text{ such that }\tau_x(A)=A \text{ for all }x\in\mathbb{Z}^d.$
\item $\theta$ is stationary, i.e.\ $\theta^\om(x+y)=\theta^{\tau_y\omega}(x)$ for all $x,y\,\in\Z^d$ and $\bP$-a.e.\ $\omega\in\Omega$. Further, $\mathbb{E}[\theta^\om (0)]<\infty$ and $\mathbb{E}[\theta^\om (0)/\mu^\om(0)]\in(0,\infty)$.
\end{enumerate}
\end{assumption}
In particular, the last condition in Assumption~\ref{ass:ergodicity general speed measure}(iii) ensures that the process $X$ is non-explosive.
During the last decade, considerable effort has been invested in the derivation of \emph{quenched invariance principles} or \emph{quenched functional central limit theorems (QFCLT)}, see the surveys \cite{ Bi11, Ku14} and references therein. 
The following QFCLT for random walks under ergodic conductances is the main result of \cite{invariance}.

\begin{theorem}[QFCLT]
\label{thm:qfclt general speed}
Suppose Assumption \ref{ass:ergodicity general speed measure} holds. Further assume that there exist $p,\,q\in(1,\infty]$ satisfying $\frac{1}{p}+\frac{1}{q}<\frac{2}{d}$ such that
$\E\left[\omega(e)^p\right]<\infty$ and $\E\big[\omega(e)^{-q}\big]<\infty$
for any $e\in E_d$. For $n\in \bbN$, define $X_t^{(n)} := \frac{1}{n} X_{n^2 t}$, $t\geq 0$.  Then, for $\prob$-a.e.\ $\om$,   $X^{(n)}$ converges (under $P_0^\om$) in law towards a Brownian motion on $\bbR^d$  with a deterministic non-degenerate covariance matrix $\Si^2$.
\end{theorem}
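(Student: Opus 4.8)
\medskip\noindent\textbf{Proof proposal.} The natural route is the by-now-standard combination of the \emph{point of view of the particle}, a corrector (harmonic coordinates) construction, a martingale functional CLT, and a sublinearity bound on the corrector; the moment conditions $\E[\omega(e)^p]<\infty$, $\E[\omega(e)^{-q}]<\infty$ with $\frac1p+\frac1q<\frac2d$ enter precisely in the last of these. First I would pass to the environment process $\eta_t:=\tau_{X_t}\omega$, a Markov process on $\Omega$ whose generator is $\mathcal{L}^\omega_\theta$ lifted to $\Omega$. Under the measure $\mathbb{Q}$ with $\frac{\md\mathbb{Q}}{\md\mathbb{P}}=\theta^\omega(0)/\E[\theta^\omega(0)]$ --- well defined and mutually absolutely continuous with $\mathbb{P}$ by Assumption~\ref{ass:ergodicity general speed measure}(iii) --- the process $\eta$ is stationary and reversible, and it inherits ergodicity from ergodicity of $\mathbb{P}$ under spatial shifts together with irreducibility of the walk. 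Next I would construct the corrector $\chi:\Omega\times\Z^d\to\R^d$ via the Lax--Milgram theorem on the Hilbert space of square-integrable cocycles (shift-covariant gradient fields), with the inner product weighted by $\omega$: one obtains $\nabla\chi\in L^2(\mathbb{Q})$ such that $\Phi(\omega,x):=x-\chi(\omega,x)$ satisfies $\mathcal{L}^\omega_\theta\Phi(\omega,\cdot)=0$. Boundedness and coercivity of the associated Dirichlet form on cocycles require only $\E[\omega(e)]<\infty$, so this step already works in the present generality.

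Since $\Phi(\omega,\cdot)$ is $\mathcal{L}^\omega_\theta$-harmonic, $M_t:=\Phi(\omega,X_t)=X_t-\chi(\omega,X_t)$ is a $P_0^\omega$-martingale for $\mathbb{P}$-a.e.\ $\omega$, and $X^{(n)}_t=\tfrac1n M_{n^2t}+\tfrac1n\chi(\omega,X_{n^2t})$. For the martingale term I would invoke a quenched martingale FCLT of Lindeberg--Helland type: the predictable quadratic variation of $n^{-1}M_{n^2\cdot}$ at time $t$ is $\tfrac1{n^2}\int_0^{n^2t}a(\eta_s)\,\md s$ for an explicit $a\in L^1(\mathbb{Q})$, hence converges $\mathbb{Q}$-a.s.\ --- and thus, by equivalence of measures, $\mathbb{P}$-a.s.\ --- to $t\,\Si^2$ with $\Si^2$ a deterministic matrix given by the usual variational formula; the Lindeberg condition follows from $L^2(\mathbb{Q})$-integrability of the one-step jumps of $M$ combined with the ergodic theorem and the fact that the truncation threshold $\varepsilon n\to\infty$. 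This yields $n^{-1}M_{n^2\cdot}\Rightarrow\Si B$ under $P_0^\omega$ for $\mathbb{P}$-a.e.\ $\omega$.

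The crux is sublinearity of the corrector: $\tfrac1n\max_{|x|\le n}|\chi(\omega,x)|\to 0$ $\mathbb{P}$-a.s. From the $L^2(\mathbb{Q})$-bound on $\nabla\chi$, the cocycle property, and the spatial ergodic theorem one first gets $|\chi(\omega,x)|=o(|x|)$ in a spatially averaged ($\ell^1$-on-boxes) sense; the passage from this to the $\ell^\infty$-bound on Euclidean balls is where I expect the real work and where $\frac1p+\frac1q<\frac2d$ is used. Concretely I would run a maximal-inequality / De Giorgi iteration for functions solving $\mathcal{L}^\omega_\theta u=$ (divergence of a bounded field) on balls, starting from the Sobolev inequality on $\Z^d$ and absorbing the random conductances through H\"older's inequality with exponents tied to $p,q$: the condition $\frac1p+\frac1q<\frac2d$ is exactly what makes the iteration converge with a prefactor that, by the ergodic theorem, is eventually deterministic. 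Given this, the corrector term vanishes: writing $R_n:=\max_{s\le n^2t}|X_s|$ one has $\tfrac1n\max_{s\le n^2t}|\chi(\omega,X_s)|\le\tfrac1n\max_{|x|\le R_n}|\chi(\omega,x)|$, and a standard self-improvement argument using tightness of $n^{-1}M_{n^2\cdot}$ together with local uniform decay of $r\mapsto\tfrac1n\max_{|x|\le rn}|\chi(\omega,x)|$ forces this to $0$ uniformly on compacts in $P_0^\omega$-probability; hence $X^{(n)}\Rightarrow\Si B$. Finally, non-degeneracy of $\Si^2$: for $v\neq 0$, $v\cdot\Si^2 v$ is the minimal Dirichlet energy in the variational formula, bounded below via a resistance-type estimate using $\E[\omega(e)^{-q}]<\infty$ (equivalently: if $v\cdot\Si^2 v=0$ then $x\mapsto v\cdot x$ would be a sublinear $\mathcal{L}^\omega_\theta$-harmonic cocycle, contradicting ergodicity). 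The single hardest ingredient is the De Giorgi iteration delivering the quenched maximal inequality under the stated (essentially sharp) moment condition.
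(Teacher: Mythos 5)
The paper does not prove this theorem itself: its ``proof'' is a citation to the QFCLT of Andres--Deuschel--Slowik \cite[Theorem~1.3]{invariance} for the VSRW, together with the remark that the result extends to a general speed measure $\theta^\om$ (\cite[Remark~1.5]{invariance}; the CSRW case is worked out in \cite[Section~6.2]{andres2013invariance}). Your sketch is a correct outline of the argument carried out in those references: pass to the environment process under the tilted measure $\md\mathbb{Q}/\md\mathbb{P}=\theta^\om(0)/\E[\theta^\om(0)]$; construct the corrector $\chi$ by $L^2$-orthogonality (equivalently a Lax--Milgram argument on the Hilbert space of shift-covariant square-integrable gradient fields); apply a quenched martingale FCLT to $M_t=X_t-\chi(\om,X_t)$; and close by proving $\mathbb{P}$-a.s.\ sublinearity of the corrector, upgraded from an $\ell^1$-on-boxes statement (cocycle property plus spatial ergodic theorem) to a pointwise bound via a maximal inequality for $\mathcal{L}^\om_\theta$-harmonic functions. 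One small inaccuracy in your account: in \cite{invariance} that maximal inequality is obtained by \emph{Moser} iteration, not De Giorgi iteration; the De Giorgi scheme is what the present paper uses (following \cite{ACS20}) for the H\"older regularity of the heat kernel needed in the local limit theorem, not in the proof of the QFCLT. The distinction is cosmetic --- both iterations start from the Sobolev inequality, absorb the random weights via H\"older's inequality exactly where $1/p+1/q<2/d$ is needed, and are rendered quenched by the ergodic theorem, as you describe. The non-degeneracy of $\Si^2$ is likewise argued in \cite{invariance} along the lines you indicate.
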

\begin{proof}
For the VSRW, this is \cite[Theorem~1.3]{invariance}. As noted in \cite[Remark~1.5]{invariance} the QFCLT extends to the random walk with general speed measure $\theta^\omega$ provided $\E[\theta^\om(0)]\in(0,\infty)$. See \cite[Section 6.2]{andres2013invariance} for a proof of this extension in the case of the CSRW.
\end{proof}

Recently the moment condition in Theorem~\ref{thm:qfclt general speed} has been improved in \cite{BS20}.

\begin{remark}
If we let $\bar \Sigma^2$ denote the covariance matrix of the above Theorem in the case of the VSRW, the corresponding covariance matrix of the random walk $X$ with speed measure $\theta^\om$ is given by $\Sigma^2=\E\left[\theta^\om(0)\right]^{-1}\bar\Sigma^2$ -- see \cite[Remark~1.5]{invariance}.
\end{remark}

\begin{assumption}
\label{ass:limsups}
There exist $p,q,r\in(1,\infty]$ satisfying
\begin{equation}
\label{eq:pqr_eqn}
\frac{1}{r}+\frac{1}{p}\frac{r-1}{r}+\frac{1}{q}<\frac{2}{d}
\end{equation}
such that
\begin{align} \label{eq:cond_pqr}
\mathbb{E}\left[\Big( \frac{\mu^\om(0)}{\theta^\om(0)}\Big)^{p} \, \theta^\om(0) \right]+\mathbb{E}\left[\nu^\om(0)^{q}\right]
+\mathbb{E}\left[\theta^\om(0)^{-1}\right]+\mathbb{E}\left[\theta^\om(0)^{r}\right] \; < \; \infty.
\end{align}
\end{assumption}
While under Assumptions~\ref{ass:ergodic dynamic} and \ref{ass:limsups} Gaussian-type upper bounds on the heat kernel $p_\theta$ have been obtained in \cite{ADS19}, in the present paper our focus is on local limit theorems.  A local limit theorem constitutes a scaling limit of the heat kernel towards the normalized Gaussian transition density of the Brownian motion with covariance matrix $\Sigma^2$, which appears as the limit process in the QFCLT in Theorem~\ref{thm:qfclt general speed}. The Gaussian heat kernel associated with that process will be denoted
\begin{align} \label{eq:def_kt}
k_t(x)\equiv k_t^\Sigma (x):=\frac{1}{\sqrt{(2\pi t)^d\det \Sigma^2}}\exp\Big(-x\cdot(\Sigma^2)^{-1}x/(2t)\Big).
\end{align}

Our first main result is the following local limit theorem for the RCM under general speed measure. For $x\in\bR^d$ write $\floor{x}=(\floor{x_1},...,\floor{x_d})\in\bZ^d$.

\begin{theorem}[Quenched local limit theorem]
\label{thm:qllt general speed}
Let $T_2>T_1>0$, $K>0$ and suppose that Assumptions \ref{ass:ergodicity general speed measure} and \ref{ass:limsups} hold. Then,
$$\lim_{n\to\infty}\sup_{\abs{x}\leq K} \sup_{ t\in [T_1, T_2]} \abs{n^dp_\theta^\omega(n^2t,0,\floor{nx})-ak_t(x)}=0,\quad\text{for }\mathbb{P}\text{-a.e. }\omega,$$
with $a:=\E\big[\theta^\om(0)\big]^{-1}$.
\end{theorem}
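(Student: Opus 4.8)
The plan is to follow the now-standard two-ingredient recipe for proving a quenched local limit theorem from a quenched invariance principle. The first ingredient is the QFCLT (Theorem~\ref{thm:qfclt general speed}), which gives convergence of the rescaled heat kernel in an integrated/weak sense: for every bounded continuous $g$, one has $\sum_y \theta^\om(y) g(y/n) p_\theta^\om(n^2 t,0,y) \to a^{-1}\!\int g(x) k_t(x)\,\md x$ after appropriate normalization, where the factor $a$ emerges from the ergodic theorem applied to $\theta^\om$ (since $n^{-d}\sum_{y: y/n \in A}\theta^\om(y) \to \E[\theta^\om(0)]\,|A|$ $\bP$-a.s.). The second ingredient is an equicontinuity (or oscillation) estimate for the rescaled heat kernel: one must show that the family $x \mapsto n^d p_\theta^\om(n^2 t, 0, \lfloor nx\rfloor)$ is, $\bP$-a.s., equicontinuous in $(t,x)$ on the compact set $[T_1,T_2]\times\{|x|\le K\}$ (in the parabolic sense), together with a uniform upper bound so that no mass escapes to infinity. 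Given these two ingredients, a standard Arzel\`a--Ascoli argument identifies every subsequential limit as $a\,k_t(x)$, yielding the claimed locally uniform convergence.

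Concretely, the steps I would carry out are as follows. \textbf{Step 1: Gaussian upper bounds.} Invoke the near-diagonal Gaussian-type upper heat kernel bounds from \cite{ADS19}, which hold precisely under Assumptions~\ref{ass:ergodicity general speed measure} and \ref{ass:limsups}: there is a random (but a.s.\ finite) $N_0(\om)$ and constants such that $p_\theta^\om(t,0,y) \le C t^{-d/2}\exp(-c|y|^2/t)$ for $t \ge |y| \vee N_0$. This supplies both the tightness needed to control tails in the weak convergence and the a priori bound feeding into the regularity estimate. \textbf{Step 2: Parabolic H\"older regularity / oscillation estimate.} Using the De Giorgi / Moser iteration machinery (the paper's keywords advertise De Giorgi iteration, and such oscillation inequalities for $p_\theta$ under exactly these moment conditions are established via the methods of \cite{ADS19} and \cite{invariance}), derive a quantitative H\"older estimate: for balls $B(nx_0, \delta n)$ and time intervals of length $(\delta n)^2$ contained in the region of interest, the oscillation of $(t,y)\mapsto n^d p_\theta^\om(n^2t,0,y)$ is bounded by $C\delta^\gamma$ times the sup of the kernel over a slightly larger parabolic cylinder, valid once $n$ is large relative to $N_0(\om)$. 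Combined with Step~1 this gives $\bP$-a.s.\ equicontinuity. \textbf{Step 3: Weak convergence.} Fix $t$; using the QFCLT together with the ergodic theorem for $\theta^\om$ (and uniform integrability supplied by the Gaussian bound of Step~1), show $n^d\sum_y g(y/n)\,\theta^\om(y)\,p_\theta^\om(n^2t,0,y) \to a^{-1}\int g\,k_t$ for all $g\in C_b(\R^d)$; equivalently $n^d p_\theta^\om(n^2 t,0,\lfloor n\cdot\rfloor)$ converges to $a\,k_t$ in the sense of measures against the empirical measure weighted by $\theta^\om$. \textbf{Step 4: Identification and uniformity.} By Steps~1--2 any subsequence of $f_n(t,x):= n^d p_\theta^\om(n^2t,0,\lfloor nx\rfloor)$ has a locally uniformly convergent sub-subsequence; by Step~3 (and the ergodic averaging of the $\theta^\om$-weights, which turns the discrete weighted sum into $a^{-1}$ times Lebesgue integration in the limit) the limit must equal $a\,k_t(x)$; since the limit is unique, the full sequence converges locally uniformly, and a further compactness argument in $t$ upgrades this to uniformity over $t\in[T_1,T_2]$.

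The main obstacle is \textbf{Step 2}, the parabolic oscillation/H\"older estimate under a \emph{general} speed measure $\theta^\om$. Unlike the CSRW or VSRW cases, here one must run the De Giorgi (or Moser) iteration for the operator $\partial_t + \cL^\om_\theta$ with a weight $\theta^\om$ that is itself random, unbounded, and only controlled through the mixed moment condition \eqref{eq:cond_pqr} involving $\mu^\om/\theta^\om$, $\nu^\om$, $\theta^\om$, and $\theta^{\om,-1}$ simultaneously. The Sobolev/Poincar\'e inequalities underpinning the iteration must be formulated with respect to the speed measure, and the anisotropic way the three exponents $p,q,r$ enter \eqref{eq:pqr_eqn} is exactly what makes the bookkeeping delicate — one needs the maximal inequality to close with the correct exponent budget. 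A secondary technical point is the passage from the weak (distributional) convergence of Step~3 to pointwise convergence: one must ensure the $\theta^\om$-weighting in the empirical measure is handled via the spatial ergodic theorem uniformly enough (e.g.\ along a suitable subsequence, then interpolated) so that the Lebesgue density $a\,k_t(x)$ — rather than some $\theta$-distorted object — is correctly identified as the limit. Both difficulties are, however, of the type already resolved in \cite{ADS19, invariance} for the heat kernel bounds and the QFCLT respectively, so the proof is largely a matter of assembling and adapting those estimates.
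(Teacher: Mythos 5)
Your proposal reproduces the paper's strategy exactly: QFCLT (Theorem~\ref{thm:qfclt general speed}) for weak convergence, plus a parabolic H\"older/oscillation estimate for the heat kernel obtained via De Giorgi iteration with Sobolev and Poincar\'e inequalities weighted by the speed measure $\theta^\om$ (Sections~\ref{sec:func_ineq}--\ref{sec:osc}), then a pointwise local limit theorem (Proposition~\ref{prop:pointwise llt theta}) upgraded to locally uniform convergence by a covering argument. The one substantive step you leave unspecified — how the De Giorgi maximal inequality (Theorem~\ref{thm:maximal}) is converted into actual oscillation decay (Theorem~\ref{thm:oscillations}) — is handled in the paper by level-set estimates for $(-\ln u)_+$ following \cite{wu2006elliptic, ACS20} (Lemmas~\ref{lemma:u bound space} and~\ref{lemma:u bound cylinder}), deliberately bypassing the Moser/Harnack alternative you mention in passing.
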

\begin{remark} (i) In the case of the CSRW or VSRW Assumption~\ref{ass:limsups} coincides with the moment condition in Theorem~\ref{thm:qfclt general speed}. Indeed, for the CSRW, $\theta^\om\equiv\mu^\om$, choose  $p=\infty$ and relabel $r$ by $p$; for the VSRW, $\theta^\om\equiv1$, choose $r=\infty$. 

(ii) For the sake of a simpler presentation, Theorem~\ref{thm:qllt general speed} is stated for the RCM on  $\bbZ^d$ only. However, its proof extends to RCMs with ergodic conductances satisfying a slightly modified moment condition on a general class of random graphs including  supercritical i.i.d.\ percolation clusters  and clusters in percolation models with long range correlations, see e.g.\ \cite{DRS14, Sa17}.  The corresponding QFCLT has been shown in \cite{DNS18} and a local limit theorem for the VSRW in \cite[Section~5]{ACS20}.

(iii) The quenched local limit theorem has also been established for symmetric diffusions in a stationary and ergodic environment, under analogous assumptions to the above theorem. This is the main result of \cite{CD15}, see Appendix~A therein for a discussion of the general speed case.
\end{remark}

Theorem~\ref{thm:qllt general speed} extends the local limit theorem in \cite[Theorem~1.11]{harnack} for the CSRW to the case of a general speed measure. 
In general, a local limit theorem is a stronger statement than an FCLT. In fact, even in the i.i.d.\ case, where the QFCLT does hold \cite{andres2013invariance}, we see the surprising effect that due to a trapping phenomenon the heat kernel may behave subdiffusively (see \cite{BBHK08}), in particular a local limit theorem may fail. Nevertheless it does hold, for instance, in the case of uniformly elliptic conductances, where $\mathbb{P}(c^{-1} \leq \omega(e) \leq c)=1$ for some $c\geq 1$, or for  random walks on supercritical percolation clusters (see \cite{barlow2009parabolic}). For  sharp conditions on the tails of i.i.d.\ conductances at zero for Harnack inequalities and a local limit theorem to hold we refer to \cite{BKM15}.    Hence, it is clear that some moment condition is necessary. In the case of the CSRW under general ergodic conductances the moment condition in Assumption~\ref{ass:limsups} is known to be optimal, see \cite[Theorem~5.4]{harnack}.
Further, for the VSRW a quenched local limit theorem has very recently been shown in \cite{BS20a} under  the weaker moment condition with $1/p+1/q<2/(d-1)$.
Local limit theorems have also been obtained in slightly different settings, see  \cite{croydon2008local}, where some general criteria for local limit theorems have been provided  based on the arguments in \cite{barlow2009parabolic}. Finally, stronger quantitative homogenization results for heat kernels and Green functions can be established by using techniques from quantitative stochastic homogenization, see \cite[Chapters 8--9]{AKM19} for details in the uniformly elliptic case. This technique has been adapted to the VSRW on percolation clusters in \cite{DG19}, and it is expected that it also applies to other degenerate models.

The proof of the local limit theorem has two main ingredients, the QFCLT in Theorem~\ref{thm:qfclt general speed} and a H\"older regularity estimate for the heat kernel.  For the latter it is common to use a purely analytic approach and to interpret the heat kernel as a fundamental solution of the heat equation $(\partial_t - \cL^\om_\theta) u = 0$.
Here we will follow the arguments in \cite{ACS20} based on De Giorgi's iteration technique. This approach to show H\"older regularity directly circumvents the need for a parabolic Harnack inequality, in contrast to the proofs in \cite{harnack, barlow2009parabolic}, which makes it significantly simpler.  As a by-product to our argument we do obtain a weak parabolic Harnack inequality (Proposition~\ref{prop:weak harnack}) and a lower near-diagonal heat kernel estimate (Corollary~\ref{cor:near diag ests}). In  \cite[Theorem~3]{DG19},  following again the approach in \cite{AKM19},  stronger Lipschitz continuity of the heat kernel on i.i.d.\ percolation clusters has been shown, which matches the gradient of the Gaussian heat kernel. 

Applications of homogenisation results such as FCLTs and local limit theorems in statistical mechanics often require convergence under the annealed measure. While a QFCLT does imply an annealed FCLT in general, the same does not apply to the local limit theorem.
Next we provide an annealed local limit theorem under a stronger moment condition, which we do not expect to be optimal.

\begin{theorem}[Annealed local limit theorem]
\label{thm:annealed llt general speed}
Suppose Assumption~\ref{ass:ergodicity general speed measure} holds. There exist exponents $p,q,r_1,r_2 \in (1,\infty)$ (only depending on $d$) such that if
\begin{align*}
\mathbb{E}\big[ \mu^\om(0)^{p}\big]+\mathbb{E}\big[\nu^\om(0)^{q}\big]
+\mathbb{E}\big[\theta^\om(0)^{-r_1}\big]+\mathbb{E}\big[\theta^\om(0)^{r_2}\big] \; < \; \infty
\end{align*}
then the following holds. For all $K>0$ and $0<T_1\leq T_2$,
\begin{equation}
\label{eq:annealed llt general speed}
\lim_{n\to\infty}\E\bigg[\sup_{\abs{x}\leq K} \sup_{t\in [T_1, T_2]}\abs{n^dp_\theta^\omega(n^2t,0,\floor{nx})-ak_t(x)}\bigg]=0.
\end{equation}
\end{theorem}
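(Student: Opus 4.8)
The plan is to upgrade the quenched local limit theorem (Theorem~\ref{thm:qllt general speed}) to an annealed one by establishing uniform integrability of the random variable
\[
  Z_n \; := \; \sup_{\abs{x}\leq K}\sup_{t\in[T_1,T_2]}\abs{n^d p_\theta^\omega(n^2 t, 0, \floor{nx}) - a k_t(x)}.
\]
Since $Z_n \to 0$ $\bP$-a.s.\ by Theorem~\ref{thm:qllt general speed} (note the moment hypotheses here are stronger than those of Assumption~\ref{ass:limsups}, so that theorem applies once $p,q,r_1,r_2$ are chosen appropriately large), it suffices to show $\{Z_n\}_{n\in\bbN}$ is uniformly integrable. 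Because $a k_t(x)$ is a bounded deterministic quantity, this reduces to uniform integrability of $W_n := \sup_{\abs{x}\leq K}\sup_{t\in[T_1,T_2]} n^d p_\theta^\omega(n^2 t, 0, \floor{nx})$, and for that it is enough to produce a uniform bound on a slightly superlinear moment, e.g.\ $\sup_n \E[W_n^{1+\delta}] < \infty$ for some $\delta>0$, or more robustly $\sup_n \E[W_n \log^+ W_n]<\infty$.

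First I would record the deterministic a priori bounds that are available for any speed measure: the crude on-diagonal estimate $p_\theta^\omega(t,x,y) \leq \theta^\omega(x)^{-1}\wedge\theta^\omega(y)^{-1}$ (from $P^\omega_x(X_t=y)\le 1$ together with reversibility $\theta^\omega(x)p_\theta^\omega(t,x,y)=\theta^\omega(y)p_\theta^\omega(t,y,x)$), and, crucially, the Gaussian-type \emph{upper} heat kernel bounds of \cite{ADS19}, which hold under Assumptions~\ref{ass:ergodic dynamic}--\ref{ass:limsups} and control $p_\theta^\omega(t,x,y)$ on large scales by $c_1(\omega)\, t^{-d/2}\exp(-c_2|x-y|^2/t)$ for $t$ in the relevant diffusive range, but with a random prefactor $c_1(\omega)$ whose moments are governed by the moments of $\mu^\omega(0),\nu^\omega(0),\theta^\omega(0)^{\pm1}$. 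The second step is to convert such a bound into a bound on $W_n$: after the parabolic rescaling $t\mapsto n^2 t$, $x\mapsto \floor{nx}$, the factor $n^d (n^2 t)^{-d/2} = t^{-d/2}\le T_1^{-d/2}$ is harmless, so $W_n \leq C(K,T_1,T_2)\, c_1(\tau_{\text{(something)}}\omega)$-type quantity; using stationarity of the environment and of $\theta$ (Assumption~\ref{ass:ergodicity general speed measure}(iii)), the distribution of the relevant prefactor does not depend on $n$, and a moment bound $\E[W_n^{1+\delta}]\le C$ follows provided the exponents $p,q,r_1,r_2$ in the hypothesis are taken large enough (depending only on $d$) to dominate the powers of $\mu^\omega(0),\nu^\omega(0),\theta^\omega(0)^{\pm1}$ that appear in the $c_1(\omega)$ of \cite{ADS19}. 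One subtlety: the Gaussian upper bound there typically requires the spatial point to be at scale comparable to $\sqrt{t}$ or larger, whereas here $|x|\le K$ is fixed while $\sqrt{t}\asymp n$; for the near-diagonal regime one falls back on the De Giorgi / maximal-inequality estimates developed in the body of this paper (the same oscillation and $L^1$--$L^\infty$ smoothing estimates that feed the Hölder regularity), which give $\sup_{y} p_\theta^\omega(t,x,y) \le$ (random constant)$\times t^{-d/2}$ with the random constant again a polynomial in the three basic quantities and hence moment-controlled.

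The main obstacle is precisely this moment bookkeeping: one must track, through the De Giorgi iteration and the chaining/maximal-inequality step that produces $\sup_{\abs{x}\le K}\sup_{t\in[T_1,T_2]}$ (rather than a bound at a single $(t,x)$), exactly which powers of $\mu^\omega$, $\nu^\omega$ and $\theta^\omega$ enter the random constant, and then choose $p,q,r_1,r_2$ large enough (depending only on $d$) that $\E[W_n^{1+\delta}]$ is finite and $n$-independent. Taking the supremum over $t$ and $x$ inside the expectation, rather than outside, means we cannot simply integrate a pointwise Gaussian bound; instead I would discretize $[T_1,T_2]\times \{|x|\le K\}$ into $O(n^{d+2})$ lattice-scale cells, bound the sup over each cell by a near-diagonal oscillation estimate plus the value at a cell centre, and control the resulting sum of $\le C n^{d+2}$ terms by pushing the moment exponent high enough that $n^{d+2}\,\P(W_n > \lambda)$ is summable after integration — equivalently, asking for a moment of order $>d+2$ of the basic random constant. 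This is the step where one genuinely loses optimality, which is consistent with the statement that the moment condition here "is not expected to be optimal."
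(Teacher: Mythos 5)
Your high-level strategy --- pass from the quenched LLT to the annealed one via dominated convergence / uniform integrability, and control $W_n$ through the De Giorgi maximal-inequality machinery and stationarity --- is the right one, and you correctly identify the key players (the $L^1$--$L^\infty$ smoothing estimate, stationarity, and the need for high polynomial moments of $\mu^\omega$, $\nu^\omega$, $\theta^\omega$, $1/\theta^\omega$). But the final step of your argument contains a genuine gap: you seem not to have internalized what the maximal inequality actually gives you, and as a result you propose a discretization of $[T_1,T_2]\times\{\lvert x\rvert\le K\}$ into $O(n^{d+2})$ cells plus a union bound, which would require moments of order $>d+2$ of the random prefactor and, more importantly, is completely unnecessary.

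The point is that Proposition~\ref{prop:general speed maximal L1 2} is already a bound on the \emph{supremum} of the heat kernel over an entire macroscopic space-time cylinder $Q_{\sigma'}(n)$, in terms of a random constant built from spatial averages of $\mu^\omega,\nu^\omega,\theta^\omega,1/\theta^\omega$ over $B(x_0,n)$ and the space-time $L^1$-type norm $\onorm{p_\theta^\omega(\cdot,0,\cdot)}_{1,1/p_*,Q_\sigma(n),\theta}$. That $L^1$ norm is trivially $\lesssim n^{-d}$ because the heat kernel integrates to $1$ (this is \eqref{eq:hk_1norm}). Applying the maximal inequality \emph{once} to a single cylinder of radius $c_{13}n$ containing all points $(n^2t,\lfloor nx\rfloor)$ with $t\in[T_1,T_2]$, $\lvert x\rvert\le K$ then gives (Lemma~\ref{lem:hkL1})
\begin{align*}
\sup_{\abs{x}\leq K,\,t\in\left[T_1,T_2\right]}n^d \, p_\theta^\omega(n^2t,0,\floor{nx})
\; \leq \; c_{12} \,  \norm{1\vee (1/\theta^\omega)}_{1,B(n)} \, \cA_4^\omega(c_{13} n)^{\kappa'},
\end{align*}
for \emph{all} $n\ge 1$, with no discretization and no loss of any power of $n$. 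The $n$-dependence on the right is entirely through ergodic averages over balls of radius $O(n)$, and the paper then bounds $\E\big[\sup_n (\cdots)\big]$ directly via the $L^p$-maximal ergodic theorem (Proposition~\ref{prop:general speed annealed maximal bound}, using \cite[Chapter~6, Theorem~1.2]{krengel}), which requires exactly a polynomial moment of order $>\kappa'$ (suitably adjusted) on each of the basic quantities --- not anything of order $>d+2$. This is both simpler and gives the stronger $L^1$-domination $\E[\sup_n W_n]<\infty$ rather than mere uniform integrability. Your proposal as written would produce a significantly weaker theorem and would also require you to justify the (nontrivial) claim that the oscillation bound on each cell and the value at the cell centre can be combined and summed; the paper's route avoids all of that.
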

\begin{remark}
In the case of the VSRW, i.e.\ $\theta^\om\equiv 1$, the moment condition required in Theorem~\ref{thm:annealed llt general speed} is more explicitly given by
$\E\big[\omega(e)^{2(\kappa'\vee p)}\big]  + \E\big[\omega(e)^{-2(\kappa'\vee q)}\big] <  \infty$, $ e\in E_d$,
for some $p,\,q\in(1,\infty)$ such that $1/p+1/q<2/d$ and $\kappa'=\kappa'(d,p,q,\infty)$ defined in Proposition~\ref{prop:general speed maximal L1 2} below. Similarly, in the case of the CSRW, $\theta^\om\equiv\mu^\om$, the condition reduces to
$\E\big[\omega(e)^{4\kappa'\vee 2p}\big] + \E\big[\omega(e)^{-(4\kappa'+2)\vee 2q}\big] \;< \; \infty$, $e\in E_d$,
again for some $p,\,q\in(1,\infty)$ such that $1/p+1/q<2/d$ and $\kappa'=\kappa'(d,\infty,q, p)$ defined as in Proposition~\ref{prop:general speed maximal L1 2}. 
\end{remark}

 As mentioned above, the proofs of the quenched local limit theorems in \cite{harnack} and Theorem~\ref{thm:qllt general speed}  rely on H\"older regularity
estimates on the heat kernel, which involve random constants depending on the exponential of the conductances. Those constants can be controlled almost surely, but naively taking expectations would
require exponential moment conditions  stronger than the polynomial moment conditions in Assumption~\ref{ass:limsups}. To derive the annealed local limit theorem given the corresponding quenched result, one might hope to employ the dominated convergence theorem, which requires that the integrand above can be dominated uniformly in $n$ by an integrable function. We achieve this using a  maximal inequality from \cite{ADS19}. Then it is the form of the random constants in this inequality that allows us to anneal the result using only polynomial moments, together with a simple probabilistic bound. 

\subsection{Main Results on the Dynamic RCM} \label{sec:introdyn}
Next we introduce the dynamic random conductance model.
We endow $G=(\bbZ^d, E_d)$, $d\geq 2$, with a family $\omega=\{\omega_t(e)\in(0,\infty):e\in E_d,\,t\in \bR\}$ of positive, time-dependent weights. For $t\in \bR$, $x\in\bbZ^d$, let
\begin{align*}
\mu_t^\omega(x):=\sum_{y\sim x}\omega_t(x,y), \qquad
\nu_t^\omega(x):=\sum_{y\sim x}\frac{1}{\omega_t(x,y)}.
\end{align*}

We define the \textit{dynamic variable speed random walk} starting in $x\in\bZ^d$ at $s\in\bR$ to be the continuous-time Markov chain $(X_t:t\geq s)$ with time-dependent generator 
$$\left(\mathcal{L}_t^\omega f\right)(x):=\sum_{y\sim x}\omega_t(x,y)\left(f(y)-f(x)\right),$$
acting on bounded functions $f:\,\mathbb{Z}^d\rightarrow \mathbb{R}$. Note that the counting measure, which is time-independent, is an invariant measure for $X$. In contrast to Section~\ref{sec:intro_static}, the results in this subsection, like many results on the dynamic RCM,  are restricted to this specific speed measure.
We denote $P_{s,x}^\omega$ the law of this process started at $x\in\mathbb{Z}^d$ at time $s$, and $E_{s,x}^\omega$ the corresponding expectation. For $x,y\in\mathbb{Z}^d$ and $t\geq s$, we denote $p^\omega(s,t,x,y)$ the heat kernel of $\left(X_t\right)_{t\geq s}$, that is
$$p^\omega(s,t,x,y):=P_{s,x}^\omega\left[X_t=y\right].$$
Let $\Omega$ be the set of measurable functions from $\bbR$ to $(0, \infty)^{E_d}$ equipped with a $\sigma$-algebra $\mathcal{F}$ and let $\prob$ be a probability measure on $(\Omega, \mathcal{F})$.
 Upon it we consider the $d+1$-parameter group of translations $(\tau_{t,x})_{(t,x)\in\mathbb{R}\times\mathbb{Z}^d}$ given by
$$\tau_{t,x}:\Omega\rightarrow \Omega,\quad \left(\omega_s(e)\right)_{s\in\bR,\,e\in E_d}\mapsto \left(\omega_{t+s}(x+e)\right)_{s\in\bR,\,e\in E_d}.$$
The required ergodicity and stationarity assumptions on the time-dependent random environment are as follows.

\begin{assumption}
\label{ass:ergodic dynamic}
\begin{enumerate}[(i)]
\item $\mathbb{P}$ is ergodic with respect to time-space translations, i.e.\ for all $x\in\mathbb{Z}^d$ and $t\in\bR$, $\mathbb{P}\circ \tau_{t,x}^{-1}=\mathbb{P}$. Further, $\mathbb{P}(A)\in\{0,1\} \text{ for any } A\in\mathcal{F} \text{ such that }\tau_{t,x}(A)=A \text{ for all }x\in\mathbb{Z}^d, t\in\bR.$
\item For every $A\in\mathcal{F}$, the mapping $(\omega,t,x)\mapsto\indicator_A(\tau_{t,x}\omega)$ is jointly measurable with respect to the $\sigma$-algebra $\mathcal{F}\otimes\mathcal{B}\left(\mathbb{R}\right)\otimes 2^{\mathbb{Z}^d}$.
\end{enumerate}
\end{assumption}

\begin{theorem}[Quenched FCLT and  local limit theorem] \label{thm:quenched_dynamic}
Suppose Assumption~\ref{ass:ergodic dynamic} holds and there exist $p, q \in (1, \infty]$ satisfying
  \begin{align*}
    \frac{1}{p-1} \,+\, \frac{1}{(p-1) q} \,+\, \frac{1}{q}
    \;<\;
    \frac{2}{d}
  \end{align*}
  such that  $\mean\!\big[\om_0(e)^p\big] < \infty$ and $\mean\!\big[\om_0(e)^{-q}\big] < \infty$  for any $e \in E_d$.  
\begin{enumerate}[(i)]
\item The QFCLT holds with a deterministic non-degenerate covariance matrix $\Si^2$.

\item For any $T_2>T_1>0$ and $K>0$,
$$\lim_{n\to\infty}\sup_{\abs{x}\leq K} \sup_{ t\in [T_1, T_2]} \big| n^dp^\omega(0,n^2t,0,\floor{nx})-k_t(x) \big| \; = \; 0,\quad\text{for }\mathbb{P}\text{-a.e. }\omega,$$
\end{enumerate}
where $k_t$ still denotes the heat kernel of a Brownian motion on $\mathbb{R}^d$ with covariance $\Sigma^2$.
\begin{proof}
The QFLCT in (i) has been proven in \cite{ACDS18}, see \cite{BR18} for a similar result. For the quenched local limit theorem in (ii) we refer to \cite{ACS20}.
\end{proof}
\end{theorem}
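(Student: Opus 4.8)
The plan is to obtain (i) by the corrector method applied to the process of the environment seen from the particle, and then to deduce (ii) by combining the resulting invariance principle with heat-kernel regularity estimates obtained via De Giorgi iteration; this is the route of \cite{ACDS18, ACS20} (see also \cite{BR18} for (i)), and here we only sketch the structure. For (i), consider $\eta_t := \tau_{t,X_t}\om$, the environment viewed from the particle, which is a Markov process on $\Om$ admitting $\bbE$'s underlying measure $\bbP$ as a stationary and ergodic law under Assumption~\ref{ass:ergodic dynamic} (one uses here that the counting measure is invariant for $X$). For each coordinate direction I would construct a corrector $\chi_k : \Om \to \bR$ such that $\Phi_k(t,x) := x_k + \chi_k(\tau_{t,x}\om)$ is space-time caloric, $(\partial_t + \cL_t^\om)\Phi_k = 0$, by solving the $\lambda$-regularised corrector equation and sending $\lambda \downarrow 0$; the moment hypothesis yields $L^2(\bbP)$-control of the gradient of $\chi_k$ and, crucially, sublinearity $\max_{\abs{x}\le n}\abs{\chi_k(\tau_{t,x}\om)}=o(n)$ for $\bbP$-a.e.\ $\om$. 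Then $M_t := \Phi(t,X_t)$ is a $P_{0,0}^\om$-martingale, and the martingale invariance principle --- with the ergodic theorem for $\eta_t$ identifying $\langle M\rangle_t/t \to \Si^2$, which is non-degenerate by a variational lower bound --- gives an FCLT for $M$; subtracting the sublinear corrector transfers it to $X$ itself.

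For (ii), set $u_n(t,x) := n^d p^\om(0,n^2 t,0,\floor{nx})$ and run the standard ``a priori regularity plus identification of the limit'' scheme. First, for $\bbP$-a.e.\ $\om$, I would establish, uniformly over the relevant space-time box and in large $n$: (a) a near-diagonal upper bound $p^\om(0,t,0,x)\le C(\om)\,t^{-d/2}$ for $t$ large relative to $\abs{x}$, giving local boundedness of $(u_n)$; and (b) a parabolic H\"older/oscillation estimate for caloric functions of $\partial_t + \cL_t^\om$, giving equicontinuity of $(u_n)$. Both (a) and (b) would come from De Giorgi iteration in the degenerate time-dependent setting --- a Sobolev inequality combined with a Caccioppoli energy estimate, iterated over shrinking cylinders --- the moment conditions entering through the Sobolev constants and through maximal-inequality control of the space-time averages of $\mu_t^\om$ and $\nu_t^\om$; this mirrors the proof of Theorem~\ref{thm:qllt general speed} and avoids any parabolic Harnack inequality. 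By Arzel\`a--Ascoli, $(u_n)$ is then precompact in $C_{\mathrm{loc}}((0,\infty)\times\bR^d)$; testing a subsequential limit $u$ against smooth compactly supported functions and passing to the limit with the QFCLT of (i) (the corrector sublinearity killing the first-order term) identifies $u$ as a weak solution of $\partial_t u = \tfrac{1}{2}\nabla\!\cdot\!(\Si^2\nabla u)$ emanating from $\delta_0$; tightness in the FCLT prevents any loss of mass, so $u = k_t(x)$ --- the prefactor being $1$ since the speed measure is the counting measure, i.e.\ the analogue of the constant $a$ in Theorem~\ref{thm:qllt general speed} equals $\bbE[1]^{-1}=1$. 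Uniqueness of the limit upgrades the convergence to the full sequence, and equicontinuity makes it uniform on $\{\abs{x}\le K\}\times[T_1,T_2]$.

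The main obstacle is the degeneracy: the corrector sublinearity in (i) and the heat-kernel upper bound and H\"older estimate in (ii) are elementary under uniform ellipticity but delicate when $\om_t(e)$ is only polynomially integrable and may be arbitrarily small or large, and the threshold $\tfrac{1}{p-1}+\tfrac{1}{(p-1)q}+\tfrac{1}{q}<\tfrac{2}{d}$ records precisely how much integrability the Sobolev/iteration step consumes via H\"older's inequality. I expect the De Giorgi iteration producing the quantitative parabolic H\"older estimate --- uniform in the time variable and robust to the degeneracy --- to be the technical heart of the matter; once the QFCLT is in hand, the identification of the limit is comparatively soft.
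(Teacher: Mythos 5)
The paper itself offers no internal argument for this theorem: it delegates part (i) to \cite{ACDS18} (cf.\ \cite{BR18}) and part (ii) to \cite{ACS20}. Your sketch correctly captures the machinery of those references — for (i), the corrector construction for the time-space process of the environment viewed from the particle, sublinearity of the corrector, and a martingale FCLT; for (ii), De Giorgi iteration in the degenerate, time-dependent setting to produce a near-diagonal upper bound and a parabolic oscillation/H\"older estimate, with the moment condition entering through the Sobolev and maximal-ergodic constants exactly as you describe. The one place your route diverges from \cite{ACS20} (and from this paper's own static analogue, Proposition~\ref{prop:pointwise llt theta}) is the identification step in (ii): you propose extracting a subsequential limit of $u_n$ by Arzel\`a--Ascoli and then showing it is a weak solution of $\partial_t u = \tfrac12 \nabla\!\cdot(\Sigma^2\nabla u)$, which would require a second pass through a corrector/compensated-compactness argument to pass to the limit in the degenerate divergence-form operator and then a PDE uniqueness theorem. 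The cited approach is more economical: it compares $P_0^\om[X_{n^2 t}/n \in C(x,\delta)]$ directly with the Gaussian mass on the cube using the QFCLT, and uses the H\"older estimate only to localize from the cube average to the single point $\floor{nx}$ (this is the $J_1,\dots,J_4$ decomposition in Proposition~\ref{prop:pointwise llt theta}); the QFCLT already packages the homogenization, so no weak-solution argument or separate uniqueness result is needed. Both routes are sound; yours is simply longer. One small imprecision worth flagging: you write that the stationary law of the environment process $\eta_t = \tau_{t,X_t}\om$ is $\bbP$ itself; in the dynamic VSRW setting this is indeed the case because the counting measure is invariant for $X$, but this fact deserves an explicit sentence rather than a parenthesis, since in other speed-measure normalizations the stationary law carries a density.
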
 

Similarly as in the static case we establish an annealed local limit theorem for the dynamic RCM under a stronger, but still polynomial moment condition.

\begin{theorem}[Annealed local limit theorem]
\label{thm:annealed llt dyn}
Suppose  Assumption \ref{ass:ergodic dynamic} holds. There exist exponents $p,q\in (1,\infty)$ (specified more explicitly in Assumption~\ref{ass:poly moments dynamic} below) such that if $\mathbb{E}\big[\omega_0(e)^{p}\big] <  \infty$ and $\mathbb{E}\big[\omega_0(e)^{-q}\big]< \infty$ for any $e\in E_d$, then the following holds.  For all $K>0$ and $0<T_1\leq T_2$,
\begin{equation}
\label{eq:annealed llt dyn}
\lim_{n\rightarrow \infty}\mathbb{E}\bigg[\sup_{\abs{x}\leq K} \sup_{t\in[T_1,T_2]}\big| n^dp^\omega(0,n^2t,0,\floor*{nx})-k_t(x) \big|\bigg]=0.
\end{equation}
\end{theorem}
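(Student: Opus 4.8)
The plan is to upgrade the quenched local limit theorem (Theorem~\ref{thm:quenched_dynamic}(ii)) to an annealed one by exhibiting an integrable, $n$-independent dominating function and then invoking dominated convergence, exactly in the spirit of the static annealed result (Theorem~\ref{thm:annealed llt general speed}). First I would split the supremum inside the expectation using the triangle inequality: the term $\sup_{|x|\le K}\sup_{t\in[T_1,T_2]} n^d p^\omega(0,n^2t,0,\floor{nx})$ on one hand, and $\sup n^d\,[\text{something involving }k_t]$ — which is deterministic and bounded — on the other. Since $k_t(x)$ is bounded uniformly on $|x|\le K$, $t\in[T_1,T_2]$, the only issue is a uniform-in-$n$ integrable bound on the rescaled heat kernel supremum. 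The quenched convergence gives that the integrand tends to $0$ $\mathbb P$-a.s., so once domination is in place the conclusion follows immediately.

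The heart of the matter is the domination. Here I would use the on-diagonal/Gaussian-type heat kernel upper bounds for the dynamic VSRW together with a maximal inequality of the type proved in \cite{ADS19} (and used for Theorem~\ref{thm:annealed llt general speed}), which control $\sup_{t} n^d p^\omega(0,n^2t,0,\floor{nx})$ by a quantity of the form $C(\omega)\,n^{-d}\sum_{y\in B} (\text{local functional of }\omega)$ averaged over a macroscopic box $B$ of side $\sim n$, where the random constant $C(\omega)$ and the summands have only \emph{polynomial} dependence on $\omega_0(e)^{\pm 1}$ (and their time-averages over an interval of length $\sim n^2$). Crucially, by stationarity in time and space, $\mathbb E$ of each such summand equals a fixed finite constant under the assumed moment bounds $\mathbb E[\omega_0(e)^p],\mathbb E[\omega_0(e)^{-q}]<\infty$ for the appropriate $p,q$ from Assumption~\ref{ass:poly moments dynamic}; so the spatial average has bounded expectation uniformly in $n$. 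Combined with a Hölder/Young inequality to decouple $C(\omega)$ from the averaged sum — choosing the exponents so the product lands in $L^1$, which is where the precise $p,q$ in Assumption~\ref{ass:poly moments dynamic} are pinned down — this yields a dominating function whose expectation is bounded independently of $n$. A mild extra input is a probabilistic bound (Borel–Cantelli / Markov) ensuring the bad events where the local functionals are atypically large contribute negligibly, so that one may restrict to a deterministic macroscopic region; this mirrors the ``simple probabilistic bound'' alluded to after Theorem~\ref{thm:annealed llt general speed}.

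One technical wrinkle specific to the dynamic case is that the relevant local functionals involve time-averages of $\omega_t(e)^{\pm 1}$ over windows of length $\sim n^2$; I would handle these by the (spatial-temporal) ergodic theorem for the expectation computation, but for the \emph{uniform} bound it suffices to use stationarity, since $\mathbb E$ of a time-average over $[0,n^2]$ of a stationary nonnegative functional equals its one-time value. Another point is that the maximal inequality in \cite{ADS19} is stated for the static model; I would either invoke its dynamic analogue (the heat kernel bounds in the dynamic setting are available, e.g.\ via the iteration scheme underlying \cite{ACDS18, ACS20}) or re-derive the needed $L^1$-maximal estimate directly for $(X_t)_{t\ge s}$ using the Gaussian upper bound and a chaining/union bound over the finitely many rescaled times and the $O(n^d)$ lattice points in the box.

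\textbf{Main obstacle.} The main difficulty is not the a.s.\ convergence (that is quoted) but arranging the domination so that \emph{only polynomial moments} are needed: the Hölder decoupling of the random prefactor $C(\omega)$ from the box-average must be balanced against the moment budget in $\omega_0(e)^{\pm1}$, and tracking how the heat-kernel-bound exponents (the $\kappa'$-type constants) feed into the admissible $(p,q)$ is the delicate bookkeeping — precisely the role played by Assumption~\ref{ass:poly moments dynamic}. I expect the proof to reduce, after these reductions, to a clean application of dominated convergence.
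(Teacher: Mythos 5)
Your overall structure is right and matches the paper: the annealed result is obtained from the quenched local limit theorem (Theorem~\ref{thm:quenched_dynamic}(ii)) by dominated convergence, and the work is in producing an $n$-independent integrable dominating function via an $L^1$-maximal inequality for the dynamic heat kernel (the paper's Proposition~\ref{prop:maximal_dyn}, which iterates \cite[Theorem~5.5]{ACDS18}). But there is a genuine gap in how you propose to pass from ``bounded expectations'' to an actual dominating function. You argue that stationarity gives $\sup_{n} \mathbb E[\text{box-averaged functional of }\omega] < \infty$, and then suggest a ``Borel--Cantelli / Markov'' bound to handle atypical events. This is not enough: dominated convergence requires $\mathbb E\big[\sup_{n\geq 1}(\text{dominating quantity})_n\big]<\infty$, and having each expectation uniformly bounded in $n$ does not imply the supremum is integrable. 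The tool the paper actually uses for this step is the $L^p$-version of the \emph{maximal ergodic theorem} for space-time ergodic environments (Proposition~\ref{prop:dynamic ergodic}), which is exactly what bounds $\mathbb E\big[\sup_{n\geq 1}\cA_5^\omega(c_{17}n)^{\kappa'}\big]$ by a fixed moment of the conductances. Borel--Cantelli would at best give a.s.\ finiteness of the supremum, and Markov alone gives nothing about the sup; neither produces an $L^1$ bound, so your argument as written does not close.

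A secondary issue: your fallback of re-deriving the needed maximal estimate ``by chaining/union bound over the $O(n^d)$ lattice points'' would not give the right scaling — a crude union bound over $\sim n^d$ points loses a volume factor and would require correspondingly stronger moments (or fail entirely). The paper instead obtains a clean $L^1$-to-$L^\infty$ maximal inequality by a De~Giorgi/Moser-type iteration (Proposition~\ref{prop:maximal_dyn} paired with Lemma~\ref{lem:hkL1_dyn}), where the $\kappa'$-exponent arises; your primary suggestion of invoking the dynamic analogue from \cite{ACDS18} is the correct route, so you should drop the union-bound fallback. With those two repairs — replacing Borel--Cantelli/Markov by the maximal ergodic theorem, and committing to the iteration-based maximal inequality — your outline would coincide with the paper's proof.
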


An annealed local limit theorem has been stated in the uniformly elliptic case in \cite{andres2014invariance}. We do not expect the moment conditions in Theorem~\ref{thm:annealed llt dyn} to be optimal and that they can be relaxed. In an upcoming paper \cite{DKS20} an annealed local limit theorem is obtained for ergodic  conductances uniformly bounded from below but only having a finite first moment by using an entropy argument from \cite{BDKY15}.

Relevant examples of dynamic RCMs include random walks in an environment generated by some interacting particle systems like zero-range or exclusion processes (cf.\ \cite{MO16}). Some on-diagonal heat kernel upper bounds for a degenerate time-dependent conductance model are obtained in \cite{MO16}. Full two-sided Gaussian bounds have been shown in the uniformly elliptic case for the VSRW \cite{DD} or for the CSRW under effectively non-decreasing conductances \cite{DHZ19}. 
However, unlike for the static environments, two-sided Gaussian-type heat kernel bounds are  much less regular and some pathologies may arise as they are not stable under perturbations, see \cite{HK16}. Moreover, in the degenerate case such bounds are expected to be governed by the intrinsic distance.  Even in the static case, in contrast to the CSRW, the intrinsic distance of the VSRW is not comparable to the Euclidean distance in general, cf.\ \cite{ADS19}, and 
the exact form of a time-dynamic version of the distance is still unknown.   These facts make the derivation of Gaussian bounds for the dynamic RCM with unbounded conductances a subtle open challenge.


\subsection{Application to the Ginzburg-Landau $\nabla\phi$ Model}
 A somewhat unexpected context in which one encounters (dynamic) RCMs is that of gradient Gibbs measures describing stochastic interfaces in statistical mechanical systems. One well-established model is the Ginzburg-Landau model, where an interface is described by a field of height variables $\{\phi_t(x): x\in \mathbb{Z}^d, t\geq 0\}$, whose stochastic dynamics are governed by the following infinite system of stochastic differential equations involving nearest neighbour interaction: 
\begin{align} \label{eq:SDE_intro}
\phi_t(x)=\phi(x)-\int_0^t \sum_{y:|x-y|=1} V'(\phi_t(x)-\phi_t(y)) \, dt + \sqrt{2} \, w_t(x), \qquad x\in \mathbb{Z}^d.
\end{align}
Here $\{w(x): x\in \mathbb{Z}^d\}$ is a collection of independent Brownian motions and the potential $V\in C^2(\mathbb{R},\mathbb{R}_+)$ is even and convex. The formal equilibrium measure for the dynamic is given by the Gibbs measure $Z^{-1} \exp(-H(\phi)) \prod_x d\phi(x)$ on $\mathbb{R}^{\mathbb{Z}^d}$ with  formal Hamiltonian $H(\phi) = \frac{1}{2} \sum_{x\sim y} V(\phi(x)-\phi(y))$. Investigating the fluctuations of the macroscopic interface has been quite an active field of research, see \cite{Fu05} for a survey. 

We are interested in the decay of the space-time covariances of height variables under an equilibrium Gibbs measure. By the  \emph{Helffer-Sj\"{o}strand representation} \cite{HS} (cf.\ also \cite{DD, GOS}) such covariances can be written in terms of the \emph{annealed} heat kernel of a random walk among dynamic random conductances. More precisely, 
\[
\cov_\mu\big(\phi_0(0), \phi_t(y)\big)=\int_0^\infty \mathbb{E}_\mu \left[ p^{\omega}(0,t+s,0,y) \right] ds,
\]
where the covariance and expectation are taken with respect to an ergodic Gibbs measure $\mu$ and $p^\omega$ denotes the heat kernel of the dynamic RCM with time-dependent conductances given by
\begin{align} \label{eq:defOmPhi}
\omega_t(x,y):= V''\big(\phi_t(y)-\phi_t(x)\big), \qquad \{x,y\}\in E_d, \, t\geq 0.
\end{align}
Thus far, applications of the aforementioned Helffer-Sj\"{o}strand relation have mostly been restricted to gradient models with a strictly convex potential function that has second derivative bounded above. This corresponds to uniformly elliptic conductances in the random walk picture. However, recent developments in the setting of degenerate conductances will also allow some potentials that are strictly convex but may have faster than quadratic growth at infinity. As our first main result in this direction, we use the annealed local limit theorem of Theorem~\ref{thm:annealed llt dyn} to derive  a scaling limit for the space-time covariances of the $\phi$-field for a wider class of potentials.

\begin{theorem} \label{thm:cov_lim} 
Suppose $d\geq 3$ and let $V\in C^2(\bR)$ be even with $V''\geq c_->0$. 
Then for all $h\in\bR$ there exists a stationary, shift-invariant, ergodic $\phi$-Gibbs measure $\mu$  of mean $h$, i.e.\ $\E_{\mu}[\phi(x)]=h$ for all $x\,\in\,\bZ^d$.
Further, assume that
\begin{align} \label{eq:moment_gradphi}
\mathbb{E}_{\mu}\Big[V''\big(\phi(y)-\phi(x)\big)^{\overline{p}}\Big] \;< \;  \infty, \qquad \text{for any $\{x,y\}\in E_d$,}
\end{align}
 with $\overline{p}:=(2+d)(1+2/d+\sqrt{1+1/d^2})$. Then for all $t>0$ and $x\in\bR^d$,
$$\lim_{n\to\infty}\,n^{d-2}\cov_{\mu}\!\big(\phi_0(0), \phi_{n^2t}(\floor{nx})\,\big)=\int_0^\infty k_{t+s}(x)\,ds,$$
where $k_t$ is the heat kernel from Theorem~\ref{thm:quenched_dynamic} with conductances as given in \eqref{eq:defOmPhi}.
\end{theorem}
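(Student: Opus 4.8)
The plan is to express the space--time covariance through the Helffer--Sj\"ostrand representation, rescale diffusively, and then apply the annealed local limit theorem, Theorem~\ref{thm:annealed llt dyn}, inside the resulting time integral; the only genuinely non-mechanical steps are the construction of the equilibrium measure together with the validity of the Helffer--Sj\"ostrand identity in the present generality, and an annealed on-diagonal heat kernel bound needed to dominate the non-compact part of the integral.

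\emph{Setting up the environment.} First I would recall the existence, for $d\geq 3$ and $V$ even and convex with $V''\geq c_->0$, of a shift-invariant, shift-ergodic $\phi$-Gibbs measure $\mu$ with $\E_\mu[\phi(0)]=h$: for $h=0$ this is obtained by passing to the thermodynamic limit of finite-volume Gibbs measures --- tightness and finiteness of $\var_\mu(\phi(0))$ being furnished by the Brascamp--Lieb inequality, which is where the hypothesis $d\geq 3$ enters --- followed by restriction to an ergodic component; the case of general $h$ follows by the shift $\phi\mapsto\phi+h$, since $V$ is even. Such a $\mu$ is reversible and ergodic for the (well-posed, by monotonicity of $V'$) dynamics~\eqref{eq:SDE_intro}, so letting $(\phi_t)_{t\geq 0}$ be the stationary solution with $\phi_0\sim\mu$ and defining $\omega$ by~\eqref{eq:defOmPhi}, the law of $\omega$ on $\Omega$ is stationary and ergodic under the space--time shifts $\tau_{t,x}$ --- space ergodicity being inherited from $\mu$ and the i.i.d.\ noise, time ergodicity from extremality of $\mu$ among invariant measures of the dynamics (cf.\ \cite{GOS, Fu05}) --- so that Assumption~\ref{ass:ergodic dynamic} holds. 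Since $\omega_0(e)=V''(\phi(x)-\phi(y))\geq c_-$, the bound $\E[\omega_0(e)^{-q}]\leq c_-^{-q}$ is automatic for every $q$, so in the moment condition of Theorem~\ref{thm:annealed llt dyn} only the positive moment is binding; the exponent $\overline p$ in~\eqref{eq:moment_gradphi} is chosen to meet the requirement of Assumption~\ref{ass:poly moments dynamic} precisely in the regime where arbitrarily high negative moments of the conductances are available, and $\E[\omega_0(e)^{\overline p}]<\infty$ holds by hypothesis. Hence Theorem~\ref{thm:annealed llt dyn} applies to $p^\omega$.

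\emph{Rescaling and the limit.} Next I would invoke the Helffer--Sj\"ostrand identity \cite{HS} (see also \cite{GOS, DD}),
\[
\cov_\mu\big(\phi_0(0),\phi_\tau(y)\big)=\int_0^\infty\E_\mu\big[p^\omega(0,\tau+s,0,y)\big]\,ds,\qquad\tau\geq 0,\ y\in\bZ^d,
\]
whose right-hand side is finite (being nonincreasing in $\tau$ and equal to $\var_\mu(\phi(0))<\infty$ at $\tau=0$). Taking $\tau=n^2t$, $y=\floor{nx}$ and substituting $s=n^2u$ gives
\[
n^{d-2}\cov_\mu\big(\phi_0(0),\phi_{n^2t}(\floor{nx})\big)=\int_0^\infty n^d\,\E_\mu\big[p^\omega(0,n^2(t+u),0,\floor{nx})\big]\,du.
\]
For each fixed $u>0$, Theorem~\ref{thm:annealed llt dyn} (applied with $[T_1,T_2]\ni t+u$ and $K\geq|x|$) yields $n^d\,\E_\mu[p^\omega(0,n^2(t+u),0,\floor{nx})]\to k_{t+u}(x)$, and it remains to pass the limit through the integral by dominated convergence. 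On $u\in(0,1]$ the integrand is bounded uniformly in $n$, since it converges uniformly on $t+u\in[t,t+1]$ to the bounded function $u\mapsto k_{t+u}(x)$. On $u\in(1,\infty)$ I would use an annealed on-diagonal bound $\E_\mu[p^\omega(0,\tau,0,y)]\leq C\tau^{-d/2}$ for $\tau\geq1$ (available from Nash-type and maximal inequalities for the dynamic RCM under the present moment conditions, cf.\ \cite{MO16} and the maximal inequalities underlying Theorem~\ref{thm:annealed llt dyn}), which bounds the integrand by $C(t+u)^{-d/2}\leq Cu^{-d/2}$; this is integrable on $(1,\infty)$ exactly because $d\geq 3$. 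Dominated convergence would then give $\lim_{n\to\infty}n^{d-2}\cov_\mu(\phi_0(0),\phi_{n^2t}(\floor{nx}))=\int_0^\infty k_{t+u}(x)\,du$, which is the claim.

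The hard part will be the two analytic inputs rather than the assembly: establishing existence and ergodicity of the $\phi$-Gibbs measure together with well-posedness, stationarity, and --- crucially --- the Helffer--Sj\"ostrand identity for the dynamics when $V''$ is only bounded below (rather than in the classical uniformly elliptic case), and securing the annealed on-diagonal estimate $\E_\mu[p^\omega(0,\tau,0,y)]\leq C\tau^{-d/2}$ uniformly in $y$ for the degenerate dynamic RCM under merely polynomial moments. With those in hand, the diffusive rescaling and dominated-convergence argument above is essentially routine.
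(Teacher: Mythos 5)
Your proof follows essentially the same route as the paper: existence of $\mu$ via Brascamp--Lieb and the thermodynamic limit (Theorem~\ref{thm:phi gibbs existence}), well-posedness and stationarity of the dynamics (Proposition~\ref{prop:sde solution}), verification that $\omega$ defined by \eqref{eq:defOmPhi} satisfies Assumption~\ref{ass:ergodic dynamic} and (because $\omega_t(e)\geq c_-$, forcing $q=\infty$ in Assumption~\ref{ass:poly moments dynamic}) that \eqref{eq:moment_gradphi} supplies exactly the required positive moments, then the Helffer--Sj\"ostrand representation, the substitution $s\mapsto n^2 s$, the annealed local limit theorem for the compact part, and domination of the tail by an $s^{-d/2}$ bound. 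This matches the paper's proof step by step.

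One place you overstate the difficulty: you flag the on-diagonal estimate $\E_\mu[p^\omega(0,\tau,0,y)]\leq C\tau^{-d/2}$ as a ``hard part'' requiring maximal inequalities and ``merely polynomial moments.'' In fact, precisely because $\omega_t(e)\geq c_-$ here, the Nash inequality holds deterministically with a constant depending only on $d$ and $c_-$, and so the on-diagonal bound $p^\omega(0,\tau,x,y)\leq c\,\tau^{-d/2}$ is \emph{quenched, uniform in the environment, and free of any moment assumption} (this is the paper's Lemma~\ref{lemma:on-diagonal estimate}); no annealing, no maximal inequality, no appeal to \cite{MO16} is needed. You do mention Nash-type inequalities in passing, so your argument is not wrong, but this deterministic bound is what actually makes the tail control (and the finiteness of the integral, which you argue by a somewhat informal monotonicity-in-$\tau$ claim) straightforward, and it is worth stating that clearly rather than bundling it with the genuinely delicate moment-dependent maximal inequalities behind Theorem~\ref{thm:annealed llt dyn}.
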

Here $\mathbb{E}_{\mu}$ and $\cov_{\mu}$ denote expectation and covariance w.r.t.\ the law of the process $(\phi_t)_{t\geq 0}$ with initial distribution $\mu$.
Theorem~\ref{thm:cov_lim} extends the scaling limit result of \cite[Theorem 5.2]{andres2014invariance} to hold for potentials $V$ for which $V''$ may be unbounded above.
Note that Theorem~\ref{thm:cov_lim} also contains an existence result for  stationary, shift-invariant, ergodic $\phi$-Gibbs measures whose derivation in the present setting requires some extra consideration. We obtain the existence from  the Brascamp-Lieb inequality together with an existence and uniqueness result for the system of  SDEs \eqref{eq:SDE_intro}, see Proposition~\ref{prop:sde solution}, which in turn can be derived following the arguments in \cite[Chapter~4]{Roy07}. 

Our final main result is a scaling limit for the time-static height variables under the $\phi$-Gibbs measure towards a Gaussian free field (GFF). We refer to  \cite[Theorem~A]{NS97}, \cite[Corollary~2.2]{GOS}, \cite[Theorem~2.4]{BS11}  and \cite[Theorem~9]{NW18} for similar results.
For $f\in C_0^\infty (\bbR^d)$, we denote a rescaled version of this $f_n(x):=n^{-(1+d/2)}f(x/n)$ for $n\in\bbZ_+$. We will consider the field of heights acting as a linear functional on such a test function,
\begin{align} \label{eq:defPhiScaled}
\phi(f_n):=n^{-(1+d/2)} \int_{\bbR^d}f(x) \, \phi(\floor{nx})\,dx.
\end{align}

\begin{theorem}[Scaling to GFF]
\label{thm:equilibrium scaling}
Suppose $d\geq 3$ and let $V\in C^2(\bR)$ be even with $V''\geq c_->0$. Let $\mu$ be a stationary, ergodic $\phi$-Gibbs measure of mean $0$. Assume
\begin{align*} 
\mathbb{E}_{\mu}\Big[V''\big(\phi(y)-\phi(x)\big)^{p}\Big] \;< \;  \infty, \qquad \text{for any $\{x,y\}\in E_d$,}
\end{align*}
for some $p >1+\frac{d}{2}$. Then for any $\lambda \in\bbR$ and $f\in C_0^\infty(\bbR^d)$,
\begin{align*}
\lim_{n\to\infty}\bbE_\mu\Big[\exp(\lambda \phi(f_n))\big]=\exp\Big(\frac{\lambda^2}{2}\int_{\bbR^d}f(x)(Q^{-1}f)(x)\,dx\Big),
\end{align*}
where $Q^{-1}$ is the inverse of $Q f:=\sum_{i,j=1}^d q_{ij}\frac{\partial^2 f}{\partial x_i\,\partial x_j}$ and $q=\Si^2$ the covariance matrix from Theorem~\ref{thm:quenched_dynamic} with conductances given by \eqref{eq:defOmPhi}.
\end{theorem}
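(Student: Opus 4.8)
The plan is to establish the stated convergence of Laplace transforms directly, following the scheme of \cite{NS97, GOS} and feeding in the homogenisation result for the degenerate dynamic RCM, Theorem~\ref{thm:quenched_dynamic}(i), where \cite{andres2014invariance} used a uniformly elliptic input. First I would reduce everything to the convergence of a variance. Since $V''\geq c_-$, the measure $\mu$ is uniformly log-concave, so the linear functional $\phi(f_n)$ has Gaussian concentration (Brascamp--Lieb), and in particular $\Lambda_n(\lambda):=\log\mathbb{E}_\mu[e^{\lambda\phi(f_n)}]$ is finite and smooth in $\lambda$, uniformly in $n$; note also $\|f_n\|_{\ell^2(\bbZ^d)}^2=O(n^{-2})$ and $\|f_n\|_{\ell^1(\bbZ^d)}^2=O(n^{d-2})$. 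For $s\in\bbR$ let $\mu_s$ have $\mu$-density $e^{s\phi(f_n)}/\mathbb{E}_\mu[e^{s\phi(f_n)}]$; as the tilt is linear in the heights, $\mu_s$ is again a gradient Gibbs measure for a Langevin dynamics, for $H$ perturbed by a linear external field but with the same pair potential $V$. Differentiating, $\Lambda_n'(\lambda)=\mathbb{E}_{\mu_\lambda}[\phi(f_n)]$, $\Lambda_n''(\lambda)=\var_{\mu_\lambda}(\phi(f_n))$, and $\Lambda_n(0)=\Lambda_n'(0)=0$ since $\mu$ has mean zero, whence
\[
\Lambda_n(\lambda)=\lambda^2\int_0^1(1-\theta)\,\var_{\mu_{\theta\lambda}}\!\big(\phi(f_n)\big)\,d\theta .
\]
Thus it suffices to prove that $\var_{\mu_s}(\phi(f_n))\to\int_{\bbR^d}f(x)(Q^{-1}f)(x)\,dx$ as $n\to\infty$, uniformly for $s$ in compact sets.

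For the variance I would use the Helffer--Sj\"ostrand representation \cite{HS}, which writes $\var_{\mu_s}(\phi(f_n))=\int_0^\infty\mathbb{E}_{\mu_s}[\langle f_n,p^\omega(0,u,\cdot,\cdot)f_n\rangle]\,du$ in terms of the heat kernel of the dynamic VSRW in the environment $\omega_u(x,y)=V''(\phi_u(y)-\phi_u(x))$ driven by the $\mu_s$-stationary dynamics, where $\langle f_n,p^\omega(0,u,\cdot,\cdot)f_n\rangle=\sum_{x,y}f_n(x)p^\omega(0,u,x,y)f_n(y)$. Because the conductances are bounded below by $c_->0$, the hypothesis $\mathbb{E}_\mu[V''(\nabla\phi)^p]<\infty$ with $p>1+d/2$ is exactly the condition of Theorem~\ref{thm:quenched_dynamic} with $q=\infty$; moreover, using Brascamp--Lieb to bound the tilting density in $L^r(\mu)$ uniformly in $n$ and H\"older's inequality, the moment and ellipticity conditions persist under every $\mu_s$ (with an arbitrarily small loss of exponent). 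I would then split the time integral at $n^2T$. On $[0,n^2T]$, after rescaling time to $n^2t$, the integrand $n^2\langle f_n,p^\omega(0,n^2t,\cdot,\cdot)f_n\rangle$ is bounded, uniformly in $\omega$ and $t$, by $n^2\|f_n\|_{\ell^2}^2=O(1)$, since $p^\omega(0,u,\cdot,\cdot)$ is doubly stochastic (the counting measure being invariant), hence a contraction on $\ell^2(\bbZ^d)$; combining this deterministic bound with the quenched FCLT of Theorem~\ref{thm:quenched_dynamic}(i) and a Riemann-sum argument yields $n^2\langle f_n,p^\omega(0,n^2t,\cdot,\cdot)f_n\rangle\to\int f(x)(k_t*f)(x)\,dx$, after which dominated convergence justifies taking the limit under $\mathbb{E}_{\mu_s}$, so the bulk term converges to $\int_0^T\int f(x)(k_t*f)(x)\,dx\,dt$. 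On $(n^2T,\infty)$ I would use an on-diagonal heat kernel upper bound $p^\omega(0,u,x,y)\leq C^\omega u^{-d/2}$ for the degenerate dynamic RCM (\cite{MO16}), whose random constant has finite $\mu_s$-expectation by the moment transfer; this bounds the tail by $C\,T^{1-d/2}$ uniformly in $n$ and $s$, which, crucially because $d\geq 3$, tends to $0$ as $T\to\infty$. Letting $n\to\infty$ then $T\to\infty$ identifies the limit of $\var_{\mu_s}(\phi(f_n))$ as $\int f(x)(\Gamma*f)(x)\,dx$ with $\Gamma=\int_0^\infty k_t\,dt$, and a Fourier computation identifies $\Gamma$ with the appropriately normalised Green kernel of $Q$, giving the required limit $\int f\,(Q^{-1}f)\,dx$; by construction the estimates are uniform in $s$ on compacts.

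The main obstacle is precisely this uniformity, and more fundamentally the fact that the tilted measures $\mu_s$ are not invariant under spatial translations (the external field is localised on scale $n$), so Theorem~\ref{thm:quenched_dynamic} does not literally apply to the $\mu_s$-RCM. I would circumvent this with Girsanov's theorem: on the diffusive horizon $[0,n^2T]$ the path law of the $\mu_s$-dynamics is mutually absolutely continuous with that of the $\mu$-dynamics, with Radon--Nikodym density $\tfrac{e^{s\phi_0(f_n)}}{Z}\exp\!\big(\tfrac{s}{\sqrt2}\langle f_n,w_{n^2T}-w_0\rangle-\tfrac{s^2}{4}\|f_n\|_{\ell^2}^2\,n^2T\big)$ in terms of the driving noise $w$; because $\|f_n\|_{\ell^2}^2=O(n^{-2})$ and $\phi_0(f_n)$ concentrates, this density is bounded in $L^2(\mathbb{P})$ uniformly in $n$ and in $s$ on compacts and has $\mathbb{P}$-mean one, so the $\mathbb{P}$-a.s.\ quenched FCLT and the convergence of the (deterministically bounded) bulk integrand transfer from $\mu$ to all $\mu_s$ with uniform control. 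The tail, which Girsanov does not cover up to time $\infty$, is handled uniformly instead by the a priori bound $\var_{\mu_s}(\phi(f_n))\leq C$ (Brascamp--Lieb, uniform in $s,n$) combined with the heat kernel decay above. An alternative route, closer to \cite{GOS}, would be to prove the full dynamic fluctuation theorem for $(\phi_t)_{t\geq0}$ under $\mu$ — identifying the limit of the rescaled fluctuation field as the infinite-dimensional Ornstein--Uhlenbeck process with homogenised diffusivity $q=\Sigma^2$ via the corrector and a martingale argument — and read off Theorem~\ref{thm:equilibrium scaling} as its equal-time marginal; this again uses only Theorem~\ref{thm:quenched_dynamic} for the homogenisation, together with the Brascamp--Lieb tightness bounds and the \cite{MO16} heat kernel estimates that take over from the uniformly elliptic ingredients of \cite{andres2014invariance}.
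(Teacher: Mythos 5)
Your proof takes a genuinely different route from the paper's, and while it can be made to work, the paper's argument is considerably lighter.

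The paper's proof differentiates $G_n(\lambda):=\bbE_\mu[e^{\lambda\phi(f_n)}]$ \emph{once}: since $\mu$ has mean $0$, $G_n'(\lambda)=\cov_\mu(\phi(f_n), e^{\lambda\phi(f_n)})$, and the Helffer--Sj\"ostrand representation \eqref{eq:general HS} with $F(\phi)=\phi(f_n)$ and $G(\phi)=e^{\lambda\phi(f_n)}$ expresses this as $\lambda\int_0^\infty\bbE_\mu\big[A_n(\om,t)\,e^{\lambda\phi_{n^2t}(f_n)}\big]dt$ where $A_n(\om,t)=n^{-d}\sum_x f(x/n)E_x^\om[f(X_{n^2t}/n)]$. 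Crucially the measure of integration stays the original, shift-invariant $\mu$; stationarity gives $\bbE_\mu[e^{\lambda\phi_{n^2t}(f_n)}]=G_n(\lambda)$ for every $t$, so once one proves $\int_0^\infty A_n(\om,t)\,dt\to\int f\,(Q^{-1}f)$ in $L^2(\mu)$ (via the $[0,T]$/$[T,\infty)$ decomposition you also describe, using the annealed FCLT from Theorem~\ref{thm:quenched_dynamic}(i) on the bulk and the $t^{-d/2}$ decay of Lemma~\ref{lemma:on-diagonal estimate} on the tail), Cauchy--Schwarz with the Brascamp--Lieb bound on $\|e^{\lambda\phi_{n^2t}(f_n)}\|_{L^2(\mu)}$ yields the first-order ODE $G_n'(\lambda)=\lambda G_n(\lambda)\int f\,(Q^{-1}f)\,dx+\lambda\,o(1)$, which integrates to the claim. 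You instead expand $\Lambda_n=\log G_n$ to second order and land on $\var_{\mu_s}(\phi(f_n))$ for the tilted measures $\mu_s$; you then (correctly) flag that $\mu_s$ is not shift-invariant so the RCM homogenisation theorems do not apply to it directly, and propose a Girsanov argument on the diffusive horizon together with $L^2$-boundedness and mean-one of the Radon--Nikodym density to transfer the $\mu$-results. That patch is workable (the deterministic bound on the bulk integrand upgrades the $\prob$-a.s.\ convergence to $L^2(\mu)$, and then $L^2$-boundedness plus mean-one of the density suffices, since the density itself does \emph{not} converge to $1$), but it introduces machinery the paper avoids entirely. The two routes share the same homogenisation input — convergence of the rescaled Green operator to $Q^{-1}$, which both you and the paper reduce to \cite{BS11, NW18} — but the paper's choice of differentiating once rather than twice is precisely what keeps the argument inside the stationary measure $\mu$ and sidesteps the tilting problem you had to work around.
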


\begin{remark}
(i) Note that in \eqref{eq:defPhiScaled} the height variables are scaled by $n^{-(1+d/2)}$ while the conventional scaling for a central limit theorem is $n^{-d/2}$. This stronger scaling is required due to strong correlations of the  height variables (cf.\ \cite{NS97, BS11}), in contrast to the scaling limit of the gradient field, which has weaker correlations and only requires the standard scaling $n^{-d/2}$ (cf.\ \cite{GOS, NW18}).

(ii) Having established Theorem~\ref{thm:cov_lim}, a natural next goal is to study the equilibrium space-time fluctuation of the interface and to derive a stronger, time-dynamic version of Theorem~\ref{thm:equilibrium scaling}. See \cite[Theorem~2.1]{GOS} for the case where the potential additionally has second derivative uniformly bounded above. However, this requires extending Theorem~\ref{thm:cov_lim} from a pointwise result to a scaling limit for the covariances of the $\phi$-field integrated against test functions, cf.\ \cite[Proposition 5.1]{GOS}.
In order to control the arising remainder term, we believe such an extension would require upper off-diagonal heat kernel estimates for the dynamic RCM in a degenerate, ergodic environment, which are not available at present, as discussed at the end of Section~\ref{sec:introdyn}.

\end{remark}

Finally, we provide a verification of the moment assumptions in Theorems~\ref{thm:cov_lim} and \ref{thm:equilibrium scaling} for a class of potentials $V$ with $V''$ having polynomial growth.

\begin{proposition}
\label{prop:moments}
Suppose $d\geq 3$ and let the potential $V\in C^2(\bbR)$ be even, satisfying $V''\geq c_->0$. Let $\mu$ be any ergodic, shift-invariant, stationary $\phi$-Gibbs measure. Then for all $p>0$, $\bbE_{\mu}\big[\abs{\phi_t(x)}^p\big]<\infty$ for any $x\in\bbZ^d$ and $t\geq0$.
\end{proposition}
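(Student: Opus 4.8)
The plan is to establish that, under $\mu$, the height variable at the origin has Gaussian tails; finiteness of all moments is then immediate. The only structural inputs are the one-sided bound $V''\ge c_->0$ and the transience of the simple random walk on $\bZ^d$, valid since $d\ge 3$. First I would reduce to a single height variable: because $\mu$ is stationary for the dynamics \eqref{eq:SDE_intro} and shift-invariant, $\phi_t(x)$ has the same law under $\mu$ as $\phi_0(0)$ for every $t\ge 0$ and $x\in\bZ^d$, so it suffices to show $\E_\mu\big[\lvert\phi_0(0)\rvert^p\big]<\infty$ for all $p>0$.

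The main tool is the \textbf{Brascamp--Lieb inequality}, which I would apply in the conditional form supplied by the DLR equations. Fix $\La_N:=[-N,N]^d\cap\bZ^d$ and let $\cF_{\La_N^c}$ be the $\sigma$-field generated by the heights outside $\La_N$. The conditional law of $(\phi(z))_{z\in\La_N}$ given $\cF_{\La_N^c}$ has Lebesgue density proportional to $\exp\!\big(-\tfrac12\sum V(\phi(x)-\phi(y))\big)$, the sum over edges meeting $\La_N$; a short diagonal-dominance computation using $V''\ge c_-$ shows that the Hessian of its negative logarithm in the variables $(\phi(z))_{z\in\La_N}$ dominates $c_-\,(-\De^{\mathrm{Dir}}_{\La_N})$ as a quadratic form. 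Brascamp--Lieb, applied with the convex function $s\mapsto e^{ts}$, then gives for all $t\in\bR$ and all $N$
$$
\E_\mu\!\Big[e^{\,t(\phi(0)-m_N)}\,\Big|\,\cF_{\La_N^c}\Big]\;\le\;\exp\!\Big(\tfrac{t^2}{2c_-}\,G^{\mathrm{Dir}}_{\La_N}(0,0)\Big)\;\le\;\exp\!\Big(\tfrac{t^2}{2c_-}\,G(0,0)\Big),
$$
where $m_N$ is the conditional mean of $\phi(0)$, $G^{\mathrm{Dir}}_{\La_N}$ is the Dirichlet Green function of $\La_N$, and $G(0,0)=\sum_{n\ge 0}\bP(S_n=0)<\infty$ is the Green function of the simple random walk, finite precisely because $d\ge 3$.

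It remains to remove the conditional centring $m_N$. Since the $\sigma$-fields $\cF_{\La_N^c}$ decrease in $N$, $m_N$ converges almost surely, as $N\to\infty$, to $\E_\mu[\phi(0)\mid\cT]$ with $\cT=\bigcap_N\cF_{\La_N^c}$ the tail $\sigma$-field; as an ergodic, shift-invariant Gibbs measure for this model is extremal and hence tail-trivial, this limit is the constant $h:=\E_\mu[\phi(0)]$. Fatou's lemma applied to the nonnegative variables $e^{t(\phi(0)-m_N)}\to e^{t(\phi(0)-h)}$ then yields $\E_\mu\big[e^{t(\phi(0)-h)}\big]\le\exp\!\big(t^2G(0,0)/(2c_-)\big)$ for every $t\in\bR$. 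Hence $\phi_0(0)-h$ is sub-Gaussian, so $\E_\mu[\lvert\phi_0(0)\rvert^p]<\infty$ for all $p>0$, and the proposition follows from the reduction in the first paragraph.

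The step I expect to require the most care is the identification $\lim_N m_N=h$. The conditional Brascamp--Lieb bound controls $\phi(0)$ only around $m_N$, and the sole elementary handle on $m_N$ --- an FKG / maximum-principle argument --- merely sandwiches it between the boundary heights on $\partial\La_N$, which produces a circular estimate carrying no a priori moment information. One genuinely needs that the ergodic, shift-invariant Gibbs measure $\mu$ is extremal (equivalently tail-trivial); for the $\nabla\phi$ model with convex $V$ this is part of the classification of gradient Gibbs measures, and it can alternatively be secured by invoking uniqueness of the ergodic, shift-invariant $\phi$-Gibbs measure of a given mean and identifying $\mu$ with the weak limit of finite-volume Gibbs measures with constant boundary data used in the construction behind Theorem~\ref{thm:cov_lim}, after which the bound above passes to the limit by Fatou. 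By contrast, the possible unboundedness of $V''$ from above is harmless here: the Brascamp--Lieb inequality uses only the lower Hessian bound together with integrability of $e^{-V}$, both guaranteed by $V''\ge c_->0$.
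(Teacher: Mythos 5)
Your proposal takes a genuinely different route from the paper and has a real gap at the step you yourself flag as delicate.

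The paper does not use the conditional Brascamp--Lieb inequality with the random centring $m_N$. Instead it compares, via the SDE coupling estimate of Lemma~\ref{lemma:sde estimate}, the infinite-volume process $\phi_t$ started from $\mu$ with the finite-volume process $\phi_t^{L_n,0}$ with \emph{zero} boundary data, choosing the box radius $n=n(t)$ to grow linearly in $t$ so that the comparison error is small; it then applies Brascamp--Lieb to the finite-volume Gibbs measure $\mu_{L_n}^0$, whose mean is identically zero because $V$ is even. The whole point of that architecture is to avoid any appeal to the tail $\sigma$-field of $\mu$: the random centring never appears, and only Corollary~\ref{cor:variance decay}(ii) is needed as a priori input.

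Your route, by contrast, hinges on the identification $\lim_N m_N = h$ almost surely, i.e.\ on tail-triviality of $\mu$. This is a genuine gap, for two reasons. First, shift-ergodicity of $\mu$ is equivalent to extremality in the simplex of \emph{shift-invariant} Gibbs measures, whereas tail-triviality is equivalent to extremality in the full simplex of Gibbs measures; the former does not imply the latter in general. Second, the structural results you invoke as a fallback do not close the gap: the Funaki--Spohn classification concerns $\nabla\phi$-Gibbs measures, whose tail $\sigma$-field is generated by gradients and not by heights, so extremality of the gradient marginal does not deliver tail-triviality of the height field; and neither uniqueness nor tail-triviality of ergodic, shift-invariant $\phi$-Gibbs measures of a given mean is established in the paper (Theorem~\ref{thm:phi gibbs existence} only provides existence), nor is it a standard fact for potentials with unbounded $V''$. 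Without tail-triviality, the reverse-martingale limit $m_\infty=\E_\mu[\phi(0)\mid\cT]$ is a nondegenerate random variable for which only second moments are a priori available (Corollary~\ref{cor:variance decay}(ii)), and the resulting estimate
\begin{align*}
\E_\mu\big[|\phi(0)|^p\big]\;\leq\; C_p \;+\; 2^{p-1}\,\E_\mu\big[|m_\infty|^p\big],
\end{align*}
combined with Jensen's inequality $\E_\mu\big[|m_\infty|^p\big]\leq\E_\mu\big[|\phi(0)|^p\big]$, does not close. You would need to either establish tail-triviality separately (a nontrivial addition) or switch to an argument, like the paper's dynamical comparison with a deterministic boundary, that never introduces the random centring. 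The reduction to $\E_\mu[|\phi_0(0)|^p]$ by stationarity, the conditional Brascamp--Lieb step with Green-function constant $G(0,0)/c_-$, and the observation that only the lower bound $V''\geq c_-$ and integrability of $e^{-V}$ are needed, are all correct and match the paper's use of Proposition~\ref{prop:brascamp lieb}.
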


\begin{example}
The above proposition shows that Theorems~\ref{thm:cov_lim} and~\ref{thm:equilibrium scaling} apply to polynomial potentials of  interest, for example the anharmonic crystal potential $V(x)=x^2+\lambda x^4$ ($\lambda>0$), for which the spatial correlation decay is discussed in \cite{bricmont1981lattice}.
\end{example}

\subsection{Notation}
We finally introduce some further notation used in the paper. We write $c$ to
denote a positive, finite constant which may change on each appearance. Constants denoted by $c_i$ will remain the same.  
We endow the graph $G=(\bbZ^d,E_d)$ with the natural graph distance $d$, i.e.\ $d(x,y)$ is the minimal length of a path between $x$ and $y$. Denote $B(x,r):=\{y\in\mathbb{Z}^d : d(x,y)\leq r\}$ the closed ball with centre $x$ and radius $r$. For a non-empty, finite, connected set $A\subseteq \bZ^d$,  we denote by $\partial A
:= \{x \in A: d(x,y)=1\,\text{ for some } y \in A^c \}$ the inner boundary and by $\partial^+ A
:= \{x \in A^c: d(x,y)=1\,\text{ for some } y \in A \}$ the outer boundary of $A$. We write $\overline{A}=A\cup \partial^+ A$ for the closure of $A$.
The graph is given the counting measure, i.e.\ the measure of $A\subseteq \mathbb{Z}^d$ is the number $\vert A\vert$ of elements in $A$.
For $f\!: \bbZ^d \to \bbR$ we define the operator $\nabla$ by
\begin{align*}
  \nabla f\!: E_d \to \bbR,
  \qquad
  E_d \ni e \;\longmapsto\; \nabla f(e) \;:=\; f(e^+) - f(e^-).
\end{align*}
We denote inner products as follows; for $f,\,g:\bZ^d\to\R$ and a weighting function $\phi:\Z^d\to \R$,
$\langle f,g\rangle_{\ell^2(\Z^d,\phi)}:=\sum_{x\in\Z^d}f(x)g(x)\phi(x)$
and if $f,\,g:\E_d\to\R$,  $\langle f, g\rangle_{\ell^2(E_d)}:=\sum_{e\in E_d}f(e)g(e).$ The corresponding weighted norm is denoted $\norm{f}_{l^2(\bbZ^d,\phi)}$.
The Dirichlet form associated with the operator $\cL_\theta^\omega$ is
$$\mathcal{E}^\omega(f,g):=\langle f,-\mathcal{L}_\theta^\omega g\rangle_{\ell^2(\Z^d,\theta)}\equiv\langle \nabla f,\omega\nabla g\rangle_{\ell^2(E_d)},$$
acting on bounded $f,\, g:\Z^d\rightarrow\R$. We will use the shorthand $\cE^\om(f):=\cE^\om(f,f)$. For non-empty, finite $B\subseteq \mathbb{Z}^d$ and $p\in(0,\infty)$,
space-averaged $\ell^p$-norms on functions $f:B\rightarrow\mathbb{R}$ will be used, 
$$\| f\|_{p,B}:=\bigg(\frac{1}{\vert B\vert}\sum_{x\in B}\vert f(x)\vert ^p\bigg)^{1/p}\quad \text{and}\quad \| f\|_{\infty,B}:=\max_{x\in B}\vert f(x)\vert.$$
Now let $Q=I\times B$ where $I\subseteq \R$ is compact. Let $u:Q\to\R$ and denote $u_t:B\to\R$, $u_t(\cdot):=u(t,\cdot)$ for $t\in I$. For $p'\in(0,\infty)$, we define the space-time  averaged norms  
$$\norm{u}_{p,p',Q}:=\left(\frac{1}{\abs{I}}\int_I\norm{u_t}_{p,B}^{p^\prime}dt\right)^{1/p}\quad\text{and}\quad\norm{u}_{p,\infty,Q}:=\max_{t\in I}\norm{u_t}_{p,B}.$$
Furthermore, we will work with two varieties of weighted norms
\begin{align*}
\norm{f}_{p,B,\phi}&:=\bigg(\frac{1}{\phi(B)}\sum_{x\in B}\abs{f(x)}^p \, \phi(x)\bigg)^{\! 1/p}, \quad \onorm{f}_{p,B,\phi}:=\bigg(\frac{1}{\abs{B}}\sum_{x\in B}\abs{f(x)}^p \, \phi(x)\bigg)^{\!1/p},
\\
\norm{u}_{p,p^\prime,Q,\phi}&:=\left(\frac{1}{\abs{I}}\int_I\norm{u_t}_{p,B,\phi}^{p^\prime}\,dt\right)^{1/p'}, \quad
\norm{u}_{p,\infty,Q,\phi}:=\max_{t\in I}\norm{u_t}_{p,B,\phi},\\
\onorm{u}_{p,p',Q,\phi}&:=\left(\frac{1}{\abs{I}}\int_I\onorm{u_t}_{p,B,\phi}^{p'}\,dt\right)^{1/p'},
\end{align*}
for a weighting function $\phi:B\rightarrow(0,\infty)$, where $\phi(B):=\sum_{x\in B}\phi(x)$.

\subsection{Structure of the Paper}
Section~\ref{section:general speed measure} is devoted to the proof of the quenched local limit theorem for general speed measures - Theorem~\ref{thm:qllt general speed}.  The annealed local limit theorems for the static and dynamic RCM,  Theorem~\ref{thm:annealed llt general speed} and Theorem~\ref{thm:annealed llt dyn}, are shown in Sections~\ref{section:annealed general speed} and \ref{section:dynamic}, respectively. Finally, the application to the Ginzburg-Landau interface model is discussed in Section~\ref{section:interface}.

\section{Local Limit Theorem for the Static RCM under General Speed Measure}
\label{section:general speed measure}
For the proof of Theorem~\ref{thm:qllt general speed} we shall follow a method first developed in \cite{croydon2008local} and \cite{barlow2009parabolic}, for which the main ingredients are the QFCLT in Theorem~\ref{thm:qfclt general speed} and a H\"older regularity estimate for the heat kernel. To derive the latter we adapt the techniques employed in \cite{ACS20} to the general speed measure case.
The key result in Theorem~\ref{thm:oscillations} is an oscillation inequality for solutions of $\partial_t u - \cL^\om_\theta u=0$, such as the heat kernel,  which  implies the required H\"older regularity by a simple iteration argument (see Proposition \ref{prop:cont_hk} below). For the proof of the oscillation inequality, we first derive a maximal inequality (see Theorem \ref{thm:maximal})  using a De Giorgi iteration scheme in Section~\ref{sec:max_ineq}. Then we bound  the sizes of the level sets of a solution $u$ in terms of $(-\ln u)_+$ (see Lemmas~\ref{lemma:u bound space} and \ref{lemma:u bound cylinder} below). These two steps are sufficient to prove the oscillation inequality following an idea in \cite{wu2006elliptic}, see Section~\ref{sec:osc}. To begin with, we collect the required functional inequalities in Section~\ref{sec:func_ineq}.

\subsection{Sobolev and Weighted Local Poincar\'e Inequalities} \label{sec:func_ineq}

One auxiliary result which will prove useful is a modification of the Sobolev inequality derived in \cite{invariance}.

\begin{proposition} \label{prop:sobolev}
Let $d\geq 2$ and $B\subset \bZ^d$ be finite and connected. For any $q\in [1,\infty]$ there exists $c_1= c_1(d,q)$ such that for  any $v:\bbZ^d\to\bbR$ with $v\equiv 0$ on $\partial B$,
\begin{align*}
\norm{v^2}_{\rho,B} \; \leq \;  c_1 \, |B|^{2/d} \, \norm{\nu^\omega}_{q,B} \,  \norm{\theta^\omega}_{1,B} \, \frac{\cE^\omega(v)}{\theta^\omega(B)},
\end{align*}
where $\rho := qd / (q(d-2)+d)$.
\end{proposition}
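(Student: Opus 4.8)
The plan is to derive this weighted Sobolev inequality from a known (unweighted or $\ell^1$-type) Sobolev inequality on $\bbZ^d$ by inserting the speed measure $\theta^\omega$ and the conductances $\omega$ via H\"older's inequality, following the strategy of \cite{invariance}. First I would recall the classical discrete Sobolev inequality on $\bbZ^d$: for $v$ supported in a finite connected $B$ (with $v\equiv 0$ on $\partial B$), one has $\norm{v^2}_{d/(d-2),B} \leq c\,|B|^{2/d}\,\norm{\nabla v}_{2,E_d}^2/|B|$, or equivalently $\sum_x |v(x)|^{2d/(d-2)})^{(d-2)/d} \leq c\, (\sum_e |\nabla v(e)|^2)$ after tracking the normalising volume factors. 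The key point is that the right-hand side here involves $\sum_e |\nabla v(e)|^2$, whereas the target involves $\cE^\omega(v) = \langle \nabla v, \omega \nabla v\rangle_{\ell^2(E_d)} = \sum_e \omega(e)\,|\nabla v(e)|^2$.

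The main steps, in order: (1) Bound $\sum_e |\nabla v(e)|^2$ by $\cE^\omega(v)$ with a factor involving $\nu^\omega$. Since $|\nabla v(e)|^2 = \omega(e)^{-1}\cdot\omega(e)|\nabla v(e)|^2$, a H\"older step pairing the edge-sum against $\nu^\omega$ (which is the vertex sum of $\omega^{-1}$ over incident edges) produces, after applying H\"older with exponent $q$ on the vertices, a factor $\norm{\nu^\omega}_{q,B}$ times $\cE^\omega(v)$ raised to the appropriate power — this is exactly the mechanism in \cite[Prop.~?]{invariance} and dictates the exponent $\rho = qd/(q(d-2)+d)$, which is the harmonic-type combination of $d/(d-2)$ and the conjugate index of $q$. (2) Replace the unweighted $\ell^\rho$ norm of $v^2$ on the left by the $\theta^\omega$-weighted average norm $\norm{v^2}_{\rho,B} = (\tfrac{1}{|B|}\sum_x |v(x)|^{2\rho})^{1/\rho}$ — wait, here $\norm{\cdot}_{\rho,B}$ is the space-averaged norm without weight, so actually the left side is already unweighted; the weight $\theta^\omega$ enters only on the right through $\norm{\theta^\omega}_{1,B}$ and $\theta^\omega(B)$ in the denominator. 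So step (2) is really: reconcile normalisations, writing $\cE^\omega(v)/\theta^\omega(B)$ and noting $\theta^\omega(B) = |B|\,\norm{\theta^\omega}_{1,B}$, so that $|B|^{2/d}\norm{\nu^\omega}_{q,B}\norm{\theta^\omega}_{1,B}\,\cE^\omega(v)/\theta^\omega(B) = |B|^{2/d-1}\norm{\nu^\omega}_{q,B}\,\cE^\omega(v)$, which is precisely what the classical Sobolev inequality combined with step (1) yields. (3) Track the volume powers carefully to confirm the exponent $2/d$ and the averaging constants match, and collect the constant $c_1(d,q)$.

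The main obstacle I expect is the bookkeeping in step (1): getting the H\"older exponents and the resulting Sobolev exponent $\rho = qd/(q(d-2)+d)$ exactly right, including the endpoint cases $q=1$ (giving $\rho = d/d = 1$, i.e.\ $\norm{v^2}_{1,B}$) and $q=\infty$ (giving $\rho = d/(d-2)$, the classical exponent), and making sure the space-averaged normalisations $\tfrac{1}{|B|}$ are consistent on both sides so that the clean factor $|B|^{2/d}$ emerges rather than some other power. A secondary subtlety is that the classical Sobolev inequality on $\bbZ^d$ with the stated constant depending only on $d$ (uniformly over all finite connected $B$) must be invoked in the correct form — this is where the connectedness of $B$ and the boundary condition $v\equiv 0$ on $\partial B$ are used — and one should cite the precise version from \cite{invariance} rather than reprove it.
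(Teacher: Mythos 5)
Your approach is essentially the paper's: the paper simply cites \cite[eq.\ (28)]{invariance} for $\norm{v^2}_{\rho,B}\leq c_1\,|B|^{2/d}\,\norm{\nu^\omega}_{q,B}\,\cE^\omega(v)/|B|$ and then multiplies and divides by $\theta^\omega(B)=|B|\,\norm{\theta^\omega}_{1,B}$, which is precisely your step (2); your step (1) amounts to re-deriving the cited inequality rather than invoking it. One small slip in your endpoint sanity check: at $q=1$ the exponent is $\rho=d/(2d-2)$, not $1$ (they coincide only when $d=2$), but this does not affect the argument.
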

\begin{proof}
By \cite[equation (28)]{invariance},
\begin{align*}
\norm{ v^2}_{\rho,B} \; \leq \; c_1 \, |B|^{2/d} \, \norm{\nu^\omega}_{q,B}\ \, \frac{\cE^\omega(v)}{\abs{B}},
\end{align*}
and since $\norm{\theta^\omega}_{1,B} = \theta^\om(B)/ \abs{B}$ this gives the claim.
\end{proof}

Another input is a weighted Poincar\'e inequality which will be applied in deriving the aforementioned oscillations bound. We denote the weighted average of any $u:\bbZ^d \rightarrow \bbR$ over a finite subset $B\subset \bbZ^d$ with respect to some $\phi : \bbZ^d \to \bbR $,
$$(u)_{B,\phi}\; := \;\frac{1}{\phi(B)} \sum_{x\in B} u(x) \, \phi(x).$$
We shall also write $(u)_B:=(u)_{B,1}$ when $\phi\equiv 1$.

\begin{proposition}
\label{prop:poincare}
Let $d\geq 2$. There exists $c_{2}=c_{2}(d)<\infty$ such that for any ball $B(n):=B(x_0,n)$ with $x_0\in \bbZ^d$ and $n\geq 1$, any non-empty $\mathcal{N}\subseteq B$ and $u:\bbZ^d\rightarrow\R$,
 \begin{align}\label{eq:local:PI:weighted}
    &\Norm{u - (u)_{B(n), \theta}}{1, B(n), \theta}^2
   \; \leq\;
    c_{2}\,  \cA^\om_1(n) \;
    \frac{n^2}{|B(n)|}
    \sum_{\substack{x, y \in B(n) \\ x \sim y}} \mspace{-8mu}
    \om(x, y)\, \big( u(x) - u(y) \big)^2,
  \end{align}
and
\begin{align} \label{eq:local:PI:weighted_sets} 
& \norm{u-\left(u\right)_{\mathcal{N},\theta}}_{1,B(n),\theta}^{2} \nonumber\\[.5ex]
    &\mspace{36mu} \leq \;  c_2 \,   \cA^\om_1(n) \, \bigg(1+\frac{\theta^\om(B(n))}{\theta^\om(\mathcal{N})}\bigg)^{\!2}  
 \frac {n^2} {\abs{B(n)}} \sum_{\substack{x,y\in B(n)\\ x\sim y}}\omega(x,y) \, \big(u(x)-u(y)\big)^2
\end{align}
with $\cA_1^\om(n) :=  \norm{1/\theta^\om}_{1,B(n)}^{2} \, \norm{\theta^\om}_{r,B(n)}^2 \, \norm{\nu^\omega}_{q,B(n)}$.
\end{proposition}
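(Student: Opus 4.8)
The plan is to derive the second inequality \eqref{eq:local:PI:weighted_sets} from the first \eqref{eq:local:PI:weighted}, and to obtain \eqref{eq:local:PI:weighted} by feeding the speed measure $\theta^\om$ into a chaining argument along lattice paths. For the reduction, write $B:=B(n)$ and $m:=(u)_{B,\theta}$. Since $(u)_{\cN,\theta}-m=(u-m)_{\cN,\theta}$ and $\cN\subseteq B$, I would bound $|(u)_{\cN,\theta}-m|\le(|u-m|)_{\cN,\theta}\le\tfrac{\theta^\om(B)}{\theta^\om(\cN)}\Norm{u-m}{1,B,\theta}$, so that the triangle inequality for $\Norm{\cdot}{1,B,\theta}$ gives $\Norm{u-(u)_{\cN,\theta}}{1,B,\theta}\le\bigl(1+\theta^\om(B)/\theta^\om(\cN)\bigr)\Norm{u-m}{1,B,\theta}$; squaring and invoking \eqref{eq:local:PI:weighted} then yields \eqref{eq:local:PI:weighted_sets} with the asserted constant.

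For \eqref{eq:local:PI:weighted} itself I would first replace the $\theta^\om$-weighted mean by the arithmetic mean: from $|(u)_B-(u)_{B,\theta}|\le(|u-(u)_B|)_{B,\theta}$ one gets $\Norm{u-(u)_{B,\theta}}{1,B,\theta}\le\tfrac{2}{\theta^\om(B)}\sum_{x\in B}|u(x)-(u)_B|\,\theta^\om(x)$, so it remains to estimate the right-hand side. Next I would use the path representation $|u(x)-(u)_B|\le\tfrac{1}{|B|}\sum_{y\in B}\sum_{e\in\gamma_{xy}}|\nabla u(e)|$ along a system of lattice paths $\gamma_{xy}\subseteq B$ of length at most $c(d)\,n$ satisfying the congestion bound $\#\{(x,y)\in B\times B:e\in\gamma_{xy}\}\le c(d)\,n\,|B|$ for every edge $e$; substituting and interchanging the order of summation reduces the task to bounding $\tfrac{1}{|B|}\sum_{e}|\nabla u(e)|\,\Psi_e$, where $\Psi_e:=\sum_{(x,y):e\in\gamma_{xy}}\theta^\om(x)$. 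Writing $|\nabla u(e)|=\omega(e)^{1/2}|\nabla u(e)|\cdot\omega(e)^{-1/2}$ and applying H\"older's inequality in the edge variable --- to extract the term $\sum_{x\sim y}\omega(x,y)(u(x)-u(y))^2$, to convert $\sum_e\omega(e)^{-s}$ into $\|\nu^\om\|_{q,B}$ via $\sum_{y\sim x}\omega(x,y)^{-s}\le\nu^\om(x)^s$ (valid for $s\ge1$, used with $s\le q$), and to bound the residual power of the $\theta^\om$-congestion $\Psi_e$ in terms of $\sum_{x\in B}\theta^\om(x)^r$ using the congestion bound --- together with $|B|\,\theta^\om(B)^{-1}=\|\theta^\om\|_{1,B}^{-1}\le\|1/\theta^\om\|_{1,B}$ (Cauchy--Schwarz), should produce \eqref{eq:local:PI:weighted} after squaring, with $\cA^\om_1(n)$ as stated.

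The main obstacle is the bookkeeping in this last step: the H\"older exponents must be chosen so that every power of $|B(n)|$ cancels and only the factor $n^2$ survives on the right-hand side. This rules out the crude bound $\Psi_e\le(\max_B\theta^\om)\cdot c(d)\,n\,|B|$, which would leave a spurious factor $|B(n)|^{c/r}$, and forces one to exploit the congestion bound above together with a balanced choice of exponents; this exponent-matching, which is where the precise form $\|1/\theta^\om\|_{1,B}^2\,\|\theta^\om\|_{r,B}^2\,\|\nu^\om\|_{q,B}$ of the constant originates, is the only non-routine part of the argument, the path construction and the elementary inequalities above being standard and following the scheme of \cite{invariance, ACS20}.
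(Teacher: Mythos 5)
Your reduction of \eqref{eq:local:PI:weighted_sets} to \eqref{eq:local:PI:weighted} via the triangle inequality and the bound $|(u)_{\cN,\theta}-(u)_{B,\theta}|\le \theta^\om(B)/\theta^\om(\cN)\,\Norm{u-(u)_{B,\theta}}{1,B,\theta}$ is correct and is exactly what the paper does. The proof of \eqref{eq:local:PI:weighted} itself, however, is where the two approaches diverge and where your proposal has a real gap.

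The paper does \emph{not} argue by chaining along a hand-built path system. Instead it invokes, as a black box, a discrete Sobolev--Poincar\'e inequality (Coulhon's theorem, equation \eqref{eq:Sobolev:Sa}), which for $\al\in[1,2)$ bounds $\inf_a\Norm{u-a}{d\al/(d-\al),B(n)}$ by $n$ times the $\ell^\al$-average of $|\nabla u|$. The crucial feature is the \emph{exponent gain} $\al\mapsto d\al/(d-\al)$. Then \cite{DK13} replaces $(u)_{B,\theta}$ by the optimal constant, H\"older with exponents $r,r_*$ pulls the $\theta^\om$-weight out as $\|\theta^\om\|_{r,B}/\|\theta^\om\|_{1,B}$, and one applies the Sobolev--Poincar\'e inequality with $\al$ chosen so that $d\al/(d-\al)=2$ (if $r\ge2$) or $=r_*$ (if $r<2$). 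With that choice, $\al/(2-\al)=d/2$ (resp.\ a value $\le q$), and \eqref{eq:PI:rhs} delivers the factor $\|\nu^\om\|_{q,B}$.

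Your telescoping-along-paths argument produces no such exponent gain: Jensen on the path sum gives at best $\inf_a\|u-a\|_{\beta,B}\lesssim n\,(\textrm{$\beta$-gradient average})$ for the \emph{same} exponent $\beta$ on both sides. After factoring out $\theta^\om$ via H\"older with exponents $r,r_*$ (which is forced on you, since keeping $\theta^\om$ inside leads to a weighted congestion $\Psi_e$ that is controlled only by $\max_B\theta^\om$ and thus the spurious $|B|^{c/r}$ you already flag), you need precisely $\inf_a\|u-a\|_{r_*,B}^2\lesssim \|\nu^\om\|_{q,B}\,n^2|B|^{-1}\cE^\om(u)$. Chaining with the uniform congestion bound and H\"older on edges yields this with $\|\nu^\om\|_{r_*/(2-r_*),B}=\|\nu^\om\|_{r/(r-2),B}$ in place of $\|\nu^\om\|_{q,B}$ --- requiring $q\ge r/(r-2)$ (and failing outright when $r\le2$). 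But \eqref{eq:pqr_eqn} does not imply $q\ge r/(r-2)$: for instance $d=2$, $p=4$, $q=r=5/2$ satisfies \eqref{eq:pqr_eqn} while $r/(r-2)=5>q$. So the exponent matching you defer to ``bookkeeping'' is not a routine matter of tuning H\"older parameters; with the tools you list (uniform congestion, edgewise H\"older) it cannot be done. One would need either the discrete isoperimetric/co-area route to the Sobolev--Poincar\'e inequality with gain, or a genuinely spread-out path system giving a Riesz-potential bound $|u(x)-(u)_B|\lesssim\sum_z|\nabla u(z)|\,d(x,z)^{1-d}$ followed by discrete fractional integration --- neither of which appears in your sketch. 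This missing Sobolev gain is the gap.
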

\begin{proof}
By a discrete version of the co-area formula the classical local $\ell^1$-Poincar\'e inequality on $\bbZ^d$ can be easily established, see e.g.\ \cite[Lemma~3.3.3]{SC97}, which also implies an $\ell^{\al}$-Poincar\'e inequality for any $\al \in [1, d)$.   Note that, by \cite[Th\'eor\`eme~4.1]{Cou96}, the volume regularity of balls and the local $\ell^{\al}$-Poincar\'e inequality on $\bbZ^d$ implies that for $d \geq 2$ and any $u\!: \bbZ^d \to \bbR$,
\begin{align}\label{eq:Sobolev:Sa}
  \inf_{a \in \bbR}
  \Norm{u - a}{\frac{d \al}{d-\al}, B(n)}
  \;\leq\;
  c \, n\,
  \bigg(
    \frac{1}{|B(n)|}\,
    \sum_{\substack{x, y \in B(n) \\ x \sim y}} \mspace{-8mu}
    \big| u(x) - u(y) \big|^\al
  \bigg)^{\!\!1/\al}.
\end{align}
Further, for any $\al \in [1, 2)$, H\"older's inequality yields
  \begin{align}\label{eq:PI:rhs}
    &\bigg(
      \frac{1}{|B(n)|} \!\!
      \sum_{\substack{x, y \in B(n)\\ x \sim y}} \mspace{-10mu} \big|u(x) - u(y)\big|^{\al}
    \bigg)^{\!\!\frac 1 \al}
    \leq
    \Norm{\nu^{\om}}{\frac{\al}{2 - \al}}^{1 / 2}
    \bigg(
      \frac{1}{|B(n)|} \!
      \sum_{\substack{x, y \in B(n)\\ x \sim y}} \mspace{-10mu}
        \om(x, y)  \big( u(x) - u(y) \big)^2
    \bigg)^{\!\! \frac 1 2}.
  \end{align}
Note that by \cite[Lemma 2]{DK13}, we have for any $a\in\bbR$,
$$\Norm{u - (u)_{B(n), \theta}}{1, B(n), \theta}\leq c\, \Norm{u - a}{1, B(n), \theta}.$$

  Now we prove \eqref{eq:local:PI:weighted} by distinguishing two cases. In the case $r\geq 2$ we have by Cauchy-Schwarz,
\begin{align*} 
\norm{u-a}_{1,B(n),\theta} \; \leq \; \norm{\theta^\om}_{1,B(n)}^{-1} \, \norm{\theta^\om}_{2,B(n)} \, \norm{u-a}_{2,B(n)}.
\end{align*}

Hence we obtain the assertion \eqref{eq:local:PI:weighted} by using \eqref{eq:Sobolev:Sa} and \eqref{eq:PI:rhs} with the choice $\al = 2d / (d+2)$ and Jensen's inequality. Note that $\alpha/(2-\alpha)=d/2<q$.

Similarly, in the case $r\in[1,2)$, denoting its H\"older conjugate $r_*$ we have by H\"older's inequality
\begin{align*} 
\norm{u-a}_{1,B(n),\theta} \; \leq \; \norm{\theta^\om}_{1,B(n)}^{-1} \, \norm{\theta^\om}_{r,B(n)} \, \norm{u-a}_{r_*,B(n)},
\end{align*} 
and we may use \eqref{eq:Sobolev:Sa} and \eqref{eq:PI:rhs} with the choice $\al = dr_* / (d+r_*)$. Notice that $d\alpha/(d-\alpha)=r_*$, $\alpha/(2-\alpha)\leq q$ and $\alpha \in [1,2)$ since $r\in [1,d]$ and satisfies \eqref{eq:cond_pqr}. This finishes the proof of \eqref{eq:local:PI:weighted} .

To see \eqref{eq:local:PI:weighted_sets}, note that by the triangle inequality
\begin{align*}
& \norm{u-\left(u\right)_{\mathcal{N},\theta}}_{1,B(n),\theta}
\;\leq \;
\norm{u-(u)_{B(n),\theta}}_{1,B(n),\theta} \, + \, \big| (u)_{\cN,\theta}-(u)_{B(n),\theta} \big|  \nonumber \\
& \mspace{36mu} \;\leq \;
\norm{u-(u)_{B(n),\theta}}_{1,B(n),\theta} \, + \, \frac{1}{\theta^\om(\cN)}\sum_{y\in\cN}\big| u(y)-(u)_{B(n),\theta} \big| \, \theta^\om(y) \nonumber
\\
& \mspace{36mu} \;\leq \; \bigg(1+\frac{\theta^\om(B(n))}{\theta^\om(\mathcal{N})}\bigg)\, \norm{u-(u)_{B(n),\theta}}_{1,B(n),\theta},
\end{align*}
so \eqref{eq:local:PI:weighted_sets} follows from \eqref{eq:local:PI:weighted}.
\end{proof}

\subsection{Maximal Inequality} \label{sec:max_ineq}
For the rest of Section~\ref{section:general speed measure} we assume $d\geq 2$ and we fix $p,q,r\in(1,\infty]$ such that 
\begin{align} \label{eq:pqr2}
\frac{1}{r}+\frac{1}{p}\frac{r-1}{r}+\frac{1}{q} \; < \; \frac{2}{d}.
\end{align}
For the analysis, we work with space-time cylinders defined as follows. For any $x_0\in\bZ^d$ and $t_0\in\R$ let $I_\tau:=[t_0-\tau n^2,t_0]$ and $B_\sigma:=B(x_0,\sigma n)$ for $\sigma\in(0,1],\,\tau\in(0,1]$. We write $Q(n):=[t_0-n^2,t_0]\times B(x_0,n)$ and
$$Q_{\tau,\sigma}(n):=I_\tau\times B_\sigma\quad\text{and}\quad Q_\sigma :=Q_\sigma(n):=Q_{\sigma,\sigma}(n).$$

The main result in this subsection is the following maximal inequality.

\begin{theorem}
\label{thm:maximal}
Let $t_0\in \R$, $x_0\in \bZ^d$ and $u>0$ be such that $\partial_tu-\mathcal{L}^\omega_\theta u\,\leq\,0$ on $Q(n)$ for any $n\geq 1$. Then, for any $0 \leq \Delta <2/(d+2)$
there exists $N_1=N_1(\Delta)\in\bbN$ and $c_3=c_3(d,p,q,r)$ such that for all $n\geq N_1$, $h\geq 0$ and $1/2\leq \sigma' <\sigma \leq 1$ with $\sigma-\sigma' > n^{-\Delta}$,
\begin{align*}
\max_{(t,x) \in Q_{\sigma'}(n)}u(t,x) \;\leq \; h \,+ \, c_3 \, \bigg( \frac{\cA_2^\om(n)}{(\sigma-\sigma')^2} \bigg)^{\!\kappa} \, \norm{(u-h)_+}_{2p_*,2,Q_\sigma(n),\theta}.
\end{align*}
Here $p_*:=p/(p-1)$, $\kappa:=1+p_* \rho/2(\rho - p_* r_*)$ with $\rho$ as in Proposition~\ref{prop:sobolev}, and
\begin{align} \label{eq:defA1}
\cA_2^\om(n) \; :=  \;\norm{1\vee (\mu^\omega/\theta^\om)}_{p,B(n),\theta} \, \norm{1\vee \nu^\omega}_{q,B(n)} \,   \norm{1 \vee \theta^\om}_{r,B(n)}^2 \norm{1\vee (1/\theta^\om)}_{1,B(n)}.
\end{align}
\end{theorem}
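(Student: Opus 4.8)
The plan is to establish the maximal inequality via a De~Giorgi iteration scheme applied to the truncated function $w := (u-h)_+$, following the scheme in \cite{ACS20} but keeping careful track of the speed-measure weights. First I would reduce to $h=0$ by noting that if $\partial_t u - \cL^\om_\theta u \leq 0$ then $w$ is a nonnegative subsolution in the appropriate weak sense on $Q(n)$; the truncation introduces only a favourable sign in the energy estimate. Then I would fix a decreasing sequence of radii $\sigma_k := \sigma' + 2^{-k}(\sigma-\sigma')$ interpolating between $\sigma$ and $\sigma'$, together with an increasing sequence of levels $h_k := H(1-2^{-k})$ for a constant $H>0$ to be chosen, and set $w_k := (u - h_k)_+$ on the cylinder $Q_k := Q_{\sigma_k}(n)$. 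The goal is a recursive inequality of the form $A_{k+1} \leq C\, b^k\, \cA_2^\om(n)^{\gamma} (\sigma-\sigma')^{-2\gamma} A_k^{1+\beta}$ for suitable $b>1$, $\beta>0$, $\gamma>0$, where $A_k := \norm{w_k}_{2p_*,2,Q_k,\theta}$; the standard fast-geometric-convergence lemma (e.g.\ \cite[Lemma~7.1]{ACS20}-type) then yields $A_k \to 0$ provided $H$ is chosen as a suitable power of $\cA_2^\om(n)/(\sigma-\sigma')^2$ times $A_0$, which after unwinding gives the stated bound with exponent $\kappa = 1 + p_*\rho/2(\rho - p_* r_*)$.

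The core of the iteration is the one-step energy estimate. Here I would test the (weak formulation of the) equation against $w_k \eta^2$ where $\eta$ is a cutoff in space supported in $B_{\sigma_k}$, equal to $1$ on $B_{\sigma_{k+1}}$, with $|\nabla\eta| \lesssim 2^k/((\sigma-\sigma')n)$, together with a time cutoff. Integration by parts against the Dirichlet form $\cE^\om$, the Leibniz rule on $\nabla(w_k\eta^2)$, and Young's inequality to absorb the cross term produce the Caccioppoli-type bound
\begin{align*}
\sup_{t \in I_{\sigma_{k+1}}} \norm{w_k(t,\cdot)\,\eta}_{2,B_{\sigma_k},\theta}^2 \;+\; \frac{1}{n^2}\int_{I_{\sigma_k}} \cE^\om\!\big(w_k(s,\cdot)\,\eta\big)\,\frac{ds}{|B(n)|} \;\lesssim\; \frac{4^k}{(\sigma-\sigma')^2}\,\frac{1}{n^2}\int_{I_{\sigma_k}} \norm{w_k(s,\cdot)}_{2,B_{\sigma_k},\theta'}^2\,ds,
\end{align*}
where the weight $\theta'$ appearing on the right collects the factor $\mu^\om/\theta^\om$ arising because the Dirichlet form is weighted by $\om$ on edges but the $\ell^2$-norm by $\theta^\om$ on vertices — this is precisely where the term $\norm{1\vee(\mu^\om/\theta^\om)}_{p,B(n),\theta}$ in $\cA_2^\om(n)$ enters, via a further Hölder split with exponent $p$. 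I would then feed the left-hand side into the Sobolev inequality of Proposition~\ref{prop:sobolev} (which contributes $|B(n)|^{2/d}\norm{\nu^\om}_{q,B}\norm{\theta^\om}_{1,B}/\theta^\om(B)$, hence the $\norm{1\vee\nu^\om}_q$ and one power of $\norm{1\vee\theta^\om}_r$) combined with interpolation between the $L^\infty_t L^2_x$ and $L^2_t L^{2\rho}_x$ norms to upgrade to a space-time norm with an exponent strictly above $2p_*$ in time, producing the gain $A_k^{1+\beta}$; the passage from level $h_k$ to $h_{k+1}$ is handled by the elementary estimate $|\{w_{k+1}>0\}| \leq (h_{k+1}-h_k)^{-2}\int w_k^2$ on the support, which supplies the extra negative power of the level increment that is converted into $A_k^{1+\beta}$.

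The main obstacle I anticipate is bookkeeping the weights so that the final combined prefactor is exactly $\cA_2^\om(n)$ and the exponent is exactly $\kappa$, rather than something larger. Concretely, there are three weighted-norm mismatches to reconcile: (i) Dirichlet form weighted by $\om$ versus $\ell^2$-norms weighted by $\theta^\om$, handled by a Hölder split producing $\norm{\mu^\om/\theta^\om}_p$ and forcing the averaged norm on the right to sit at exponent $2p_*$; (ii) converting the $\theta^\om$-weighted space average back to an unweighted one to apply Proposition~\ref{prop:sobolev}, costing $\norm{\theta^\om}_r$ and $\norm{1/\theta^\om}_1$ (the latter also needed to ensure $\theta^\om(B(n)) \gtrsim |B(n)|$, controlling the non-explosion direction); and (iii) checking that the interpolation exponent $\rho$ from the Sobolev inequality together with $p_*$ and $r_* = r/(r-1)$ yields $\rho - p_* r_* > 0$, which is exactly where the arithmetic condition \eqref{eq:pqr2} is used — it guarantees both that the self-improving exponent $\beta = 2(\rho - p_* r_*)/(p_*\rho)>0$ is positive and that the iteration converges. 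The constraint $n \geq N_1(\Delta)$ with $\sigma - \sigma' > n^{-\Delta}$ and $\Delta < 2/(d+2)$ enters only to guarantee that the innermost cylinder $Q_{\sigma'}(n)$ still contains enough lattice points for the volume-regularity estimate $|B(\sigma_k n)| \asymp (\sigma_k n)^d$ to hold uniformly along the iteration; I would isolate this as a lemma and then run the geometric-convergence argument on the tail, obtaining a bound on $\max_{Q_{\sigma'}(n)} u$ from $A_k \to 0$ together with the fact that for a subsolution the pointwise maximum is controlled by the $L^2$-norm on a slightly larger cylinder.
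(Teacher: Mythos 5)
Your proposal follows the same De~Giorgi scheme as the paper: energy (Caccioppoli) estimate from Lemma~\ref{lemma:aux1}, Sobolev inequality from Proposition~\ref{prop:sobolev}, a H\"older split with exponents $p_*, r_*, \rho$ to reconcile the $\omega$-weighted Dirichlet form with the $\theta^\omega$-weighted $\ell^2$-norms, Chebyshev to pass between levels, and fast geometric convergence along shrinking cylinders with the quantity $\varphi(k,\sigma)=\norm{(u-k)_+^2}_{p_*,1,Q_\sigma,\theta}$, which is your $A_k^2$. Two points are imprecise, though, and the second is where the real work is. First, the self-improving power: the paper obtains $\varphi(l,\sigma') \lesssim \varphi(k,\sigma)^{1+1/\alpha_*}$ with $\alpha_*$ the H\"older conjugate of $\alpha = 1+1/p_*-r_*/\rho$, so the gain on $A_k$ is $1/\alpha_*$, not your $\beta = 2(\rho-p_*r_*)/(p_*\rho)$; the factor-of-two mismatch would feed into the wrong $\kappa$. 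Your identification of $\rho > p_* r_*$ as the key positivity coming from \eqref{eq:pqr2} is correct. Second, the role of $\Delta < 2/(d+2)$ is not volume regularity of $B_{\sigma_k}$ (automatic since $\sigma'\geq 1/2$), and the endgame is not ``$A_k\to 0$ plus a subsolution $L^2$-to-$L^\infty$ estimate'' --- the latter would be circular, since such an estimate \emph{is} the maximal inequality being proved. The discrete iteration can only run for $J \asymp d\ln n/(2\alpha_*\ln 2)$ steps before the annuli $B_{\sigma_{j-1}}\setminus B_{\sigma_j}$ drop below lattice scale (the cutoff $\eta$ no longer exists); the paper then stops, bounds $\max (u-k_{J+1})_+$ by the trivial comparison $\max \leq |B|\cdot(\text{average})$, which costs a factor $n^{2d}$, and chooses $J$ so that $n^{2d}\,2^{2J}/r^J \leq 1$ with $r=2^{4(1+\alpha_*)}$ the geometric decay rate. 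Requiring simultaneously $(\sigma_J-\sigma_{J+1})n\geq 1$ and this $J$ yields exactly $\Delta < 2/(d+2)$, using $\alpha_*\geq (d+2)/2$. You should make the truncated iteration and the final lattice-scale $\ell^\infty$-comparison explicit; as written, your conclusion appeals to a bound you do not yet have.
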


An energy estimate is required in proving the above, cf.\ \cite[Lemma 3.7]{ADS19}.

\begin{lemma}
\label{lemma:aux1}
Suppose $Q=I\times B$ where $I=[s_1,s_2]\subseteq \R$ is an interval and $B\subset \bZ^d$ is finite and connected. Let $u$ be a non-negative solution of $\partial_t u-\cL_\theta^\omega u\leq 0$ on $Q$. Let $\eta:\bZ^d\to[0,1]$ and $\xi:\bR\to[0,1]$ be cutoff functions such that $\supp \eta\subseteq B$, $\supp\xi\subseteq I$ and $\eta\equiv 0$ on $\partial B$, $\xi(s_1)=0$. Then there exists $c_4$ such that for any $k\geq 0$ and $p,p_*\in(1,\infty)$ with $\frac{1}{p}+\frac{1}{p_*}=1$,
\begin{align} \label{eq:energy:est}
& \frac{1}{\abs{I}}\norm{\xi\eta^2 (u-k)^2_+}_{1,\infty,Q,\theta}+\frac{1}{\abs{I}}\int_I\xi(t) \, \frac{\mathcal{E}^\omega(\eta v)}{\theta^\om(B)} \, dt \nonumber  \\
& \mspace{36mu}\leq c_4\left(\norm{\mu^\omega/\theta^\om}_{p,B,\theta}\norm{\nabla\eta}_{l^\infty(E_d)}^2+\norm{\xi^\prime}_{L^\infty(I)}\right)\norm{(u-k)^2_+}_{p_*,1,Q,\theta}.
\end{align}
\end{lemma}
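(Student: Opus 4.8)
The plan is to prove the energy estimate \eqref{eq:energy:est} by the standard Caccioppoli / Moser testing procedure adapted to the general speed measure, following the line of \cite[Lemma~3.7]{ADS19} and \cite{ACS20}. The natural test function is $\varphi_t := \xi(t)\,\eta^2\,(u_t - k)_+$. First I would record that, since $u$ solves $\partial_t u - \cL^\om_\theta u \leq 0$ on $Q$ and $(u-k)_+ \geq 0$, testing the inequality against $\varphi_t$ (which is a legitimate nonnegative test function supported in $B$, with $\eta \equiv 0$ on $\partial B$) in $\ell^2(\Z^d,\theta)$ gives, for a.e.\ $t$,
\begin{align*}
\big\langle \partial_t u_t,\, \xi\eta^2 (u_t-k)_+ \big\rangle_{\ell^2(\Z^d,\theta)} \;\leq\; -\,\xi(t)\,\big\langle -\cL^\om_\theta u_t,\, \eta^2 (u_t-k)_+ \big\rangle_{\ell^2(\Z^d,\theta)} \;=\; -\,\xi(t)\, \cE^\om\!\big(\eta^2 (u_t-k)_+,\, u_t\big).
\end{align*}
The left-hand side is handled by the chain rule: $\langle \partial_t u_t, \eta^2(u_t-k)_+\rangle_{\ell^2(\Z^d,\theta)} = \tfrac12 \tfrac{d}{dt}\langle \eta^2, (u_t-k)_+^2\rangle_{\ell^2(\Z^d,\theta)}$, so after multiplying by $\xi$ and integrating by parts in time over $I$ (using $\xi(s_1)=0$ and $\xi,\eta \geq 0$) one obtains $\tfrac12 \langle \eta^2,(u-k)_+^2\rangle$ at the terminal time controlled by $\tfrac12\int_I \xi'(t)\langle \eta^2,(u_t-k)_+^2\rangle\,dt$ minus the space-time integral of the Dirichlet-form term; this already produces the $\norm{\xi'}_{L^\infty(I)}\,\norm{(u-k)_+^2}_{p_*,1,Q,\theta}$ contribution after dividing by $\theta^\om(B)$ and $\abs{I}$ and using the definition of the averaged norms.

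The heart of the matter is the algebraic manipulation of the Dirichlet-form term $\cE^\om(\eta^2(u-k)_+, u)$, which must be shown to dominate $c^{-1}\cE^\om(\eta v)$ with $v := (u-k)_+$, up to an error absorbed into the right-hand side. Writing $w = u - k$ so that $v = w_+$ and noting $\nabla(\eta^2 v)(e) = \eta(e^+)^2 v(e^+) - \eta(e^-)^2 v(e^-)$, I would use the elementary pointwise identity (valid because $v$ and $w$ agree where $v>0$ and $\nabla v \cdot \nabla w \geq 0$ pointwise on edges)
\begin{align*}
\nabla(\eta^2 v)(e)\,\nabla w(e) \;\geq\; \tfrac12 \big(\nabla(\eta v)(e)\big)^2 \;-\; C\,\big(\nabla\eta(e)\big)^2\,\big(v(e^+)^2 + v(e^-)^2\big),
\end{align*}
which is the standard localization trick (e.g.\ the one behind \cite[Lemma~3.7]{ADS19}); summing against $\om(e)$ over $e \in E_d$ gives $\cE^\om(\eta^2 v, w) \geq \tfrac12 \cE^\om(\eta v) - C\,\norm{\nabla\eta}_{\ell^\infty(E_d)}^2 \sum_{x}\mu^\om(x)\,v(x)^2$. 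The last sum is then rewritten as $\theta^\om(B)$ times a $\theta$-weighted average and estimated by Hölder with exponents $p, p_*$: $\tfrac{1}{\theta^\om(B)}\sum_{x\in B}\mu^\om(x) v(x)^2 = \tfrac{1}{\theta^\om(B)}\sum_{x\in B} \tfrac{\mu^\om(x)}{\theta^\om(x)} v(x)^2 \theta^\om(x) \leq \norm{\mu^\om/\theta^\om}_{p,B,\theta}\,\norm{v^2}_{p_*,B,\theta}$. Integrating in time with the weight $\xi(t)$ yields the $\norm{\mu^\om/\theta^\om}_{p,B,\theta}\,\norm{\nabla\eta}_{\ell^\infty(E_d)}^2\,\norm{(u-k)_+^2}_{p_*,1,Q,\theta}$ term.

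Combining the two estimates, dividing by $\theta^\om(B)$ and $\abs{I}$, and taking the supremum over $t\in I$ in the first term (the terminal-time bound holds uniformly once one runs the argument on each subinterval $[s_1, t]$) gives \eqref{eq:energy:est} with an explicit $c_4$ depending only on the constant in the localization inequality. The step I expect to require the most care is the pointwise edge inequality and its summation: one must be careful that $w$ (not $v$) appears in $\nabla w$, handle the mixed sign cases on edges where $w$ changes sign, and make sure the cross term is genuinely controlled by $(\nabla\eta)^2(v(e^+)^2 + v(e^-)^2)$ rather than something involving $\nabla v$; this is routine but is the only place where a genuine inequality (as opposed to bookkeeping) is used. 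Everything else — the time integration by parts, the Hölder step, and matching the averaged-norm notation — is mechanical.
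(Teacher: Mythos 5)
Your proposal is correct and matches the paper's intended route: the paper dispatches this lemma with the one-line remark that it follows by a simple modification of the corresponding Caccioppoli estimate in \cite[Lemma~2.9]{ACS20} (itself a variant of \cite[Lemma~3.7]{ADS19}), and what you have written out --- testing against $\xi\eta^2(u-k)_+$, integrating by parts in time using $\xi(s_1)=0$, the pointwise edge inequality $\nabla(\eta^2 v)\nabla w \geq \nabla(\eta^2 v)\nabla v = (\nabla(\eta v))^2 - v(e^+)v(e^-)(\nabla\eta)^2$, summation against $\om$ to produce the $\mu^\om$-weighted $\ell^2$ error term, and finally H\"older with exponents $p,p_*$ against the $\theta$-weighted average --- is precisely that modification. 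The one place you flag as delicate (the edge-wise inequality and the sign cases for $w=u-k$) is handled correctly; in fact the algebraic identity gives equality $\nabla(\eta^2 v)\nabla v = (\nabla(\eta v))^2 - v(e^+)v(e^-)(\nabla\eta)^2$ rather than the inequality with $\tfrac12$ you stated, so your bound is if anything slightly cruder than necessary but still yields the claim.
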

\begin{proof}
This follows by a simple modification of the argument in \cite[Lemma 2.9]{ACS20}.
\end{proof}

\begin{proof}[Proof of Theorem \ref{thm:maximal}]
The proof is based on an iteration argument and will be divided into two steps. First we will derive the estimate needed for a single iteration step, then the actual iteration will be carried out. Set $\alpha:=1+\frac{1}{p_*}-\frac{r_*}{\rho}$ with $r_*:=r/(r-1)$. Notice that for any $p,q,r \in (1,\infty]$ satisfying \eqref{eq:pqr2}, $\alpha>1$ and therefore $1/\alpha_*:= 1- 1/\alpha>0$.

\emph{Step 1:} 
Let $1/2\leq \sigma'<\sigma\leq 1$ and $0\leq k<l$ be fixed.
Note that, due to the discrete structure of the underlying space $\bbZ^d$, the balls $B_\sigma$ and $B_{\sigma'}$ may coincide. To ensure that $B_{\sigma'} \subsetneq B_\sigma$ we assume in this step that $(\sigma-\sigma')n\geq 1$. Then, it is possible to define a spatial cut-off function $\eta:\bZ^d\rightarrow[0,1]$ such that $\supp\eta\subseteq B_\sigma, \,\eta\equiv1$ on $B_{\sigma^\prime}$, $\eta\equiv 0$ on $\partial B_\sigma$ and $\norm{\nabla\eta}_{l^\infty(E)}\leq 1/((\sigma-\sigma') n)$. Further, let $\xi \in C^\infty(\R)$ be a cut-off in time satisfying $\supp \xi\subseteq I_\sigma$, $\xi\equiv 1$ on $I_{\sigma^\prime}$, $\xi(t_0-\sigma n^2)=0$ and $\norm{\xi^\prime}_{L^\infty([0,\infty))}\leq 1/((\sigma-\sigma')n^2)$.
By H\"older's inequality, followed by applications of H\"older's and Young's inequalities,
\begin{align} \label{eq:maximal1}
&\norm{\big(u-l\big)_+^2}_{p_*,1,Q_{\sigma^\prime},\theta} \; \leq \; \norm{\big(u-k\big)_+^2}_{\alpha p_*,\alpha,Q_{\sigma^\prime},\theta}\, \norm{\indicator_{\{u\geq l\}}}_{\alpha_* p_*,\alpha_*,Q_{\sigma^\prime},\theta} \nonumber \\
& \mspace{36mu} \; \leq \; \bigg(\norm{\big(u-k\big)_+^2}_{1,\infty,Q_{\sigma^\prime},\theta}\, + \, \norm{(u-k)_+^2}_{\rho/r_*,1,Q_{\sigma^\prime},\theta}\bigg) \, \norm{\indicator_{\{u\geq l\}}}_{p_*,1,Q_{\sigma^\prime},\theta}^{1/\alpha_*}.
\end{align}
Note that by Jensen's inequality
\begin{align} \label{eq:theta_balls}
\frac{\theta^\omega(B_\sigma)}{\theta^\omega(B_{\sigma'})} \; \leq \; c \, \norm{\theta^\omega}_{1,B_\sigma} \, \norm{1/\theta^\omega}_{1,B_{\sigma'}}.
\end{align}
We use H\"older's inequality, the Sobolev inequality in Proposition~\ref{prop:sobolev}, 
 the fact that $r_*/\rho<1$ and Lemma~\ref{lemma:aux1} to obtain
\begin{align} \label{eq:maximal2}
& \norm{\big(u-k\big)_+^2}_{\rho/r_*,1,Q_{\sigma^\prime},\theta}
\; \leq  \; c \, \Big( \norm{\theta^\omega}_{1,B_\sigma} 
 \, \norm{1/\theta^\omega}_{1,B_{\sigma'}}\Big)^{\!\frac{r_*}{\rho}} \,
\norm{\xi \eta^2 \big(u-k\big)_+^2}_{\rho/r_*,1,Q_{\sigma},\theta} \nonumber \\
& \mspace{36mu}
\;\leq \; 
c \, n^2 \, \norm{\nu^\omega}_{q,B_\sigma} \, \Big( \norm{\theta^\omega}_{r,B_\sigma}^2 \,  \norm{1/\theta^\omega}_{1,B_{\sigma}}  \Big)^{\frac{r_*}{\rho}} \, \frac{1}{\abs{I_\sigma}} \int_{I_\sigma}\xi(t)\, \frac{\mathcal{E}^\omega\big(\eta\, (u_t-k)_+\big)}{\theta^\om(B_\sigma)} \, dt \nonumber
\\
& \mspace{36mu} \; \leq \; c \, \frac{\tilde{\cA}_2^\om(n) }{(\sigma-\sigma^\prime)^2} \, \norm{(u-k)_+^2}_{p_*,1,Q_\sigma,\theta},
\end{align}
with $\tilde{\cA}_2^\omega(n):=\cA_2^\omega(n)\,/\, \, \norm{1\vee (1/\theta^\om)}_{1,B_\sigma}$.  Further, again by \eqref{eq:theta_balls} and  Lemma \ref{lemma:aux1},
\begin{align} \label{eq:maximal3}
& \norm{\big(u-k\big)_+^2}_{1,\infty,Q_{\sigma^\prime},\theta}
\; \leq \;
 c \, \norm{\theta^\omega}_{1,B_\sigma} 
 \, \norm{1/\theta^\omega}_{1,B_{\sigma'}} \norm{\xi\eta^2\big (u-k \big)_+^2}_{1,\infty,Q_\sigma,\theta} \nonumber \\
& \mspace{36mu} \;\leq \; 
c \,  \frac{\norm{1\vee (\mu^\omega/\theta^\om)}_{p,B_\sigma,\theta} \,  \norm{\theta^\omega}_{1,B_\sigma} 
 \, \norm{1/\theta^\omega}_{1,B_{\sigma}}}{(\sigma-\sigma^\prime)^2}\norm{(u-k)_+^2}_{p_*,1,Q_\sigma,\theta} \nonumber \\
& \mspace{36mu} \;\leq \; c\, \frac{\tilde{\cA}_2^\omega(n)}{(\sigma-\sigma^\prime)^2} \, \norm{\big(u-k\big)_+^2}_{p_*,1,Q_\sigma,\theta}.
\end{align}
Moreover, note that
\begin{align}
\label{eq:maximal4}
& \norm{\indicator_{\{u\geq l\}}}_{p_*,1,Q_{\sigma^\prime},\theta} \;\leq \; 
 c  \, \norm{\theta^\omega}_{1,B_\sigma} 
 \, \norm{1/\theta^\omega}_{1,B_{\sigma'}} \norm{\indicator_{\{u-k\geq l-k \}}}_{p_*,1,Q_\sigma,\theta} \nonumber \\
& \mspace{36mu}  \;\leq \; \frac{\tilde{\cA}_2^\om(n)}{(l-k)^2} \, \norm{\big(u-k\big)_+^2}_{p_*,1,Q_\sigma,\theta}.
\end{align}
Therefore, combining \eqref{eq:maximal1} with \eqref{eq:maximal2}, \eqref{eq:maximal3} and \eqref{eq:maximal4} yields
\begin{align*}
\norm{\big(u-l\big)_+^2}_{p_*,1,Q_{\sigma^\prime},\theta} \; \leq \;  \frac{ c\, \tilde{\cA}_2^\om(n)^{1+\frac{1}{\alpha_*}}}{ (l-k)^{2/\alpha_*} (\sigma-\sigma^\prime)^2} \, \norm{\big(u-k\big)_+^2}_{p_*,1,Q_\sigma,\theta}^{1+\frac{1}{\alpha_*}}.
\end{align*}
Introducing $\varphi(l,\sigma^\prime):=\norm{\big(u-l\big)_+^2}_{p_*,1,Q_{\sigma^\prime},\theta}$ and setting $M:=c\, \tilde{\cA}_2^\om(n)^{1+\frac{1}{\alpha_*}}$ the above inequality reads
\begin{align} \label{eq:varphi_singlestep}
\varphi(l,\sigma^\prime) \; \leq \; \frac{M}{ (l-k)^{2/\alpha_*} (\sigma-\sigma^\prime)^2} \, \varphi(k,\sigma)^{1+\frac{1}{\alpha_*}}
\end{align}
and holds for any $0\leq k<l$ and $1/2\leq\sigma^\prime<\sigma\leq 1$.

\emph{Step 2:} For any $\Delta \in [0, 2/(d+2))$ let $n\geq N_2(\Delta)$ where $N_2(\Delta)<\infty$ is such that $n^{2/(d+2)-\Delta}\geq 2$ for all $n\geq N_2$. Let $h\geq 0$ be arbitrary and $1/2\leq \sigma'< \sigma\leq 1$ be chosen in such a way that $\sigma-\sigma' > n^{-\Delta}$. Further, for $j\in \bbN$ we set
\begin{align*}
\sigma_j \; := \; 2^{-j} (\sigma-\sigma'), \qquad k_j\; :=\; h \,+ \, K \, \big( 1-2^{-j} \big),
\end{align*}
where $K:= 2^{2(1+\alpha_*)^2} \big(M/(\sigma-\sigma')^2\big)^{\alpha_*/2} \varphi(h,\sigma)^{1/2}$, and  $J:=\lfloor d\ln n/ 2\alpha_* \ln 2 \rfloor$.
Since $\alpha_* \geq (d+2)/2$, we have
\begin{align*}
(\sigma_{j-1} - \sigma_j) n \; = \; 2^{-j} (\sigma - \sigma')n \; > \; 1, \qquad \forall j=1, \ldots, J.
\end{align*}
Next we claim that, by induction,
\begin{align} \label{eq:bound_varphi}
\varphi(k_j,\sigma_j) \; \leq \; \frac{\varphi(h,\sigma)}{r^j}, \qquad \forall j=1,\ldots, J,
\end{align}
where $r=2^{4(1+\alpha_*)}$.
Indeed for $j=0$ the bound \eqref{eq:bound_varphi} is trivial. Now assuming that  \eqref{eq:bound_varphi} holds for any $j-1\in\{0, \ldots, J-1\}$, we obtain by \eqref{eq:varphi_singlestep} that
\begin{align*}
\varphi(k_j,\sigma_j) & \; \leq \; M \, \bigg( \frac{2^j}{K} \bigg)^{2/\alpha_*} \bigg(\frac{2^j}{(\sigma-\sigma^\prime)} \bigg)^2 \, \varphi(k_{j-1},\sigma_{j-1})^{1+\frac{1}{\alpha_*}} \\
& \; \leq \;
 M \, \bigg( \frac{2^j}{K} \bigg)^{2/\alpha_*} \bigg(\frac{2^j}{(\sigma-\sigma^\prime)} \bigg)^2  \, \bigg( \frac{\varphi(h,\sigma)}{r^{j-1}} \bigg)^{1+\frac{1}{\alpha_*}} \; \leq \; \frac{\varphi(h,\sigma)}{r^j},
\end{align*}
which completes the proof of \eqref{eq:bound_varphi}. Note that by the choice of $J$, $(n^{2d}2^{2J})/r^J \leq 1$ and $(\sigma_J-\sigma_{J+1}) n \geq 1$.

By using the Cauchy-Schwarz inequality, \eqref{eq:maximal3} and \eqref{eq:bound_varphi}, we have that
\begin{align*}
&\max_{(t,x) \in Q_{\sigma_{J+1}}} \big(u(t,x)-k_{J+1} \big)_+ 
 \; \leq \;
c\, n^d \, \norm{1/\theta^\om}_{1,B_\sigma}^{1/2} \,  \norm{\big(u-k_{J+1}\big)_+^2}_{1,\infty,Q_{\sigma_{J+1}},\theta}^{1/2} \\
& \mspace{36mu} \; \leq \;
c\, \norm{1/\theta^\om}_{1,B_\sigma}^{1/2}  \bigg( n^{2d} \, 2^{2J} \,  \frac{\tilde{\mathcal{A}}_2^\omega(n)}{(\sigma-\sigma^\prime)^2} \, \varphi(k_J,\sigma_J) \bigg)^{\!1/2} 
 \; \leq \;
c\, \bigg( \frac{\mathcal{A}_2^\omega(n)}{(\sigma-\sigma^\prime)^2} \, \varphi(h,\sigma) \bigg)^{\!1/2} \\
& \mspace{36mu}
\; = \;
c\, \bigg( \frac{\mathcal{A}_2^\omega(n)}{(\sigma-\sigma^\prime)^2}  \bigg)^{\!1/2}  \, \norm{(u-h)_+}_{2p_*,2,Q_\sigma(n),\theta}.
\end{align*}
Hence,
\begin{align*}
\max_{(t,x) \in Q_{\sigma'}} u(t,x) \; \leq \; h \,+ \, K \,+ \,  
c\, \bigg( \frac{\mathcal{A}_2^\omega(n)}{(\sigma-\sigma^\prime)^2}  \bigg)^{\!1/2}  \, \norm{(u-h)_+}_{2p_*,2,Q_\sigma(n),\theta},
\end{align*}
and the claim follows with $\kappa=(1+\alpha_*)/2$ as in the statement.
\end{proof}

\subsection{Oscillation inequality} \label{sec:osc}

The next significant result allows us to control the oscillations of a space-time harmonic function. We denote the oscillation of a function $u$ on a cylinder $Q\subseteq \bbR\times\bZ^d$, $\osc_Q u:=\max_{(t,x)\in Q}u(t,x)-\min_{(t,x)\in Q}u(t,x)$. Recall the definition of $\cA_1^\om(n)$ and  $\cA_2^\om(n)$ in Proposition~\ref{prop:poincare} and \eqref{eq:defA1}, respectively. For $n\geq 4$  we also set  $\cA_3^\om(n):= \norm{1/\theta^\om}_{1,B(\frac n 4)} \, \norm{\theta^\om}_{1,B(\frac  n 2)}$.

\begin{theorem}[Oscillation inequality]
\label{thm:oscillations}
Fix $t_0\in\R,\,x_0\in \bZ^d$. Let $u:\bbZ^d\to\bbR$ be such that $\partial_tu-\mathcal{L}_\theta^\omega u=0$ on $Q(n)$ for $n\geq 1$. There exists $N_3= N_3(d)$ (independent of $x_0$) such that for all $n\geq N_3$ the following holds. There exists 
\begin{align*}
\gamma^\om(x_0,n)=\gamma\big(\cA_1^\om(n), \cA_2^\om(n), \cA_3^\om(n),\norm{\mu^\omega}_{1,B(n)}, \norm{\theta^\om}_{1,B(n)}, \norm{1/ \theta^\om}_{1,B(n)}\big) \in (0,1),
\end{align*}
which is continuous and increasing in all components, 
such that
$$\osc_{Q( n/4)} u \; \leq  \; \gamma^\om(x_0,n) \, \osc_{Q(n)} u.$$
\end{theorem}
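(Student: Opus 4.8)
The strategy is the classical one of De Giorgi--Moser for deriving an oscillation decay from a maximal inequality plus a level-set estimate, here adapted to the general speed measure and following the idea in \cite{wu2006elliptic} as announced. The maximal inequality of Theorem~\ref{thm:maximal} is already in hand, applied both to a solution $u$ and to $-u$ (or, after an affine normalization, to functions bounded between $0$ and $1$). What is still needed, and what the lemmas referenced as Lemmas~\ref{lemma:u bound space} and \ref{lemma:u bound cylinder} supply, is a bound on the size of the level sets $\{u \le \lambda\}$ (resp.\ $\{u \ge 1-\lambda\}$) inside a subcylinder in terms of $(-\ln u)_+$; these come from a weighted logarithmic Sobolev / Poincar\'e estimate, which is exactly why the weighted Poincar\'e inequality of Proposition~\ref{prop:poincare} (in the form \eqref{eq:local:PI:weighted_sets}, allowing an arbitrary reference set $\cN$) was set up with the ratio $\theta^\om(B(n))/\theta^\om(\cN)$ made explicit.

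\textbf{Key steps, in order.} First I would normalize: after subtracting $\min_{Q(n)} u$ and dividing by $\osc_{Q(n)} u$, assume $0 \le u \le 1$ on $Q(n)$; adding a small constant $\delta$ to make $u$ strictly positive and letting $\delta \downarrow 0$ at the end, one may assume $u > 0$. Second, fix the intermediate radius, say work with the ``good'' cylinder $Q(n/2)$ and target $Q(n/4)$, and split into the two cases according to whether $u$ or $1-u$ has the larger $\theta$-weighted superlevel set at the relevant scale; WLOG $\theta^\om(\{x : u_{t_1}(x) \ge 1/2\} \cap B(n/2))$ is a definite fraction of $\theta^\om(B(n/2))$ for some slice $t_1$ near the bottom of the time interval --- this is where the factor $(1 + \theta^\om(B)/\theta^\om(\cN))^2$ in \eqref{eq:local:PI:weighted_sets} is absorbed, contributing to the $\cA_3^\om$ dependence. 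Third, apply the level-set lemma (the analogue of Lemma~\ref{lemma:u bound cylinder}) to get that $w := (-\ln u)_+$, or rather $(\ln \tfrac{1}{1-u+\eps})_+$ for the complementary function, has small space-time $L^1$ norm on a slightly smaller cylinder, hence by Chebyshev the level set $\{u \le 2^{-m}\}$ occupies only a fraction $\le C/m$ of the cylinder. Fourth, feed this into the maximal inequality applied to the function $v := (\lambda - u)_+ / \lambda$ with $\lambda = 2^{-m}$ and $m$ chosen so that $C/m$ beats the De Giorgi threshold: the $L^{2p_*}_2$ norm of $(v - h)_+$ for suitable $h$ is controlled by a power of the measure of the level set, so $\max_{Q(n/4)} v$ is strictly less than $1$, i.e.\ $\inf_{Q(n/4)} u \ge \lambda'$ for some $\lambda' = \lambda'(\text{the } \cA_i^\om)$ --- and symmetrically $\sup_{Q(n/4)} u \le 1 - \lambda'$ in the other case. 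Either way $\osc_{Q(n/4)} u \le (1-\lambda') \osc_{Q(n)} u$, giving $\gamma^\om = 1 - \lambda'$. Fifth, unwind the dependence of $\lambda'$ on the random quantities: each constant produced by Proposition~\ref{prop:sobolev}, Proposition~\ref{prop:poincare}, and Theorem~\ref{thm:maximal} is a product of powers of $\norm{1/\theta^\om}_{1,\cdot}$, $\norm{\theta^\om}_{r,\cdot}$, $\norm{\nu^\om}_{q,\cdot}$, $\norm{1 \vee (\mu^\om/\theta^\om)}_{p,\cdot,\theta}$ over the balls $B(n), B(n/2), B(n/4)$, hence $1-\lambda'$ can be written as a continuous, increasing function of $\cA_1^\om(n), \cA_2^\om(n), \cA_3^\om(n)$ together with $\norm{\mu^\om}_{1,B(n)}, \norm{\theta^\om}_{1,B(n)}, \norm{1/\theta^\om}_{1,B(n)}$, and it lies strictly in $(0,1)$ since all these quantities are finite and bounded below; the requirement $n \ge N_3(d)$ is just what is needed so that the radii $n/2, n/4$ differ by more than the $n^{-\Delta}$ gap demanded in Theorem~\ref{thm:maximal} and so that all balls are non-degenerate.

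\textbf{Main obstacle.} The genuinely delicate point is the level-set estimate on $w = (-\ln u)_+$ and its interaction with the speed measure. In the uniformly elliptic case one differentiates $\sum_x (\ln u_t(x)) \phi(x)$ in time and uses $\partial_t u - \cL u \ge 0$ together with a (log-)Poincar\'e inequality to get a uniform-in-time bound on the spatial oscillation of $\ln u_t$, then integrates; with a general $\theta^\om$ one must carry the weight $\theta^\om$ through both the time-derivative computation (it is the reversing measure, so this is natural) and the Poincar\'e step, and crucially control the mismatch between the weighted average $(\ln u_t)_{B,\theta}$ and the unweighted level-set count --- this is precisely the role of the set-version Poincar\'e inequality \eqref{eq:local:PI:weighted_sets} and is where $\cA_3^\om$ and the extra $\norm{\theta^\om}_{1,\cdot}$, $\norm{1/\theta^\om}_{1,\cdot}$ factors enter. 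Verifying that the final $\gamma^\om$ is indeed $<1$ (and not accidentally $1$ due to the $(1 + \theta^\om(B)/\theta^\om(\cN))$ blow-up when $\cN$ is small) requires that the case split is done at a fixed fraction threshold, so that $\theta^\om(\cN) \ge \text{const} \cdot \theta^\om(B)$ always, keeping that factor bounded by a power of $\cA_3^\om$. I expect the rest --- the De Giorgi iteration hidden inside Theorem~\ref{thm:maximal}, and the bookkeeping of the $\cA_i^\om$ dependence --- to be routine given the tools already assembled.
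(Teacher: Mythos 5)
Your proposal is correct and takes essentially the same route as the paper's proof: normalize and WLOG reduce to one direction, use the smoothed-log energy estimate (Lemma~\ref{lemma:energy estimate}) to propagate the mass bound across time slices (Lemma~\ref{lemma:u bound space}) and then control the $\theta$-weighted size of high level sets in the cylinder (Lemma~\ref{lemma:u bound cylinder}), and finally feed the resulting smallness into the De Giorgi maximal inequality of Theorem~\ref{thm:maximal}. The only cosmetic difference is that the paper applies Theorem~\ref{thm:maximal} directly to $u$ with a threshold $h=k_l$ near $M_n$ (so the output is an upper bound $\max_{Q(n/4)}u\leq M_n-2^{-(l+2)}(M_n-m_n)$), rather than to the transformed function $v=(\lambda-u)_+/\lambda$, but this is an equivalent reformulation and does not alter the argument.
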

Before we prove Theorem~\ref{thm:oscillations} we briefly record the following continuity statement for space-time harmonic functions as one of its consequences.
\begin{corollary} \label{cor:cont_caloric}
Suppose that Assumptions~\ref{ass:ergodicity general speed measure} and \ref{ass:limsups} hold.  Let $\de > 0$, $x_0 \in \bbZ^d$ and $\sqrt{t_0}/2 > \de$ be fixed. Suppose $\partial_t u - \cL_t^{\om} u = 0$  on $[0, t_0] \times B(x_0, n)$. For $\prob$-a.e.\ $\om$, there exist  $N_4=N_4(x_0,\om)$ and $\bar{\ga}\in (0,1)$ (only depending on the law of $\om$ and $\theta^\om$) such that 
if $\de n \geq N_4$,   then for any $t\in n^2 [t_0 - \de^2, t_0]$ and $x_1, x_2 \in B(x_0, \de n)$,
  \begin{align*}
    \big|u(t,x_1) - u(t,x_2)\big|
    \leq
    c_5\, \bigg( \frac{\de}{\sqrt{t_0}} \bigg)^{\!\!\vr}\,
    \max_{[3 t_0 / 4, t_0] \times B(x_0, \sqrt{t_0} / 2)} u,
  \end{align*}
  where $\vr := \ln \bar{\ga} / \ln (1/4)$ and $c_5$ depends only on $\bar{\ga}$.
\end{corollary}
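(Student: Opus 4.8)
The plan is to derive Corollary~\ref{cor:cont_caloric} from the oscillation inequality (Theorem~\ref{thm:oscillations}) by a standard dyadic iteration, taking care to pass from the $\om$-dependent contraction factors $\ga^\om(x_0,m)$ to a deterministic one. First I would fix $\om$ and $x_0$ and apply Theorem~\ref{thm:oscillations} on the nested cylinders $Q(m)=[t_0'-m^2,t_0']\times B(x_0,m)$ for $m=n\sqrt{t_0}, n\sqrt{t_0}/4, n\sqrt{t_0}/4^2,\ldots$, so that iterating $j$ times gives
\begin{align*}
\osc_{Q(n\sqrt{t_0}/4^{\,j})} u \;\leq\; \Big(\prod_{i=0}^{j-1}\ga^\om(x_0,\, n\sqrt{t_0}/4^{\,i})\Big)\,\osc_{Q(n\sqrt{t_0})} u,
\end{align*}
valid as long as the smallest radius $n\sqrt{t_0}/4^{\,j}$ stays above $N_3(d)$. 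The point $(t,x_1),(t,x_2)$ with $t\in n^2[t_0-\de^2,t_0]$ and $x_1,x_2\in B(x_0,\de n)$ both lie in the cylinder $Q(n\sqrt{t_0}/4^{\,j})$ (recentred at $(n^2 t_0, x_0)$) once $4^{-j}\gtrsim \de/\sqrt{t_0}$, so choosing $j\approx \log_4(\sqrt{t_0}/\de)$ controls $|u(t,x_1)-u(t,x_2)|$ by the product of the contraction factors times $\osc_{Q(n\sqrt{t_0})}u$, and the latter is bounded by $2\max_{[3t_0/4,t_0]\times B(x_0,\sqrt{t_0}/2)}u$ (the factor $2$ and the specific subcylinder come from the geometry of $Q(n\sqrt{t_0})$; note $u>0$, being a heat kernel, so $\osc \leq \max$).

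The crux is replacing the random product $\prod_i \ga^\om(x_0, n\sqrt{t_0}/4^{\,i})$ by $\bar\ga^{\,j}$ for a deterministic $\bar\ga\in(0,1)$. Here I would use that $\ga^\om(x_0,m)$ is, by the statement of Theorem~\ref{thm:oscillations}, continuous and increasing in its arguments $\cA_1^\om(m),\cA_2^\om(m),\cA_3^\om(m),\norm{\mu^\om}_{1,B(m)},\norm{\theta^\om}_{1,B(m)},\norm{1/\theta^\om}_{1,B(m)}$, each of which is a spatial average of a stationary, integrable (by Assumptions~\ref{ass:ergodicity general speed measure} and \ref{ass:limsups}) random variable over the ball $B(x_0,m)$. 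By the spatial ergodic theorem, for $\prob$-a.e.\ $\om$ each such average converges as $m\to\infty$ to the corresponding expectation, hence $\ga^\om(x_0,m)\to \bar\ga:=\ga(\E[\cdots])\in(0,1)$ as $m\to\infty$ along any sequence of radii tending to infinity. Consequently, for $\prob$-a.e.\ $\om$ there is $N_4(x_0,\om)$ such that $\ga^\om(x_0,m)\leq \tilde\ga$ for all $m\geq N_4$, where $\tilde\ga:=(1+\bar\ga)/2\in(\bar\ga,1)$; I would then state the corollary with this $\tilde\ga$ renamed $\bar\ga$ (any value in $(\bar\ga_{\mathrm{true}},1)$ works and only changes $c_5$ and $\vr$). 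Requiring $\de n\geq N_4$ guarantees that \emph{every} radius appearing in the iteration, namely $n\sqrt{t_0}/4^{\,i}$ for $i=0,\ldots,j$, exceeds $N_4$ (since $\de\leq\sqrt{t_0}/2$ forces $4^{-j}\sqrt{t_0}\gtrsim\de$, so the smallest radius is $\gtrsim \de n \geq N_4$), provided $N_4$ is also taken $\geq N_3(d)$.

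Putting this together: with $j=\lceil\log_4(\sqrt{t_0}/\de)\rceil$ we get $\prod_{i=0}^{j-1}\ga^\om(x_0,\cdot)\leq \bar\ga^{\,j}\leq c\,(\de/\sqrt{t_0})^{\vr}$ where $\vr=\ln\bar\ga/\ln(1/4)>0$, and combining with $\osc_{Q(n\sqrt{t_0})}u\leq 2\max_{[3t_0/4,t_0]\times B(x_0,\sqrt{t_0}/2)}u$ yields the claimed bound with $c_5$ depending only on $\bar\ga$. The main obstacle I anticipate is the bookkeeping in the last paragraph: one must verify that the dyadic sequence of recentred space-time cylinders is genuinely nested and that the target points $(t,x_1),(t,x_2)$ sit inside the $j$-th one — this needs the hypothesis $\sqrt{t_0}/2>\de$ to ensure the time-interval $n^2[t_0-\de^2,t_0]$ and the spatial ball $B(x_0,\de n)$ both fit — and one must make the single exceptional null set (and the single random threshold $N_4$) work simultaneously for all the radii $n\sqrt{t_0}/4^{\,i}$, which is why the ergodic-theorem step is phrased as convergence along the whole sequence of radii rather than for a fixed scale.
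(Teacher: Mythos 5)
Your approach is the same as the paper's (the paper says its Corollary~\ref{cor:cont_caloric} "follows from Theorem~\ref{thm:oscillations} as in \cite[Corollary~2.6]{ACS20}, see also Proposition~\ref{prop:cont_hk} below for a similar proof"): iterate the oscillation inequality on nested dyadic space-time cylinders, use the ergodic theorem to replace $\gamma^\om$ by a deterministic $\bar\gamma$ uniformly in the radii, and extract the H\"older exponent $\vr = \ln\bar\gamma / \ln(1/4)$. Your handling of the random threshold $N_4$ and the justification that all radii $n\sqrt{t_0}/4^i$ appearing in the iteration exceed $N_4$ whenever $\de n \geq N_4$ is correct, and matches what Lemma~\ref{lem:ergodic} provides.

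There is one genuine (if easily repaired) error in your bookkeeping. You start the iteration from $Q(n\sqrt{t_0})$, which, recentred at $(n^2 t_0, x_0)$, is $[0,n^2 t_0]\times B(x_0,n\sqrt{t_0})$, and then claim
\[
\osc_{Q(n\sqrt{t_0})} u \;\leq\; 2\max_{[3t_0/4,t_0]\times B(x_0,\sqrt{t_0}/2)} u.
\]
This inequality cannot hold in general: the set $n^2[3t_0/4,t_0]\times B(x_0,n\sqrt{t_0}/2)$ on the right is a \emph{strict subset} of $Q(n\sqrt{t_0})$, so the oscillation over the larger cylinder can easily exceed (any constant multiple of) the supremum over the smaller one; neither the parabolic maximum principle nor monotonicity of $\osc$ goes in the direction you need, and you cannot appeal to a Harnack inequality since the whole point of this approach is to avoid it. The fix — and what the paper's proof of Proposition~\ref{prop:cont_hk} actually does, by taking $\de_0 = \sqrt{t}/2$ — is to start the iteration one step lower, from $Q(n\sqrt{t_0}/2) = n^2[3t_0/4,t_0]\times B(x_0,n\sqrt{t_0}/2)$, which is \emph{exactly} the cylinder in the right-hand side of the corollary. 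For $u\geq 0$ one then has $\osc_{Q(n\sqrt{t_0}/2)} u \leq \max_{Q(n\sqrt{t_0}/2)} u$ with no factor of $2$ and no containment issue, and the rest of your argument goes through verbatim (the number of iterations becomes $\approx \log_4(\sqrt{t_0}/(2\de))$, which only changes the constant $c_5$). A secondary remark: you justify $u>0$ by saying it is a heat kernel, but the corollary applies to an arbitrary caloric $u$; the nonnegativity is implicit in the statement (otherwise $\max u$ could be negative and the bound would be vacuous) and should simply be assumed rather than derived from the heat-kernel interpretation.
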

\begin{proof}
This follows from Theorem~\ref{thm:oscillations}  as in \cite[Corollary~2.6]{ACS20}, see also Proposition~\ref{prop:cont_hk} below for a similar proof.
\end{proof}

In the remainder of this subsection we will prove Theorem~\ref{thm:oscillations}
 by following the method in \cite{ACS20}, originally used in \cite{wu2006elliptic} for parabolic equations in continuous spaces. Consider the function $g:(0,\infty)\rightarrow[0,\infty)$, which may be regarded as a continuously differentiable version of the function $x\mapsto(-\ln x)_+$, defined by 
\begin{align*}
g(z):=\threepartdef {-\ln z} {z\in(0,\bar c],} {\frac{(z-1)^2}{2 \bar c (1-\bar c)}}{z\in (\bar c,1],}{0}{z\in(1,\infty),}
\end{align*}
where $\bar c\in[\frac{1}{4},\frac{1}{3}]$ is the smallest solution of the equation $2c\ln(1/c)=1-c$. Note that $g\in C^1(0,\infty)$ is convex and non-increasing. Although $g(u)$ is not space-time harmonic, we can still bound its Dirichlet energy as follows.

\begin{lemma}
\label{lemma:energy estimate}
Suppose $u>0$ satisfies $\partial_t u-\mathcal{L}_\theta^\omega u = 0$ on $Q=I\times B$ with $I$ and $B$ as in Lemma~\ref{lemma:aux1}. Let $\eta: \bZ^d\rightarrow[0,1]$ be a cut-off function with
$\supp\eta\subseteq B$ and $\eta\equiv0$ on $\partial B$. Then,
\begin{equation}
\label{eq:energy est}
\partial_t\norm{\eta^2g(u_t)}_{1,B,\theta}+\frac{\mathcal{E}^{\omega,\eta^2}(g(u_t))}{6 \, \theta^\om(B)} \; \leq  \; 6 \, \frac{\norm{1\vee \mu^\omega}_{1,B}}{\norm{\theta^\om}_{1,B}} \,\osr(\eta)^2 \, \norm{\nabla \eta}_{l^\infty(E_d)}^2,
\end{equation}
where $\osr(\eta):=\max\{(\eta(y)/\eta(x))\vee 1\,|\, \{x,y\}\in E_d,\,\eta(x)\neq 0\}$ and
$$\mathcal{E}^{\omega,\eta^2}(f) \; := \; \sum_{e\in E_d } \big(\eta^2(e^+)\wedge\eta^2(e^-)\big) \, \omega(e)\, (\nabla f)^2(e).$$
\end{lemma}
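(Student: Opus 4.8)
The plan is to compute $\partial_t \norm{\eta^2 g(u_t)}_{1,B,\theta}$ directly using the chain rule and the equation $\partial_t u_t = \cL_\theta^\om u_t$, and then to exploit the convexity of $g$ to produce the Dirichlet energy term on the left while absorbing the remainder into the right-hand side. Concretely, $\partial_t \norm{\eta^2 g(u_t)}_{1,B,\theta} = \frac{1}{\theta^\om(B)}\sum_x \eta^2(x)\, g'(u_t(x))\, \theta^\om(x)\, (\cL_\theta^\om u_t)(x)$, and since $(\cL_\theta^\om u_t)(x) = \frac{1}{\theta^\om(x)}\sum_{y\sim x}\om(x,y)(u_t(y)-u_t(x))$, the $\theta^\om(x)$ cancels and we get $\frac{1}{\theta^\om(B)}\sum_x \eta^2(x) g'(u_t(x)) \sum_{y\sim x}\om(x,y)(u_t(y)-u_t(x))$. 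Symmetrizing over edges, this equals $-\frac{1}{2\theta^\om(B)}\sum_{e}\om(e)\,\nabla(u_t)(e)\,\nabla(\eta^2 g'(u_t))(e)$.

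The heart of the matter is the pointwise edge inequality: for an edge $e=\{x,y\}$, one wants to bound $-\nabla(u_t)(e)\,\nabla(\eta^2 g'(u_t))(e)$ from above by something like $-c\,(\eta^2(e^+)\wedge\eta^2(e^-))\,(\nabla g(u_t))^2(e) + (\text{error involving }\nabla\eta)$. I would write $\nabla(\eta^2 g'(u))(e) = \eta^2(x) g'(u(x)) - \eta^2(y) g'(u(y))$ and split it via a discrete product rule into a term proportional to $\nabla(g'(u))(e)$ times a common value of $\eta^2$, plus a term proportional to $\nabla(\eta^2)(e)$ times a value of $g'(u)$. For the main term, use that $g$ is convex and non-increasing, so $-\nabla(u)(e)\,\nabla(g'(u))(e) = -(u(x)-u(y))(g'(u(x))-g'(u(y))) \leq 0$, and moreover one can compare $-(u(x)-u(y))(g'(u(x))-g'(u(y)))$ with $-(g(u(x))-g(u(y)))^2$ up to a constant — this is where the specific form of $g$ (the $-\ln z$ piece on $(0,\bar c]$ and the quadratic gluing, designed so that $g'$, $(g')^2$ and $g''$ are comparable in the right way) is used, together with the bound $|\nabla u(e)| \le \mu^\om$-type control coming from $u>0$ and $g' \le 0$. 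For the cross term, Young's inequality splits off a small multiple of the Dirichlet energy (absorbed left) against a term of order $\osr(\eta)^2 \norm{\nabla\eta}_\infty^2$ times $\om(e)$-weighted quantities that sum to $\norm{1\vee\mu^\om}_{1,B}$; the $\osr(\eta)$ factor appears precisely because one must relate $\eta^2$ at one endpoint to $\eta^2$ at the other when $\eta$ is not constant across $e$.

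I would then sum the edge inequality over $e\in E_d$, divide by $2\theta^\om(B)$, and collect terms: the good negative term gives $-\frac{1}{6\theta^\om(B)}\cE^{\om,\eta^2}(g(u_t))$ (the constant $6$ being the slack left after Young's inequality), and the error terms give the right-hand side $6\,\frac{\norm{1\vee\mu^\om}_{1,B}}{\norm{\theta^\om}_{1,B}}\osr(\eta)^2\norm{\nabla\eta}_{l^\infty(E_d)}^2$, using $\theta^\om(B)/|B| = \norm{\theta^\om}_{1,B}$ to convert the normalization. The main obstacle is the pointwise edge estimate: getting the comparison between $-(u(x)-u(y))(g'(u(x))-g'(u(y)))$ and $(\nabla g(u))^2(e)$ uniform in the values of $u$ at the two endpoints, handling separately the cases where both endpoints lie in the logarithmic regime $(0,\bar c]$, both in the quadratic gluing regime, both in $(1,\infty)$ (where $g\equiv 0$ and there is nothing to do), and the mixed cases where the edge straddles two regimes — the value $\bar c\in[\tfrac14,\tfrac13]$ solving $2c\ln(1/c)=1-c$ is chosen exactly to make $g\in C^1$ and to make these case analyses close with clean constants. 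This is a modification of \cite[Lemma 2.10]{ACS20} and I would follow that argument, carrying the speed measure $\theta^\om$ through the normalizations as the only substantive change.
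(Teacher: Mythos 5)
Your proposal is correct and follows the same route as the paper, which itself only cites the analogous argument in \cite{ACS20}: time-differentiate the $\theta$-weighted $\ell^1$-norm, use $\partial_t u = \cL_\theta^\om u$ so that $\theta^\om(x)$ cancels against the denominator in the generator, discrete-integrate by parts, and then run the pointwise edge inequality for $g$ (convexity, the $g'(z)z$-boundedness baked into the choice of $\bar c$, Young's inequality to split off $\osr(\eta)^2\norm{\nabla\eta}_{l^\infty}^2$) exactly as in the constant-speed case, with $\theta^\om$ entering only through the normalisations $\theta^\om(B)/|B|=\norm{\theta^\om}_{1,B}$. This is precisely the observation the paper relies on.
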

\begin{proof}
This follows by the same arguments as in \cite[Lemma~2.11]{ACS20}.
\end{proof}

Now, define
\begin{equation}
\label{eq:M_n m_n}
M_n \; := \; \sup_{(t,x)\in Q(n)} u(t,x)\quad\text{and}\quad m_n \;:= \;\inf_{(t,x)\in Q(n)}u(t,x).
\end{equation}
For the purposes of the next lemma, given $k_0\in\R$, we denote 
\begin{equation}
\label{eq:k_js}
k_j\;:= \; M_n-2^{-j}(M_n-k_0), \qquad j\in\N.
\end{equation}
Also recall the definition of $\cA_3^\om(n)$ right before Theorem~\ref{thm:oscillations}.

\begin{lemma}
\label{lemma:u bound space}
 Let $t_0\in\R,x_0\in \bZ^d,$ and $u$ be such that $\partial_t u-\mathcal{L}_\theta^\omega u=0$ on $Q(n)$ for $n\geq 4$. Let $\eta:\bZ^d\rightarrow[0,1]$ be the spatial cut-off function $\eta(x):=[1- 2 d(x_0,x)/n]_+$.  
Suppose, for some $k_0\in \R$,
\begin{equation}
\label{eq:ass u1}
\frac{1}{n^2}\int_{t_0-n^2}^{t_0}\norm{\indicator_{\{u_t\leq k_0\}}}_{1,B(n),\eta^2\theta} \, dt \; \geq \; \frac{1}{2}.
\end{equation}
Then there exist $c_6,c_7 >0$ such that for any $\delta\in (0,1/4 c_7 \cA_3^\om(n))$ and any 
\begin{align*}
j\geq 1+  \frac{c_6 \, \norm{1\vee \mu^\omega}_{1,B(n)} \, \norm{1\vee (1/\theta^\om)}_{1,B(n)}}{\frac 1 4 - c_7 \delta \cA_3^\om(n)}
\end{align*}
we have that 
\begin{align*}
\norm{\indicator_{\{u_t\leq k_j\}}}_{1,B(n/2),\theta} \; \geq \; \delta,\qquad\forall t\in\big[t_0-\tfrac{1}{4}n^2,t_0\big].
\end{align*}
\end{lemma}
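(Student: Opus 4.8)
The plan is to integrate the energy estimate of Lemma~\ref{lemma:energy estimate} in time, applied to the function $g$ composed with a suitably rescaled and shifted version of $u$, and then to extract the bound on the level sets from a contradiction argument. More precisely, fix $j\geq 1$ and consider the positive function $w_t := (u_t - k_{j-1})/(k_j - k_{j-1}) = (u_t - k_{j-1}) \cdot 2^{j}/(M_n - k_0)$ — wait, better to normalise so that $w>0$ and the event $\{u_t \leq k_j\}$ corresponds to a sublevel set of $w$ where $g(w)$ is bounded below. One convenient choice is $w_t := (M_n - u_t)/(M_n - k_j)$, which is positive on $Q(n)$ (since $u < M_n$ can be assumed, else the statement is vacuous after a limiting argument), satisfies $\partial_t w - \cL^\om_\theta w = 0$, and has $w_t \geq 1$ precisely on $\{u_t \leq k_j\}$; then $g(w_t) = 0$ there. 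That is the wrong direction, so instead take $w_t := (M_n - u_t)/(M_n - k_0)$ and observe $\{u_t \leq k_j\} = \{w_t \geq 1 - 2^{-j}\cdot\text{(something)}\}$; I will calibrate the constant so that on the complement, $g(w_t)$ is at least a fixed positive multiple of $j$. The key point is that $g(z) = -\ln z$ for small $z$, so $g$ of a function bounded away from $1$ from \emph{above} picks up a factor growing linearly in the number of dyadic levels $j$.

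With the right normalisation, the argument runs as follows. Apply Lemma~\ref{lemma:energy estimate} with $\eta(x) = [1 - 2d(x_0,x)/n]_+$, whose support lies in $B(n/2)$, which vanishes on $\partial B(n/2)$, has $\norm{\nabla\eta}_{l^\infty(E_d)} \leq 2/n$ and $\osr(\eta) \leq c$ (a dimensional constant, since $\eta$ decreases by at most a bounded factor across any edge where it is nonzero, away from the tip — one checks $\osr(\eta)\leq 3$ say). Dropping the nonnegative Dirichlet-energy term and integrating \eqref{eq:energy est} over $t \in [t_0 - n^2, s]$ for $s \in [t_0 - \tfrac14 n^2, t_0]$ gives
\begin{align*}
\norm{\eta^2 g(w_s)}_{1,B(n),\theta} \;\leq\; \norm{\eta^2 g(w_{t_0-n^2})}_{1,B(n),\theta} + 6\,\frac{\norm{1\vee\mu^\om}_{1,B(n)}}{\norm{\theta^\om}_{1,B(n)}}\,\osr(\eta)^2\,\norm{\nabla\eta}_{l^\infty(E_d)}^2\,n^2.
\end{align*}
The last term is bounded by $c\,\norm{1\vee\mu^\om}_{1,B(n)}/\norm{\theta^\om}_{1,B(n)}$. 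For the initial term, I use the calibration of $w$ together with the hypothesis \eqref{eq:ass u1}: on $\{u_t \leq k_0\}$ one has $g(w_t) = 0$ (if $w$ is normalised so $w_t \leq$ the relevant threshold exactly there), so \eqref{eq:ass u1} says the mass of the region where $g(w_{t_0-n^2})$ vanishes is at least half, hence — after converting between the $\eta^2\theta$-average and $\theta$-averages using $\cA_3^\om(n)$ and Jensen — the initial term is controlled by $c\,g(\text{max value})$, but in fact the cleanest route is to use that $g \leq -\ln(\cdot)$ and $g$ vanishes on half the $\eta^2\theta$-mass to bound $\norm{\eta^2 g(w_{t_0-n^2})}_{1,B(n),\theta}$ directly in terms of $\osc$ via $M_n, m_n$; here the factor $\norm{1\vee(1/\theta^\om)}_{1,B(n)}$ enters when passing from $\eta^2\theta$ to $\theta$ normalisation on the smaller ball $B(n/2)$.

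Now for the lower bound: suppose, for contradiction, that at some time $t \in [t_0 - \tfrac14 n^2, t_0]$ we have $\norm{\indicator_{\{u_t \leq k_j\}}}_{1,B(n/2),\theta} < \delta$. Then on $B(n/2)$, outside a set of $\theta$-mass fraction $<\delta$, we have $u_t > k_j$, equivalently $w_t < $ the corresponding threshold, so $g(w_t) \geq c\, j$ (linear growth of $-\ln$ through the dyadic scale, using $k_j = M_n - 2^{-j}(M_n - k_0)$). Since $\eta \equiv 1$ on $B(n/4)$ and $\eta^2 \geq$ const on a fixed fraction of $B(n/2)$, this forces $\norm{\eta^2 g(w_t)}_{1,B(n),\theta} \geq c\,j\,(1 - \delta\cdot c\,\cA_3^\om(n))$ — the $\delta\cA_3^\om(n)$ correction coming from discarding the small bad set after renormalising between $B(n)$ and $B(n/2)$ with $\osr(\eta)^2$ and $\cA_3^\om$ factors. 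Combining with the upper bound from the integrated energy estimate, $c\,j\,(1 - c\,\delta\,\cA_3^\om(n)) \leq c\,\norm{1\vee\mu^\om}_{1,B(n)}\norm{1\vee(1/\theta^\om)}_{1,B(n)}$, which is false once $\delta < 1/(4c_7\cA_3^\om(n))$ and $j \geq 1 + c_6\norm{1\vee\mu^\om}_{1,B(n)}\norm{1\vee(1/\theta^\om)}_{1,B(n)}/(\tfrac14 - c_7\delta\cA_3^\om(n))$, giving the claim.

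\textbf{Main obstacle.} The delicate point is the bookkeeping of the cut-off renormalisations: passing between the weighted $\eta^2\theta$-average over $B(n)$, the $\theta$-average over $B(n/2)$, and the region $B(n/4)$ where $\eta$ is of order one, while keeping track of exactly which combination of $\norm{\theta^\om}_{1,\cdot}$, $\norm{1/\theta^\om}_{1,\cdot}$, $\osr(\eta)$ and $\norm{1\vee\mu^\om}_{1,\cdot}$ appears — this is what produces the precise constants $c_6, c_7$ and the quantity $\cA_3^\om(n) = \norm{1/\theta^\om}_{1,B(n/4)}\norm{\theta^\om}_{1,B(n/2)}$ in the statement. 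One must also handle the edge case $M_n = m_n$ (then $u$ is constant and everything is trivial) and verify $\osr(\eta) \leq c(d)$ for the linear tent function $\eta$, which is elementary but needs checking near the tip where $\eta$ can drop from $2/n$ to $0$ — there $\osr$ is genuinely unbounded as a ratio, so one must instead observe that the definition of $\osr(\eta)$ only involves edges with $\eta(x) \neq 0$, and across such an edge $\eta(y)/\eta(x) \leq 1 + 2/n/\eta(x)$; since $\eta(x) \geq 2/n$ whenever $\eta(x)\neq 0$ on the lattice, $\osr(\eta) \leq 2$. This is the kind of point where a naive estimate would fail, so I would flag it explicitly rather than absorb it into a constant.
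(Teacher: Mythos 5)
Your overall strategy — apply Lemma~\ref{lemma:energy estimate} to $g$ of the normalized function $w_t=(M_n-u_t)/(M_n-k_0)$, integrate in time, then argue by contradiction on the level sets — is the paper's approach, and you correctly identify $w$ (the paper's $v$), the role of $\eta^2\theta$-averages, and the point that $\osr(\eta)$ is bounded because $\eta\geq 2/n$ on its support. But there is a genuine gap in how you treat the initial time slice. You integrate the energy estimate from $t_0-n^2$ and then claim that \eqref{eq:ass u1} ``says the mass of the region where $g(w_{t_0-n^2})$ vanishes is at least half.'' That is a misreading: \eqref{eq:ass u1} is a \emph{time-averaged} condition over the whole interval $[t_0-n^2,t_0]$ and gives you nothing at the specific slice $t=t_0-n^2$ — the set $\{u_{t_0-n^2}\leq k_0\}$ could perfectly well be empty. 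The paper resolves this with a pigeonhole step: since the time-average over an interval of length $n^2$ is $\geq 1/2$, the inequality $\norm{\indicator_{\{v_s<1\}}}_{1,B(n),\eta^2\theta}>3/4$ cannot hold for \emph{all} $s\in[t_0-n^2,t_0-\tfrac13 n^2]$ (that already contributes $>\tfrac34\cdot\tfrac23=\tfrac12$ to the complementary integral), so there exists $s_0\in[t_0-n^2,t_0-\tfrac13 n^2]$ with $\norm{\indicator_{\{u_{s_0}\leq k_0\}}}_{1,B(n),\eta^2\theta}\geq\tfrac14$, and one then integrates the energy estimate from $s_0$, not from $t_0-n^2$. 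The choice $s_0\leq t_0-\tfrac13 n^2$ also guarantees $s_0\leq t$ for every $t\in[t_0-\tfrac14 n^2,t_0]$.

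A second, related issue: even at the good time $s_0$, you still need an upper bound on $\norm{\eta^2 g(w_{s_0})}_{1,B(n),\theta}$ on the remaining (up to) $3/4$ of the $\eta^2\theta$-mass, and your suggestion to bound this ``in terms of $\osc$ via $M_n,m_n$'' does not work because $g(z)=-\ln z$ is unbounded as $z\downarrow 0$ (points where $u_{s_0}$ is near $M_n$). The paper sidesteps this by working with $g(v+\eps_j)$ with $\eps_j=2^{-j}$, which is still space-time harmonic after shifting and satisfies $g(v_{s_0}+\eps_j)\leq g(\eps_j)=j\ln 2$, so the initial term grows only linearly in $j$ — the same rate as the lower bound $g(h_j+\eps_j)\gtrsim (j-1)\ln 2$ coming from the level set $\{v_t<h_j\}$ — and the two can be compared. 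Without the $\eps_j$-regularization the comparison of the initial term against the linear-in-$j$ lower bound cannot be made. These two points are the substance of the lemma; the bookkeeping you flag (passing between $\eta^2\theta$-averages and $\theta$-averages, producing $\cA_3^\om$) is secondary.
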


\begin{proof}
Set
$$v_t(x):=\frac{M_n-u_t(x)}{M_n-k_0},\qquad h_j=\epsilon_j:=2^{-j},\quad j\in\N.$$
Then $\partial_t(v+\eps_j)-\mathcal{L}_\theta^\omega(v+\eps_j)=0$  on $Q(n)$ for all $j\in\N$ and, for any $x\in\bZ^d$,  $u_t(x)>k_j$ if and only if $v_t(x)<h_j$. By \eqref{eq:ass u1} there exists $s_0\in[t_0-n^2,t_0-\frac{1}{3}n^2]$ such that
\begin{align} \label{eq:est_vs0}
\norm{\indicator_{\{v_{s_0}<1\}}}_{1,B(n),\eta^2\theta} \; \leq \; \frac{3}{4}.
\end{align}
To see this, assume the contrary is true, that is $\norm{\indicator_{\{v_s<1\}}}_{1,B(n),\eta^2\theta}>\frac{3}{4}$ for all $s\in[t_0-n^2,t_0-\frac{1}{3}n^2]$. Then
\begin{align*}
&\frac{1}{2} \;\geq \; \frac{1}{n^2}\int_{t_0-n^2}^{t_0}\norm{\indicator_{\{u_t> k_0\}}}_{1,B,\eta^2\theta} \, dt \; =\; \frac{1}{n^2}\int_{t_0-n^2}^{t_0}\norm{\indicator_{\{v_t<1\}}}_{1,B,\eta^2\theta} \, dt \\
& \mspace{36mu}
\; > \; \frac{1}{n^2}\int_{t_0-n^2}^{t_0-\frac{1}{3}n^2}\frac{3}{4} \, dt \; =\; \frac{1}{2},
\end{align*}
which is a contradiction.
Let $t\in[t_0-\frac{1}{4}n^2,t_0]$. By integrating the estimate \eqref{eq:energy est} over the interval $[s_0,t]$, noting that $\norm{\nabla\eta}_{l^\infty(E)}\leq2/n$, $\osr(\eta)\leq2$ and $t-s_0\leq n^2$, 
\begin{align*}
\norm{g(v_t+\eps_j)}_{1,B(n),\eta^2\theta}& \; \leq \; \norm{g(v_{s_0}+\eps_j)}_{1,B(n),\eta^2\theta} \,+ \, c \, \norm{1\vee \mu^\omega}_{1,B(n)} \, \norm{\theta^\om}_{1,B(n)}^{-1}.
\end{align*}
 Since $g$ is non-increasing and identically zero on $[1,\infty)$, using \eqref{eq:est_vs0} we have
\begin{align*}
\norm{g(v_{s_0}+\eps_j)}_{1,B(n),\eta^2\theta} \; \leq \; g(\eps_j)\, \norm{\indicator_{\{v_{s_0}<1\}}}_{1,B(n),\eta^2\theta} \; \leq \;  \frac{3}{4}\, g(\eps_j),
\end{align*}
and
\begin{align*}
\norm{g(v_{t}+\eps_j)}_{1,B(n),\eta^2\theta} \; \geq \; g(h_j+\eps_j)\, \norm{\indicator_{\{v_t<h_j\}}}_{1,B(n),\eta^2\theta}.
\end{align*}
So, combining the above, for $j\geq 2$
\begin{align*}
\norm{\indicator_{\{v_t<h_j\}}}_{1,B(n),\eta^2\theta}& \; \leq \; 
\frac 3 4 \, \frac{g(\eps_j)}{g(h_j+\eps_j)} \, + \, \frac{c}{g(h_j+\eps_j)} \,  \norm{1\vee \mu^\omega}_{1,B(n)} \, \norm{\theta^\om}_{1,B(n)}^{-1} \\
&\; \leq \; \frac{3}{4} \, \Big(1+\frac{1}{j-1}\Big) \, + \, \frac{c}{j-1} \, \norm{1\vee \mu^\omega}_{1,B(n)} \, \norm{1\vee (1/\theta^\om)}_{1,B(n)}.
\end{align*}
Then, since $\eta\equiv0$ on $B(n/2)^c$, 
\begin{align} \label{eq:lemma1}
& \norm{\indicator_{\{u_t\leq k_j\}}}_{1,B(n/2),\theta} \; = \; \frac{\langle\eta^2\theta^\om,1\rangle_{\ell^2(\bbZ^d)}}{\theta^\om(B(n/2))} \, \Big(1-\norm{\indicator_{\{v_t< h_j\}}}_{1,B(n),\eta^2\theta}\Big) \nonumber \\
& \mspace{36mu} \; \geq \;\frac{\langle\eta^2\theta^\om,1\rangle_{\ell^2(\bbZ^d)}}{\theta^\om(B(n/2))} \,  \bigg(\frac{1}{4} -\, \frac{c}{j-1} \, \norm{1\vee \mu^\omega}_{1,B(n)} \, \norm{1\vee (1/\theta^\om)}_{1,B(n)} \bigg).
\end{align}
Note that $\langle\eta^2\theta^\om,1\rangle_{\ell^2(\bbZ^d)}/\theta^\om(B(n/2)) \in(0,1)$ and  since $\eta\geq 1/2$ on $B(n/4)$,
\begin{align*}
\frac{\langle\eta^2\theta^\om,1\rangle_{\ell^2(\bbZ^d)}}{\theta^\om(B(n/2))} \; \geq \; c \, \norm{\theta^\om}_{1,B(n/4)} \, \norm{\theta^\om}_{1,B(n/2)}^{-1}.
\end{align*}
%
%
By combining this inequality above with (\ref{eq:lemma1}) and using that 
\begin{align*}
j-1 \; \geq \;   \frac{c_6 \, \norm{1\vee \mu^\omega}_{1,B(n)} \, \norm{1\vee (1/\theta^\om)}_{1,B(n)}}{\frac 1 4 - c_7 \delta \norm{\theta^\om}_{1,B(n/4)}^{-1} \, \norm{\theta^\om}_{1,B(n/2)} }
\end{align*}
 by Jensen's inequality, we get the claim.
\end{proof}

\begin{lemma}
\label{lemma:u bound cylinder}
Set $\tau:=1/4$ and $\sigma:=1/2$. Let $t_0\in \R,\,x_0\in \bZ^d,\,n\geq 4$, and suppose $u$ satisfies $\partial_t u-\mathcal{L}_\theta^\omega u=0$ on $Q(n)$. Assume there exist $\delta>0$ and $i_0\in\N$  such that 
\begin{align} \label{eq:ass:apriori}
\norm{\indicator_{\{u_t\leq k_{i_0}\}}}_{1,B(x_0,\sigma n),\theta} \;\geq \; \delta,\qquad\forall t\in I_\tau=\big[t_0-\tfrac{1}{4}n^2,t_0\big].
\end{align}
Let $\eps\in(0,1)$ be arbitrary. Then there exists 
\begin{align*}
j_0=j_0\big(\eps,\delta, i_0, \cA^\om_1(n), \norm{\mu^\omega}_{1,B(n)}, \norm{\theta^\om}_{1,B(n)} \big)\in\bbN  \quad  \text{with $j_0\geq i_0$,}
\end{align*} 
which is continuous and decreasing in the first two components and continuous and increasing in the other components,
such that
\begin{align*}
\norm{\indicator_{\{u>k_j\}}}_{1,1,Q_{\tau,\sigma}(n),\theta} \; \leq \; \eps, \qquad\forall\,  j\geq j_0.
\end{align*}
\end{lemma}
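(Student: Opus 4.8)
The plan is to iterate the spatial estimate provided by Lemma~\ref{lemma:u bound space} against the energy estimate of Lemma~\ref{lemma:energy estimate}, turning an $L^1$-in-time lower bound on the level sets $\{u_t \le k_{i_0}\}$ into an $L^1$-in-space-time \emph{upper} bound on $\{u > k_j\}$ for large $j$, by exploiting that the function $g$ (the regularized $(-\ln z)_+$) blows up like $\ln(1/z)$ near zero while the Dirichlet-energy contribution from the cutoff stays bounded. Concretely, I would again work with $v_t(x) := (M_n - u_t(x))/(M_n - k_{i_0})$, so that $v \ge 0$, $v + \eps_j$ is still a solution of $\partial_t w - \cL_\theta^\om w = 0$ on $Q(n)$ for $\eps_j := 2^{-j}$, and $\{u_t \le k_j\} = \{v_t \ge 1 - 2^{-(j - i_0)}\} \supseteq \{v_t \ge 1\}$ while $\{u_t > k_j\} = \{v_t + \eps_j < h_j + \eps_j\}$ with $h_j = 2^{-(j-i_0)}$ (suitably indexed). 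The hypothesis \eqref{eq:ass:apriori} says that $\norm{\indicator_{\{v_t \ge 1\}}}_{1, B(x_0, \sigma n), \theta} \ge \delta$ uniformly for $t \in I_\tau$.

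\medskip

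First I would apply Lemma~\ref{lemma:energy estimate} with the cutoff $\eta(x) := [1 - 2d(x_0,x)/n]_+$ (so $\supp \eta \subseteq B(x_0, n/2)$, $\eta \equiv 0$ on $\partial B(x_0,n/2)$, $\osr(\eta) \le 2$ and $\norm{\nabla \eta}_{\ell^\infty(E_d)} \le 2/n$) to the solution $v + \eps_j$ on $I_\tau \times B(x_0, n/2)$. Integrating \eqref{eq:energy est} in time from an arbitrary $s \in I_\tau$ to $t \in I_\tau$ gives
\begin{align*}
\norm{\eta^2 g(v_t + \eps_j)}_{1, B, \theta}
\;\le\;
\norm{\eta^2 g(v_s + \eps_j)}_{1, B, \theta}
\,+\,
c \,\frac{\norm{1 \vee \mu^\om}_{1, B(n)}}{\norm{\theta^\om}_{1, B(n)}},
\end{align*}
using $t - s \le n^2/4$. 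Averaging this inequality over $s \in I_\tau$ and using \eqref{eq:ass:apriori}, which forces the average of $\norm{\eta^2 g(v_s + \eps_j)}_{1,B,\theta}$ to be bounded above by $(1-\delta')g(\eps_j)$ for a suitable $\delta' > 0$ comparable to $\delta$ (since on the set $\{v_s \ge 1\}$ of $\eta^2\theta$-mass at least a constant multiple of $\delta$ we have $g(v_s + \eps_j) \le g(1) = 0$, so the integrand only sees the complementary set), yields the key bound
\begin{align*}
\frac{1}{|I_\tau|}\int_{I_\tau} \norm{\eta^2 g(v_t + \eps_j)}_{1,B,\theta}\, dt
\;\le\;
(1 - \delta'') \, g(\eps_j) \,+\, c\,\frac{\norm{1\vee \mu^\om}_{1,B(n)}}{\norm{\theta^\om}_{1,B(n)}}.
\end{align*}

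\medskip

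Second, on the set $\{u_t > k_j\} = \{v_t < h_j\}$ we have $g(v_t + \eps_j) \ge g(h_j + \eps_j) \ge g(2 \cdot 2^{-(j - i_0)})$, which is of order $j - i_0$ up to additive constants, so Chebyshev in the form
\begin{align*}
\norm{\indicator_{\{u_t > k_j\}}}_{1, B(x_0, n/2), \eta^2\theta}
\;\le\;
\frac{1}{g(h_j + \eps_j)} \, \norm{\eta^2 g(v_t + \eps_j)}_{1, B, \theta}
\end{align*}
combined with the previous display gives, after integrating in $t$ and recalling $g(\eps_j)/g(h_j + \eps_j) \to 1$ as $j \to \infty$ uniformly in $i_0$ (with $h_j = 2 \eps_j$ after reindexing so the ratio is $\le 1 + c/(j - i_0)$),
\begin{align*}
\norm{\indicator_{\{u > k_j\}}}_{1, 1, Q_{\tau, \sigma}(n), \eta^2\theta}
\;\le\;
(1 - \delta'')\Big(1 + \tfrac{c}{j - i_0}\Big)
\,+\,
\frac{c\,\norm{1\vee \mu^\om}_{1,B(n)}\,\norm{1\vee(1/\theta^\om)}_{1,B(n)}}{j - i_0}.
\end{align*}
The right-hand side is \emph{not} yet small — its limsup as $j \to \infty$ is $1 - \delta''$, not $0$. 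To close the gap to $\eps$ I would iterate: rerun the argument with $i_0$ replaced by the largest index $j$ for which the level set is still large, using the improved bound as the new input to Lemma~\ref{lemma:u bound space}; each pass multiplies the measure of the large level set by a factor bounded away from $1$ (depending on $\delta, \norm{\mu^\om}_{1,B(n)}, \norm{\theta^\om}_{1,B(n)}, \cA_1^\om(n)$), so after $\lceil \ln \eps / \ln(1 - \delta'') \rceil$ rounds the level-set average drops below $\eps$. Finally, convert from $\eta^2\theta$-norm back to $\theta$-norm on $B(x_0, \sigma n)$ using $\eta \ge 1/2$ there and the volume-regularity comparison $\langle \eta^2 \theta^\om, 1\rangle / \theta^\om(B(x_0,\sigma n)) \ge c$, absorbing the resulting constant into $j_0$. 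Tracking the dependence of $j_0$ on the parameters through the (finitely many) iteration rounds gives the claimed monotonicity: $j_0$ decreases in $\eps$ and $\delta$ and increases in $\cA_1^\om(n), \norm{\mu^\om}_{1,B(n)}, \norm{\theta^\om}_{1,B(n)}$, and $j_0 \ge i_0$ by construction.

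\medskip

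The main obstacle is the bootstrapping: a single application of the energy/Chebyshev argument only shows the bad level set shrinks by a fixed multiplicative factor, not that it becomes arbitrarily small, so one has to feed the output back through Lemma~\ref{lemma:u bound space} a controlled (but $\eps$- and $\delta$-dependent) number of times and verify that the constants, the index shifts $i_0 \mapsto j_0^{(1)} \mapsto j_0^{(2)} \mapsto \cdots$, and the monotonicity in the geometric quantities $\cA_1^\om(n)$ etc.\ propagate cleanly through the iteration without blowing up. Keeping the cutoff $\eta$ and the pair $(\tau,\sigma) = (1/4, 1/2)$ fixed throughout (rather than shrinking cylinders at each step, as in the maximal inequality) is what makes this manageable.
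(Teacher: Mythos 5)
Your argument has a genuine gap: you do not use the weighted Poincar\'e inequality, and the bootstrap you propose to compensate for the resulting loss does not close.

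The paper's proof does not integrate the energy estimate of Lemma~\ref{lemma:energy estimate} linearly in time as you do in your first step. Instead, for each fixed $t\in I_\tau$ it applies the weighted Poincar\'e inequality \eqref{eq:local:PI:weighted_sets} with $\cN_t:=\{x\in B_\sigma : g(w_t(x)+\eps_j)=0\}$; the hypothesis \eqref{eq:ass:apriori} is used precisely to ensure $\theta^\om(\cN_t)/\theta^\om(B_\sigma)\geq\delta$, so the Poincar\'e prefactor $(1+\theta^\om(B_\sigma)/\theta^\om(\cN_t))^2$ is controlled by $1/\delta^2$. This bounds the \emph{square} $\norm{g(w_t+\eps_j)}_{1,B_\sigma,\theta}^2$ by the Dirichlet energy $\mathcal{E}^{\om,\eta^2}(g(w_t+\eps_j))$. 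Only then does one integrate the energy estimate over $I_\tau$: this bounds $\int_{I_\tau}\mathcal{E}^{\om,\eta^2}(g(w_t+\eps_j))\,dt$ by the \emph{first power} $\norm{\eta^2 g(w_{t_0-\tau n^2}+\eps_j)}_{1,B,\theta}\leq g(\eps_j)\sim j$ plus a constant. The square on the left versus the first power on the right is the entire point: after Chebyshev,
\begin{align*}
\norm{\indicator_{\{w<h_j\}}}_{1,1,Q_{\tau,\sigma},\theta}^2
\;\lesssim\;
\frac{1}{\delta^2}\,\cA_1^\om(n)\,\norm{1\vee\theta^\om}_{1,B(n)}\,\norm{1\vee\mu^\om}_{1,B(n)}\,
\Big(\frac{g(\eps_j)}{g(h_j+\eps_j)^2}+\frac{1}{g(h_j+\eps_j)^2}\Big)
\;\sim\;\frac{j}{(j-1)^2},
\end{align*}
which tends to $0$. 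No iteration is needed.

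You correctly notice that your linear Chebyshev estimate only yields a bound whose limit superior is $1-\delta''>0$, because $g(\eps_j)/g(h_j+\eps_j)\to 1$. But the bootstrap you then sketch does not repair this. To re-enter Lemma~\ref{lemma:u bound space} you need the time-averaged $\eta^2\theta$-mass of the good level set to clear the fixed threshold $1/2$ hard-wired into its hypothesis \eqref{eq:ass u1}; your intermediate output only guarantees a time-averaged mass of at least $\delta''$, which is in general well below $1/2$, and relaxing the $1/2$ destroys the margin $\tfrac14-c_7\delta\cA_3^\om(n)$ that the proof of that lemma needs. Iterating the cylinder argument alone is not an option either: its input is a uniform-in-time lower bound on the spatial mass of the good set, while its output is a time-averaged upper bound on the bad set, and these two statements do not compose without another pass through Lemma~\ref{lemma:u bound space}. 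The fix is to insert the Poincar\'e inequality \eqref{eq:local:PI:weighted_sets} at each fixed time before integrating the energy estimate; then the argument closes in a single pass and yields the monotone dependence of $j_0$ on $\cA_1^\om(n)$ (which enters through the Poincar\'e constant), $\norm{\mu^\om}_{1,B(n)}$ and $\norm{\theta^\om}_{1,B(n)}$ exactly as stated.
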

\begin{proof}
Let $\eta:\bZ^d\rightarrow[0,1]$ be a cut-off function such that $\supp\eta\subseteq B(n)$, $\eta\equiv1$ on $B_{\sigma}$ and $\eta\equiv 0$ on $\partial B(n)$  with linear decay on $B(n)\setminus B_{\sigma}$. So $\norm{\nabla \eta}_{\ell^\infty(E_d)}\leq 2/n$ and $\osr(\eta)\leq 2$. Now, let
\begin{align*}
w_t(x)\; := \; \frac{M_n-u_t(x)}{M_n-k_{i_0}}\quad \text{and} \quad h_j=\eps_j:=2^{-j}.
\end{align*}
Then $w\geq 0$ and $\partial_t(w+\eps_j)-\mathcal{L}_\theta^\omega(w+\eps_j)=0$ on $Q(n)$ for $j\in\N$. For any $t\in I_\tau$, let $\mathcal{N}_t:=\{x\in B_\sigma:g(w_t(x)+\eps_j)=0\}$. Since  $g\equiv 0$ on $(1,\infty)$ by its definition,  
\begin{align*}
\frac{\theta^\om(\mathcal{N}_t)}{\theta^\om(B_\sigma)} 
\;= \;
\norm{\indicator_{\{g(w_t+\eps_j)=0\}}}_{1,B_\sigma,\theta} 
\; \geq \; 
\norm{\indicator_{\{w_t\geq 1\}}}_{1,B_\sigma,\theta}
\; = \;
\norm{\indicator_{\{u_t\leq k_{i_0}\}}}_{1,B_\sigma,\theta}
\;\geq \; \delta,
\end{align*}
where we used \eqref{eq:ass:apriori}  in the last step. By Proposition \ref{prop:poincare} we have
\begin{align*}
\norm{g(w_t+\eps_j)}_{1,B_\sigma,\theta}^2 \;
\leq \;  c_7 \, n^2 \,  \cA^\om_1(\sigma n) \, \bigg(1+\frac{\theta^\om(B_\sigma)}{\theta^\om(\mathcal{N}_t)}\bigg)^{\!2}  \,
  \frac{\mathcal{E}^{\omega,\eta^2}\big(g(w_t+e_j)\big)}{\abs{B_\sigma}},
\end{align*}
so that by Jensen's inequality and by integrating (\ref{eq:energy est}) over $I_\tau$, 
\begin{align*}
& \norm{g(w+\eps_j)}_{1,1,Q_{\tau,\sigma},\theta}^2
\;\leq \;
\frac{1}{\tau n^2}\int_{I_\tau}\norm{g(w_t+\eps_j)}_{1,B_\sigma,\theta}^2 \, dt \\
&\mspace{36mu}
\; \leq \; \frac{c}{\delta^2} \, \cA_1^\om(\sigma n) \, \norm{\theta^\om}_{1,B(n)} \, \int_{I_\tau}\frac{\mathcal{E}^{\omega,\eta^2}\big(g(w_t+\eps_j)\big)}{\theta^\om(B(n))} \, dt \\
&\mspace{36mu}
\; \leq \; \frac{c}{\delta^2} \, \cA_1^\om(n) \, \Big( \norm{\theta^\om}_{1,B(n)} \, \norm{\eta^2g(w_{t_0-\tau n^2}+\eps_j)}_{1,B(n),\theta}\,+ \,
\norm{1\vee\mu^\omega}_{1,B(n)}\Big).
\end{align*}
Since $g$ is non-increasing and $w_t>0$ for all $t\in I_\tau$,
\begin{align} \label{eq:lemma2eq}
& \norm{\mathbbm{1}_{w<h_j}}_{1,1,Q_{\tau,\sigma},\theta}^2
\; \leq \;
\frac{\norm{g(w+\eps_j)}_{1,1,Q_{\tau,\sigma},\theta}^2}{g(h_j+\eps_j)^2} \nonumber \\
&
\mspace{36mu} \; \leq \; \frac{c}{\delta^2} \, A_1^\omega(n) \, \bigg(\norm{\theta}_{1,B(n)} \, \frac{g(\eps_j)}{g(h_j+\eps_j)^2}
\,+ \,
\norm{1\vee\mu^\omega}_{1,B(n)} \,\frac{1}{g(h_j+\eps_j)^2}\bigg) \nonumber \\
& \mspace{36mu}
\;\leq \; \frac{c}{\delta^2} \, A_1^\omega(n) \, \norm{ 1 \vee \theta^\om}_{1,B(n)} \, \norm{1\vee\mu^\omega}_{1,B(n)} \,  \bigg(\frac{j}{(j-1)^2} \, + \, \frac{1}{(j-1)^2}\bigg).
\end{align}
Thus, for any $\eps>0$, there exists some  $j_0 \geq i_0$ as in the statement such that  $\norm{\indicator_{\{u>k_j\}}}_{1,1,Q_{\tau,\sigma},\theta}=\norm{\indicator_{\{w<h_{j-{i_0}}\}}}_{1,1,Q_{\tau,\sigma},\theta}\leq \eps$ for all $j\geq j_0$. 
\end{proof}

\begin{proof}[Proof of Theorem \ref{thm:oscillations}]
We may assume without loss of generality that $u>0$, otherwise consider $u-\inf_{Q(n)}u$. Set $\tau=1/4$, $\sigma=1/2$ as before in Lemma~\ref{lemma:u bound cylinder}.  Define $k_0:=(M_n+m_n)/2$ with $M_n$ and $m_n$ as in \eqref{eq:M_n m_n} and let $k_j$ be defined by \eqref{eq:k_js}. Further, let $\eta$ be the cut-off function $\eta(x):=\left[1-d(x_0,x)/\sigma n\right]_+$.
We may assume 
$$\frac{1}{n^2}\int_{I_1}\norm{\indicator_{\{u_t\leq k_0\}}}_{1,B(n),\eta^2\theta}\, dt \; \geq \; \frac{1}{2}.$$
Otherwise, consider $M_n+m_n-u$ in place of $u$. 
%
Set $\eps:=\big(2 c_3 \, (4 \cA_2^\om(\sigma n))^\kappa \big)^{-2p_*}$ with  $\cA_2^\om(n)$ as in Theorem \ref{thm:maximal}. Fix any $\Delta\in(0,\frac{2}{d+2})$ and $N_3\geq 2N_1(\Delta)$ such that $\frac{1}{2}>(\sigma N_3)^{-\Delta}$. Now for all $n\geq N_3$, applying consecutively Lemma~\ref{lemma:u bound space} and Lemma~\ref{lemma:u bound cylinder}, there exists 
\begin{align*} 
l=l^\om(x_0,n)=l\big(\cA^\om_1(n), \cA_2^\om(n), \cA^\om_3(n), \norm{\mu^\omega}_{1,B(n)}, \norm{\theta^\om}_{1,B(n)}, \norm{1/\theta^\om}_{1,B(n)}\big),
\end{align*} 
which is continuous and increasing in all components,
such that
$$\norm{\indicator_{\{u>k_j\}}}_{1,1,Q_{\tau,\sigma}(n),\theta} \;\leq \; \eps,\qquad\forall j\geq l.$$
By an application of Jensen's inequality,
\begin{align*}
& \norm{(u-k_l)_+}_{2p_*,2,Q_1(\sigma n),\theta} \; \leq  \; \big(M_n-k_l\big) \, \norm{\indicator_{\{u>k_l\}}}_{2p_*,2,Q(\sigma n),\theta} \\
& \mspace{36mu} \;\leq  \; \big(M_n-k_l\big) \, \norm{\indicator_{\{u>k_l\}}}_{1,1,Q_1(\sigma n),\theta}^{1/2p_*} \; \leq \; \big(M_n-k_l\big) \, \eps^{1/2p_*}.
\end{align*}
Now, let $\vartheta =\frac{\sigma}{2}=\frac{1}{4}$. Then Theorem~\ref{thm:maximal} implies that
\begin{align*}
& M_{\vartheta n} \; \leq \; \max_{Q_{1/2}(\sigma n)}u(t,x)
\; \leq \; k_l+ c_3  \big(4 \cA_2^\om(\sigma n) \big)^\kappa \, \norm{(u-k_l)_+}_{2p_*,2,Q_1(\sigma n),\theta}\\
& \mspace{36mu} \;\leq \; k_l \,+ \,\frac{1}{2} \, \big(M_n-k_l\big) \;= \; M_n\,- \,2^{-(l+2)}\, \big(M_n-m_n\big).
\end{align*}
Hence
\begin{align*}
M_{\vartheta n}-m_{\vartheta n}\;\leq \; M_n \,- \, 2^{-(l+2)} \, \big(M_n-m_n\big)-m_{\vartheta n} \;\leq \; \big(1-2^{-(l+2)}\big) \, \big(M_n-m_n\big),
\end{align*}
and the theorem is proven.
\end{proof}

\subsection{Proof of the Local Limit Theorem}
\label{subsection:qllt}
As mentioned at the beginning of this section, we will derive the required H\"older regularity estimate from the oscillation inequality in Theorem~\ref{thm:oscillations}. The following version of the ergodic theorem will help us to control ergodic averages on scaled balls with varying centre-points.

\begin{proposition} \label{prop:krengel_pyke}
  Let $\cB:=  \big\{ B :  \text{$B$ closed Euclidean ball in $\bbR^d\!$}\big\}$.
  Suppose that Assumption~\ref{ass:ergodicity general speed measure} holds.  Then, for any $f \in L^1(\Om)$,
  \begin{align*}
    \lim_{n \to \infty} \sup_{ B \in \cB}
    \bigg|
      \frac{1}{n^{d}}\, \sum_{x \in (nB) \cap \bbZ^d}\mspace{-12mu}
        f \circ \tau_{x}
      \,-\,
      |B| \cdot \mean\!\big[f\big]
    \bigg|
    \;=\;
    0, \qquad \prob\text{-a.s.},
  \end{align*}
  where $|B|$ denotes the Lebesgue measure of $B$.
\end{proposition}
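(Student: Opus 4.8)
The plan is to deduce the uniform statement from the ordinary spatial ergodic theorem applied to a \emph{finite} family of cubes at one fixed dyadic scale, and then to squeeze an arbitrary ball between unions of such cubes; throughout write $S_n(B):=n^{-d}\sum_{x\in(nB)\cap\bbZ^d}f\circ\tau_x$. First I would reduce to $f\ge0$: the quantity inside the supremum is linear in $f$ and $\abs{a-b}\le\abs a+\abs b$, so splitting $f=f^+-f^-$ suffices, and for $f\ge0$ the map $B\mapsto S_n(B)$ is monotone under inclusion, which is what powers the squeeze. Since the supremum ranges over \emph{all} balls, a compact window must be fixed: I would fix $\Lambda>0$ and prove uniform convergence over $\cB_\Lambda:=\{B\in\cB:B\subseteq\overline B(0,\Lambda)\}$, which is the form in which the proposition is used (with $\Lambda$ dictated by the application). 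Some such restriction is in fact unavoidable: already for $f\equiv1$ the Gauss lattice-point bound makes the supremum over balls of unbounded radius equal to $+\infty$ for every fixed $n$.

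Next, fix $k\in\bbN$ and let $\cD_k$ be the finite set of dyadic cubes $2^{-k}(j+[0,1)^d)$, $j\in\bbZ^d$, that meet $\overline B(0,\Lambda+1)$. For each fixed cube $Q$, the spatial ergodic theorem (valid by Assumption~\ref{ass:ergodicity general speed measure}(ii) for the averaging sets $(nQ)\cap\bbZ^d$; off-centre cubes reduce to origin-anchored boxes by inclusion--exclusion), together with the count $\abs{(nQ)\cap\bbZ^d}=n^d\abs Q+O(n^{d-1})$, yields $S_n(Q)\to\abs Q\,\mean[f]$ as $n\to\infty$, $\prob$-a.s. Intersecting the countably many resulting null sets over $Q\in\bigcup_k\cD_k$, I obtain a single full-measure event on which, for every $k$,
\[
\varepsilon_n^{(k)} \;:=\; \max_{Q\in\cD_k}\,\abs{S_n(Q)-\abs Q\,\mean[f]} \;\longrightarrow\; 0 \qquad (n\to\infty),
\]
this being a maximum of finitely many convergent sequences.

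On this event, fix $k$ and $B\in\cB_\Lambda$ and set $\underline B:=\bigcup\{Q\in\cD_k:Q\subseteq B\}$ and $\overline{B}':=\bigcup\{Q\in\cD_k:Q\cap B\ne\emptyset\}$, so that $\underline B\subseteq B\subseteq\overline{B}'$ and, counting the $O(2^{k(d-1)})$ cubes of $\cD_k$ meeting $\partial B$, $\abs{\overline{B}'\setminus\underline B}\le c\,2^{-k}$ with $c=c(d,\Lambda)$. Monotonicity gives $S_n(\underline B)\le S_n(B)\le S_n(\overline{B}')$, and summing the cube-wise bounds over the at most $\abs{\cD_k}$ cubes involved yields the key estimate
\[
\abs{S_n(B)-\abs B\,\mean[f]} \;\le\; c\,2^{-k}\,\mean[f] \;+\; \abs{\cD_k}\,\varepsilon_n^{(k)},
\]
whose right-hand side does not depend on $B$. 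Taking $\sup_{B\in\cB_\Lambda}$ and then $n\to\infty$ gives $\limsup_{n}\sup_{B\in\cB_\Lambda}\abs{S_n(B)-\abs B\,\mean[f]}\le c\,2^{-k}\,\mean[f]$ for every $k$, whence the limit is $0$.

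The main point to get right is the use of a \emph{fixed} scale $k$. The more obvious strategy of approximating each ball by cubes of side proportional to its own radius would require the ergodic averages to converge uniformly over cubes of \emph{all} sizes and positions, which for merely $L^1$ data can fail (a heavy-tailed $f$ makes the supremum of short-range averages over far-away translates blow up). Fixing $k$ confines everything to the finite mesh $\cD_k$ inside the window, so uniformity over cubes is automatic and the only scale-dependent error is the relative boundary volume $c\,2^{-k}$, which is harmless; the remaining ingredients — the spatial ergodic theorem and the lattice-point count — are routine.
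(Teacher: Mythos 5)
Your argument is correct, but it takes a genuinely different route from the paper: the paper gives no argument at all and simply invokes the uniform pointwise ergodic theorem of Krengel and Pyke (\cite[Theorem~1]{KP87}), whereas you give a self-contained proof from the ordinary (Tempelman-type) $L^1$ ergodic theorem along scaled copies of finitely many dyadic cubes at one fixed scale, combined with positivity/monotonicity after splitting $f=f^+-f^-$ and the squeeze $\underline B\subseteq B\subseteq\overline B'$ with boundary-layer volume $O(2^{-k})$. What the citation buys is generality -- Krengel--Pyke's theorem applies to much richer classes of averaging sets (e.g.\ all convex subsets of a fixed bounded region) -- while your argument isolates exactly the two features that matter for balls in a bounded window: the family is sandwiched between unions of mesh cubes differing by uniformly small Lebesgue measure, and only finitely many cubes are involved at each scale, so the fixed-scale maximum $\varepsilon_n^{(k)}$ tends to zero without any maximal-function input beyond $L^1$. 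Your restriction to a bounded window $\cB_\Lambda$ is not a gap but the correct reading of the statement: it is the form in which the proposition is applied (in Lemma~\ref{lem:ergodic} only balls $B(\lfloor nx\rfloor/n,\delta)$ inside a fixed compact set occur, and taking $\Lambda\in\bbN$ and intersecting the countably many null sets recovers that use), and it matches the bounded setting of \cite{KP87}. One small inaccuracy in your side remark: the Gauss lattice-point estimate is an \emph{upper} bound on the discrepancy, so it does not by itself show that the supremum over balls of unbounded radius is infinite for $f\equiv 1$; for that one needs a lower bound, e.g.\ the jump of the counting function at radii $\sqrt m$ with many lattice points on the sphere, or the classical $L^2$-discrepancy of balls of order $R^{(d-1)/2}$. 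This does not affect your proof of the windowed statement, which is complete.
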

\begin{proof}
 See, for instance, \cite[Theorem~1]{KP87}.
\end{proof}

\begin{lemma}
\label{lem:ergodic}
Suppose Assumptions~\ref{ass:ergodicity general speed measure} and \ref{ass:limsups} hold. Let $\gamma^\om$ be as in Theorem~\ref{thm:oscillations}. 
Then, $\prob$-a.s., for any  $x\in\R^d$ and $\delta\in (0,1)$,
\begin{align*}
\limsup_{n\to\infty} \gamma^\om(\floor{nx}, \delta n) \; \leq \; \bar \gamma \in (0,1),
\end{align*}
with $\bar \gamma$ only depending on the law of $\om$ and $\theta^\om$. 
\end{lemma}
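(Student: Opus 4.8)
The statement asks us to control the limsup of $\gamma^\om(\floor{nx}, \delta n)$ as $n \to \infty$. Recall from Theorem~\ref{thm:oscillations} that $\gamma^\om(x_0, m)$ is a continuous, increasing function of the six quantities $\cA^\om_1(m)$, $\cA^\om_2(m)$, $\cA^\om_3(m)$, $\norm{\mu^\om}_{1,B(x_0,m)}$, $\norm{\theta^\om}_{1,B(x_0,m)}$, $\norm{1/\theta^\om}_{1,B(x_0,m)}$, and that each of these is itself built out of space-averaged $\ell^p$-norms of the form $\norm{f^\om}_{p,B(x_0,m)} = \big(|B(x_0,m)|^{-1}\sum_{y \in B(x_0,m)} (f^\om(y))^p\big)^{1/p}$ for various stationary functions $f^\om$ of the environment. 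The plan is: first, write each such average as an ergodic average of $g \circ \tau_y$ over $y \in B(\floor{nx}, \delta n)$, where $g(\om) := (f^\om(0))^p$; second, invoke the uniform-over-balls ergodic theorem (Proposition~\ref{prop:krengel_pyke}) to show each of these averages converges, $\prob$-a.s., to a deterministic constant as $n \to \infty$, uniformly in the ball's centre; third, feed these limits through the continuity of $\gamma$ to conclude.

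In more detail: for each of the relevant stationary functions — $f^\om \in \{\mu^\om(0),\ \nu^\om(0),\ \theta^\om(0),\ 1/\theta^\om(0),\ (\mu^\om(0)/\theta^\om(0))\theta^\om(0) \text{-type terms},\ 1\vee(\cdot)\}$ and the analogous expressions appearing in $\cA_1^\om, \cA_2^\om, \cA_3^\om$ — the moment condition in Assumption~\ref{ass:limsups} guarantees that the corresponding $g = (f^\om(0))^p \in L^1(\Om)$. (One must check the exponents: e.g. $\norm{1\vee(\mu^\om/\theta^\om)}_{p,B,\theta}^p$ involves $\mean[(\mu^\om(0)/\theta^\om(0))^p\,\theta^\om(0)]$, which is finite by \eqref{eq:cond_pqr}; likewise $\norm{\nu^\om}_{q,B}^q$ needs $\mean[\nu^\om(0)^q]<\infty$; $\norm{\theta^\om}_{r,B}^r$ needs $\mean[\theta^\om(0)^r]<\infty$; $\norm{1/\theta^\om}_{1,B}$ needs $\mean[\theta^\om(0)^{-1}]<\infty$; and $\norm{\mu^\om}_{1,B}$ needs $\mean[\mu^\om(0)] = \mean[\sum_{y\sim 0}\om(0,y)]<\infty$, which holds since $\mean[\om(e)]<\infty$ by Assumption~\ref{ass:ergodicity general speed measure}(i).) Applying Proposition~\ref{prop:krengel_pyke} to each such $g$, and noting that $\tfrac{1}{|B(\floor{nx},\delta n)|}\sum_{y \in B(\floor{nx},\delta n)} g\circ\tau_y \to \mean[g]$ with an error bounded by $\sup_{B\in\cB}|\cdots|$ that vanishes a.s. (and observing $|B(\floor{nx},\delta n)| \sim \omega_d (\delta n)^d$ so the volume normalisation is asymptotically the Lebesgue volume of the rescaled ball), we obtain that $\prob$-a.s., simultaneously for all $x \in \R^d$ and $\delta \in (0,1)$ (the a.s. event does not depend on $x$ or $\delta$ since the ergodic theorem is uniform over the whole family $\cB$), each of the six arguments of $\gamma$ evaluated at $(\floor{nx}, \delta n)$ converges as $n\to\infty$ to a deterministic limit depending only on the $\prob$-law of $\om$ and $\theta^\om$.

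Finally, since $\gamma$ is continuous in all six components (Theorem~\ref{thm:oscillations}), we pass to the limit: $\limsup_{n\to\infty} \gamma^\om(\floor{nx},\delta n) = \gamma(\text{limiting values}) =: \bar\gamma$, a deterministic constant in $(0,1)$ depending only on the law of $(\om,\theta^\om)$ — in particular independent of $x$ and $\delta$. (Monotonicity of $\gamma$ is not even needed here, only continuity; it would matter if we only had asymptotic \emph{bounds} rather than convergence of the arguments, but the ergodic theorem gives genuine convergence.) The only genuinely delicate points are bookkeeping ones: (a) verifying that the $\ell^p$-norms over \emph{balls} $B(x_0,m)$ in the graph distance match up with ergodic averages over $(n B)\cap\bbZ^d$ for Euclidean balls $B$ — this is routine since $B(x_0,\delta n)$ is exactly $(\floor{nx} + \delta n\, \bar B_\infty)\cap\bbZ^d$ up to the distinction between $\ell^1$ and $\ell^\infty$ balls, and Proposition~\ref{prop:krengel_pyke} as stated covers Euclidean balls but its proof (see \cite{KP87}) applies verbatim to any fixed convex body, in particular to cubes; alternatively one sandwiches a graph ball between two Euclidean balls — and (b) making sure every exponent appearing inside $\cA_1^\om, \cA_2^\om, \cA_3^\om$ is covered by \eqref{eq:cond_pqr} and Assumption~\ref{ass:ergodicity general speed measure}. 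I expect (b), the exponent bookkeeping, to be the main (though still routine) obstacle; there is no hard analytic content beyond the ergodic theorem already quoted.
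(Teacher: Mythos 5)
Your proposal takes essentially the same route as the paper: apply the uniform ergodic theorem (Proposition~\ref{prop:krengel_pyke}) to each of the stationary averages entering $\gamma^\om$, and feed the resulting deterministic limits through the continuity (and, in the paper, monotonicity) of $\gamma$. One tiny slip: graph balls on $\bbZ^d$ with nearest-neighbour edges are $\ell^1$-balls, not $\ell^\infty$-balls, though this does not affect the sandwiching/convexity argument you invoke.
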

\begin{proof}
Recall that $\ga^\om$ is continuous and increasing in all components.  By Proposition~\ref{prop:krengel_pyke} we have, for instance, for any $x\in \bbR^d$ and $\de\in (0,1)$,
\begin{align*}
\limsup_{n\to\infty} \norm{\mu^\om}_{1,  B(\lfloor n x  \rfloor, \de n)}\; \leq  \; \E\big[\mu^\om(0)\big]=:\bar \mu, \qquad \text{ $\bbP$-a.s.}
\end{align*}
Analogous statements hold for the other components of $\ga^\om$, that is $\cA_1^\om$, $\cA_2^\om$ etc. Since $\ga^\om$ is continuous and increasing in all components we get the claim for some $\bar \gamma \in (0,1)$ depending only on the respective moments of $\mu^\om(0)$, $\nu^\om(0)$ and $\theta^\om(0)$ .
\end{proof}

Lemma~\ref{lem:ergodic} facilitates applying the oscillations inequality iteratively with a common, deterministic constant. Together with the upper heat kernel bound cited below, this will produce a H\"older continuity statement for the rescaled heat kernel in Proposition~\ref{prop:cont_hk} below.

\begin{lemma}\label{lem:ondiag_quenched} Suppose Assumptions \ref{ass:ergodicity general speed measure} and \ref{ass:limsups} hold. For $\prob$-a.e.\ $\om$, any $\lambda>0$ and $x\in \bbZ^d$ there exist $c_8=c_8(d,p,q,r, \lambda)$ and $N_5=N_5(x,\om)$ such that for any $t$ with $\sqrt{t}\geq N_5$  and all $y\in B(x,\lambda \sqrt{t})$,
\begin{align*}
p_\theta^\om(t,x,y) \; \leq \; c_8 \, t^{-d/2}.
\end{align*}
\end{lemma}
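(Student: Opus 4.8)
The plan is to derive the on-diagonal upper bound from the general Gaussian-type upper heat kernel estimate already available for the RCM with general speed measure, namely the one established in \cite{ADS19} under Assumptions~\ref{ass:ergodicity general speed measure} and \ref{ass:limsups}. Such an estimate has the schematic form $p_\theta^\om(t,x,y) \le c\,t^{-d/2}\exp(-c'\,d(x,y)^2/t)$ (with a possibly corrected ballistic regime), valid for $t$ larger than some random scale $N(x,\om)$ that is finite $\prob$-a.s.\ because the relevant ergodic averages $\norm{\mu^\om}_{1,B(x,\sqrt t)}$, $\norm{\nu^\om}_{q,B(x,\sqrt t)}$, $\norm{\theta^\om}_{r,B(x,\sqrt t)}$, $\norm{1/\theta^\om}_{1,B(x,\sqrt t)}$ etc.\ converge (this is exactly the content invoked in Lemma~\ref{lem:ergodic} via Proposition~\ref{prop:krengel_pyke}). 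Restricting that bound to the diagonal and to $y \in B(x,\lambda\sqrt t)$ immediately discards the exponential factor at the cost of a constant depending on $\lambda$, which is the asserted conclusion.

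Concretely, first I would quote the precise statement of the upper bound from \cite{ADS19} (the relevant theorem there gives, for $\prob$-a.e.\ $\om$ and $\sqrt t \ge N_5(x,\om)$, an estimate of the form $p_\theta^\om(t,x,y)\le c_8\,t^{-d/2}(1 \vee \text{exponential decay term})$ uniformly in $y$). Then I would observe that for $y \in B(x,\lambda\sqrt t)$ the exponential factor $\exp(-c' d(x,y)^2/t)$ is bounded above by $1$, so that $p_\theta^\om(t,x,y)\le c_8\,t^{-d/2}$; if the available bound is stated only in the near-diagonal or ballistic-corrected form, the argument is the same since on $B(x,\lambda\sqrt t)$ we have $d(x,y)/\sqrt t \le \lambda$, a bounded quantity, and the correction term is then absorbed into the constant, which is why $c_8$ is allowed to depend on $\lambda$. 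Finally I would note that $N_5$ may be taken to be the same random scale as in \cite{ADS19}, finite $\prob$-a.s.\ by the ergodic theorem.

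The main (and essentially the only) obstacle is bookkeeping: one must check that the moment hypotheses of Assumption~\ref{ass:limsups} are exactly those under which the cited heat kernel upper bound in \cite{ADS19} holds, and that the random scale $N_5(x,\om)$ appearing there depends on $x$ only through a translate of $\om$, so that it is $\prob$-a.s.\ finite for every fixed $x$ simultaneously (by stationarity and countability of $\bbZ^d$). There is no analytic difficulty beyond restricting an already-proved bound to a ball of radius proportional to $\sqrt t$; the content is entirely in invoking \cite{ADS19} correctly.
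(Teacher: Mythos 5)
Your proposal is correct and follows exactly the first of the two one-line justifications the paper itself gives, namely reading the bound off the Gaussian-type upper estimate of \cite[Theorem~3.2]{ADS19} (the paper also notes it could alternatively be derived from Theorem~\ref{thm:maximal} via the argument of \cite[Corollary~2.10]{ACS20}). The bookkeeping you flag — matching moment hypotheses, a.s.\ finiteness of the random scale via the ergodic theorem, and absorbing the exponential/ballistic correction into a $\lambda$-dependent constant on $B(x,\lambda\sqrt t)$ — is precisely what the paper's citation implicitly relies on.
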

\begin{proof}
This can be directly read off \cite[Theorem~3.2]{ADS19} or derived from Theorem \ref{thm:maximal} by the method in \cite[Corollary 2.10]{ACS20}.
\end{proof}

\begin{proposition} \label{prop:cont_hk}
Let $\delta>0$, $\sqrt{t}/2 \geq \delta$ and $x\in \bbR^d$ be fixed. Then, there exists $c_9>0$ such that for $\prob$-a.e.\ $\om$,
  \begin{align*}
\limsup_{n\to \infty} \sup_{\substack{y_1,y_2 \in B(\floor{nx},\delta n) \\ s_1,s_2 \in [t-\delta^2, t]}} n^d \, \big| p_\theta^\omega\big(n^2s_1,0,y_1 \big)-p_\theta^\omega\big(n^2 s_2,0,y_2\big)\big|
    \;\leq\;
    c_9\, \bigg( \frac{\delta}{\sqrt{t}} \bigg)^{\!\!\varrho}\,
   t^{-d/2},
  \end{align*}
  where $\varrho= \ln( \bar\gamma)/\ln(1/4)$.
\end{proposition}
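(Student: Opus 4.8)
\emph{Setup.} The plan is to fix $\omega$ in the full-probability set on which Lemmas~\ref{lem:ergodic} and \ref{lem:ondiag_quenched} hold, and to realise the left-hand side as the oscillation of a \emph{single} caloric function over a small space-time cylinder, which we then shrink by iterating the oscillation inequality of Theorem~\ref{thm:oscillations}. Set $v(s,y) := p_\theta^\omega(s,0,y)$; since $X$ is reversible with respect to $\theta^\omega$, $v$ solves $\partial_s v - \mathcal{L}_\theta^\omega v = 0$ on $(0,\infty)\times\bbZ^d$. Write $x_0 := \floor{nx}$ and $t_0 := n^2 t$. Every pair $(n^2 s_i, y_i)$ with $y_i\in B(x_0,\delta n)$ and $s_i\in[t-\delta^2,t]$ lies in the cylinder $Q(\delta n)$ based at $(t_0,x_0)$, i.e.\ $[t_0-\delta^2 n^2,t_0]\times B(x_0,\delta n)$, so the supremum in the statement is at most $\osc_{Q(\delta n)} v$. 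It therefore suffices to bound $\limsup_{n\to\infty} n^d\,\osc_{Q(\delta n)} v$.

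\emph{Dyadic iteration.} Put $K := \floor{\log_4(\sqrt{t}/(2\delta))}$, which is $\geq 0$ by the hypothesis $\sqrt t/2\geq\delta$, and $n_k := 4^k\delta n$ for $k=0,\dots,K$, so that $n_0=\delta n$ and $n_K = 4^K\delta n \leq \tfrac12 n\sqrt t$. For $n$ large enough (depending only on $\delta$ and the constant $N_3$ of Theorem~\ref{thm:oscillations}) every scale $n_1,\dots,n_K$ exceeds $N_3$, and since $n_K\leq \tfrac12 n\sqrt t$ each cylinder $Q(n_k)$ based at $(t_0,x_0)$ is contained in $[\tfrac34 t_0, t_0]\times B(x_0,\tfrac12 n\sqrt t)\subseteq (0,\infty)\times\bbZ^d$, where $v$ is caloric. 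Applying Theorem~\ref{thm:oscillations} successively at scales $n_K,n_{K-1},\dots,n_1$ (noting $n_{k-1}=n_k/4$) and using $v\geq 0$ gives
\begin{align*}
\osc_{Q(\delta n)} v \;\leq\; \Big(\prod_{k=1}^K \gamma^\omega(x_0, n_k)\Big)\,\osc_{Q(n_K)} v \;\leq\; \Big(\prod_{k=1}^K \gamma^\omega(x_0, n_k)\Big)\,\max_{Q(n_K)} v .
\end{align*}

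\emph{Passing to the limit.} I would now estimate the two factors as $n\to\infty$. For the product: the Krengel--Pyke argument of Lemma~\ref{lem:ergodic} applies verbatim with the fixed radii $4^k\delta$ in place of $\delta$ (Proposition~\ref{prop:krengel_pyke} is uniform over all Euclidean balls, irrespective of their size), so $\limsup_{n\to\infty}\gamma^\omega(\floor{nx}, 4^k\delta n)\leq\bar\gamma$ for each $k$; as $K$ is fixed, $\limsup_{n\to\infty}\prod_{k=1}^K\gamma^\omega(x_0,n_k)\leq\bar\gamma^K$. For the maximum: if $y\in B(x_0,\tfrac12 n\sqrt t)$ and $s\in[\tfrac34 t_0,t_0]$ then $|y|/\sqrt s$ is bounded, for all large $n$, by a constant $\lambda_0=\lambda_0(|x|,t)$, while $\sqrt s\geq \tfrac{\sqrt3}{2} n\sqrt t\to\infty$; hence Lemma~\ref{lem:ondiag_quenched} (with $\lambda=\lambda_0$) yields $v(s,y)\leq c_8\,s^{-d/2}\leq c_8(\tfrac34 t_0)^{-d/2}=c\,n^{-d}t^{-d/2}$ for $n$ large, so $\limsup_{n\to\infty} n^d\max_{Q(n_K)} v\leq c\,t^{-d/2}$. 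Combining, $\limsup_{n\to\infty} n^d\,\osc_{Q(\delta n)} v\leq c\,\bar\gamma^K\,t^{-d/2}$. Finally, by the choice of $K$ we have $4^K>\sqrt t/(8\delta)$; since $\bar\gamma\in(0,1)$, raising to the negative power $\log_4\bar\gamma$ reverses the inequality and gives $\bar\gamma^K=(4^K)^{\log_4\bar\gamma}<(8\delta/\sqrt t)^{\varrho}$ with $\varrho=\ln\bar\gamma/\ln(1/4)$, which furnishes the claim with $c_9:=8^\varrho c$.

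\emph{Main difficulty.} The genuinely new analytic content is entirely in the inputs (Theorem~\ref{thm:oscillations} and Lemma~\ref{lem:ondiag_quenched}); the points demanding care are the bookkeeping ones: choosing the dyadic tower so that it stays inside $\{s>0\}$ and near the diagonal (precisely what $\sqrt t/2\geq\delta$ secures), converting the geometric decay $\bar\gamma^K$ into the H\"older factor $(\delta/\sqrt t)^\varrho$, and, above all, observing that Lemma~\ref{lem:ergodic} is robust enough to accommodate the $n$-dependent centre $\floor{nx}$ and radii $\geq 1$ simultaneously for the finitely many scales $4^k\delta$.
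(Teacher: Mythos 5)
Your argument is correct and is essentially the paper's own proof, differing only in bookkeeping: the paper indexes the dyadic tower downward from the largest cylinder $Q_0$ (half-width $\delta_0 = \sqrt t/2$) to $Q_{k_0}$ (half-width $\delta_{k_0}\in[\delta,4\delta)$), while you index upward from $Q(\delta n)$ to $Q(n_K)$ with $n_K \le \tfrac12 n\sqrt t$; the two chains are identical and $k_0 = K$. Both proofs then invoke Theorem~\ref{thm:oscillations} at each scale with the uniform constant $\bar\gamma$ from Lemma~\ref{lem:ergodic}, use Lemma~\ref{lem:ondiag_quenched} to bound the maximum on the largest cylinder by $c\,n^{-d}t^{-d/2}$, and convert $\bar\gamma^{K}$ into the H\"older factor via the two-sided bound $\sqrt t/(8\delta) < 4^K \le \sqrt t/(2\delta)$. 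One point worth remarking is your explicit observation that Lemma~\ref{lem:ergodic} is applied at radii $4^k\delta$ that may exceed $1$, justified because Proposition~\ref{prop:krengel_pyke} holds uniformly over all Euclidean balls; the paper uses this implicitly (its $\delta_{k-1}$ range up to $\sqrt t/2$), so your note is a welcome clarification rather than a deviation.
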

\begin{proof}
Set $\delta_k:=4^{-k}\sqrt{t}/2$ and with a slight abuse of notation let
\begin{align*}
Q_k \; := \; n^2 [t- \delta_k^2, t] \times B(\floor{nx},\delta_k n), \qquad k\geq 0.
\end{align*}
Choose $k_0\in\N$ such that $\delta_{k_0}\geq \delta > \delta_{k_0+1}$.
In particular, for every $k\leq k_0$ we have $\delta_{k} \in [\delta,\sqrt{t}]$. Now apply Theorem~\ref{thm:oscillations} and Lemma~\ref{lem:ergodic}, which give that there exists $N_6=N_6(\om, x,\delta)$ such that  for $\prob$-a.e.\ $\om$ and all $n \geq N_6$,
\begin{align*}
\osc_{Q_{k}} p_\theta^\omega\big(\cdot,0,\cdot \big) \; \leq \;  \bar \gamma \, \osc_{Q_{k-1}} p_\theta^\omega\big(\cdot,0,\cdot \big), \qquad \forall k=1,...,k_0.
\end{align*}
 We  iterate the above inequality on the chain  $Q_0 \supset Q_1 \supset \cdots \supset Q_{k_0}$ to obtain
  \begin{align} \label{eq:pre_hoelder}
  \osc_{Q_{k_0}} p_\theta^\omega\big(\cdot,0,\cdot \big)
    \;\leq\;
    \bar \gamma^{k_0}\, \max_{Q_{0}} p_\theta^\omega\big(\cdot,0,\cdot \big).
  \end{align}
  Note that 
\begin{align*}
Q_{k_0} \; = \; n^2 [t- \delta_{k_0}^2, t] \times B(\floor{nx},\delta_{k_0} n) \; \supset \; n^2 [t-\delta^2,t] \times B(\floor{nx},\delta n).
\end{align*}  
Hence, since $\overline{\gamma}^{k_0} \leq c (\delta / \sqrt{t})^\varrho$, the claim follows from \eqref{eq:pre_hoelder} and Lemma~\ref{lem:ondiag_quenched}.
\end{proof}

We shall now apply the above H\"older regularity to prove a pointwise version of the local limit theorem.
\begin{proposition}
\label{prop:pointwise llt theta}
Suppose Assumptions \ref{ass:ergodicity general speed measure} and \ref{ass:limsups} hold. For any $x\in\Z^d$ and $t>0$,
\begin{align*}
\lim_{n\to\infty} \, \abs{n^d \, p_\theta^\omega\big(n^2t,0,\floor{nx}\big)-a \, k_t(x)} \;= \; 0,\qquad\text{$\prob$-a.s.}
\end{align*}
with $k_t$ as defined in \eqref{eq:def_kt} and $a:=\E\big[\theta^\om(0)\big]^{-1}$.
\end{proposition}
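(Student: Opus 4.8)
The plan is to deduce the pointwise local limit theorem by combining three ingredients: the QFCLT of Theorem~\ref{thm:qfclt general speed}, the H\"older regularity of the rescaled heat kernel from Proposition~\ref{prop:cont_hk}, and the spatial ergodic theorem of Proposition~\ref{prop:krengel_pyke} applied to the speed measure. The bridge between the random walk and the heat kernel is the elementary identity $P_0^\om(X_{n^2 t}=y)=\theta^\om(y)\,p_\theta^\om(n^2 t,0,y)$, which gives, for any Euclidean ball $B\subset\R^d$,
\[
P_0^\om\big(X_{n^2 t}/n \in B\big)
\;=\;\frac{1}{n^d}\sum_{y\in\Z^d,\ y/n\in B}\theta^\om(y)\,\Big(n^d\, p_\theta^\om(n^2 t,0,y)\Big).
\]
The QFCLT controls the left-hand side, the H\"older estimate says the bracketed factor on the right is essentially constant over a small ball, and the ergodic theorem evaluates the resulting average of $\theta^\om$; the constant $a=\E[\theta^\om(0)]^{-1}$ will drop out of the last step.

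Concretely, I would fix $x\in\Z^d$ and $t>0$, and for small $\delta\in(0,\sqrt t/2)$ take $B=B_{\mathrm{eucl}}(x,\delta)$. Since $\partial B$ is Lebesgue-null, Theorem~\ref{thm:qfclt general speed} (whose hypotheses follow from Assumptions~\ref{ass:ergodicity general speed measure} and \ref{ass:limsups}) and the Portmanteau theorem give $P_0^\om(X_{n^2t}/n\in B)\to\int_B k_t(z)\,dz$ for $\prob$-a.e.\ $\om$. Next, since $\{y:\ y/n\in B\}$ sits inside a graph ball $B(\floor{nx},c\delta n)$, Proposition~\ref{prop:cont_hk} gives, for $\prob$-a.e.\ $\om$ and all large $n$,
\[
\sup_{y:\ y/n\in B}\big| n^d p_\theta^\om(n^2 t,0,y)-n^d p_\theta^\om(n^2 t,0,nx)\big|\ \le\ R(\delta):=c\,(\delta/\sqrt t)^{\varrho}\,t^{-d/2},
\]
while Proposition~\ref{prop:krengel_pyke}, applied to the stationary integrable field $\om\mapsto\theta^\om(0)$, gives $n^{-d}\sum_{y:\ y/n\in B}\theta^\om(y)\to |B|\,\E[\theta^\om(0)]=|B|/a$. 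Writing $a_n:=n^d p_\theta^\om(n^2 t,0,nx)$ and $w_n:=n^{-d}\sum_{y:\ y/n\in B}\theta^\om(y)$, the identity yields $|P_0^\om(X_{n^2t}/n\in B)-a_n w_n|\le R(\delta)\,w_n$, hence $a_n=P_0^\om(X_{n^2t}/n\in B)/w_n+O(R(\delta))$. Letting $n\to\infty$ shows $\limsup_n a_n$ and $\liminf_n a_n$ both lie within $R(\delta)$ of $a\,|B|^{-1}\int_B k_t$; letting $\delta\to 0$, the average tends to $k_t(x)$ by continuity of $k_t$ and $R(\delta)\to 0$, so $a_n\to a\,k_t(x)$. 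Since $\floor{nx}=nx$ for $x\in\Z^d$, this is the assertion.

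The routine points are: choosing a single $\prob$-full-measure set of $\om$ on which the QFCLT, Proposition~\ref{prop:cont_hk}, and Proposition~\ref{prop:krengel_pyke} all hold simultaneously (one intersects over a sequence $\delta_k\downarrow 0$, using monotonicity in the radius for the rest), checking that a scaled Euclidean ball is comparable to a graph ball up to a dimensional constant in the radius so that Proposition~\ref{prop:cont_hk} applies, and noting that $B_{\mathrm{eucl}}(x,\delta)$ is a continuity set for the Gaussian measure $k_t\,dz$. The only genuinely delicate aspect is the bookkeeping of the two limiting procedures in the correct order, $n\to\infty$ before $\delta\to 0$, and verifying that the H\"older error $R(\delta)$ is uniform in $\om$ and is not amplified by the ergodic sum --- it is not, precisely because it multiplies the bounded average $w_n$. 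No new hard estimate is required here: the substantive inputs, namely the QFCLT and the De Giorgi-based oscillation inequality underlying Proposition~\ref{prop:cont_hk}, are already in hand.
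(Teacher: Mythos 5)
Your proof is correct and takes essentially the same route as the paper: the paper also writes $J := \big(p_\theta^\omega(n^2t,0,\floor{nx}) - n^{-d}a\,k_t(x)\big)\,\theta^\om(C^n(x,\delta))$ for a scaled cube $C^n(x,\delta)$ and decomposes $J=J_1+J_2+J_3+J_4$, where $J_1$ is the H\"older-oscillation term (your $R(\delta)\,w_n$ bound via Proposition~\ref{prop:cont_hk}), $J_2$ is the QFCLT error, $J_3$ the ergodic-average error (your $w_n\to|B|/a$), and $J_4$ the continuity of $k_t$; it then takes $n\to\infty$ and $\delta\to 0$ in that order, exactly as you do. Your use of Euclidean balls instead of cubes and your packaging via the single identity and the ratio $P_0^\om(\cdots)/w_n$ are purely cosmetic differences.
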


\begin{proof}
For any $x\in\R^d$ and $\delta>0$ let $C(x,\delta):=x+\left[-\delta,\delta\right]^d$ and $C^n(x,\delta):=n \, C(x,\delta)\cap\Z^d$, i.e.\ $C(x, \de)$ is a ball in $\bbR^d$ with respect to the supremum norm. Note that the cubes $C^n(x,\delta)$ are comparable with $B(\floor{nx},\delta n)$ and we may apply Proposition~\ref{prop:cont_hk} with $B(\floor{nx},\delta n)$ replaced by $C^n(x,\delta)$.  Let 
$$J :=  \Big(p_\theta^\omega\big(n^2t,0,\floor{nx}\big)- n^{-d} \,a \, k_t(x)\Big) \, \theta^\om\big(C^n(x,\delta)\big).$$
We can rewrite this, for any $\delta>0$, as $J=\sum_{i=1}^4 J_i$ where
\begin{align*}
J_1&:=\sum_{z\in C^n(x,\delta)}\Big(p_\theta^\omega\big(n^2t,0,\floor{nx}\big)-p_\theta^\omega\big(n^2t,0,z\big)\Big) \, \theta^\om(z),\\
J_2&:=P_0^\omega\Big[X_t^{(n)}\in C(x,\delta)\Big]-\int_{C(x,\delta)}k_t(y) \, dy,\\
J_3&:=k_t(x)\, \Big((2\delta)^d-\theta^\om\big(C^n(x,\delta)\big) \, n^{-d} \, a\Big),\\
J_4&:=\int_{C(x,\delta)}\big(k_t(y)-k_t(x)\big) \, dy,
\end{align*}
with $X_t^{(n)}:=\frac{1}{n}X_{n^2t}$, $t\geq 0$, being the rescaled random walk. 
It suffices to prove that, for each $i=1,...,4$,  as $n\to \infty$, $|J_i|/n^{-d}\theta^\om\big(C^n(x,\delta)\big)$ converges $\mathbb{P}$-a.s.\ to a  limit which is small with respect to $\delta$.

First note that $J_2\rightarrow 0$ by Theorem~\ref{thm:qfclt general speed} and $ n^{-d}\theta^\om\big(C^n(x,\delta)\big)\to(2\delta)^d/a$ by the arguments of Lemma~\ref{lem:ergodic}. Thus, $\lim_{n\to \infty}|J_i|/n^{-d}\theta^\om\big(C^n(x,\delta)\big) = 0$ for $i=2,3$. Further, by the Lipschitz continuity of the heat kernel $k_t$ in its space variable it follows that
 $\lim_{n\to\infty} |J_4|/ n^{-d}\, \theta^\om\big(C^n(x,\delta)\big)=O(\delta)$.
To deal with the remaining term, we  apply Proposition~\ref{prop:cont_hk}, which yields
\begin{align*}
 \limsup_{n\to \infty} \max_{z \in C^{n}(x, \de)}\, n^{d}\,
  \big|
    p_\theta^{\om}\big( n^2 t, 0, z \big)
    \,-\,
    p_\theta^{\om}\big( n^2 t, 0, \lfloor n x \rfloor \big)
  \big|
  \;\leq\;
  c\, \de^{\varrho}\, t^{-\frac{d}{2} - \frac{\varrho}{2}}.
\end{align*}
Hence, $\limsup_{n\to\infty} |J_1|/ n^{-d}\, \theta^\om\big(C^n(x,\delta)\big)=O(\delta^{\varrho})$, $\prob$-a.s.
Finally, the claim follows by letting $\delta \to 0$.
\end{proof}

\begin{proof}[Proof of Theorem \ref{thm:qllt general speed}]
Having proven the pointwise result Proposition~\ref{prop:pointwise llt theta}, the full local limit theorem follows by extending over compact sets in $x$ and $t$. This is done using a covering argument, exactly as in Step~2 in the proof of \cite[Theorem~3.1]{ACS20}, which in turn is a slight modification of the proofs in \cite{croydon2008local} and \cite{barlow2009parabolic}.
\end{proof}

\subsection{Weak parabolic Harnack inequality and near diagonal heat kernel bounds} \label{sec:corollaries of qllt} 
The above method of proving the local limit theorem is simpler than the derivations of \cite{harnack, barlow2009parabolic}, in part because it does not require a full parabolic Harnack inequality. However, the above analysis still provides a weak parabolic Harnack inequality.

\begin{proposition}
\label{prop:weak harnack}
Suppose Assumptions \ref{ass:ergodicity general speed measure} and \ref{ass:limsups} hold. For any $x_0\in \bbZ^d$, $t_0 \in \bbR$ and $\bbP$-a.e.\ $\om$, there exists $N_7=N_7(\om,x_0)$ such that for all $n \geq N_7$ the following holds. Let $u>0$ be such that $\partial_t u-\cL_\th^\om u=0$ on $Q(n):=[t_0-n^2,t_0]\times B(x_0,n)$. Assume there exists $\eps >  0$ such that 
\begin{equation}
\frac{1}{n^2}\int_{t_0-n^2}^{t_0}\norm{\indicator_{\{u_t\geq \eps\}}}_{1,B(n),\eta^2\theta} \, dt \; \geq \; \frac{1}{2}
\end{equation}
with $\eta$ as in Lemma~\ref{lemma:u bound space}. Then there exists $\gamma=\gamma(\eps,p,q,d)$ (also depending on the law of $\om$ and $\theta^\om$) such that 
$$u(t,x)\geq \gamma\qquad \forall\,(t,x)\in Q_{\frac{1}{2}}(n/2)=[t_0-n^2/8,t_0]\times B(x_0,n/4).$$
\end{proposition}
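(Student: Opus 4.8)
The plan is to mimic the chain of lemmas used to prove the oscillation inequality (Theorem~\ref{thm:oscillations}), but applied to $u$ itself rather than to $M_n - u$, since here we want a lower bound rather than a decay of oscillation. First I would reduce to a quantitative statement: it suffices to produce a deterministic level $\gamma>0$, depending only on $\eps$, the dimension, the exponents $p,q,r$, and on the $\bbP$-a.s.\ limits of the ergodic averages $\cA_1^\om(n), \cA_2^\om(n), \cA_3^\om(n)$ and $\norm{\mu^\om}_{1,B(n)}$, $\norm{\theta^\om}_{1,B(n)}$, $\norm{1/\theta^\om}_{1,B(n)}$ (all of which converge $\bbP$-a.s.\ by Proposition~\ref{prop:krengel_pyke}, as in Lemma~\ref{lem:ergodic}), such that the conclusion holds for $n$ larger than some random $N_7(\om,x_0)$. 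This is exactly the mechanism by which the random constants in Lemmas~\ref{lemma:u bound space} and \ref{lemma:u bound cylinder} are turned into deterministic ones along the proof of Theorem~\ref{thm:oscillations}.

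The core argument runs as follows. Fix $x_0,t_0$. Given the hypothesis $\frac{1}{n^2}\int_{t_0-n^2}^{t_0}\norm{\indicator_{\{u_t\geq\eps\}}}_{1,B(n),\eta^2\theta}\,dt\geq\frac12$, I first observe that WLOG $u\leq 1$ on $Q(n)$ — indeed, the statement is about a lower bound, and we may truncate $u$ at level $1$ at the cost of replacing $\eps$ by $\eps\wedge 1$, noting that $u\wedge 1$ is a supersolution. Then apply Lemma~\ref{lemma:u bound space} with the roles reversed: set $\tilde u := 1 - u$, which is a bounded solution on $Q(n)$ with $0\le\tilde u\le 1$, put $k_0 := 1-\eps$ so that $\{u_t\geq\eps\} = \{\tilde u_t \le 1-\eps\} = \{\tilde u_t \le k_0\}$, and the hypothesis becomes precisely the assumption \eqref{eq:ass u1} of Lemma~\ref{lemma:u bound space} for $\tilde u$. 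That lemma then yields, for a suitable choice of $\delta>0$ (depending on $\cA_3^\om(n)$) and all $j$ above an explicit threshold $j_1$ (depending on $\norm{1\vee\mu^\om}_{1,B(n)}$ and $\norm{1\vee(1/\theta^\om)}_{1,B(n)}$), the bound $\norm{\indicator_{\{\tilde u_t\leq \tilde k_j\}}}_{1,B(n/2),\theta}\geq\delta$ for all $t\in[t_0-\tfrac14 n^2,t_0]$, where $\tilde k_j := 1 - 2^{-j}(1-k_0) = 1 - 2^{-j}\eps$ in the notation of \eqref{eq:k_js} applied to $\tilde u$ (so $\{\tilde u_t\le \tilde k_j\} = \{u_t\ge 2^{-j}\eps\}$). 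Next feed this a-priori lower bound on a level set into Lemma~\ref{lemma:u bound cylinder} applied to $\tilde u$: with $\tau=1/4$, $\sigma=1/2$, for any prescribed $\eps'\in(0,1)$ there is $j_0\geq j_1$ (continuous and monotone in the data, as stated there) with $\norm{\indicator_{\{\tilde u>\tilde k_{j_0}\}}}_{1,1,Q_{\tau,\sigma}(n),\theta}\leq\eps'$, i.e.\ $\norm{\indicator_{\{u<2^{-j_0}\eps\}}}_{1,1,Q_{1/2}(n/2),\theta}\leq\eps'$ after passing to the smaller cylinder. Finally, run the maximal inequality (Theorem~\ref{thm:maximal}) on the supersolution $\hat u := (2^{-j_0}\eps - u)_+$, which satisfies $\partial_t\hat u - \cL^\om_\th\hat u\leq 0$: choosing $\eps'$ small enough (depending only on $c_3$, $\kappa$ and the limiting value of $\cA_2^\om$) forces $\max_{Q_{1/4}(n/2)}\hat u < 2^{-j_0-1}\eps$, whence $u\geq 2^{-j_0-1}\eps =: \gamma$ on $Q_{1/4}(n/2)=[t_0-n^2/8,t_0]\times B(x_0,n/4)$, which is the claim (up to matching the cylinder notation $Q_{1/2}(n/2)$ used in the statement).

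To pass from the random constants to the deterministic $\gamma=\gamma(\eps,p,q,d)$: all the thresholds $\delta$, $j_1$, $j_0$, $\eps'$ above are continuous and monotone in the ergodic averages $\cA_i^\om$ and the $\ell^1$-norms of $\mu^\om,\theta^\om,1/\theta^\om$ on the relevant balls; by Proposition~\ref{prop:krengel_pyke} each of these converges $\bbP$-a.s.\ to its expectation (which is finite by Assumptions~\ref{ass:ergodicity general speed measure} and \ref{ass:limsups}), so for $n$ beyond a random $N_7(\om,x_0)$ they are all within, say, a factor $2$ of their limits, and one may replace them by fixed numbers depending only on the law. This is precisely the reduction carried out in Lemma~\ref{lem:ergodic} and in the proof of Theorem~\ref{thm:oscillations}. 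The main obstacle I anticipate is purely bookkeeping: carefully tracking the reindexing $k_j \leftrightarrow \tilde k_j$ under the substitution $u\mapsto 1-u$ and ensuring the side conditions in Lemmas~\ref{lemma:u bound space} and \ref{lemma:u bound cylinder} (the constraints $j\geq i_0$, $\delta<1/(4c_7\cA_3^\om(n))$, $(\sigma-\sigma')n\geq 1$, etc.) are simultaneously met for all large $n$; there is no new analytic difficulty, since every ingredient has already been established in this section.
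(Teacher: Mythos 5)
Your overall strategy matches the paper's: the published proof is a one-line citation to \cite{ACS20} naming Theorem~\ref{thm:maximal}, Lemma~\ref{lemma:u bound space} and Lemma~\ref{lemma:u bound cylinder} as the ingredients, and your chain (transform $u$ to a function with a significant \emph{low} level set $\to$ Lemma~\ref{lemma:u bound space} $\to$ Lemma~\ref{lemma:u bound cylinder} $\to$ maximal inequality to get a uniform bound on the transformed function on the smaller cylinder $\to$ lower bound for $u$) is precisely this method, and is also what drives the proof of Theorem~\ref{thm:oscillations}.

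One step deserves scrutiny. You ``WLOG truncate $u$ at level $1$'' and then apply Lemmas~\ref{lemma:u bound space} and~\ref{lemma:u bound cylinder} to $\tilde u = 1 - u$. But after truncation, $u\wedge 1$ is only a supersolution, hence $1-(u\wedge 1)$ is a \emph{sub}solution, whereas both lemmas are stated for \emph{solutions} of $\partial_t u-\cL_\th^\om u=0$; their proofs use this (e.g.\ Lemma~\ref{lemma:energy estimate} is invoked for an exact solution). The truncation is not needed: $u$ restricted to the finite cylinder $Q(n)$ is automatically bounded, so one can work directly with $M_n - u$ where $M_n:=\sup_{Q(n)}u$, which is a genuine positive solution. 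With $k_0:=M_n-\eps$, the hypothesis becomes exactly \eqref{eq:ass u1} for $M_n-u$. A related cosmetic slip: the levels from \eqref{eq:k_js} applied to $\tilde u=M_n-u$ are $\tilde k_j = \tilde M_n - 2^{-j}(\tilde M_n-k_0)$ with $\tilde M_n = M_n - m_n$ (not $1$), so $\{\tilde u_t\le\tilde k_j\} = \{u_t\ge m_n+2^{-j}(\eps-m_n)\}$, which is contained in $\{u_t\ge 2^{-j}\eps\}$ only because $m_n = \inf_{Q(n)} u \geq 0$; the inclusion you need does hold, but the equality you write down does not. Finally, applying Theorem~\ref{thm:maximal} to $(2^{-j_0}\eps-u)_+$ technically sits outside its hypothesis ($u>0$), though the De Giorgi iteration only really requires nonnegativity; it is cleaner (and closer to the template of Theorem~\ref{thm:oscillations}) to apply Theorem~\ref{thm:maximal} to $M_n-u$ with $h=\tilde k_{j_0}$. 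With these repairs the argument is exactly the paper's.
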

\begin{proof}
This follows by the same method as \cite[Theorem 2.14]{ACS20}.  Theorem \ref{thm:maximal}, Lemma \ref{lemma:u bound space} and Lemma \ref{lemma:u bound cylinder} are all necessary ingredients.
\end{proof}

Finally, we can also derive from Theorem~\ref{thm:qllt general speed} a near-diagonal lower heat kernel estimate, which complements the upper bounds obtained in \cite{ADS19}.

\begin{corollary} \label{cor:near diag ests}
Suppose Assumptions \ref{ass:ergodicity general speed measure} and \ref{ass:limsups} hold. For $\bbP$-a.e.\ $\om$, there exists $N_8(\om)>0$  and $c_{10}=c_{10}(d)>0$ such that  for all $t\geq N_8(\om)$ and $x\in B(0,\sqrt{t})$,
\begin{align*}
p_\th^\om(t,0,x)\geq c_{10}\, t^{-d/2}.
\end{align*}
\end{corollary}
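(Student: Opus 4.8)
The plan is to obtain the estimate as an immediate consequence of the quenched local limit theorem, Theorem~\ref{thm:qllt general speed}, together with the positivity of the limiting Gaussian density on compact sets. Fix $K=1$, $T_1=1/2$ and $T_2=2$, and recall that $a=\E[\theta^\om(0)]^{-1}\in(0,\infty)$ by Assumption~\ref{ass:ergodicity general speed measure}(iii). Since $k$ is continuous and everywhere strictly positive, $\underline c:=\min_{|y|\le K,\ s\in[T_1,T_2]}k_s(y)>0$. Applying Theorem~\ref{thm:qllt general speed} with these parameters, for $\prob$-a.e.\ $\om$ there is $N=N(\om)\in\N$ with
\[
  n^d\, p_\theta^\om\big(n^2 s,0,\lfloor n y\rfloor\big)\ \ge\ \tfrac12\, a\, \underline c,\qquad \forall\, n\ge N,\ |y|\le K,\ s\in[T_1,T_2].
\]

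It then remains to match a given pair $(t,x)$ with $t$ large and $x\in B(0,\sqrt t)$ to this rescaled statement. Set $n:=\lceil \sqrt t\,\rceil$, $y:=x/n$ and $s:=t/n^2$. Then $ny=x\in\Z^d$, so $\lfloor n y\rfloor=x$; since $x\in B(0,\sqrt t)$ means $\|x\|_1\le\sqrt t$, hence $|x|\le\|x\|_1\le\sqrt t$, we have $|y|\le\sqrt t/n\le 1=K$; and since $\sqrt t\le n\le\sqrt t+1$ we have $s=t/n^2\in\big[\,t/(\sqrt t+1)^2,\,1\,\big]\subseteq[T_1,T_2]$ once $t$ exceeds an absolute constant. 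Choosing $N_8(\om)$ large enough that all of these hold and that $n\ge N$, we get for $t\ge N_8(\om)$
\[
  n^d\, p_\theta^\om(t,0,x)\ =\ n^d\, p_\theta^\om\big(n^2 s,0,\lfloor n y\rfloor\big)\ \ge\ \tfrac12\, a\, \underline c,
\]
and, using $n=\lceil\sqrt t\,\rceil\le 2\sqrt t$ for $t\ge 1$,
\[
  p_\theta^\om(t,0,x)\ \ge\ \tfrac12\, a\,\underline c\, n^{-d}\ \ge\ 2^{-(d+1)}\, a\,\underline c\ t^{-d/2}\ =:\ c_{10}\, t^{-d/2},
\]
which is the claim.

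I do not expect a genuine obstacle: the analytic work is entirely contained in the local limit theorem, and the only point requiring care is the scaling substitution — one must pick the integer $n$ so that the rescaled time $s=t/n^2$ lands in a fixed compact subinterval of $(0,\infty)$ bounded away from the origin while the rescaled endpoint $x/n$ stays in a fixed ball, and so that $\lfloor n(x/n)\rfloor$ is exactly $x$; the uniformity over $|y|\le K$ in Theorem~\ref{thm:qllt general speed} is precisely what lets $x$ range over all of $B(0,\sqrt t)$. (Strictly, $c_{10}$ above also depends on the law of $\om$ and $\theta^\om$, through $a$ and through $\Sigma^2$ in $\underline c$; only the dimensional dependence is recorded in the statement, consistent with the paper's convention on constants.) As an alternative avoiding the full local limit theorem, one could instead first establish an on-diagonal bound $p_\theta^\om(t,0,0)\ge c\, t^{-d/2}$ by combining conservation of mass $\sum_y p_\theta^\om(t,0,y)\,\theta^\om(y)=1$, a localisation of most of this mass in $B(0,\lambda\sqrt t)$ via the QFCLT, the matching upper bound of Lemma~\ref{lem:ondiag_quenched}, and the ergodic estimate of Proposition~\ref{prop:krengel_pyke} for $\theta^\om\big(B(0,\lambda\sqrt t)\big)$, and then transfer it to all $x\in B(0,\sqrt t)$ using the oscillation inequality of Theorem~\ref{thm:oscillations} with Lemma~\ref{lem:ergodic}; but the route via Theorem~\ref{thm:qllt general speed} is shorter.
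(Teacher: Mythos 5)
Your proof is correct and follows the same route the paper intends: the paper's own proof simply cites the constant-speed analogue in \cite{harnack} (Lemma~5.3 there), which derives the near-diagonal lower bound from the local limit theorem by exactly the kind of scaling substitution $n=\lceil\sqrt t\,\rceil$, $y=x/n$, $s=t/n^2$ you carry out. You also correctly observe that $c_{10}$ in fact depends on the law of $\om$ and $\theta^\om$ through $a$ and $\Si^2$, not only on $d$; this is an imprecision in the corollary's statement, not in your argument.
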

\begin{proof}
This follows from the local limit theorem exactly as for the constant speed case in \cite[Lemma 5.3]{harnack}.
\end{proof}

\section{Annealed Local Limit Theorem under General Speed Measure}
\label{section:annealed general speed}

\subsection{Maximal Inequality for the Heat Kernel}
The first step to show the annealed local limit theorem in Theorem~\ref{thm:annealed llt general speed} is to establish an $L^1$ form of the maximal inequality in \cite{ADS19}, which involves space-time cylinders of a more convenient form for this section. So for $\eps\in (0,1/4 ),\; x_0\in\bbZ^d$, we redefine
$$Q_\sigma(n):=\big[(1-\sigma)\eps n^2,n^2-(1-\si)\eps n^2 \big]\times B(x_0,\sigma n)$$
where $n\in\bbN$ and $\sigma\in[\frac{1}{2},1]$.

\begin{proposition}
\label{prop:general speed maximal L1 2}
Fix $\eps\in(0,1/4)$, $x_0\in \bbZ^d$ and let $p,q,r \in (1,\infty]$ be such that \eqref{eq:pqr_eqn} holds. There exists $c_{11}=c_{11}(d,p,q,r)$  such that for all $n\geq 1$ and $1/2\leq \sigma' <\sigma \leq 1$,
\begin{align*}
\max_{(t,x)\in Q_{\sigma'}(n)}p_\theta^\om(t,0,x)  \leq  c_{11}   \norm{1\vee (1/\theta^\omega)}_{1,B(n)} \bigg( \frac{\cA_4^\om(n)}{\eps(\sigma-\sigma')^2}\bigg)^{\!\kappa'}  \onorm{p_\theta^\om(\cdot, 0, \cdot)}_{1,1/p_*,Q_\sigma(n),\theta},
\end{align*}
where $\kappa'=\kappa'(d,p,q,r):=p_*+p_*^2\rho/(\rho-r_*p_*)$  with $\rho$ as in Proposition~\ref{prop:sobolev} and
\begin{align} \label{eq:defA4}
\cA_4^\om(n) \; :=  \;\onorm{1\vee (\mu^\omega/\theta^\om)}_{p,B(n),\theta} \, \norm{1\vee \nu^\omega}_{q,B(n)} \,   \norm{1 \vee \theta^\om}_{r,B(n)} \norm{1\vee (1/\theta^\om)}_{q,B(n)}.
\end{align}
\end{proposition}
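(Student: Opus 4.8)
The plan is to adapt the De Giorgi iteration underlying Theorem~\ref{thm:maximal}, but now iterating an $L^1$-type quantity on space-time cylinders rather than the $L^{2p_*}$-norm, and keeping track of the precise dependence on the ergodic averages so that only polynomial moments enter after taking expectations (this last point is exploited in the next subsection, not here). The key structural difference from the proof of Theorem~\ref{thm:maximal} is the choice of iteration functional: instead of $\varphi(l,\sigma')=\norm{(u-l)_+^2}_{p_*,1,Q_{\sigma'},\theta}$ one works with an averaged $L^1$ quantity, essentially $\onorm{(u-l)_+}_{1,1/p_*,Q_{\sigma'},\theta}$ (or its $p_*$-th power), which is tailored so that the final bound is phrased in terms of $\onorm{p_\theta^\om(\cdot,0,\cdot)}_{1,1/p_*,Q_\sigma(n),\theta}$ as in the statement. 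Note that $u=p_\theta^\om(\cdot,0,\cdot)$ is indeed a non-negative (caloric) solution of $\partial_t u - \cL^\om_\theta u = 0$ on the relevant cylinder, so the energy estimate Lemma~\ref{lemma:aux1} applies.

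First I would set up the single-step estimate. Fix $1/2\le\sigma'<\sigma\le 1$ and levels $0\le k<l$; choose spatial and temporal cut-offs $\eta,\xi$ adapted to the pair $(\sigma,\sigma')$ of cylinders $Q_\sigma(n)\supset Q_{\sigma'}(n)$ (now with the time-interval geometry dictated by the $\eps$-dependent definition of $Q_\sigma(n)$, so that $\norm{\xi'}_{L^\infty}\lesssim 1/(\eps(\sigma-\sigma')n^2)$ and $\norm{\nabla\eta}_{\ell^\infty}\lesssim 1/((\sigma-\sigma')n)$). Then, exactly as in the chain \eqref{eq:maximal1}--\eqref{eq:maximal4}, one interpolates via Hölder's inequality between an $L^\infty$-in-time piece, controlled by the energy estimate Lemma~\ref{lemma:aux1}, and a higher space-integrability piece, controlled by the Sobolev inequality Proposition~\ref{prop:sobolev}; the level-set factor $\norm{\indicator_{\{u\ge l\}}}$ is bounded by Chebyshev against $(u-k)_+/(l-k)$. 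The outcome should be a recursion of the form $\psi(l,\sigma')\le \frac{c\,\cA_4^\om(n)^{\beta}}{\eps^{\beta'}(l-k)^{\gamma}(\sigma-\sigma')^2}\,\psi(k,\sigma)^{1+1/\alpha_*}$ for the chosen functional $\psi$, with $\alpha_*$ as in Theorem~\ref{thm:maximal} and with $\cA_4^\om$ collecting precisely the ergodic averages in \eqref{eq:defA4} (note the $\norm{1\vee\nu^\om}_{q}$ and $\norm{1\vee\theta^\om}_{r}$ powers differ slightly from $\cA_2^\om$, reflecting that we iterate an $L^1$ rather than $L^2$ quantity). Second, I would run the geometric iteration: set $\sigma_j=\sigma'+2^{-j}(\sigma-\sigma')$, $k_j=h+K(1-2^{-j})$ with $K$ chosen proportional to $(c\,\cA_4^\om(n)^\beta/(\eps^{\beta'}(\sigma-\sigma')^2))^{\alpha_*/\gamma}\psi(h,\sigma)^{1/\gamma}$, and verify by induction that $\psi(k_j,\sigma_j)\le \psi(h,\sigma)/r^j$ for a fixed $r>1$, so that $\psi(k_\infty,\sigma')=0$, giving $\max_{Q_{\sigma'}(n)}u\le h+K$. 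Taking $h=0$ and converting the averaged $L^1$ functional $\psi$ back to $\onorm{p_\theta^\om(\cdot,0,\cdot)}_{1,1/p_*,Q_\sigma(n),\theta}$ (picking up one factor $\norm{1\vee(1/\theta^\om)}_{1,B(n)}$ from passing between $\theta$-weighted and unweighted averages, as in \eqref{eq:theta_balls} and the final display of the proof of Theorem~\ref{thm:maximal}) yields the claimed inequality with $\kappa'=p_*+p_*^2\rho/(\rho-r_*p_*)$, which is exactly $2p_*\cdot\kappa$ with $\kappa$ as in Theorem~\ref{thm:maximal} — consistent with the fact that we now iterate an $L^1$ rather than an $L^{2}$ (squared) quantity, so the exponent on the averages roughly doubles.

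The main obstacle is bookkeeping: one must choose the iteration functional $\psi$ so that (a) the single-step Hölder/Sobolev/energy estimate closes with a super-linear exponent $1+1/\alpha_*>1$, using the same admissible exponent $\alpha=1+1/p_*-r_*/\rho$ as in Theorem~\ref{thm:maximal}, which is where condition \eqref{eq:pqr_eqn} (equivalently \eqref{eq:pqr2}) is used, and (b) the ergodic averages aggregate exactly into $\cA_4^\om(n)$ with the exponents shown and with the single extra prefactor $\norm{1\vee(1/\theta^\om)}_{1,B(n)}$. A secondary technical point is that, unlike Theorem~\ref{thm:maximal}, here there is no restriction of the form $\sigma-\sigma'>n^{-\Delta}$ and the bound must hold for all $n\ge 1$; this is achieved because the $\eps$-fattened time interval in the new definition of $Q_\sigma(n)$ guarantees the time-cut-offs always have room, and the spatial discreteness issue $B_{\sigma'}=B_\sigma$ is harmless since the estimate is trivially monotone in the cylinders (one may simply take the cut-off $\eta$ to be the indicator adjustment, or absorb it into the constant). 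Everything else is a routine repetition of the computations in the proof of Theorem~\ref{thm:maximal}, with Lemma~\ref{lemma:aux1} and Proposition~\ref{prop:sobolev} used verbatim.
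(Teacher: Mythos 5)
Your proposed route is genuinely different from the paper's, and as described it has a gap at its heart. The paper does \emph{not} rerun a De Giorgi iteration on an $L^1$-type level-set functional. Instead, it (a) takes the already-established $L^{2p_*}$-maximal inequality as a black box (from the companion paper, cited as \cite[Proposition~3.8]{ADS19}), (b) interpolates $\onorm{u}_{2p_*,2,Q_k,\theta}\leq\onorm{u}_{1,2\gamma,Q_k,\theta}^\gamma\norm{u}_{\infty,\infty,Q_k}^{1-\gamma}$ with $\gamma=1/(2p_*)$, so that the maximum of $u$ reappears on the right with exponent $1-\gamma<1$, (c) iterates this self-referential bound over nested cylinders, producing a factor $\norm{u}_{\infty,\infty,Q_K}^{(1-\gamma)^K}$, and (d) then sends $K\to\infty$ so that the exponent on the residual maximum dies off geometrically. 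Step (d) needs a crude, $n$-uniform a priori bound on $\norm{u}_{\infty,\infty}$, and the paper gets it from the pointwise identity $p_\theta^\om(t,0,x)\,\theta^\om(x)\leq 1$, which is specific to the heat kernel; this is exactly where the prefactor $\norm{1\vee(1/\theta^\om)}_{1,B(n)}$ enters, not from passing between $\theta$-weighted and unweighted averages as you suggest. This exponent-iteration mechanism is also what makes the estimate hold for all $n\geq 1$ without a restriction like $\sigma-\sigma'>n^{-\Delta}$.

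Your alternative -- directly iterating $\psi(l,\sigma')=\onorm{(u-l)_+}_{1,1/p_*,Q_{\sigma'},\theta}$ -- is not a routine variant of the proof of Theorem~\ref{thm:maximal}. The energy estimate (Lemma~\ref{lemma:aux1}) and the Sobolev inequality (Proposition~\ref{prop:sobolev}) both naturally output $L^2$-type quantities of $(u-k)_+$, which is why $\varphi$ in Theorem~\ref{thm:maximal} is built from $(u-k)_+^2$; extracting an $L^1$ functional from that chain requires another interpolation which would reintroduce the $\norm{u}_{\infty,\infty}$ you are trying to avoid, i.e.\ it leads you back to the paper's route. Also, your claimed recursion $\psi(l,\sigma')\leq C(l-k)^{-\gamma}(\sigma-\sigma')^{-2}\psi(k,\sigma)^{1+1/\alpha_*}$ is asserted rather than derived, and the homogeneity check (so that $K$ comes out proportional to $\psi(0,\sigma)$ and not to a different power) is precisely the delicate point. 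Finally, your explanation of why the estimate holds for all $n\geq 1$ (monotonicity of the cylinders) does not address the real obstruction, which is that a De Giorgi iteration needs room for the spatial cut-offs at every step. In short: the proposal is not a repetition of Theorem~\ref{thm:maximal}'s proof with cosmetic changes, and the key mechanisms -- interpolation against the $L^\infty$ norm, exponent iteration to $0$, and the pointwise bound $p_\theta^\om\cdot\theta^\om\leq 1$ -- are missing.
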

\begin{proof}
For abbreviation we set $u=p_\theta^\om(\cdot, 0,\cdot)$ and $\sigma_k:=\sigma-(\sigma-\sigma^\prime)2^{-k}$. Further, write $B_k:=B(x_0,\sigma_k n)$ and $Q_k:=Q_{\sigma_k}(n)$. Note that $\abs{B_k}/\abs{B_{k+1}}\leq c\,2^d$. Let $\gamma=1/(2p_*)$. Then by H\"older's inequality
\begin{align*}
\onorm{u}_{2p_*,2,Q_{k},\theta} &\; \leq \; \onorm{u}_{1,2\gamma,Q_{k},\theta}^\gamma \, \norm{u}_{\infty,\infty,Q_{k}}^{1-\gamma},
\end{align*}
and by the proof of \cite[Proposition 3.8]{ADS19} (cf.\ last line on page~14), setting $\phi=1$ and $\delta=1$ there, we have
\begin{align*}
\norm{u}_{\infty,\infty,Q_{k-1}} \; \leq \; c \, \bigg(\frac{\cA_4^\omega(n)}{\eps(\sigma_k-\sigma_{k-1})^2}\bigg)^{\!\kappa/p_*}\onorm{u}_{2p_*,2,Q_{k},\theta}
\end{align*}
with $\kappa=\kappa(d,p,q,r)$ as throughout \cite{ADS19}. Combining the above equations yields
\begin{align*}
\norm{u}_{\infty,\infty,Q_{k-1}} \; \leq \; 2^{2\kappa k/p_*} \, J \,\onorm{u}_{1,2\gamma,Q_{\sigma},\theta}^\gamma \, \norm{u}_{\infty,\infty,Q_{k}}^{1-\gamma},
\end{align*}
where we have introduced $J:= c \big(\mathcal{A}_4^\om(n)/\eps(\sigma-\sigma')^2 \big)^{\kappa/p_*}\geq 1$ for brevity.
By iteration, we have for any $K\in\bbZ_+$,
\begin{align} \label{eq:post_it}
\norm{u}_{\infty,\infty,Q_{\sigma^\prime}} \; \leq  \; 2^{2\kappa/p_*\sum_{k=0}^{K-1}(k+1)(1-\gamma)^k} \Big(J \, \onorm{u}_{1,2\gamma, Q_\sigma,\theta}^\gamma\Big)^{\sum_{k=0}^{K-1}(1-\gamma)^k}\norm{u}_{\infty,\infty,Q_{K}}^{(1-\gamma)^K}.
\end{align}
Note that $p_\theta^\om (t,0,x) \, \theta^\om(x) \leq 1$ for all $t>0$ and $x\in \bbZ^d$. Therefore,
\begin{align*}
\norm{u}_{\infty,\infty,Q_{K}}\;&\leq\; \max_{x\in B_{K}} \theta^\omega(x)^{-1}  \,   \max_{(t,x)\in Q(n)} \, u(t,x) \, \theta^\omega(x) \; \leq \; \abs{B_{K}} \,  \norm{1/\theta^\omega}_{1,B_{K}}.
\end{align*}

Since $\limsup_{K\to\infty} \abs{B_{K}}^{(1-\gamma)^K}\leq c$ with $c$ independent of $n$ and $\norm{1/\theta^\omega}_{1,B_{K}}^{(1-\gamma)^K} \leq c \, \norm{1\vee (1/\theta^\omega)}_{1,B(n)}$, we obtain by letting $K\to\infty$ in \eqref{eq:post_it},
\begin{align*}
\norm{u}_{\infty,\infty,Q_{\sigma^\prime}} \; \leq \;  \,2^{\frac{2\kappa}{p_*\gamma^2}} \,  \norm{1\vee (1/\theta^\omega)}_{1,B(n)} \,
J^{1/\gamma} \, \onorm{u}_{1,2\gamma,Q_\sigma,\theta},
\end{align*}
which completes the proof, with $\kappa':=2\kappa.$
\end{proof}

\subsection{Proof of Theorem~\ref{thm:annealed llt general speed} }

Here we anneal the results of Section \ref{section:general speed measure} to derive  the annealed local limit theorem for the static RCM under a general speed measure stated in Theorem~\ref{thm:annealed llt general speed}. This will require a stronger moment condition. For any $p,q,r_1,r_2 \in [1,\infty]$ set
\begin{align*}
M(p,q,r_1,r_2) \; := \; \mathbb{E}\left[ \mu^\om(0)^{p}\right]+\mathbb{E}\left[\nu^\om(0)^{q}\right]
+\mathbb{E}\left[\theta^\om(0)^{-r_1}\right]+\mathbb{E}\left[\theta^\om(0)^{r_2}\right] \in (0,\infty]. 
\end{align*}


\begin{proposition}
\label{prop:general speed annealed maximal bound}
Suppose Assumption \ref{ass:ergodicity general speed measure} holds. Then there exist $p, q, r_1, r_2 \in (0,\infty)$ (only depending on $d$) such that, under the moment condition $M(p,q,r_1,r_2) < \infty$, for all $K>0$ and $0<T_1\leq T_2$,
\begin{align*}
\mathbb{E}\bigg[\sup_{n\geq 1,\,\abs{x}\leq K,\,t\in[T_1,T_2]}n^d \,p_\theta^\omega(n^2t,0,\floor{nx})\bigg] \; < \; \infty.
\end{align*}
\end{proposition}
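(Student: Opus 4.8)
The statement to prove is Proposition~\ref{prop:general speed annealed maximal bound}: a uniform (in $n$, $|x|\le K$, $t\in[T_1,T_2]$) bound on the expectation of the rescaled heat kernel. The strategy is to apply the $L^1$ maximal inequality of Proposition~\ref{prop:general speed maximal L1 2} to control $\sup_{t,x} n^d p_\theta^\om(n^2t,0,\floor{nx})$ by a random constant (a product of spatial averages of powers of $\mu^\om$, $\nu^\om$, $\theta^\om$, $1/\theta^\om$ over a ball $B(Cn)$) times the space-time $L^1$-norm $\onorm{p_\theta^\om(\cdot,0,\cdot)}_{1,1/p_*,Q_\sigma(n),\theta}$, and then to take expectations.

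First I would fix $\eps\in(0,1/4)$ and a scale so that the cylinder $Q_\sigma(n)$ in Proposition~\ref{prop:general speed maximal L1 2} covers the relevant time-space window $[n^2T_1,n^2T_2]\times B(\floor{nx},Kn)$ for all $|x|\le K$; this is a matter of choosing the centre $x_0$ and inflating radii by a constant depending on $K,T_1,T_2$. By a standard covering argument (finitely many balls of radius $n$ covering $B(0,(K+1)n)$), it suffices to bound $\E\big[\sup_{n\ge1}\max_{(t,x)\in Q_{1/2}(n)} n^d p_\theta^\om(t,0,x)\big]$ for a single sequence of cylinders. Applying Proposition~\ref{prop:general speed maximal L1 2} with $\sigma'=1/2,\sigma=1$ gives
\begin{align*}
\max_{(t,x)\in Q_{1/2}(n)} n^d p_\theta^\om(t,0,x) \;\le\; c\, n^d\,\norm{1\vee(1/\theta^\om)}_{1,B(n)}\,\cA_4^\om(n)^{\kappa'}\,\onorm{p_\theta^\om(\cdot,0,\cdot)}_{1,1/p_*,Q_1(n),\theta}.
\end{align*}

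The key observation is that the $L^1$ space-time norm on the right is essentially a mass term: since $\sum_y p_\theta^\om(t,0,y)\theta^\om(y)=1$ for all $t$, the weighted sum $\sum_{y\in B(n)} p_\theta^\om(t,0,y)\theta^\om(y)\le 1$, so $\onorm{p_\theta^\om(\cdot,0,\cdot)}_{1,1/p_*,Q_1(n),\theta}\le c\,|B(n)|^{-1}\le c\,n^{-d}$ (up to the space-time average normalisation, which only contributes constants and the $|I|$ factor absorbed into $\eps$). Thus the $n^d$ prefactor is exactly cancelled, and we are left with
\begin{align*}
\sup_{n\ge1}\max_{(t,x)\in Q_{1/2}(n)} n^d p_\theta^\om(t,0,x) \;\le\; c\,\sup_{n\ge1}\Big(\norm{1\vee(1/\theta^\om)}_{1,B(n)}\,\cA_4^\om(n)^{\kappa'}\Big).
\end{align*}
Now $\cA_4^\om(n)$ and $\norm{1\vee(1/\theta^\om)}_{1,B(n)}$ are products of space-averages $\norm{f^\om}_{s,B(n)}$ of functions $f^\om\in\{\mu^\om/\theta^\om,\nu^\om,\theta^\om,1/\theta^\om\}$ raised to exponents; by the spatial ergodic theorem each such average converges a.s.\ to a finite constant (the relevant moment), so the $\limsup_{n}$ of the product is a.s.\ finite. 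To get integrability of the supremum over \emph{all} $n$ I would use Hölder's inequality to bound $\E[\sup_n(\cdots)]$ by $\E\big[\sum_{n\ge1}(\cdots)\big]^{?}$ — more precisely bound $\sup_n$ by a finite sum of suprema of individual averages, and for each, use the maximal ergodic inequality (Wiener's, or simply that $\sup_n \norm{f^\om}_{s,B(n)}^{s}$ has the same law as a maximal function of the stationary sequence $(f\circ\tau_x)^s$ over growing boxes) together with a moment bound: $\E[\sup_n \norm{f^\om}_{s,B(n)}^{s}]\le c\,\E[f^\om(0)^{s}\log_+(f^\om(0)^s)]$ or, more simply, $\le c\,\E[f^\om(0)^{s(1+\de)}]$ for any $\de>0$ via an $L^{1+\de}$ maximal inequality. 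Choosing the exponents $p,q,r_1,r_2$ large enough (as a function of $d$, $\kappa'$, $\rho$, $p_*$, $r_*$) so that $M(p,q,r_1,r_2)<\infty$ guarantees all the requisite moments are finite, this gives the claim.

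\textbf{Main obstacle.} The delicate point is handling the supremum over all $n\ge1$ rather than just the $\limsup$: a pointwise-a.s.\ ergodic limit does not by itself give an integrable supremum. The fix is the maximal ergodic theorem, but one must be careful that the relevant object is a supremum of averages over boxes of the form $B(x_0,n)$ (translates of growing cubes), to which a multi-parameter maximal inequality applies, and that after expanding $\cA_4^\om(n)^{\kappa'}$ one gets a finite product of such maximal functions each needing a strictly-higher-than-first-moment bound to be $L^1$ — this is precisely why the moment exponents $p,q,r_1,r_2$ must be taken somewhat larger than the QFCLT thresholds and are not expected to be optimal. A secondary, purely bookkeeping issue is making the geometry of the redefined cylinders $Q_\sigma(n)$ line up with the window $[T_1,T_2]\times\{|x|\le K\}$, which is routine.
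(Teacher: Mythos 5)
Your proposal is correct and follows essentially the same approach as the paper: you apply the $L^1$ maximal inequality of Proposition~\ref{prop:general speed maximal L1 2} on a cylinder large enough to contain the window $\{(n^2t,\lfloor nx\rfloor): t\in[T_1,T_2], |x|\le K\}$, use stochastic completeness ($\sum_y p_\theta^\om(t,0,y)\theta^\om(y)\le 1$) to show the space-time $L^1$ norm is $O(n^{-d})$ and cancels the prefactor, and then bound $\mathbb{E}[\sup_n(\cdots)]$ of the resulting product of spatial averages via the $L^p$ maximal ergodic theorem under a suitably strong moment condition. The only cosmetic difference is that you invoke a covering of $B(0,(K+1)n)$ by balls of radius $n$, whereas the paper simply takes a single cylinder $Q_{1/2}(c_{13}n)$ with $c_{13}$ chosen large enough; both are routine geometric bookkeeping.
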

Before we prove Proposition~\ref{prop:general speed annealed maximal bound}  we remark that it immediately implies the annealed local limit theorem.

\begin{proof}[Proof of Theorem~\ref{thm:annealed llt general speed}] 
Given the quenched  result in Theorem \ref{thm:qllt general speed}, the statement follows from Proposition~\ref{prop:general speed annealed maximal bound} by  the dominated convergence theorem.
\end{proof}


The rest of this section is devoted to the proof of Proposition \ref{prop:general speed annealed maximal bound}. We start with a  consequence of the maximal inequality in  Proposition~\ref{prop:general speed maximal L1 2}.

\begin{lemma} \label{lem:hkL1}
Let $p,\,q,\,r\in(1,\infty]$ be such that \eqref{eq:pqr_eqn} holds.
For all $K>0,\,0<T_1\leq T_2$, there exist $c_{12}=c_{12}(d,p,q,r,K,T_1,T_2)$ and $c_{13}=c_{13}(K,T_2)$  such that 
\begin{align*}
\sup_{\abs{x}\leq K,\,t\in\left[T_1,T_2\right]}n^d \, p_\theta^\omega(n^2t,0,\floor{nx})
\; \leq \; c_{12} \,  \norm{1\vee (1/\theta^\omega)}_{1,B(n)} \, \cA_4^\omega(c_{13} n)^{\kappa'}, \qquad \forall n\geq 1,
\end{align*}
 with $\cA_4^\omega$ as in \eqref{eq:defA4}.
\end{lemma}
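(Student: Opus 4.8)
The plan is to obtain Lemma~\ref{lem:hkL1} as an essentially immediate consequence of the maximal inequality of Proposition~\ref{prop:general speed maximal L1 2}, applied on a single suitably enlarged space-time cylinder, together with one soft observation. Since the walk $X$ is non-explosive under Assumption~\ref{ass:ergodicity general speed measure}, we have $\sum_{y\in\bbZ^d} p_\theta^\om(s,0,y)\,\theta^\om(y) = \sum_{y} P_0^\om(X_s=y) = 1$ for every $s>0$; hence $\onorm{p_\theta^\om(s,0,\cdot)}_{1,B,\theta}\le 1/|B|$ for every finite $B\subset\bbZ^d$, and therefore $\onorm{p_\theta^\om(\cdot,0,\cdot)}_{1,1/p_*,Q,\theta}\le 1/|B|$ whenever $Q=I\times B$. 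This trivial deterministic bound on the quantity appearing on the right-hand side of Proposition~\ref{prop:general speed maximal L1 2} is what makes the prefactor $n^d$ harmless, since it is cancelled by the volume $|B|\asymp n^d$; so the only real issue is to set up the rescaling so that all the relevant space-time points lie in the inner cylinder.

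Concretely, I would fix $\sigma'=1/2$, $\sigma=1$ and centre $x_0=0$, and introduce the rescaled radius $m:=\lceil c_{13}\,n\rceil$ with $c_{13}=c_{13}(d,K,T_2)$ chosen large enough, together with a fixed $\eps=\eps(d,K,T_1,T_2)\in(0,1/4)$ chosen small enough (e.g.\ $\eps:=\min\{T_1/(c_{13}+1)^2,\,1/8\}$). Using the sandwich $c_{13}n\le m\le(c_{13}+1)n$ valid for $n\ge1$, a short elementary computation shows that for every $n\ge1$, $|x|\le K$ and $t\in[T_1,T_2]$ one has $\lfloor nx\rfloor\in B(0,m/2)$ and $n^2t\in[\tfrac12\eps m^2,\,m^2-\tfrac12\eps m^2]$; that is, $(n^2t,\floor{nx})\in Q_{1/2}(m)$ in the notation of this section (with centre $x_0=0$). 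The only point requiring care here is uniformity in $n$: the constants $c_{13}$ and $\eps$ must be fixed once and for all, independently of $n$, which is exactly why the bound on $\lceil c_{13}n\rceil$ is used; this is routine but slightly fiddly, and is the part I would expect to be the "main obstacle" (such as it is).

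Then I would apply Proposition~\ref{prop:general speed maximal L1 2} with its parameter $n$ replaced by $m$, with this $\sigma',\sigma,x_0,\eps$; inserting $\onorm{p_\theta^\om(\cdot,0,\cdot)}_{1,1/p_*,Q_1(m),\theta}\le 1/|B(m)|$ and $(\sigma-\sigma')^2=1/4$ yields
\[
  \max_{(t,x)\in Q_{1/2}(m)} p_\theta^\om(t,0,x)
  \;\le\;
  \frac{c_{11}}{|B(m)|}\,\Big(\tfrac{4}{\eps}\,\cA_4^\om(m)\Big)^{\!\kappa'}\,\norm{1\vee(1/\theta^\om)}_{1,B(m)} .
\]
Multiplying through by $n^d$ and using the volume regularity $|B(m)|\ge c_d\,m^d\ge c_d\,c_{13}^d\,n^d$, so that $n^d/|B(m)|$ is bounded by a constant depending only on $d$ and $c_{13}$, and recalling from the previous step that every $(n^2t,\floor{nx})$ with $|x|\le K$, $t\in[T_1,T_2]$ lies in $Q_{1/2}(m)$, gives
\[
  \sup_{|x|\le K,\;t\in[T_1,T_2]} n^d\,p_\theta^\om(n^2t,0,\floor{nx})
  \;\le\;
  c_{12}\,\norm{1\vee(1/\theta^\om)}_{1,B(m)}\,\cA_4^\om(m)^{\kappa'},
\]
where $c_{12}$ absorbs $c_{11}$, $(4/\eps)^{\kappa'}$ and the volume constant, hence depends only on $d,p,q,r,K,T_1,T_2$, and $m=\lceil c_{13}n\rceil$ is a ball radius comparable to $c_{13}n$. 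This is the asserted inequality; the precise radius of the ball carrying the averaged norms is immaterial for the only use of this lemma, namely in the proof of Proposition~\ref{prop:general speed annealed maximal bound}, where only this $\Theta(n)$ scaling matters. I do not foresee any genuine analytic difficulty beyond this, since all the hard work is already contained in Proposition~\ref{prop:general speed maximal L1 2}.
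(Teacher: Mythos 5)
Your proposal is correct and follows essentially the same route as the paper's proof: observe that $\onorm{p_\theta^\om(\cdot,0,\cdot)}_{1,1/p_*,Q_1(\cdot),\theta}$ is trivially bounded by a constant times the inverse volume of the ball because $p_\theta^\om(s,0,\cdot)\theta^\om(\cdot)$ is a sub-probability density, then pick a rescaled radius $\Theta(n)$ and an $\eps$ depending only on $K,T_1,T_2$ so that all the relevant lattice points $(n^2t,\lfloor nx\rfloor)$ fall inside $Q_{1/2}$ of the enlarged cylinder, and apply Proposition~\ref{prop:general speed maximal L1 2} with $\sigma=1$, $\sigma'=1/2$. The only cosmetic difference is that you take an integer radius $\lceil c_{13}n\rceil$ whereas the paper works with $N=c_{13}n$ directly; since only the $\Theta(n)$ scaling of the ball matters for the subsequent annealed argument, this changes nothing.
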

\begin{proof}

First note that by definition of the heat kernel $p_\theta^\omega$,
\begin{align} \label{eq:hk_1norm}
& \onorm{ p_\theta^\omega(\cdot,0,\cdot)}_{1,1/p_*,Q_1(n),\theta} \; = \;\Bigg(\frac{1}{\abs{I_1}}\int_{I_1}\bigg(\frac{1}{\abs{B(n)}}\sum_{y\in B(n)}p_\theta^\omega(t,0,y) \, \theta^\omega(y) \bigg)^{\! 1/p_*} \, dt \Bigg)^{p_*} \nonumber\\
& 
\mspace{36mu}
\leq \; c \, n^{-d} \, \bigg(\frac{1}{\abs{I_1}}\int_{I_1} P_0^\omega\big[ X_t\in B(n)\big]^{1/p_*} \, dt \bigg)^{p_*} \;\leq \; c \, n^{-d},
\end{align}
for all $n\in\mathbb{N}$. 
Choose $x_0=0$ and set $N=c_{13}n$ for any $c_{13}> 2\ceil{K\vee\sqrt{T_2}}$. Then we can find $\eps\in(0,1/4)$ such that 
\begin{align*}
\big\{(n^2t,\floor{nx}):\, t\in [T_1,T_2],\,\abs{x}\leq K\big\}\; \subseteq \; Q_{1/2}(N)=\big[\tfrac{\eps}{2} N^2, (1-\tfrac{\eps}{2}) N^2\big]\times B(0,N/2).
\end{align*}
The claim follows now from Proposition~\ref{prop:general speed maximal L1 2} with the choice $\sigma=1$ and $\sigma'=1/2$ together with \eqref{eq:hk_1norm}. 
\end{proof}


\begin{proof}[Proof of Proposition~\ref{prop:general speed annealed maximal bound}]
By Lemma~\ref{lem:hkL1} it suffices to show that, under a suitable moment condition, $\mean\big[ \sup_{n\geq 1}  \norm{1\vee (1/\theta^\omega)}_{1,B(n)} \, \cA_4^\om(c_{13} n)^{\kappa'} \big]<\infty$.  Recall that
\begin{align*}
\cA_4^\om(n)\;= \; 
\onorm{1\vee (\mu^\omega/\theta^\om)}_{p,B(n),\theta}  \, \norm{1\vee \nu^\omega}_{q,B(n)}  \,   \norm{1 \vee \theta^\om}_{r,B(n)} \norm{1\vee (1/\theta^\om)}_{q,B(n)},
\end{align*} 
for any $p,q,r \in (1,\infty]$ satisfying \eqref{eq:pqr_eqn}. After an application of H\"older's inequality it suffices to show
that $\mean\big[ \sup_{n\geq 1} \, \norm{\nu^\omega}_{q,B(n)}^{4\kappa'} \big]  <  \infty$ and similar moment bounds on the other terms. Now suppose that $\mathbb{E}\big[ \nu^{\om}(0)^{4\kappa^\prime\vee q'}\big]<\infty$ for any $q'>q$. Then, if $4\kappa'>q$, given Assumption~\ref{ass:ergodicity general speed measure}, we can apply the $L^p$-version of the maximal ergodic theorem (see \cite[Chapter 6, Theorem~1.2]{krengel}, cf.\  Proposition~\ref{prop:dynamic ergodic} below) to deduce
\begin{align*}
\mean\Big[ \sup_{n\geq 1} \,  \norm{\nu^\omega}_{q,B(n)}^{4\kappa'} \Big]  \; \leq \;  c \, \mathbb{E}\big[ \nu^{\om}(0)^{4\kappa'}\big] \; < \; \infty.
\end{align*}
In the case  $4\kappa'\leq q<q'$, we  have by Jensen's inequality and again the maximal ergodic theorem,
\begin{align*}
\mean\Big[ \sup_{n\geq 1}  \norm{\nu^\omega}_{q,B(n)}^{4\kappa'} \Big]  
& \leq   \mathbb{E}\Bigg[ \sup_{n\geq 1} \bigg(\frac{1}{\abs{B(n)}}\sum_{x\in B(n)} \nu^\om(x)^{q}\bigg)^{\!\frac{q'}{q}}\Bigg]^{\frac{4\kappa'}{q'}} 
 \leq  c \, \mathbb{E}\big[v^\om(0)^{q'}\big]^{\frac{4\kappa'}{q'}}  < \infty.
\end{align*}
The other terms involving $\norm{\theta^\om}_{r,B(n)}$ etc.\ can be treated similarly.
\end{proof}

\section{Annealed Local Limit Theorem for the Dynamic RCM}
\label{section:dynamic}

Similarly as in the static case our starting point is establishing an $L^1$ maximal inequality for space-time harmonic functions. Once again, we redefine our space-time cylinders. For $t_0\in\bbR,\,x_0\in\bbZ^d,n\in\bbN,$ and $\sigma\in(0,1]$, let
$$Q_\sigma (n):=[t_0,t_0+\sigma n^2]\times B(x_0,\sigma n).$$
Throughout this section we fix $p,\,q\in(1,\infty]$ satisfying
\begin{equation}
\label{eq:pq_cond_dyn}
\frac{1}{p-1}\frac{q+1}{q}+\frac{1}{q}<\frac{2}{d}.
\end{equation}

\begin{proposition} \label{prop:maximal_dyn}
Let $t_0\in \R$, $x_0\in \bZ^d$ and $\Delta\in(0,1)$. There exist $N_9=N_9(\Delta)\in\bbN$ and $c_{14}=c_{14}(d,p,q)$  such that for all $n\geq N_9$ and $\frac 1 2\leq \sigma' <\sigma \leq 1$ with $\sigma-\sigma' > n^{-\Delta}$,
\begin{align*}
& \max_{(t,x)\in Q_{\sigma'}(n)}p^\om(0,t,0,x) \;\leq \;  \, c_{14} \, \bigg( \frac{\cA_5^\om(n)}{(\sigma-\sigma')^2} \bigg)^{\!\kappa'} \, \norm{p^\om(0,\cdot,0,\cdot)}_{1, 1,Q_\sigma(n)}^{\beta_n},
\end{align*}
where  $\kappa':= \alpha^2 p_*  /(\alpha-1)$ with $\alpha:=\frac{1}{p_*}+\frac{1}{p_*}(1-\frac{1}{\rho})\frac{q}{q+1}$, $\rho$ as in Proposition~\ref{prop:sobolev},  and 
\begin{align*}
\cA_5^\om(n)\;:=\;  \norm{1\vee \mu^\omega}_{p,p,Q(n)} \, \norm{1\vee \nu^\omega}_{q,q,Q(n)},
 \quad
\beta_n\;:=\;  \vartheta \sum_{k=0}^{K_n-1}(1-\vartheta)^k,
\end{align*}
with  $\vartheta :=1/(2\alpha p_*) \in(0,1)$ and $K_n:=\big\lfloor\frac{\Delta\ln n-\ln(\sigma-\sigma')}{\ln 2}\big\rfloor$.
\end{proposition}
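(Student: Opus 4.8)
The strategy mirrors the proof of Proposition~\ref{prop:general speed maximal L1 2}, adapted to the dynamic setting where the speed measure is the counting measure but the conductances carry a time index. The two inputs are (a) the De~Giorgi-type maximal bound of the form
\[
\norm{p^\om(0,\cdot,0,\cdot)}_{\infty,\infty,Q_{\sigma'}(n)}
\;\leq\; c\,\Big(\tfrac{\cA_5^\om(n)}{(\sigma-\sigma')^2}\Big)^{\!\kappa/p_*}
\norm{p^\om(0,\cdot,0,\cdot)}_{2p_*,2,Q_\sigma(n)},
\]
which is exactly what the iteration in \cite[Proposition~3.8]{ADS19} (in its dynamic incarnation; cf.\ also \cite{ACS20}) produces with the cylinders and exponents of the present section, and (b) H\"older's inequality to trade the $L^{2p_*,2}$-norm on the right for an $L^{1,1}$-norm at the price of a power of the $L^\infty$-norm. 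Concretely, with $\vartheta=1/(2\alpha p_*)$ one has the interpolation
\[
\norm{u}_{2p_*,2,Q_k}\;\leq\;\onorm{u}_{1,1,Q_k}^{\vartheta}\,\norm{u}_{\infty,\infty,Q_k}^{1-\vartheta},
\]
where $u:=p^\om(0,\cdot,0,\cdot)$, and $Q_k:=Q_{\sigma_k}(n)$ with $\sigma_k:=\sigma-(\sigma-\sigma')2^{-k}$. The exponent $\alpha$ here is precisely the one arising from the ratio of Sobolev/H\"older exponents in the dynamic De~Giorgi scheme, hence the appearance of $q/(q+1)$ and $\rho$ in its definition.

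\textbf{Iteration.} Feeding the interpolation into the single-step bound and using $(\sigma_k-\sigma_{k-1})^{-2}\leq c\,4^k(\sigma-\sigma')^{-2}$ gives, for each $k\geq 1$,
\[
\norm{u}_{\infty,\infty,Q_{k-1}}\;\leq\; 2^{ck}\,J\,\onorm{u}_{1,1,Q_\sigma}^{\vartheta}\,\norm{u}_{\infty,\infty,Q_k}^{1-\vartheta},
\qquad J:=c\Big(\tfrac{\cA_5^\om(n)}{(\sigma-\sigma')^2}\Big)^{\!\kappa/p_*}\geq 1,
\]
(here we used $\onorm{u}_{1,1,Q_k}\leq c\,\onorm{u}_{1,1,Q_\sigma}$ up to the fixed volume ratio $|B_k|/|B_\sigma|$, which is bounded by $c$ since $\sigma_k\in[\sigma',\sigma]$ with $\sigma'\geq 1/2$). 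Iterating this $K_n$ times yields
\[
\norm{u}_{\infty,\infty,Q_{\sigma'}}\;\leq\;
2^{c\sum_{k\geq 0}(k+1)(1-\vartheta)^k}\,
\Big(J\,\onorm{u}_{1,1,Q_\sigma}^{\vartheta}\Big)^{\sum_{k=0}^{K_n-1}(1-\vartheta)^k}\,
\norm{u}_{\infty,\infty,Q_{K_n}}^{(1-\vartheta)^{K_n}}.
\]
The geometric sum $\sum_{k\geq 0}(k+1)(1-\vartheta)^k$ converges and gives the absolute constant $c_{14}$, while $\sum_{k=0}^{K_n-1}(1-\vartheta)^k=\beta_n/\vartheta$, so the exponent on $J$ becomes $\beta_n/\vartheta=\sum_{k=0}^{K_n-1}(1-\vartheta)^k$ — this is where $\beta_n$, and hence the exponent $\kappa'=\alpha^2p_*/(\alpha-1)$ on $\cA_5^\om$ relative to $\beta_n$, comes from after bookkeeping the powers $\kappa/p_*$ and $1/\vartheta=2\alpha p_*$. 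One also needs $K_n\geq 1$, which is guaranteed for $n\geq N_9(\Delta)$ by the hypothesis $\sigma-\sigma'>n^{-\Delta}$, and the balls $B_{\sigma_{k}}$ must genuinely decrease, i.e.\ $(\sigma_{k-1}-\sigma_k)n\geq 1$ for $k\leq K_n$, which is exactly the definition of $K_n$.

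\textbf{Controlling the residual $L^\infty$ term.} Unlike in the static case, here the trivial bound $p^\om(0,t,0,x)\leq 1$ (the counting measure has $\theta\equiv 1$) gives immediately
\[
\norm{u}_{\infty,\infty,Q_{K_n}}\;\leq\;1,
\]
so the factor $\norm{u}_{\infty,\infty,Q_{K_n}}^{(1-\vartheta)^{K_n}}\leq 1$ and can simply be dropped; there is no need to let $K_n\to\infty$ or to insert any $\norm{1\vee(1/\theta^\om)}$ factor (which is why such a factor is absent from the statement). This leaves
\[
\max_{(t,x)\in Q_{\sigma'}(n)}p^\om(0,t,0,x)\;\leq\;c_{14}\Big(\tfrac{\cA_5^\om(n)}{(\sigma-\sigma')^2}\Big)^{\!\kappa'}\norm{u}_{1,1,Q_\sigma(n)}^{\beta_n},
\]
as claimed, after checking that $(\kappa/p_*)\cdot(1/\vartheta)=\kappa'$ with the stated value of $\kappa'$.

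\textbf{Main obstacle.} The genuinely delicate point is not the iteration bookkeeping but verifying input (a): that the De~Giorgi scheme of \cite{ADS19,ACS20}, run with the cylinders $Q_\sigma(n)=[t_0,t_0+\sigma n^2]\times B(x_0,\sigma n)$ of this section and with the dynamic generator $\cL_t^\om$, produces a single-step inequality with the energy/Sobolev exponents encoded in $\alpha$ and in $\cA_5^\om(n)=\norm{1\vee\mu^\om}_{p,p,Q(n)}\norm{1\vee\nu^\om}_{q,q,Q(n)}$ and with the condition \eqref{eq:pq_cond_dyn} exactly matching the requirement $\alpha>1$ (equivalently $1/\alpha_*>0$). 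This requires re-running the energy estimate (the dynamic analogue of Lemma~\ref{lemma:aux1}, i.e.\ \cite[Lemma~2.9]{ACS20} with a time-dependent Dirichlet form), the Sobolev inequality of Proposition~\ref{prop:sobolev} with $\theta\equiv 1$, and the space-time H\"older interpolation, and then checking that \eqref{eq:pq_cond_dyn} is precisely the algebraic condition making $\alpha-1=\frac1{p_*}(1-\frac1\rho)\frac{q}{q+1}-(1-\frac1{p_*})>0$; this is a direct but somewhat tedious computation using $\rho=qd/(q(d-2)+d)$. Once (a) is in place with these exponents, the rest is the routine iteration above.
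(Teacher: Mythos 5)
Your overall strategy — interpolate an intermediate space-time norm between $L^{1,1}$ and $L^{\infty,\infty}$, feed this into a De~Giorgi single-step bound, iterate over shrinking cylinders, and kill the residual $L^\infty$ factor using $p^\om\leq 1$ — is exactly the paper's. However, there is a concrete error in the exponents that makes your version of the two key ingredients incorrect.

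You model both the single-step bound and the interpolation on the static Proposition~\ref{prop:general speed maximal L1 2}, using the mixed norm $\norm{u}_{2p_*,2,Q_k}$ and writing
\[
\norm{u}_{2p_*,2,Q_k}\leq\onorm{u}_{1,1,Q_k}^{\vartheta}\,\norm{u}_{\infty,\infty,Q_k}^{1-\vartheta},\qquad \vartheta=\tfrac{1}{2\alpha p_*}.
\]
This interpolation does not hold with $\vartheta=1/(2\alpha p_*)$; for the $(2p_*,2)$-norm (with the normalisation $\norm{u}_{p,p',Q}=(|I|^{-1}\int_I\|u_t\|_{p,B}^{p'}\,dt)^{1/p}$ as defined in the paper) the exponent one extracts by the trivial $|u|^{s}\leq\|u\|_\infty^{s-1}|u|$ argument plus Jensen's inequality depends only on $p_*$, not on $\alpha$. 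The dynamic maximal inequality from \cite[Theorem~5.5]{ACDS18}, which is the correct input here, in fact produces the \emph{isotropic} norm $\norm{u}_{2\alpha p_*,\,2\alpha p_*,\,Q_{\sigma_k}}$ on the right-hand side, not $\norm{u}_{2p_*,2,\cdot}$, with prefactor exponent $\kappa=\alpha/(2(\alpha-1))$ (not $\kappa/p_*$). For that norm the elementary H\"older bound
\[
\norm{u}_{2\alpha p_*,\,2\alpha p_*,\,Q}\;\leq\;\norm{u}_{1,1,Q}^{1/(2\alpha p_*)}\,\norm{u}_{\infty,\infty,Q}^{1-1/(2\alpha p_*)}
\]
does hold exactly with $\vartheta=1/(2\alpha p_*)$, and then the iteration and the final bookkeeping $\kappa/\vartheta=2\alpha p_*\cdot\alpha/(2(\alpha-1))=\alpha^2 p_*/(\alpha-1)=\kappa'$ go through cleanly. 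Your arithmetic $(\kappa/p_*)\cdot(1/\vartheta)=\kappa'$ would require your $\kappa$ to equal $\alpha p_*/(2(\alpha-1))$, i.e.\ a different value of $\kappa$ than the one that actually appears in \cite{ACDS18}; the two errors cancel in the final answer, but the intermediate steps as written are not valid inequalities. You also acknowledge that you have not verified the dynamic De~Giorgi input; the paper avoids re-deriving it by citing \cite[Theorem~5.5]{ACDS18} directly (with $f=0$, so $\gamma=1$ therein). Fix the intermediate norm to $\norm{u}_{2\alpha p_*,2\alpha p_*}$, take the single-step bound verbatim from \cite{ACDS18} with exponent $\kappa=\alpha/(2(\alpha-1))$, and your iteration and the treatment of the residual term ($u\leq 1$, so no $\theta$-dependent factor) are then correct and agree with the paper.
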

\begin{proof}
Write $u(\cdot,\cdot)=p^\omega(0,\cdot,0,\cdot)$ and $\sigma_k:=\sigma-(\sigma-\sigma^\prime)2^{-k}$ for $k\in\bbN$.  Then,
\begin{align*}
\norm{u}_{2\alpha p_*,2\alpha p_*,Q_{\sigma_k}}\leq\norm{u}_{1,1,Q_{\sigma_k}}^\vartheta \norm{u}_{\infty,\infty,Q_{\sigma_k}}^{1-\vartheta },
\end{align*}
 by H\"older's inequality. Note that by the definition of $K_n$ we have $\sigma_k-\sigma_{k-1}>n^{-\Delta}$ for all $k\in\{1,\ldots,K_n\}$. By \cite[Theorem 5.5]{ACDS18} (notice that $f=0$ in the present setting which leads to $\gamma=1$ therein), there exist $c=c(d)\in(1,\infty)$, $N_9(\Delta)\in\bbN$ such that for $n\geq N_9(\Delta)$ and $k\in \{1,...,K_n\}$,
\begin{align*}
\norm{u}_{\infty,\infty,Q_{\sigma_{k-1}}} \! \leq  c \, \bigg(\frac{\cA_5^\omega(n)}{(\sigma_k-\sigma_{k-1})^2}\bigg)^{\!\kappa} \norm{u}_{2\alpha p_*,2\alpha p_*, Q_{\sigma_k}} \!  \leq  2^{2\kappa k}  J \, \norm{u}_{1,1,Q_{\sigma_k}}^\vartheta \norm{u}_{\infty,\infty,Q_{\sigma_k}}^{1-\vartheta},
\end{align*}
with $\kappa:=\frac{\alpha}{2(\alpha-1)}$ and $J:=c\left(\frac{\cA_5^\om(n)}{(\sigma-\sigma^\prime)^2}\right)^\kappa\geq1.$ Then by iteration,
\begin{align*}
\norm{u}_{\infty,\infty,Q_{\sigma^\prime}}& \; \leq \; 2^{2\kappa\sum_{k=0}^{K_n-1}(k+1)(1-\vartheta)^k}\left(J\norm{u}_{1,1,Q_\sigma}^\vartheta \right)^{\sum_{k=0}^{K_n-1}(1-\vartheta)^k}\norm{u}_{\infty,\infty,Q_{\sigma_{K_n}}}^{(1-\vartheta)^{K_n}}\\
&\; \leq \;  2^{2\kappa/\vartheta ^2} \, J^{1/\vartheta } \, \norm{u}_{1,1,Q_\sigma}^{\beta_n},
\end{align*}
where we used that $u\leq 1$.
\end{proof}


\begin{assumption}
\label{ass:poly moments dynamic}
Suppose that $\mathbb{E}\big[\omega_0(e)^{2(\kappa'\vee p)}\big]  <  \infty$ and $\mathbb{E}\big[\omega_0(e)^{-2(\kappa'\vee q)}\big] <  \infty$
for any $e\in E_d$ with $p,q\in(1,\infty]$ satisfying \eqref{eq:pq_cond_dyn} and $\kappa'$ as in Proposition~\ref{prop:maximal_dyn}.
\end{assumption}

\begin{proposition}
\label{prop:annealed maximal bound dynamic}
Suppose Assumption \ref{ass:ergodic dynamic} and Assumption \ref{ass:poly moments dynamic} hold. Then for all $K>0$ and $0<T_1\leq T_2$, there exists $c_{15}=c_{15}(d,p,q,K,T_1,T_2)$ such that
$$\E\bigg[\sup_{n\in\N,\,\abs{x}\leq K,\,t\in[T_1,T_2]}n^dp^\omega(0,n^2t,0,\floor{nx})\bigg] \; \leq \; c_{15}.$$
\end{proposition}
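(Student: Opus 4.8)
The plan is to follow the proof of Proposition~\ref{prop:general speed annealed maximal bound} almost verbatim, with Proposition~\ref{prop:maximal_dyn} playing the role of Proposition~\ref{prop:general speed maximal L1 2}; the only genuinely new feature is the exponent $\beta_n<1$ attached to the $L^1$-norm in Proposition~\ref{prop:maximal_dyn}. Since $\sup_{|x|\le K,\,t\in[T_1,T_2]}n^d p^\om(0,n^2 t,0,\floor{nx})\le n^d$ holds deterministically, it suffices to bound the expectation of the supremum over $n\ge n_0$ for a suitably large fixed $n_0\in\bbN$, the finitely many smaller $n$ contributing only a deterministic constant.

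Fix $\Delta\in(0,1)$. For $n\ge n_0$ set $N:=c_{16}n$ with $c_{16}\in\bbN$ large depending on $K,T_1,T_2$, take $x_0:=0$, and fix a reference time $t_0:=1$. As in Lemma~\ref{lem:hkL1}, once $c_{16}$ and $n_0$ are large enough, for all $n\ge n_0$,
\[
\big\{(n^2 t,\floor{nx}) : t\in[T_1,T_2],\ |x|\le K\big\}\ \subseteq\ Q_{1/2}(N)=\big[1,\,1+\tfrac12 N^2\big]\times B(0,N/2).
\]
Taking $t_0>0$ fixed keeps $p^\om(0,\cdot,0,\cdot)$ a genuine solution on $Q(N)=[1,1+N^2]\times B(0,N)$, away from the time-$0$ singularity, and renders the cylinders $Q(N)$ increasing in $n$. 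Applying Proposition~\ref{prop:maximal_dyn} with $\sigma=1,\,\sigma'=1/2$ (valid for $n\ge n_0$, so that $N\ge N_9(\Delta)$ and $N^{-\Delta}<\tfrac12$), using $\sum_y p^\om(0,t,0,y)=1$ to get $\norm{p^\om(0,\cdot,0,\cdot)}_{1,1,Q(N)}\le|B(0,N)|^{-1}\le c\,N^{-d}$, and multiplying by $n^d=c_{16}^{-d}N^d$, we obtain
\[
\sup_{|x|\le K,\,t\in[T_1,T_2]} n^d\, p^\om(0,n^2 t,0,\floor{nx})\ \le\ c\,N^{d(1-\beta_N)}\,\big(\cA_5^\om(N)\big)^{\kappa'}.
\]
The prefactor $N^{d(1-\beta_N)}$ is bounded uniformly in $n$: since $\beta_N=1-(1-\vartheta)^{K_N}$ and $K_N\ge\Delta\ln N/\ln 2$, one has $1-\beta_N\le N^{-\Delta\lambda}$ with $\lambda:=\log_2(1/(1-\vartheta))>0$, so that $d(1-\beta_N)\ln N\to 0$.

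It remains to show $\mean\big[\sup_{n\ge 1}\big(\cA_5^\om(c_{16}n)\big)^{\kappa'}\big]<\infty$. By Young's inequality this reduces to bounding $\mean\big[\sup_n\norm{1\vee\mu^\om}_{p,p,Q(c_{16}n)}^{2\kappa'}\big]$ and the analogous quantity for $\nu^\om$. For the former, recall that $\norm{f}_{p,p,Q}^p$ is the space-time average of $|f|^p$ over $Q$ and that $(1\vee\mu_t^\om(x))^p=g\circ\tau_{t,x}$ with $g:=(1\vee\mu_0^\om(0))^p$; applying the $L^s$-maximal ergodic theorem for the time-space action (Proposition~\ref{prop:dynamic ergodic}) along the nested parabolic cylinders $Q(c_{16}n)$ -- directly with $s=2\kappa'/p$ if $2\kappa'/p>1$, and otherwise after first raising to a power $b\in(1,2]$ and applying the power-mean and Jensen inequalities to bring the exponent into the admissible range -- bounds it by $c\,\big(1+\mean[\om_0(e)^{2(\kappa'\vee p)}]\big)<\infty$, where we used that $\mu_0^\om(0)$ is a sum of $2d$ edge weights together with Assumption~\ref{ass:poly moments dynamic}. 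The $\nu^\om$-term is handled identically, with the bound $c\,\big(1+\mean[\om_0(e)^{-2(\kappa'\vee q)}]\big)<\infty$. Combining this with the previous display completes the proof.

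The main obstacle, a mild one, is the bookkeeping: one must position the cylinders $Q_\sigma(N)$ so that they simultaneously cover $[n^2T_1,n^2T_2]\times B(0,Kn)$, stay away from the heat kernel's time-$0$ singularity, and remain nested so that the maximal ergodic theorem applies along them; and one must check that the correction factor $N^{d(1-\beta_N)}$ induced by the exponent $\beta_N<1$ in Proposition~\ref{prop:maximal_dyn} stays bounded. Beyond these points the argument is the same use of the maximal ergodic theorem as in the static case.
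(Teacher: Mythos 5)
Your argument is correct and follows essentially the same route as the paper: the paper isolates your step~1--5 into the auxiliary Lemma~\ref{lem:hkL1_dyn} (taking $t_0=T_1$ instead of your $t_0=1$, a cosmetic difference) and then applies the maximal ergodic theorem of Proposition~\ref{prop:dynamic ergodic} as you do, with the same case distinction on whether the needed exponent already exceeds $1$. Your explicit verification that $N^{d(1-\beta_N)}\to 1$ merely spells out what the paper asserts in one line at the end of the proof of Lemma~\ref{lem:hkL1_dyn}.
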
 
We postpone the proof of the above to the end of this section. First, we deduce the annealed local limit theorem from it.

\begin{proof}[Proof of Theorem~\ref{thm:annealed llt dyn}]
The statement follows from the corresponding quenched result, see Theorem~\ref{thm:quenched_dynamic}-(ii) above, together with Proposition~\ref{prop:annealed maximal bound dynamic} by an application of the dominated convergence theorem. Note that the moment condition in Assumption~\ref{ass:poly moments dynamic} is stronger than the one required in Theorem~\ref{thm:quenched_dynamic}.
\end{proof}


The proof of Proposition~\ref{prop:annealed maximal bound dynamic} begins with a consequence of Proposition~\ref{prop:maximal_dyn}.

\begin{lemma} 
\label{lem:hkL1_dyn}
For all $K>0,\,0<T_1\leq T_2$, there exist $N_{10}=N_{10}(T_2,K)\in\bbN$ and constants $c_{16}=c_{16}(d,p,q,K,T_1,T_2)$, $c_{17}=c_{17}(K,T_2)$ such that for all $n\geq N_{10}$, 
\begin{align*}
\sup_{\abs{x}\leq K,\,t\in\left[T_1,T_2\right]}n^d \, p^\omega(0,n^2t,0,\floor{nx})
\; \leq \; c_{16} \, \cA_5^\omega(c_{17} n)^{\kappa'}
\end{align*}
with $\cA_5^\omega$ and $\kappa'$ as in Proposition~\ref{prop:maximal_dyn}.
\end{lemma}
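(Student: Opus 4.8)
The plan is to follow the proof of Lemma~\ref{lem:hkL1} from the static case, with Proposition~\ref{prop:maximal_dyn} replacing Proposition~\ref{prop:general speed maximal L1 2}. Two points require attention. First, the maximal inequality must be applied on a cylinder on which $u:=p^\omega(0,\cdot,0,\cdot)$ is space-time harmonic, so the anchor time of the cylinder has to be kept strictly positive (and in fact of order $n^2$, so that the right-hand side is of the correct size). Second, the exponent $\beta_n$ in Proposition~\ref{prop:maximal_dyn} is strictly less than $1$ and depends on $n$, so a short argument is needed to absorb the resulting powers of $n$. The latter is the only genuinely new step and I expect it to be the main obstacle.

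First I would record the conservation bound on the heat kernel. Since $u(t,\cdot)=p^\omega(0,t,0,\cdot)=P_{0,0}^\omega[X_t=\cdot\,]$ is a sub-probability on $\bbZ^d$, for any $x_0$, any $\sigma\in[\tfrac12,1]$ and any $m\geq 1$ we have $\sum_{y\in B(x_0,\sigma m)}u(t,y)\leq 1$, so by the definition of the space-time averaged norms and $|B(x_0,m)|\geq c\,m^d$,
\[
  \norm{u}_{1,1,Q_\sigma(m)}
  \;=\;
  \frac{1}{|I|}\int_I\frac{1}{|B(x_0,\sigma m)|}\sum_{y\in B(x_0,\sigma m)}u(t,y)\,dt
  \;\leq\; c\,m^{-d}.
\]
Next I would fix some $\Delta\in(0,1)$ and, in the definition of $Q_\sigma(n)$, take $x_0=0$ and the anchor time $t_0:=\tfrac12 T_1 n^2>0$, and set $N:=c_{17}n$ with $c_{17}=c_{17}(K,T_2)$ large enough that
\[
  \big\{(n^2t,\floor{nx}):t\in[T_1,T_2],\ \abs{x}\leq K\big\}
  \;\subseteq\;
  Q_{1/2}(N)=\big[\tfrac12 T_1 n^2,\ \tfrac12 T_1 n^2+\tfrac12 N^2\big]\times B(0,N/2)
\]
(for instance $c_{17}>2\big(d(K+1)\vee\sqrt{2T_2}\,\big)$ works, using $T_1\leq T_2$), and large enough that $N\geq N_9(\Delta)$ and $\tfrac12>N^{-\Delta}$; these last two hold once $n\geq N_{10}(T_2,K)$. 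Since $t_0>0$, the heat kernel $u$ is space-time harmonic on $Q_1(N)\subseteq(0,\infty)\times\bbZ^d$, so Proposition~\ref{prop:maximal_dyn} applies with $N$ in place of $n$, $\sigma=1$, $\sigma'=\tfrac12$; combining it with the bound above gives, for all $n\geq N_{10}$,
\[
  \sup_{\abs{x}\leq K,\ t\in[T_1,T_2]}p^\omega(0,n^2t,0,\floor{nx})
  \;\leq\;
  \max_{(t,x)\in Q_{1/2}(N)}u(t,x)
  \;\leq\;
  c_{14}\,4^{\kappa'}\,\cA_5^\omega(N)^{\kappa'}\,\big(c\,N^{-d}\big)^{\beta_N}.
\]

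It then remains to absorb the powers of $N$ into $n^{-d}$. Multiplying by $n^d$ and using $N=c_{17}n$ leaves the prefactor $c^{\beta_N}c_{17}^{-d\beta_N}\,n^{d(1-\beta_N)}$ in front of $\cA_5^\omega(c_{17}n)^{\kappa'}$; since $0\leq\beta_N\leq 1$ the first two factors are bounded, so the crux is to see that $n^{d(1-\beta_N)}$ stays bounded uniformly in $n$. Summing the geometric series in the definition of $\beta_N$ gives $1-\beta_N=(1-\vartheta)^{K_N}$, and from $K_N=\big\lfloor(\Delta\ln N+\ln 2)/\ln 2\big\rfloor\geq\Delta\ln N/\ln 2\geq\Delta\ln n/\ln 2$ (using $c_{17}\geq 1$) we get $1-\beta_N\leq n^{-\gamma}$ with $\gamma:=-\Delta\ln(1-\vartheta)/\ln 2>0$, hence $d(1-\beta_N)\ln n\leq d\sup_{m\geq 1}\big(m^{-\gamma}\ln m\big)<\infty$. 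This yields the claim with a constant $c_{16}=c_{16}(d,p,q,K,T_1,T_2)$; the mild dependence of the anchored cylinder defining $\cA_5^\omega$ on $T_1$ is harmless, since that cylinder is contained (up to enlarging $c_{17}$ by a factor depending only on $K,T_2$) in the cylinder anchored at the origin. Apart from this handling of the $n$-dependent exponent $\beta_n$ and the choice of a positive anchor time, the argument is a direct transcription of the proof of the static Lemma~\ref{lem:hkL1}.
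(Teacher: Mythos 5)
Your proof is correct and follows essentially the same route as the paper: establish the $O(n^{-d})$ bound on the $\norm{\cdot}_{1,1}$-norm of the heat kernel, enclose the relevant space–time region in a cylinder $Q_{1/2}(c_{17}n)$, and invoke Proposition~\ref{prop:maximal_dyn} with $\sigma=1$, $\sigma'=1/2$. The only cosmetic differences are the choice of anchor time ($t_0=\tfrac12 T_1 n^2$ versus $t_0=T_1$ in the paper) and that you explicitly verify $\sup_n n^{d(1-\beta_N)}<\infty$ via the geometric-series identity $1-\beta_N=(1-\vartheta)^{K_N}\le n^{-\gamma}$, whereas the paper simply asserts $n^{d(1-\beta_n)}\to 1$; your computation is a clean way to fill in that step.
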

\begin{proof}

First note that by definition of the heat kernel $p^\omega$,
\begin{align} \label{eq:hk_1norm_dyn}
& \| p^\omega(0,\cdot,0,\cdot)\| _{1,1,Q(n)} \; = \;\frac{1}{\abs{I_1}}\int_{I_1}\frac{1}{\abs{B(n)}}\sum_{y\in B(n)}p^\omega(0,t,0,y)  \, dt \nonumber\\
& 
\mspace{36mu}
= \; c \, n^{-d} \, \frac{1}{\abs{I_1}}\int_{I_1} P_{0,0}^\omega\big[ X_t\in B(n)\big] \, dt  \;\leq \; c \, n^{-d},
\end{align}
for all $n\in\mathbb{N}$. 
Set $x_0=0$, $t_0=T_1$ and let $N=c_{18}\, n$ with $c_{18}$ chosen  such that
\begin{align*}
\big\{(n^2t,\floor{nx}):\, t\in [T_1,T_2],\,\abs{x}\leq K\big\} \;\subseteq \; Q_{1/2}(N)=\big[t_0,t_0+\tfrac{1}{2}N^2\big]\times B(x_0,N/2).
\end{align*}
Then by applying Proposition~\ref{prop:maximal_dyn} with the choice $\Delta=1/2,$ $\sigma=1$ and $\sigma'=1/2$ we get that for all $n\geq \ceil{\frac{N_9}{c_{17}}}\vee 4$,
\begin{align*}
\sup_{\abs{x}\leq K,\,t\in\left[T_1,T_2\right]} n^d \, p^\omega(0,n^2t,0,\floor{nx})
\; \leq \; c \, \cA_5^\omega(c_{17} n)^{\kappa'} \, n^{d(1-\beta_n)}.
\end{align*}
Since $ n^{d(1-\beta_n)} \to 1$ as $n\to \infty$ the claim follows.
\end{proof}

For the proof of Proposition~\ref{prop:annealed maximal bound dynamic} we also require a maximal ergodic theorem for space-time ergodic environments.

\begin{proposition}
\label{prop:dynamic ergodic}
Suppose Assumption \ref{ass:ergodic dynamic} holds. Let $x_0\in\bZ^d,\,t_0\geq 0$ and $p\geq 1$. Then there exists $c_{18}=c_{18}(p)>0$ such that for all $f\in L^p(\Omega)$,
\begin{equation}
\label{eq:dynamic dom ergodic thm}
\E\Bigg[\bigg(\sup_{n\geq1}\frac{1}{n^2}\int_{t_0}^{t_0+n^2}\frac{1}{\abs{B(x_0,n)}}\sum_{x\in B(x_0,n)}f \circ \tau_{t,x} \,dt\bigg)^p\Bigg] \;\leq  \; c_{18} \, \E\left[f^p\right].
\end{equation}
\end{proposition}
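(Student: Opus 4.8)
The plan is to deduce this $(d+1)$-parameter maximal inequality from the classical one-parameter $L^p$ maximal ergodic theorem (see \cite[Chapter~6, Theorem~1.2]{krengel}), applied successively in the time variable and in each of the $d$ spatial directions, after dominating the cylinder averages by a composition of one-dimensional maximal operators. Since $\mathbb{P}\circ\tau_{t,x}^{-1}=\mathbb{P}$, I would first translate to reduce to $t_0=0$ and $x_0=0$, and assume $f\ge 0$ (otherwise replace $f$ by $|f|$); Assumption~\ref{ass:ergodic dynamic}(ii) guarantees that $s\mapsto f(\tau_{s,0}\omega)$ and all the functions of $\omega$ below are measurable. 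Using that $\bR\times\bZ^d$ is abelian, so $\tau_{s,y}=\tau_{0,y}\circ\tau_{s,0}$, the average over $[0,n^2]\times B(0,n)$ rewrites as $\tfrac1{n^2}\int_0^{n^2}(S_nf)(\tau_{s,0}\omega)\,ds$, where $S_nf(\omega):=|B(0,n)|^{-1}\sum_{y\in B(0,n)}f(\tau_{0,y}\omega)$ is the spatial average over the graph ball.

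The key step is to decouple the time scale $n^2$ from the space scale $n$. Bounding $S_nf\le S^*f:=\sup_{m\ge1}S_mf$ and then $\sup_{n\ge1}\tfrac1{n^2}\int_0^{n^2}(S^*f)(\tau_{s,0}\omega)\,ds\le\sup_{r\ge1}\tfrac1r\int_0^r(S^*f)(\tau_{s,0}\omega)\,ds=:M_t(S^*f)(\omega)$, one sees that the anisotropic scaling is harmless, since the two suprema are taken independently. I would then estimate $\|M_t(S^*f)\|_{L^p}\le c_p\,\|S^*f\|_{L^p}$ via the maximal inequality for the measure-preserving flow $(\tau_{s,0})_{s\in\bR}$; as only scales $r\ge1$ occur, this reduces --- by comparing the continuous average with a Riemann sum, $\int_0^r g\circ\tau_{s,0}\,ds\le\sum_{k=0}^{\lceil r\rceil-1}G\circ\tau_{k,0}$ with $G:=\int_0^1 g\circ\tau_{s,0}\,ds$ and $\|G\|_{L^p}\le\|g\|_{L^p}$ --- to the discrete $L^p$ maximal ergodic theorem for the single transformation $\tau_{1,0}$, so no local-ergodic-theorem input is needed.

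For $\|S^*f\|_{L^p}$ I would replace the $\ell^1$-ball $B(0,m)$ by the enclosing cube $C_m:=\{-m,\dots,m\}^d$, using $|C_m|\le c_d\,|B(0,m)|$ to get $S_mf\le c_d\,A_{C_m}f$ with $A_{C_m}f(\omega):=|C_m|^{-1}\sum_{y\in C_m}f(\tau_{0,y}\omega)$. Since $A_{C_m}$ factors as the composition $A_m^{(1)}\circ\cdots\circ A_m^{(d)}$ of the coordinatewise averages and each factor is a positive $L^p$-contraction, one obtains the pointwise bound $\sup_{m\ge1}A_{C_m}f\le M^{(1)}M^{(2)}\cdots M^{(d)}f$, where $M^{(i)}g:=\sup_{m\ge1}A_m^{(i)}g$ is the one-dimensional maximal operator associated with $\tau_{0,e_i}$. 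Applying \cite[Chapter~6, Theorem~1.2]{krengel} $d$ times gives $\|S^*f\|_{L^p}\le c_d\,c_p^{\,d}\,\|f\|_{L^p}$, and combining the three displays yields the assertion with $c_{18}=c_{18}(p,d)$.

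No single step is a serious obstacle: this is a standard multi-parameter maximal estimate, and the two points needing care --- the continuous-time factor and the passage from $\ell^1$-balls to cubes --- are both routine. I should note, however, that the composed one-dimensional bounds are strong-type only for $p>1$, which is the range in which the proposition is applied; at $p=1$ one obtains instead the corresponding weak-type $(1,1)$ inequality.
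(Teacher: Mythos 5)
Your proof is correct, and it takes a more self-contained route than the paper, which disposes of the proposition simply by citing the discussion following \cite[Chapter~6, Theorem~4.4]{krengel}, i.e.\ by invoking a multiparameter $L^p$ maximal ergodic theorem as a black box. You instead build the multiparameter estimate from the one-parameter theorem \cite[Chapter~6, Theorem~1.2]{krengel} by (a) commuting the group action to write the cylinder average as a time-average of the spatial average $S_n f$, (b) dominating $S_n f$ by the full spatial maximal function $S^* f$ so that the time- and space-suprema decouple --- this is the step that genuinely matters here, because the cylinder $[t_0,t_0+n^2]\times B(x_0,n)$ is anisotropic and a naive citation of a balls-or-cubes multiparameter theorem would need exactly this observation to apply --- (c) transferring the continuous time-supremum to the discrete one for the single map $\tau_{1,0}$ via the Riemann-sum bound, and (d) dominating the $\ell^1$-ball averages by cube averages, which factor coordinatewise so that $d$ applications of the one-dimensional $L^p$ maximal inequality suffice (with the standard splitting of the two-sided ergodic maximal function into forward and backward one-sided pieces, since each $A_m^{(i)}$ averages over $\{-m,\dots,m\}$). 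The trade-off: the paper's proof is a one-liner, but opaque about why anisotropy is harmless; yours is longer but makes the reduction explicit and tracks the constant $c_{18}=c_{18}(p,d)$. Your closing caveat is also well taken: the strong-type $L^p$ bound requires $p>1$, and indeed the paper only ever applies the proposition with exponents strictly greater than $1$ (in the proofs of Propositions~\ref{prop:general speed annealed maximal bound} and~\ref{prop:annealed maximal bound dynamic}); the case $p=1$ as literally written in the statement would only yield a weak-type $(1,1)$ estimate.
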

\begin{proof}
See the discussion following \cite[Chapter 6, Theorem 4.4, p.224]{krengel}.
\end{proof}

\begin{proof} [Proof of Proposition~\ref{prop:annealed maximal bound dynamic}]
By Lemma~\ref{lem:hkL1_dyn}, it suffices to bound $\bbE\big[\sup_{n\geq 1}\cA_5^\om(c_{17}n)^{\kappa'}\big]$ under the moment condition of Assumption \ref{ass:poly moments dynamic}. This follows by using the maximal ergodic theorem of Proposition~\ref{prop:dynamic ergodic}, similarly to the proof of Proposition~\ref{prop:general speed annealed maximal bound}.
\end{proof}

\section{Applications to the Ginzburg-Landau $\nabla\phi$ Model}
\label{section:interface}

In this section we apply the homogenization results for the dynamic RCM in Theorems~\ref{thm:quenched_dynamic} and \ref{thm:annealed llt dyn} in the context of  a stochastic interface model, the Ginzburg-Landau $\nabla \phi$ model. The survey \cite{Fu05} provides a comprehensive review of this class of models. We write $\Lambda\Subset\bZ^d$ for $\Lambda$ a finite subset of $\bZ^d$. $\Lambda^*$ denotes the set of all undirected edges in $\Lambda$, i.e.\ $\Lambda^*=\lbrace\{x,y\}\in E_d:\,x,y \in\,\overline{\Lambda}\rbrace$, and we write $\mathcal{P}(S)$ for the family of Borel probability measures on some topological space $S$. 

\subsection{Setup and Existence of $\phi$-Gibbs Measures} The Ginzburg-Landau $\nabla\phi$ model describes a hypersurface (interface) embedded in $d+1$-dimensional space, $\bR^{d+1}$, which separates two pure thermodynamical phases. The interface is represented by a field of height variables $\phi=\{\phi(x) \in\bR: x\in\Gamma\}$, which measure the vertical distances between the interface and $\Gamma\subseteq\bZ^d$, a fixed $d$-dimensional reference hyperplane.
%
The Hamiltonian $H$ represents the energy associated with the field of height variables $\phi$. In general, for $\Gamma=\bZ^d$ or $\Gamma\Subset\bZ^d$,
\begin{equation}
\label{eq:hamiltonian}
H(\phi)\equiv H_\Gamma^\psi (\phi)=\sum_{\{x,y\} \in\Gamma^*}V(\phi(x)-\phi(y)).
\end{equation}
Note that boundary conditions $\psi=\{\psi(x):x\in\,\partial^+\Gamma\}$ are required to define the sum in the case $\Gamma\Subset\bZ^d$, i.e.\ we set $\phi(x)=\psi(x)$ for $x\in\partial^+\Gamma.$ The sum in \eqref{eq:hamiltonian} is merely formal  when $\Gamma=\bZ^d$. 
%
The dynamics of the $\nabla\phi$ model are governed by the following infinite system of SDEs for $\phi_t=\lbrace\phi_t(x):\,x\in\Gamma\rbrace\in\,\bR^\Gamma,$
$$d\phi_t(x)=-\frac{\partial H}{\partial \phi(x)}(\phi_t)\,dt+\sqrt{2}\,dw_t(x),\quad x\in\Gamma,\; t>0,$$
where $w_t=\{w_t(x) : x\in \mathbb{Z}^d\}$ is a collection of independent one-dimensional standard Brownian motions. Due to the form of the Hamiltonian, only nearest neighbour interactions are involved. Equivalent to the above in the case $\Gamma=\bZ^d$ is
\begin{align} \label{eq:phi_dyn}
\phi_t(x)=\phi_0(x)-\int_0^t \sum_{y:|x-y|=1} V'(\phi_t(x)-\phi_t(y)) \, dt + \sqrt{2}\, w_t(x), \qquad x\in \bZ^d.
\end{align}
Similarly, if $\Gamma\Subset\bZ^d$, we define the finite volume process by
\begin{align*} 
\phi^{\Gamma,\psi}_t(x)=\phi_0^{\Gamma,\psi}(x)-\int_0^t \sum_{y\in\overline{\Gamma}:|x-y|=1} V'\big(\phi_t^{\Gamma,\psi}(x)-\phi_t^{\Gamma,\psi}(y)\big) \, dt + \sqrt{2}\, w_t(x), \quad x\in \Gamma,
\end{align*}
subject to the boundary conditions $\phi_t^{\Gamma,\psi}(y)=\psi(y),\;y\,\in\,\partial^+\Gamma$. The evolution of $\phi_t$ is designed such that it is stationary and reversible under the equilibrium $\phi$-Gibbs measure $\mu_\Gamma^\psi$ or $\mu$ (see \eqref{eq:finite gibbs} below).
We denote $\bP_{\mu}$ the law of the process $\phi_t$ started under the distribution $\mu$ (and $\E_\mu$ the corresponding expectation). By a slight abuse of notation we will also write  $\E_{\mu}$, $\var_\mu$ and $\cov_\mu$  for the expectation, variance and  covariance  under $\mu$.

Most of the mathematical literature on the $\nabla \phi$ model treats the case of a suitably smooth, even and strictly convex interaction potential $V$ such that $V''$ is bounded above. However, we will relax these conditions; throughout the rest of this section we work with $V$ as in the following assumption.

\begin{assumption}
\label{ass:potential}
The potential $V\,\in\,C^2(\R)$ is even and there exists $c_->0$ such that
\begin{equation}
\label{eq:lower convexity}
c_{-}\leq V''(x),\quad \text{for all }x\in\R.
\end{equation}
\end{assumption}
Note that under Assumption \ref{ass:potential}, the coefficients of the SDE \eqref{eq:phi_dyn} are not necessarily globally Lipschitz continuous. However, it is still possible to construct an almost surely continuous solution $\phi_t$, see Proposition \ref{prop:sde solution}.
The assumption that the potential has second derivative bounded away from zero is helpful for the existence of an equilibrium $\phi$-Gibbs measure.
%
%
For $\Gamma\Subset \bZ^d$, the finite volume $\phi$-Gibbs measure for the field of heights $\phi\in\bR^d$ is defined as
\begin{equation}
\label{eq:finite gibbs}
\mu(d\phi)\equiv \mu_\Gamma^\psi(d\phi)=\frac{1}{Z_\Gamma^\psi}\exp\left(-H_\Gamma^\psi(\phi)\right)d\phi_\Gamma,
\end{equation}
with boundary condition $\psi\in\bR^{\partial^+\Gamma}$, where $d\phi_\Gamma$ is the Lebesgue measure on $\bR^\Gamma$ and $Z_\Gamma^\psi$ is a normalisation constant.
Then  \eqref{eq:lower convexity} implies $Z_\Gamma^\psi<\infty$ for every $\Gamma\Subset\bZ^d$ and hence $\mu_\Gamma^\psi\in\mathcal{P}(\bR^\Gamma)$ is a probability measure. In the infinite volume case $\Gamma=\bZ^d$, \eqref{eq:finite gibbs} has no rigorous meaning but one can still define Gibbs measures as follows.

\begin{definition}
A probability measure $\mu\in\mathcal{P}\big(\bR^{\bZ^d}\big)$ is a $\phi$-Gibbs measure if its conditional probability on $\mathcal{F}_{\Gamma^c}=\sigma\lbrace\phi(x): x\notin\Gamma\rbrace$ satisfies the DLR (Dobrushin-Lanford-Ruelle) equation
\begin{equation}
\label{eq:DLR}
\mu(\cdot\vert\mathcal{F}_{\Gamma^c})(\psi)=\mu_\Gamma^\psi(\cdot),\quad\text{for }\mu\text{-a.e. }\psi,
\end{equation}
for all $\Gamma\Subset\bZ^d$.
\end{definition} 


In order to study the properties of solutions to the system of SDEs \eqref{eq:phi_dyn}, it is necessary to restrict to a suitable class of initial configurations. Let $\cS:=\{(\phi(x))_{x\in\bbZ^d}:\abs{\phi(x)}\leq a+\abs{x}^n,\text{ for some }a\in\bbR,\,n\in\bbN\}$ denote the configurations of heights with at most polynomial growth.

\begin{proposition}
\label{prop:sde solution}
Given any initial configuration $\phi_0\in\cS$, there exists a unique solution to the system of SDEs \eqref{eq:phi_dyn} such that for any $x\in\bbZ^d$ the process $\phi_t(x)$ is almost surely continuous and for all $t>0$ the configuration $\phi_t\in\cS$ almost surely. Any Gibbs measure concentrated on $\cS$ is stationary and reversible with respect to the process $\phi_t$.
\end{proposition}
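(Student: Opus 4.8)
The plan is to realise the infinite-volume dynamics as a limit of finite-volume Langevin approximations, in the spirit of \cite[Chapter~4]{Roy07}; the one new feature is that Assumption~\ref{ass:potential} does not bound $V''$ from above, so $-\nabla H$ need not be globally Lipschitz, and I would replace Lipschitz estimates throughout by the dissipativity $\big(V'(a)-V'(b)\big)(a-b)\ge c_-(a-b)^2\ge 0$ valid for all $a,b\in\bR$. First, for $N\in\bbN$ put $\Gamma_N:=[-N,N]^d\cap\bbZ^d$ and consider the finite system $\phi^{\Gamma_N,\psi}$ with boundary frozen at $\psi:=\phi_0|_{\partial^+\Gamma_N}$. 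The Hamiltonian $H^\psi_{\Gamma_N}$ is strongly convex — its Hessian is the graph Laplacian of $\Gamma_N$ with Dirichlet boundary and edge weights $V''(\cdot)\ge c_-$ — hence it has a unique minimiser $m_N$ and the drift is strictly dissipative towards $m_N$. Standard theory for monotone SDEs then yields a unique global strong solution (the bound $\tfrac{d}{dt}\mean|\phi^{\Gamma_N,\psi}_t-m_N|^2\le 2|\Gamma_N|$ excludes explosion), and since its generator $\Delta-\nabla H^\psi_{\Gamma_N}\!\cdot\nabla$ is symmetric in $L^2(\mu^\psi_{\Gamma_N})$, the measure $\mu^\psi_{\Gamma_N}$ is reversible and invariant for it.

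Next I would pass to the limit $N\to\infty$. Fixing a small $\ep>0$, the weight $r_\ep(x):=e^{-2\ep|x|}$ and the norm $\|\phi\|_\ep^2:=\sum_x\phi(x)^2r_\ep(x)$, I would compare $\phi^{\Gamma_N,\psi}$ and $\phi^{\Gamma_M,\psi}$ for $N<M$: their difference $v_t$ has no Brownian part, and It\^o's formula together with a discrete summation by parts and the monotonicity of $V'$ should give
\[
\mean\big[\|v_t\|_\ep^2\big]\;\le\;C_\ep\!\int_0^t\mean\big[\|v_s\|_\ep^2\big]\,ds\;+\;\mathrm{Err}_N(t),
\]
where, for $\ep$ small enough, the $\nabla r_\ep$ cross-terms are absorbed by the good $-c_-$ term, $C_\ep=C_\ep(d,c_-)$, and the boundary remainder $\mathrm{Err}_N$ lives on sites with $|x|\asymp N$, hence carries a factor $e^{-c\ep N}$; using a priori moment bounds for $\phi^{\Gamma_N,\psi}_t$ from parallel energy estimates and the polynomial growth of $\phi_0\in\cS$, one checks $\sup_{[0,T]}\mathrm{Err}_N\to0$. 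Gronwall then makes $(\phi^{\Gamma_N,\psi})_N$ Cauchy in $L^2\!\big(\bP;C([0,T];\ell^2_{\ep})\big)$ (Doob upgrading the pointwise-in-$t$ bound), so a limit $\phi$ exists; letting $N\to\infty$ in the integral form of \eqref{eq:phi_dyn}, which at each site only involves the $2d$ neighbours and the continuous function $V'$, shows that $\phi$ solves \eqref{eq:phi_dyn} with a.s.\ continuous paths. The moment bounds pass to the limit and, via Markov's inequality, Borel--Cantelli and a Kolmogorov-type estimate in $t$ on a dense set, give $\phi_t\in\cS$ for all $t\ge0$ a.s.

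Uniqueness within $\cS$ is then immediate: the weighted energy estimate above applies directly to the difference of two solutions with the same $\phi_0\in\cS$, now with no boundary remainder, and Gronwall forces $\|\phi_t-\tilde\phi_t\|_\ep\equiv0$. For the last assertion, let $\mu$ be a $\phi$-Gibbs measure concentrated on $\cS$ and $\phi_0\sim\mu$: conditioning on $\cF_{\Gamma_N^c}$, the DLR equation \eqref{eq:DLR} says the conditional law of $(\phi_0(x))_{x\in\Gamma_N}$ is $\mu^{\phi_0}_{\Gamma_N}$, which as noted above is reversible for $\phi^{\Gamma_N,\phi_0|_{\partial^+\Gamma_N}}$; letting $N\to\infty$ and using the convergence of finite- to infinite-volume dynamics from the previous paragraph (the frozen boundary becomes negligible since $\mathrm{Err}_N\to0$) transfers reversibility, and hence stationarity, to $\mu$ under the flow $\phi_t$.

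I expect the main obstacle to be the uniform-in-$N$ estimate of the second paragraph: because $V''$ may be unbounded there is no Lipschitz bound to fall back on, so one must close the Gronwall argument purely through the dissipativity $(V'(a)-V'(b))(a-b)\ge c_-(a-b)^2$, while simultaneously tuning the exponential weight so the summation-by-parts cross-terms are controlled, bounding the finite-volume boundary remainder uniformly, and ensuring that the limiting configuration stays in the polynomial-growth class $\cS$ under a possibly superlinear drift.
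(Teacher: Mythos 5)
Your outline — finite‐volume Langevin approximations, weighted $\ell^2$ energy estimates with an exponential weight $r_\ep$, and Gronwall, with the dissipativity $(V'(a)-V'(b))(a-b)\ge c_-(a-b)^2$ replacing a global Lipschitz bound — is indeed the route the paper's proof points to; the paper itself only cites \cite[Theorem~4.2.13]{Roy07} and asserts that the structural conditions there are met. So in spirit your approach and the paper's agree. However, the step you yourself single out as the main obstacle contains a genuine gap, and I do not think the fix you propose (``tuning $\ep$ so the $\nabla r_\ep$ cross‐terms are absorbed by the good $-c_-$ term'') can work as written.

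The distinction from Royer's Ising setting is structural. There, $H(\phi)=\sum_xV(\phi(x))+\tfrac12\sum_{x,y}J(x,y)\phi(x)\phi(y)$ with a bounded, summable coupling $J$, so in the drift difference the unbounded $V''$ appears only in the \emph{single-site} term $-(V'(\phi(x))-V'(\phi'(x)))$, which pairs with $\chi(x)=\phi(x)-\phi'(x)$ to give a harmless nonpositive contribution $\le -c_-\chi(x)^2$; the genuine \emph{coupling} is linear and Lipschitz. In the $\nabla\phi$ model there is no single-site potential and the unbounded $V''$ is precisely the coupling: writing $w_{xy}(t):=\int_0^1 V''(\cdot)\,ds\ge c_-$, the difference of two solutions satisfies $\dot\chi_t(x)=-\sum_{y\sim x}w_{xy}(t)(\chi_t(x)-\chi_t(y))$, and a discrete integration by parts gives
\begin{align*}
\frac{d}{dt}\|\chi_t\|_\ep^2
=\;-\sum_{\{x,y\}}w_{xy}\big(r_\ep(x)+r_\ep(y)\big)\big(\chi(x)-\chi(y)\big)^2
-\sum_{\{x,y\}}w_{xy}\big(\chi(x)^2-\chi(y)^2\big)\big(r_\ep(x)-r_\ep(y)\big).
\end{align*}
The cross term carries a factor $w_{xy}$ that is \emph{not} bounded, and after Young's inequality it produces a contribution of the order $\ep^2\sum_{\{x,y\}}w_{xy}\,r_\ep\,\chi(y)^2$, which cannot be dominated by $\|\chi\|_\ep^2$ uniformly in the conductances (for a single edge with $\chi(x)\approx\chi(y)$ the per-edge quadratic form $w_{xy}[(\chi(x)-\chi(y))^2-\text{const}\,\ep^2(\chi(x)^2+\chi(y)^2)]$ is not signed, and sending $w_{xy}\to\infty$ makes the ratio to $\|\chi\|_\ep^2$ blow up). Hence dissipativity plus weight-tuning alone does not close the Gronwall bound. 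To salvage the argument one genuinely needs a priori control on the conductances $w_{xy}(t)$ — e.g.\ uniform-in-$N$ moment bounds on $\phi_t^{\Gamma_N,\psi}$ feeding back into growth of $V''$, a Feynman–Kac/maximum-principle argument with a comparison walk, or a stopping-time localisation — and this is exactly the content of the citation to \cite{Roy07} that the paper relies on but does not spell out. Your proposal acknowledges the obstacle but does not provide the extra ingredient that closes it.
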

\begin{proof}
The proof follows by similar arguments as for the Ising model case of \cite[Theorem 4.2.13]{Roy07}. The key observations are that equation (4.2.5) there holds for our Hamiltonian and the relation (4.2.12b) holds for our interaction potential $V$, where $p$ is defined as $p(x)=c_-\mathbbm{1}_{\abs{x}=1}$ for $x\in\bbZ^d$.
\end{proof}


Brascamp-Lieb inequalities state that for $\Gamma\Subset\bZ^d$, covariances under the aforementioned $\phi$-Gibbs measure $\mu_\Gamma^\psi$ are bounded by those under $\mu_\Gamma^{\psi,\,G}$, the Gaussian finite volume $\phi$-Gibbs measure determined by the quadratic potential $V^*(x)=\frac{c_-}{2}x^2$. 

\begin{proposition}[Brascamp-Lieb inequality for exponential moments]
\label{prop:brascamp lieb}
Let $\Gamma\Subset\bZ^d$.  For every $\nu\in\bR^\Gamma$, 
\begin{align}
\label{eq:brascamp}
\E_{\mu_\Gamma^\psi}\bigg[\exp\Big(\lvert\langle\nu,\phi-\E_{\mu_\Gamma^{\psi}}[\phi]\rangle_{\ell^2(\Gamma)}\rvert\Big)\bigg]&\leq 2\exp\Big(\frac{1}{2}\var_{\mu_\Gamma^{\psi,G}}\big(\langle\nu,\phi\rangle_{\ell^2(\Gamma)}\big)\Big).
\end{align}
\end{proposition}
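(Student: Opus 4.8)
The Brascamp–Lieb inequality for exponential moments is a standard consequence of the classical Brascamp–Lieb inequality for variances of convex measures, combined with an explicit Gaussian computation and a standard tail-to-MGF trick. I would proceed as follows.

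First, recall the classical Brascamp–Lieb variance inequality: since $H_\Gamma^\psi$ has Hessian bounded below (in the sense of quadratic forms) by the Hessian of the reference Gaussian Hamiltonian $H_\Gamma^{\psi,G}(\phi) = \sum_{\{x,y\}\in\Gamma^*} \frac{c_-}{2}(\phi(x)-\phi(y))^2$ — this is exactly condition \eqref{eq:lower convexity}, as $V'' \geq c_- = (V^*)''$ — for any linear functional $\ell(\phi) = \langle \nu, \phi\rangle_{\ell^2(\Gamma)}$ one has
\begin{align*}
\var_{\mu_\Gamma^\psi}\big(\langle\nu,\phi\rangle_{\ell^2(\Gamma)}\big) \;\leq\; \var_{\mu_\Gamma^{\psi,G}}\big(\langle\nu,\phi\rangle_{\ell^2(\Gamma)}\big).
\end{align*}
I would cite this (e.g.\ Brascamp–Lieb's original work, or the exposition in Giacomin–Olla–Spohn or in Funaki's survey \cite{Fu05}) rather than reprove it, since it is a well-known input. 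Actually the sharper statement I want is the one for exponential moments directly, which is also due to Brascamp–Lieb and is typically stated as: for a log-concave measure whose potential dominates a Gaussian one, and for any linear functional $\ell$ with $\E[\ell] = 0$,
\begin{align*}
\E\big[e^{|\ell|}\big] \;\leq\; 2\,\E_G\big[e^{\ell}\big] \;=\; 2\exp\big(\tfrac12\var_G(\ell)\big).
\end{align*}

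Second, to make the write-up self-contained I would derive the exponential version from the variance version. The key point is that the variance bound applies not just to $\mu_\Gamma^\psi$ but to all of its exponential tilts: for any $\lambda\in\bR$, the tilted measure $\mu_\lambda(d\phi) \propto e^{\lambda\langle\nu,\phi\rangle}\mu_\Gamma^\psi(d\phi)$ has Hamiltonian $H_\Gamma^\psi(\phi) - \lambda\langle\nu,\phi\rangle$, whose Hessian is unchanged (the tilt is linear), hence still dominates the Gaussian Hessian. Writing $\Lambda(\lambda) := \log\E_{\mu_\Gamma^\psi}[e^{\lambda\langle\nu,\phi-\E[\phi]\rangle}]$ for the (centered) cumulant generating function, one has $\Lambda''(\lambda) = \var_{\mu_\lambda}(\langle\nu,\phi\rangle) \leq \var_{\mu_\Gamma^{\psi,G}}(\langle\nu,\phi\rangle) =: v$, while $\Lambda(0) = \Lambda'(0) = 0$ by centering. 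Integrating twice gives $\Lambda(\lambda) \leq \tfrac12 v\lambda^2$, i.e.\ $\E[e^{\lambda\langle\nu,\phi-\E[\phi]\rangle}] \leq e^{v\lambda^2/2}$ for every $\lambda$. Applying this at $\lambda = 1$ and $\lambda = -1$ and using $e^{|x|} \leq e^x + e^{-x}$ yields
\begin{align*}
\E_{\mu_\Gamma^\psi}\big[e^{|\langle\nu,\phi-\E[\phi]\rangle|}\big] \;\leq\; \E[e^{\langle\nu,\phi-\E[\phi]\rangle}] + \E[e^{-\langle\nu,\phi-\E[\phi]\rangle}] \;\leq\; 2e^{v/2},
\end{align*}
which is exactly \eqref{eq:brascamp}.

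Third, a small technical caveat to address: for the tilted-measure argument one needs $\E_{\mu_\lambda}[|\langle\nu,\phi\rangle|^2] < \infty$ and, more basically, that $Z_\lambda = \int e^{\lambda\langle\nu,\phi\rangle - H_\Gamma^\psi(\phi)}\,d\phi_\Gamma < \infty$. This follows since $H_\Gamma^\psi(\phi) - \lambda\langle\nu,\phi\rangle \geq \tfrac{c_-}{2}\sum_{\{x,y\}\in\Gamma^*}(\phi(x)-\phi(y))^2 - \lambda\langle\nu,\phi\rangle + (\text{boundary terms})$, and on the finite box $\Gamma$ with fixed boundary data $\psi$ this is a coercive quadratic-minus-linear lower bound, so the integral converges and all polynomial (hence the relevant quadratic) moments are finite; differentiation under the integral sign is then justified.

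\textbf{Main obstacle.} There is essentially no deep obstacle here — the result is classical and the proof is short. The one place requiring a little care is the justification that the Brascamp–Lieb comparison really does apply to every exponential tilt simultaneously (so that $\Lambda'' \leq v$ uniformly), together with the standard but slightly fiddly verification of integrability and the interchange of differentiation and integration on the finite volume $\Gamma$; all of this is routine for a fixed finite box with fixed boundary conditions. I would most likely in fact shorten the write-up to a one-line proof: \emph{This is the Brascamp–Lieb inequality for exponential moments; see \cite[Theorem~4.1]{Fu05} or the original \cite{BrascampLieb}, applied with the reference Gaussian potential $V^*(x) = \tfrac{c_-}{2}x^2$, whose convexity is dominated by that of $V$ by Assumption~\ref{ass:potential}.} — and include the two-line tilting argument above only if a self-contained derivation is wanted.
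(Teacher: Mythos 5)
Your proposal is correct and essentially matches the paper's approach: the paper's proof is a one-line citation to \cite[Theorem~4.9]{Fu05} (not Theorem~4.1, as you wrote) together with the remark that the upper bound $V''\leq c_+$ is not actually needed there. The self-contained tilting derivation you sketch — which uses only $V''\geq c_-$ to dominate the Gaussian Hessian uniformly over all exponential tilts, gives $\Lambda''\leq v$, hence $\Lambda(\lambda)\leq v\lambda^2/2$, and then applies $e^{|x|}\leq e^x+e^{-x}$ — is precisely why the upper bound on $V''$ is dispensable, which is the substance of the paper's remark.
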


\begin{proof}
See \cite[Theorem 4.9]{Fu05}. Note that the condition $V''(x)\leq c_+,\;\forall \, x\in\bR$, for some $c_+>0$, is not needed for the proof.
\end{proof}

This inequality is pivotal in proving the following existence result, which constitutes the first part of Theorem \ref{thm:cov_lim}. We shall also employ the massive Hamiltonian
\begin{equation}
\label{eq:massive hamiltonian}
H_m(\phi)\equiv H_{\Gamma,m}^\psi (\phi)\; := \; H^\psi_\Gamma \,+ \, \frac{m^2}{2}\sum_{x\in\Gamma}\phi(x)^2, \qquad m>0.
\end{equation}

\begin{remark}
Note that Proposition~\ref{prop:brascamp lieb} also holds for the massive Hamiltonian $H_{\Gamma, m}^\psi$ and in that case the Gaussian potential can be taken to be $V^*(x)=\frac{c_-+2d m^2}{2}x^2$.
\end{remark}

\begin{theorem}[Existence of $\phi$-Gibbs measures]
\label{thm:phi gibbs existence}
If $d\geq3$ then for all $h\in\bR$ there exists a stationary, shift-invariant, ergodic $\phi$-Gibbs measure $\mu\in\cP\big(\bR^{\bZ^d}\big)$ of mean $h$, i.e.\ $\E_{\mu}[\phi(x)]=h$ for all $x\in\bZ^d$.
\end{theorem}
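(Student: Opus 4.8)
The plan is to obtain $\mu$ as a thermodynamic limit of infinite-volume Gibbs measures for the \emph{massive} Hamiltonian and then pass through an ergodic decomposition. Fix $h\in\bR$, and for $m>0$ let $\mu_m$ be the infinite-volume Gibbs measure on $\bR^{\bZ^d}$ associated with the massive Hamiltonian $H_m$ of \eqref{eq:massive hamiltonian}, with the mass term centred at $h$, i.e.\ $\frac{m^2}{2}\sum_x(\phi(x)-h)^2$. I would first recall the classical fact that for each $m>0$ this measure exists, is \emph{unique}, and is shift-invariant: the mass term makes $H_m$ uniformly strictly convex and confining, so the finite-volume measures $\mu^{\psi}_{\Gamma_L,m}$ (with, say, boundary datum $\psi\equiv h$, which is admissible since $V''\geq c_->0$ forces $Z^\psi_{\Gamma_L,m}<\infty$) converge as $\Gamma_L\uparrow\bZ^d$, and shift-invariance of $\mu_m$ follows from uniqueness. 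Evenness of $V$ together with the symmetry of the mass term about $h$ gives $\E_{\mu_m}[\phi(x)]=h$ for every $x$.

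The quantitative input is the Brascamp--Lieb inequality. By Proposition~\ref{prop:brascamp lieb} and the Remark following it, applied to $H_m$, exponential moments of $\phi(x)-h$ under $\mu_m$ are dominated by the corresponding quantities for the Gaussian field with potential $V^\ast(y)=\tfrac{c_-+2dm^2}{2}y^2$, whose variance at $x$ is bounded above, uniformly in $m\in(0,1]$, by the Green function at the diagonal of the massless random walk with conductance $c_-$, which is finite precisely because $d\geq3$. Hence there is $\lambda>0$ with
\begin{equation*}
\sup_{m\in(0,1]}\ \sup_{x\in\bZ^d}\ \E_{\mu_m}\!\Big[\exp\big(\lambda\,|\phi(x)-h|\big)\Big]\;<\;\infty,
\end{equation*}
and in particular $\sup_m\sup_x\var_{\mu_m}(\phi(x))<\infty$. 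This makes $\{\mu_m:m\in(0,1]\}$ tight on $\bR^{\bZ^d}$ with the product topology, so I may extract a sequence $m_k\downarrow 0$ along which $\mu_{m_k}\Rightarrow\mu$.

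Next I would check that $\mu$ is a shift-invariant $\phi$-Gibbs measure concentrated on $\cS$. Shift-invariance passes to the weak limit. For the DLR equations, fix $\Gamma\Subset\bZ^d$: each $\mu_{m_k}$ satisfies the DLR equation for $H^\psi_{\Gamma,m_k}$, and since on the finite set $\Gamma$ the mass term tends to $0$ as $m_k\to 0$ while the uniform exponential moment bound supplies the uniform integrability needed to pass to the limit in the finite-volume conditional densities, $\mu$ satisfies the DLR equation for $H^\psi_\Gamma$; as $\Gamma$ was arbitrary, $\mu$ is a $\phi$-Gibbs measure, and $\E_\mu[\phi(x)]=h$ by the same uniform integrability. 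Finally $\sup_x\var_\mu(\phi(x))<\infty$ forces, via Borel--Cantelli, $|\phi(x)|\leq|x|^n$ eventually in $x$ for $n$ large, $\mu$-a.s., so $\mu$ is concentrated on $\cS$, and by Proposition~\ref{prop:sde solution} it is then stationary (and reversible) for the dynamics \eqref{eq:phi_dyn}.

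It remains to upgrade to an ergodic measure, which is the only genuinely non-routine point. Decompose $\mu$ into ergodic components for the group of spatial shifts, $\mu=\int\mu_\xi\,\pi(d\xi)$; it is standard that the ergodic components of a translation-invariant $\phi$-Gibbs measure are themselves translation-invariant $\phi$-Gibbs measures (the DLR identities are almost-sure statements about regular conditional probabilities and are inherited). For $\pi$-a.e.\ $\xi$, the measure $\mu_\xi$ is shift-invariant and ergodic, so by translation invariance $\E_{\mu_\xi}[\phi(x)^2]=\E_{\mu_\xi}[\phi(0)^2]$ for all $x$, which is finite since $\int\E_{\mu_\xi}[\phi(0)^2]\,\pi(d\xi)=\E_\mu[\phi(0)^2]<\infty$; likewise $\E_{\mu_\xi}[\phi(x)]$ is a constant $h(\xi)\in\bR$. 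Pick any such $\xi$; since $H^\psi_\Gamma$ depends on $\phi$ only through nearest-neighbour differences, it is invariant under $\phi\mapsto\phi+c$ for constants $c$, so the push-forward of $\mu_\xi$ under $\phi\mapsto\phi+(h-h(\xi))$ is again a $\phi$-Gibbs measure; it is shift-invariant, ergodic, of mean $h$, has finite second moments hence is concentrated on $\cS$, and is therefore stationary for the dynamics by Proposition~\ref{prop:sde solution}. This is the desired measure. The main obstacle is the interplay of the three ingredients in Steps~1--3: the uniqueness (hence shift-invariance) of the massive infinite-volume measures, the \emph{uniform}-in-$m$ Brascamp--Lieb moment bounds, and the limiting argument for the DLR equations; the hypothesis $d\geq3$ enters exactly once, through finiteness of the massless Green function, and is essential.
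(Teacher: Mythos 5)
Your proof is correct and follows the paper's overall strategy: massive approximation, uniform-in-$m$ Brascamp--Lieb bounds (with $d\geq3$ entering through the finiteness of the massless Green function), tightness and passage to the limit $m\downarrow 0$, and then an ergodicity step. The two arguments diverge in two places. First, the paper constructs the massive infinite-volume measure as a limit of finite-volume measures with \emph{periodic} boundary conditions on the torus $(\bZ/n\bZ)^d$, which delivers shift-invariance for free and is then handled with an explicit tightness argument in the weighted $\ell^2$-norm $\norm{\phi}_r$; you instead appeal to the existence, uniqueness, and hence shift-invariance of the infinite-volume massive Gibbs measure as a classical fact (it is, via Royer), which is shorter but not self-contained within the paper's toolkit. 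Second, and in your favour: on the ergodicity step the paper simply cites \cite[Theorem~14.15]{Ge11} to extract an ergodic extreme element of the convex set of shift-invariant Gibbs measures \emph{of mean $h$}; but that set is not obviously a face of the simplex of shift-invariant Gibbs measures, so an extreme point of it need not be ergodic, and the paper's sentence glosses over this. Your version --- ergodic decomposition of $\mu$, observing that the components have finite constant means $h(\xi)$ with $\int h(\xi)\,\pi(d\xi)=h$, and then recentering one component by $\phi\mapsto\phi+(h-h(\xi))$, using that the Hamiltonian only sees nearest-neighbour differences so the pushforward is again a shift-invariant ergodic Gibbs measure --- is the clean way to close that gap. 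You also make explicit (via Borel--Cantelli from finite second moments) that the resulting measure is concentrated on $\cS$, which the paper leaves implicit when invoking Proposition~\ref{prop:sde solution}. Minor caveat: your passage to the DLR equations for the weak limit is only sketched, but so is the paper's, and the uniform exponential moment bound does supply the uniform integrability needed, so this is a routine detail rather than a gap.
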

\begin{proof}
Let $m>0$ and first take a sequential limit as $n\to\infty$ of finite volume $\phi$-Gibbs measures $\mu^0_{m,\, \Gamma_n}$ with periodic boundary conditions, corresponding to the massive Hamiltonian $H_{\Gamma_n,\,m}^0$ on the torus $\Gamma_n:=\left(\bZ/n\bZ\right)^d$. Since $V$ is even, $\bbE_{\mu^0_{m,\, \Gamma_n}}\left[\phi(x)\right]=0$. When $d\geq 3$, the variance of the Gaussian system corresponding to the potential $V^*(x)=\frac{c_-}{2}x^2$ is uniformly bounded in $n$, so by the Brascamp-Lieb inequality,
$$\sup_{x\in\bZ^d,\,n\in\bbN}\,\bbE_{\mu^0_{m,\, \Gamma_n}}\left[\exp(\lambda\abs{\phi(x)})\right]<\infty,\quad \forall\,\lambda>0.$$
In order to show tightness of $(\mu^0_{m,\, \Gamma_n})_{n\in\bbN}$ we introduce a weighted $l^2(\bbZ^d)$ norm 
$$\norm{\phi}_{r}^2:=\sum_{x\in\bbZ^d}\phi(x)^2\me^{-2r\,\abs{x}}\qquad \text{for }r>0.$$
Then sets of the form $K_M=\{\phi\in\bbR^{\bbZ^d} :  \norm{\phi}_r\leq M\}$ are compact,  cf.\  e.g.\ \cite[proof of Proposition~3.3]{Fu05}. For any $n\in \bbN$,
\begin{align*}
\bbP_{\mu^0_{m,\, \Gamma_n}}\big(K_M^c\big)&\leq \bbE_{\mu^0_{m,\, \Gamma_n}}\Big[\norm{\phi}_r^2/M^2\Big]=\frac{1}{M^2}\sum_{x\in\bbZ^d}\bbE_{\mu^0_{m,\, \Gamma_n}}\Big[\phi(0)^2\Big]\me^{-2r\,\abs{x}},
\end{align*}
where we have used shift-invariance of $\mu^0_{m,\, \Gamma_n}$. Then the above Brascamp-Lieb inequality implies
$$\sup_{n\in \bbN}\, \bbP_{\mu^0_{m,\, \Gamma_n}}\big(K_M^c\big)\leq c\, \sum_{x\in\bbZ^d}\frac{\me^{-2r\,\abs{x}}}{M^2}\leq \frac{c}{M^2}.$$
This tends to zero as $M\to \infty$. Therefore $(\mu^0_{m,\, \Gamma_n})_{n\in\bbN}$ is tight and along some proper subsequence there exists a limit $\mu^0_m:=\lim_{k\to\infty}\mu^0_{m,\, \Gamma_{n_k}}$, a shift-invariant Gibbs measure on $\bbZ^d$ of mean $0$. Now for all $m>0$ and $x\in\bbZ^d$, $\frac{\partial^2 H_m(\phi)}{\partial \phi(x)^2}\geq c_-$ so by the Brascamp-Lieb inequality again (taking an infinite volume limit)
$$\sup_{x\in\bZ^d,\,0 < m \leq 1}\,\bbE_{\mu^0_m}\left[\exp(\lambda\abs{\phi(x)})\right]<\infty,\quad \forall\,\lambda>0.$$
With this bound we can then show that the limit $\mu^0=\lim_{m\downarrow0}\mu_m^0$ exists, analogously to the above argument. The distribution of $\phi+h$ where $\phi$ is $\mu^0$ distributed is a shift-invariant $\phi$-Gibbs measure on $\bbZ^d$ under which $\phi(x)$ has mean $h$ for all $x\in\bZ^d$. Having shown that the convex set of shift-invariant $\phi$-Gibbs measures of mean $h$ is non-empty,  there exists an extremal element of this set which is ergodic, see \cite[Theorem 14.15]{Ge11}. Finally, by Proposition \ref{prop:sde solution} this Gibbs measure is reversible and hence stationary for the process $\phi_t$.
\end{proof}

\begin{remark}
The $\phi$-Gibbs measures exist when $d\geq 3$ but not for $d=1,\,2$. An infinite volume (thermodynamic) limit for $\mu_\Gamma^0$ as $\Gamma\uparrow\bZ^d$ exists only when $d\geq 3$.
\end{remark}

\subsection{Proofs of Theorems~\ref{thm:cov_lim} and \ref{thm:equilibrium scaling} via Helffer-Sj\"ostrand representation }
Our first aim is to investigate the decay of the space-time correlation functions under the equilibrium Gibbs measures. The idea, originally from Helffer and Sj\"ostrand \cite{HS}, is to describe the correlation functions in terms of a certain random walk in a dynamic random environment (cf.\ also \cite{DD, GOS}).
Let $(X_t)_{t\geq 0}$ be the random walk on $\Z^d$ with jump rates given by the random dynamic conductances
\begin{align} \label{eq:def_env}
\omega_t(e):=V''(\nabla_e \phi_t)=V''(\phi_t(y)-\phi_t(x)), \qquad e=\{x,y\}\in E_d.
\end{align}
Note that the conductances are positive by Assumption \ref{ass:potential} and, since $V$ is even, the jump rates are symmetric, i.e.\ $\omega_t(\{x,y\})=\omega_t(\{y,x\})$. Further, let $p^{\om}(s,t,x,y)$, $x,y\in \Z^d$, $s\leq t$, denote the transition densities of the random walk $X$. Then the Helffer-Sj\"ostrand representation (see \cite[Theorem 4.2]{Fu05} or \cite[Equation (6.10)]{DD}) states that if $F,\,G\in C_b^1(\cS)$ are differentiable functions with bounded derivatives depending only on finitely many coordinates then for all $t>0$,
\begin{equation}
\label{eq:general HS}
\cov_{\mu}(F(\phi_0), G(\phi_t))=\int_0^\infty \!\!\sum_{x,\,y\in\bbZ^d}\mathbb{E}_{\mu} \bigg[ \frac{\partial F(\phi_0)}{\partial \phi(x)} \,\frac{\partial G (\phi_{t+s})}{\partial \phi(y)}\, p^{\om}(0,t+s,x,y) \bigg] \,ds,
\end{equation}
where $\mu$ is a stationary, ergodic, shift-invariant $\phi$-Gibbs measure. Note that for $d\geq 3$ the integral in \eqref{eq:general HS} is finite due to the following on-diagonal heat kernel estimate.

\begin{lemma}
\label{lemma:on-diagonal estimate}
There exists deterministic $c_{19}=c_{19}(d,c_-)<\infty$ such that 
\begin{equation}
p^\om(0,t,x,y)\, \leq \, c_{19} \,t^{-d/2}, \qquad \forall\, t\geq 1,\,  x,y\in \bbZ^d.
\end{equation}
\end{lemma}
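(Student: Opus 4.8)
## Proof Proposal

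The plan is to derive this on-diagonal bound as a special case of the general heat kernel estimates for the dynamic RCM, observing that the only input needed is the uniform lower ellipticity $\omega_t(e) \geq c_-$ coming from Assumption~\ref{ass:potential}, together with the trivial upper bound $p^\om(0,t,x,y) \leq 1$ for the discrete-time-like structure. More precisely, since the conductances defined in~\eqref{eq:def_env} satisfy $\omega_t(e) = V''(\nabla_e\phi_t) \geq c_- > 0$ almost surely, the counting measure is invariant, and the walk has generator $\cL_t^\om$ with lower bound $c_-$ on each edge, we are in a strictly-positive (though not upper-bounded) conductance setting where classical Nash/Moser-type arguments apply to the \emph{quenched} heat kernel with a \emph{deterministic} constant.

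First I would recall that, for the dynamic VSRW, a Nash inequality holds on $\bbZ^d$ with a constant depending only on $d$ and the ellipticity lower bound $c_-$: indeed the Dirichlet form $\cE_t^\om(f,f) = \sum_{e} \omega_t(e)(\nabla f(e))^2 \geq c_- \sum_e (\nabla f(e))^2$ dominates $c_-$ times the Dirichlet form of the simple random walk, for which the standard Nash inequality $\|f\|_{\ell^2}^{2+4/d} \leq c_N \, \cE^{\mathrm{SRW}}(f,f)\, \|f\|_{\ell^1}^{4/d}$ is available (see e.g. the references on random walks on $\bbZ^d$). Second, I would run the standard Nash argument for the time-inhomogeneous semigroup: writing $u(t,\cdot) = p^\om(0,t,0,\cdot)$, one has $\partial_t u = \cL_t^\om u$, so $\partial_t \|u_t\|_{\ell^2(\bbZ^d)}^2 = -2\cE_t^\om(u_t,u_t) \leq -2c_- \cE^{\mathrm{SRW}}(u_t,u_t)$, and combining with mass conservation $\|u_t\|_{\ell^1} = 1$ and the Nash inequality gives a differential inequality for $\|u_t\|_{\ell^2}^2$ whose integration yields $\|u_t\|_{\ell^2(\bbZ^d)}^2 \leq c(d,c_-)\, t^{-d/2}$ for $t \geq 1$. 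Third, by the semigroup property and symmetry of the heat kernel with respect to the counting measure, $p^\om(0,t,x,y) = \sum_z p^\om(0,t/2,x,z)\, p^\om(t/2,t,z,y) = \langle p^\om(0,t/2,x,\cdot), p^\om(t/2,t,\cdot,y)\rangle_{\ell^2(\bbZ^d)} \leq \|p^\om(0,t/2,x,\cdot)\|_{\ell^2} \, \|p^\om(t/2,t,\cdot,y)\|_{\ell^2}$ by Cauchy--Schwarz, and each factor is bounded by $c(d,c_-)^{1/2}(t/2)^{-d/4}$ by the $\ell^2$ decay estimate applied with the time origin shifted (the Nash constant only uses the lower ellipticity bound, which is uniform in time), giving $p^\om(0,t,x,y) \leq c_{19}(d,c_-)\, t^{-d/2}$ for all $t \geq 1$ and all $x,y$.

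Alternatively, and more in keeping with the paper's machinery, one may simply cite \cite[Theorem 5.5]{ACDS18} or the on-diagonal part of the results used in Theorem~\ref{thm:quenched_dynamic}: in the uniformly-elliptic-from-below regime the quantity $\cA_5^\om(n)$ appearing in Proposition~\ref{prop:maximal_dyn} can be bounded using $\nu_t^\om(x) \leq 2d/c_-$, so the only remaining term is $\|1 \vee \mu^\om\|_{p,p,Q(n)}$, which is handled separately; but since here we only want the crude on-diagonal bound and not a maximal inequality, it is cleanest to invoke the Nash inequality argument directly, as it avoids any moment condition on $\mu^\om = \sum_y V''(\nabla\phi)$ whatsoever. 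I would choose the direct Nash route.

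The main obstacle is essentially bookkeeping rather than substance: one must ensure that the Nash/Cauchy--Schwarz argument is valid for the \emph{time-dependent} generator with the time origin at an arbitrary intermediate time $s$, i.e. that the constant $c(d,c_-)$ in $\|p^\om(s,t,x,\cdot)\|_{\ell^2}^2 \leq c(d,c_-)(t-s)^{-d/2}$ is genuinely uniform in $s$ and in $\om$. This holds because the Nash inequality input only sees the ellipticity lower bound $c_-$, which is time-independent by Assumption~\ref{ass:potential}; and one should also check non-explosiveness of the walk, which is immediate here since the counting measure is invariant and the jump rates, although unbounded, are those of a dynamic RCM for which non-explosion on $\bbZ^d$ is standard. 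Once these points are noted, the estimate follows with a deterministic $c_{19} = c_{19}(d,c_-)$.
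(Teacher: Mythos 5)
Your proposal is correct and follows essentially the same route as the paper: the lower ellipticity bound $\omega_t(e)\geq c_-$ from Assumption~\ref{ass:potential} yields a Nash inequality with a deterministic constant depending only on $d$ and $c_-$, and the on-diagonal bound then follows by the standard Nash iteration (the paper cites \cite{CKS87} and \cite{MO16} for this). You fill in the details the paper leaves to ``standard arguments,'' including the $\ell^2$-decay differential inequality and the Chapman--Kolmogorov/Cauchy--Schwarz step, correctly noting that the backward factor is handled by the same estimate since the Nash constant is uniform in time.
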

\begin{proof}
Note that by Assumption~\ref{ass:potential},
$\om_t (e)\,\geq\, c_-$ for all $t\geq 0$ and $e\in E_d$, which implies the Nash inequality, i.e.\ for any $f\,:\,\bbZ^d\,\to\,\bbR$,
\begin{align*}
\norm{f}_{l^2(\bbZ^d)}\,\leq\, c\,\norm{\om_t^{1/2} \nabla f}_{l^2(E_d)}^{\frac{d}{d+2}} \, \norm{f}_{l^1(\bbZ^d)}^{\frac{2}{d+2}},
\end{align*}
from which the statement follows by standard arguments, see \cite{CKS87} and \cite{MO16}.
\end{proof}
A consequence of the above is the following variance estimate, an example of algebraic decay to equilibrium, in contrast to the  exponential decay to equilibrium which would follow from a spectral gap estimate or Poincar\'e inequality. For this model, these inequalities hold on finite boxes but fail on the whole lattice.

\begin{corollary}
\label{cor:variance decay}
Suppose $d\geq 3$ and let $\mu\,\in\,\cP\big(\bR^{\bZ^d}\big)$ be any ergodic, shift-invariant, stationary $\phi$-Gibbs measure. 
\begin{enumerate}
\item[(i)]  There exists $c_{20}=c_{20}(d,c_-)>0$ such that for all $F,\,G\in C_b^1(\cS)$ and $t>0$,
\begin{align*}
\abs{\cov_{\mu}\big(F(\phi_0), G(\phi_t)\big)} \leq  \frac{c_{20}}{(t\vee 1)^{\frac{d}{2}-1}}\sum_{x,\,y\in\bbZ^d}\mathbb{E}_{\mu} \bigg[ \Big(\frac{\partial F}{\partial \phi(x)}(\phi_0)\Big)^{\!2} \bigg]^{\frac 1 2}\mathbb{E}_{\mu} \bigg[ \Big( \frac{\partial G}{\partial \phi(y)}(\phi_0) \Big)^{\!2} \bigg]^{\frac 1 2}.
\end{align*}
\item[(ii)] $\bbE_{\mu}\left[\phi(0)^2\right]\,<\,\infty\,$. 
\end{enumerate}

\end{corollary}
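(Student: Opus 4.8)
\textbf{Part (i).} The plan is to start from the Helffer--Sj\"ostrand representation \eqref{eq:general HS}, take absolute values inside, and bound the heat kernel by Lemma~\ref{lemma:on-diagonal estimate} together with the trivial estimate $p^\om(0,r,x,y)\le 1$; after enlarging $c_{19}$ so that $c_{19}\ge 1$ this gives $p^\om(0,t+s,x,y)\le c_{19}((t+s)\vee 1)^{-d/2}$ for every $s\ge 0$. Since $F$ and $G$ depend on only finitely many coordinates, the sums over $x,y$ are finite and may be taken out of the $s$-integral, and an application of the Cauchy--Schwarz inequality under $\mathbb{E}_\mu$ to the product $\frac{\partial F}{\partial\phi(x)}(\phi_0)\,\frac{\partial G}{\partial\phi(y)}(\phi_{t+s})$ yields
\[
\big|\cov_\mu(F(\phi_0),G(\phi_t))\big|
\;\le\;
c_{19}\Bigg(\int_0^\infty\!\! \big((t+s)\vee 1\big)^{-d/2}\,ds\Bigg)
\sum_{x,y\in\bbZ^d}\mathbb{E}_\mu\!\Big[\big(\tfrac{\partial F}{\partial\phi(x)}(\phi_0)\big)^{\!2}\Big]^{1/2}
\mathbb{E}_\mu\!\Big[\big(\tfrac{\partial G}{\partial\phi(y)}(\phi_{t+s})\big)^{\!2}\Big]^{1/2}.
\]
Here I would use that $\mu$ is stationary for the dynamics (Proposition~\ref{prop:sde solution}), so that $\phi_{t+s}\stackrel{d}{=}\phi_0$ and the last factor equals $\mathbb{E}_\mu[(\partial_y G(\phi_0))^2]^{1/2}$, and then compute the elementary integral: for $d\ge 3$ one gets $\int_0^\infty((t+s)\vee1)^{-d/2}\,ds\le(1+\tfrac{2}{d-2})(t\vee1)^{-(d/2-1)}$ (splitting at $s=1-t$ when $t<1$). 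This gives (i) with $c_{20}=c_{19}(1+\tfrac{2}{d-2})$.

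\textbf{Part (ii).} The idea is to feed a truncated height variable into part (i) and send $t\downarrow 0$. Fix a smooth, odd, nondecreasing, bounded, $1$-Lipschitz truncation $\rho_K$ with $\rho_K(u)=u$ for $|u|\le K$ and $|\rho_K(u)|\uparrow|u|$ as $K\to\infty$ (e.g.\ $\rho_K(u)=K\rho(u/K)$ for a fixed such $\rho$ that is concave on $[0,\infty)$), and take $F=G:\phi\mapsto\rho_K(\phi(0))$, which lies in $C_b^1(\cS)$, depends on a single coordinate, and satisfies $|\partial_x F|\le\indicator_{\{x=0\}}$. For each $t>0$, part (i) then gives $|\cov_\mu(\rho_K(\phi_0(0)),\rho_K(\phi_t(0)))|\le c_{20}(t\vee1)^{-(d/2-1)}$; letting $t\downarrow0$ and using almost-sure continuity of $t\mapsto\phi_t(0)$ (Proposition~\ref{prop:sde solution}) with bounded convergence, the left-hand side tends to $\var_\mu(\rho_K(\phi(0)))$, so $\var_\mu(\rho_K(\phi(0)))\le c_{20}$ uniformly in $K$. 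Since $\rho_K(\phi(0))^2\uparrow\phi(0)^2$, monotone convergence gives
\[
\mathbb{E}_\mu\big[\phi(0)^2\big]
\;=\;
\lim_{K\to\infty}\Big(\var_\mu\big(\rho_K(\phi(0))\big)+\mathbb{E}_\mu\big[\rho_K(\phi(0))\big]^2\Big)
\;\le\; c_{20}+\limsup_{K\to\infty}\mathbb{E}_\mu\big[\rho_K(\phi(0))\big]^2,
\]
so it remains to bound the centering constants $\mathbb{E}_\mu[\rho_K(\phi(0))]$ uniformly in $K$.

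\textbf{Main obstacle.} Controlling these centering constants is the delicate step: the variance bound above does not itself see the mean, and the conditional variance decomposition relating $\var_\mu$ to the finite-volume $\var_{\mu_\Gamma^\psi}$ is self-referential at this point. The plan is to bring in the Brascamp--Lieb inequality (Proposition~\ref{prop:brascamp lieb}) for the conditional finite-volume measures $\mu_\Gamma^\psi=\mu(\cdot\,|\,\mathcal{F}_{\Gamma^c})(\psi)$ supplied by the DLR equations: comparison with the Gaussian field with potential $\tfrac{c_-}{2}x^2$ bounds $\var_{\mu_\Gamma^\psi}(\phi(0))$ by $c_-^{-1}$ times the Green's function of simple random walk killed on exiting $\Gamma$, hence by $c_-^{-1}G(0,0)<\infty$ for $d\ge3$, uniformly in $\Gamma$ and $\psi$. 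Combining this with the backward-martingale convergence of $\mathbb{E}_\mu[\rho_K(\phi(0))\,|\,\mathcal{F}_{\Gamma^c}]$ along an exhaustion $\Gamma\uparrow\bbZ^d$ and with ergodicity of $\mu$, one aims to transfer a uniform bound to the means; establishing the requisite a priori integrability of $\phi(0)$ under a general ergodic, shift-invariant, stationary $\phi$-Gibbs measure is, I expect, the crux of the argument. (For the measures actually used later, produced in Theorem~\ref{thm:phi gibbs existence} with a prescribed finite mean, this issue does not arise and the bound $c_{20}+(\text{mean})^2$ is immediate.) Once the centerings are controlled, (ii) follows from the displayed inequality.
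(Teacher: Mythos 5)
Part (i) is correct and essentially identical to the paper's proof: Cauchy--Schwarz under $\E_\mu$ in the Helffer--Sj\"ostrand identity \eqref{eq:general HS}, replace the heat kernel by the deterministic on-diagonal bound of Lemma~\ref{lemma:on-diagonal estimate} (combined with $p^\om\le 1$ for small times), use stationarity to drop the time index on the $\partial G$ factor, and evaluate the elementary $s$-integral.

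For Part (ii), the paper's entire argument is one sentence: ``by taking $t\downarrow 0$ we deduce from (i) and dominated convergence that $\sup_M\E_\mu[(\phi_0(0)\wedge M)^2]<\infty$, which gives (ii).'' Your derivation follows the same route (truncate, take $t\downarrow 0$ using the a.s.\ path continuity from Proposition~\ref{prop:sde solution} and bounded convergence, then pass to the limit by monotone convergence), and your diagnosis of the remaining difficulty is accurate: sending $t\downarrow 0$ in (i) produces only a uniform bound on $\var_\mu$ of the truncated heights, while passing to the second moment additionally requires controlling the means $\E_\mu[\phi_0(0)\wedge M]$ uniformly in $M$, and the paper does not supply such a bound. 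You will not find the Brascamp--Lieb/backward-martingale step you sketch carried out in this source. In context, the Gibbs measures $\mu$ actually fed into this corollary come from Theorem~\ref{thm:phi gibbs existence} and carry a prescribed finite mean $h$, so that $\phi(0)$ is integrable and $\E_\mu[\phi_0(0)\wedge M]\to h$, which closes the argument; for an arbitrary ergodic, shift-invariant, stationary $\phi$-Gibbs measure, the deduction as written is incomplete, and your circularity concern is genuine. A small improvement of your version over the paper's: your smooth, odd, bounded $\rho_K$ is a legitimate element of $C_b^1(\cS)$, whereas $\phi\mapsto\phi(0)\wedge M$ is neither $C^1$ at $\phi(0)=M$ nor bounded below, so the paper's test function is not strictly admissible in (i) without a smoothing step.
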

\begin{proof}
(i) follows by the Cauchy-Schwarz inequality applied to \eqref{eq:general HS} together with Lemma \ref{lemma:on-diagonal estimate} and stationarity of $\mu$. Further, by taking $t \downarrow 0$ we deduce from (i) and dominated convergence that $\sup_M \bbE_\mu\left[(\phi_0(0)\wedge M) ^2\right]<\infty$, which gives (ii).   	
\end{proof}


\begin{proof}[Proof of Theorem~\ref{thm:cov_lim}]
In Theorem~\ref{thm:phi gibbs existence} above, the existence of a stationary, shift-invariant, ergodic $\phi$-Gibbs measure $\mu$ has been shown. Further, the environment $\omega$ defined in \eqref{eq:def_env} satisfies Assumption~\ref{ass:ergodic dynamic} by the ergodicity of $\mu$. 
Note that $\om_t(e)\geq c_-$  for any $e\in E_d$ and $t>0$
by Assumption \ref{ass:potential}, so we may set $q=\infty$ in Assumption~\ref{ass:poly moments dynamic}, which then reduces to \eqref{eq:moment_gradphi}. The Helffer-Sj\"ostrand relation  \eqref{eq:general HS} gives
\begin{align*}
\cov_{\mu}(\phi_0(0), \phi_t(x))=\int_0^\infty \mathbb{E}_{\mu} \left[ p^{\om}(0,t+s,0,x) \right] \,ds.
\end{align*}
Now, applying Theorem \ref{thm:annealed llt dyn},
\begin{align}
\nonumber
n^{d-2} \cov_{\mu}(\phi_0(0), \phi_{n^2t}\left(\lfloor nx \rfloor)\right)= \, & \int_0^\infty  \mathbb{E}_{\mu}\left[ n^d\,p^{\om}\big(0,n^2(t+s),0,\lfloor nx\rfloor\big) \right]\,ds \\
\label{eq:covariance convergence}
\xrightarrow [n\to \infty]{} & \int_0^\infty k_{t+s}(x)\, ds,
\end{align}
which is the claim. Note that Theorem~\ref{thm:annealed llt dyn} gives uniform convergence of the integrand on any compact interval $[0,T]$ and Lemma~\ref{lemma:on-diagonal estimate} tells us that the integrand is dominated by $c_{20} s^{-\frac{d}{2}}$, integrable on $[T,\infty)$ since $d\geq 3$. Therefore, by dominated convergence  we are justified in interchanging the limit and the integral.
\end{proof}

Having applied the RCM local limit theorem to prove the above space-time covariance scaling limit, we now present an application of the invariance principle in Theorem~\ref{thm:quenched_dynamic}-(i). We use this to characterise the scaling limit of the equilibrium fluctuations as a Gaussian free field. Recall that for $f\in C_0^\infty (\bbR^d)$ and $n\in\bbZ_+$,
\begin{align*} 
f_n(x):=n^{-(1+d/2)}f(x/n)
 \quad \text{and} \quad   \phi(f_n):=n^{-(1+d/2)} \int_{\bbR^d}f(x) \, \phi(\floor{nx})\,dx.
\end{align*}

\begin{proof}[Proof of Theorem~\ref{thm:equilibrium scaling}]
Let $n\in\bbZ_+$ and set $G_n(\lambda):=\bbE_\mu[\exp(\la\phi(f_n))]$, $\la\in \bbR$. By the Brascamp-Lieb inequality Proposition~\ref{prop:brascamp lieb}, $\{G_n(\la)\}_{n\geq 1}$ is uniformly bounded for $\la$ in a compact interval. So differentiating gives
\begin{align*}
\frac{dG_n(\la) }{d\la}=\bbE_\mu\big[\phi(f_n)\exp(\la \phi(f_n))\big].
\end{align*}
Recall that $(X_t)_{t\geq 0}$ denotes the random walk on $\bbZ^d$ under the conductances given by \eqref{eq:def_env}. For simplicity, we write $P_x$ for the law of the walk started from $x\in \bbZ^d$ at time $s=0$ and $E_x$ for the corresponding expectation. Then by the Helffer-Sj\"ostrand relation \eqref{eq:general HS}, the above equals
\begin{align*}
&\la\, n^{-(d+2)}\int_0^\infty \sum_{x\in\bbZ^d}\bbE_\mu\Big[f(x/n)\, E_x\big[f(X_t/n \big)\big] \exp(\la\phi_t(f_n))\Big]\,dt\\
=\,&\la\, n^{-d}\int_0^\infty\sum_{x\in\bbZ^d}f(x/n)\,\bbE_\mu\Big[ E_x\big[f(X_{n^2t}/n \big)\big] \exp(\la\phi_{n^2t}(f_n))\Big]\,dt
\end{align*}
where we have employed a simple change of variable.  The key step of the proof is to show that
\begin{align} \label{eq:BSconv}
\lim_{n\to\infty} n^{-d} \int_0^\infty \sum_{x\in\bbZ^d} f(x/n) \, E_x\big[ f(X_{n^{2}t}/n) \big]\, dt= \int_{\bbR^d} f(x)(Q^{-1}f)(x)\,dx\quad\text{in }L^2(\mu).
\end{align}
Once this is proven, we can deduce that
\begin{align*}
\frac{dG_n(\la)}{d\la}=\la\,G_n(\la)\int_{\bbR^d} f(x)(Q^{-1}f)(x)\,dx + \lambda\, o(1),
\end{align*}
as $n\to \infty$. This relies on the fact that by stationarity of the Gibbs measure, $\bbE_\mu[\exp(\la\phi_{n^2t}(f_n))]=\bbE_\mu[\exp(\la\phi(f_n))]$ for any $\lambda$ and $t\geq 0$, and this term is uniformly bounded in $n$. Therefore, denoting $G(\la)=\lim_{n\to\infty}G_n(\la)$, we have by letting $n\to\infty$,
$$\frac{dG(\la)}{d\la}=\la\,G(\la)\int_{\bbR^d} f(x)(Q^{-1}f)(x)\,dx$$
which together with $G(0)=1$ gives the claim. 

Thus, it remains to show \eqref{eq:BSconv}. This follows from the same arguments as for the corresponding result in \cite[Proposition~4.4]{BS11}, cf.\ \cite[Proposition~14]{NW18} also. We omit the details here but since the right hand side of \eqref{eq:BSconv} can also be written as a time integral, the main idea is to decompose it into an integral over $[0,T]$ and $[T,\infty)$, for some suitably large $T$. We then obtain the desired convergence from the integral over $[0,T]$ by applying the annealed functional central limit theorem for the random walk $X$, which is an immediate consequence of the QFCLT in Theorem~\ref{thm:quenched_dynamic}-(i) with $q=\infty$. The remaining integral over $[T,\infty)$ can be neglected due to heat kernel decay of order $t^{-d/2}$ in Lemma~\ref{lemma:on-diagonal estimate}. Note that in dimension $d\geq 3$, as explained in \cite[Remark~4.6]{BS11}, such a heat kernel estimate may serve as a replacement for the decay estimate on the spatial derivative of the heat kernel in \cite[Lemma~3.6]{BS11}.
\end{proof}

\subsection{Moments of  $\phi$-Gibbs measures}
Finally, we shall derive Proposition~\ref{prop:moments} giving polynomial moment bounds on the heights $\phi$ under any ergodic, shift-invariant, stationary $\phi$-Gibbs measure. Hence we can verify the conditions in Theorem~\ref{thm:cov_lim} and Theorem~\ref{thm:equilibrium scaling} for any polynomial potential satisfying Assumption~\ref{ass:potential}. The proof will require the following comparison estimate for $\phi_t$ and $\phi_t^{L_n}$ where $L_n:=[-n,n]^d\cap\bbZ^d$ for $n\in\bbN$.

\begin{lemma}
\label{lemma:sde estimate}
Let $\mu$ be a shift-invariant Gibbs measure. There exists a positive, symmetric, summable sequence $\alpha=\big(\alpha(x)\big)_{x\in\bbZ^d}$ such that the following holds. There exist constants $c_{21},\,c_{22},\,c_{23} > 0$ such that for any $n \in  \bbN$, $t>0$ and any bounded Lipschitz function $f$ on $l^2(\bbZ^d,\alpha)\,$,
\begin{equation}
\label{eq:sde estimate 1}
\bbE_\mu\Big[\big(f\big(\phi_t\big)-f\big(\phi_t^{L_n,0}\big)\big)^2\Big]\, \leq \,c_{21}\, \norm{f}_{\lip,\alpha}^2\,\me^{c_{22} t-c_{23} n}\, \Big(1+\bbE_\mu\big[\phi_0(0)^2\big]\Big),
\end{equation}
where $\norm{f}_{\lip,\alpha}:=\sup_{\phi \neq \phi'}\abs{f(\phi)-f(\phi')}\norm{\phi-\phi'}_{l^2(\bbZ^d,\alpha)}^{-1}$.
\end{lemma}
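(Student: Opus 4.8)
\textbf{Proof strategy for Lemma~\ref{lemma:sde estimate}.}
The plan is to compare the two infinite/finite-volume solutions of the SDE system via a Gronwall-type argument on a suitably weighted $\ell^2$-norm of the difference $\psi_t := \phi_t - \phi_t^{L_n,0}$ (extending $\phi_t^{L_n,0}$ by $0$ outside $L_n$). First I would fix the weight sequence: choose $\alpha(x) := c\,\me^{-a|x|}$ for a small $a>0$, normalised to be summable; this is the standard device (as in \cite{FuO96}, \cite[Chapter~4]{Roy07}) that makes the nearest-neighbour Laplacian a \emph{bounded} perturbation on $\ell^2(\bbZ^d,\alpha)$, so that the coupling constant in the energy estimate only costs a finite multiplicative factor depending on $a$ and $d$. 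With $f$ bounded and Lipschitz on $\ell^2(\bbZ^d,\alpha)$, it suffices to control $\bbE_\mu[\Vert \psi_t\Vert_{\ell^2(\bbZ^d,\alpha)}^2]$, since
\[
\bbE_\mu\big[(f(\phi_t)-f(\phi_t^{L_n,0}))^2\big] \le \Vert f\Vert_{\lip,\alpha}^2\,\bbE_\mu\big[\Vert \psi_t\Vert_{\ell^2(\bbZ^d,\alpha)}^2\big].
\]

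Next I would derive the differential inequality. Since $\phi_t$ and $\phi_t^{L_n,0}$ are driven by the \emph{same} Brownian motions, the difference $\psi_t$ satisfies a random ODE (the noise cancels): for $x \in L_n$,
\[
\frac{d}{dt}\psi_t(x) = -\sum_{y:|x-y|=1}\big(V'(\nabla_{xy}\phi_t)-V'(\nabla_{xy}\phi_t^{L_n,0})\big),
\]
while for $x \notin L_n$ we simply have $\psi_t(x)=\phi_t(x)$ driven by its own equation. Computing $\tfrac{d}{dt}\Vert\psi_t\Vert_{\ell^2(\bbZ^d,\alpha)}^2 = 2\langle \psi_t, \dot\psi_t\rangle_{\ell^2(\bbZ^d,\alpha)}$ and using the convexity bound $(V'(b)-V'(a))(b-a)\ge c_-(b-a)^2$ together with summation by parts against the weight $\alpha$, the "diagonal" convex part produces a non-positive term inside $L_n$; the price is the commutator between $\nabla$ and multiplication by $\alpha$, which is $O(a)$ and yields a term $\le c_{22}\Vert\psi_t\Vert_{\ell^2(\bbZ^d,\alpha)}^2$ by Cauchy--Schwarz and the gradient Lipschitz-type bound on $V'$ coming from $c_-\le V''$. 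Crucially, the boundary contribution—where $x\in L_n$ has a neighbour outside, or $x\notin L_n$—is supported on $\{|x|\ge n\}$, so it is bounded by $c\,\me^{-an}$ times $\sup_{s\le t}\bbE_\mu[(\phi_s(0))^2 + (\phi_s^{L_n,0}(0))^2]$, which by stationarity of $\mu$ (Proposition~\ref{prop:sde solution}) and Corollary~\ref{cor:variance decay}(ii) is bounded by $c(1+\bbE_\mu[\phi_0(0)^2])$. One subtlety: $V''$ is not assumed bounded above, so $V'$ is not globally Lipschitz; I would handle the boundary term not by a Lipschitz bound on $V'$ but by absorbing $V'(\nabla\phi_t)$ and $V'(\nabla\phi_t^{L_n,0})$ directly into the above second-moment bound, using that $|V'(b)| \le |V'(0)| + \int_0^{|b|}V''\,$, combined with the polynomial-growth a priori control of $\nabla\phi$ from Proposition~\ref{prop:sde solution} and the uniform moment bounds of Proposition~\ref{prop:moments} (or, at the level at which this lemma is invoked, Corollary~\ref{cor:variance decay}). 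Assembling,
\[
\frac{d}{dt}\,\bbE_\mu\big[\Vert\psi_t\Vert_{\ell^2(\bbZ^d,\alpha)}^2\big] \le c_{22}\,\bbE_\mu\big[\Vert\psi_t\Vert_{\ell^2(\bbZ^d,\alpha)}^2\big] + c\,\me^{-an}\big(1+\bbE_\mu[\phi_0(0)^2]\big),
\]
and since $\psi_0 = 0$, Gronwall's inequality gives $\bbE_\mu[\Vert\psi_t\Vert_{\ell^2(\bbZ^d,\alpha)}^2] \le c\,\me^{c_{22}t-an}(1+\bbE_\mu[\phi_0(0)^2])$, which is \eqref{eq:sde estimate 1} with $c_{23}=a$.

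\textbf{Main obstacle.} The delicate point is the lack of an upper bound on $V''$: the standard proof of such comparison estimates (e.g.\ in \cite[Chapter~4]{Roy07}, or the Ising-type argument referenced in Proposition~\ref{prop:sde solution}) uses a global Lipschitz bound on $V'$ both for well-posedness of the difference equation and for closing the Gronwall loop. Here the dissipative term $-c_-\Vert\nabla\psi_t\Vert^2$ controls the \emph{interior} dynamics regardless of how large $V''$ is (monotonicity of $V'$ is all that is needed), so the real work is confined to the boundary layer $\{|x|\ge n\}$, where I must trade the missing Lipschitz bound for an a priori moment bound on the heights and their gradients. Making that trade quantitative—so that the boundary term genuinely decays like $\me^{-c_{23}n}$ uniformly in $t$ on compacts and only multiplied by $1+\bbE_\mu[\phi_0(0)^2]$—is the crux, and it is exactly where the polynomial-growth class $\cS$ and the stationarity/moment inputs (Corollary~\ref{cor:variance decay}, Proposition~\ref{prop:moments}) enter. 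Everything else is a routine weighted energy estimate plus Gronwall.
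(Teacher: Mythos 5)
Your plan --- comparing $\phi_t$ and $\phi_t^{L_n,0}$ through a Gronwall estimate on a weighted $\ell^2$-norm of the difference $\psi_t := \phi_t - \phi_t^{L_n,0}$, with a summable, exponentially decaying weight --- is the same high-level route the paper takes, via the Ising-type comparison estimate of \cite[Chapter~5]{Roy07} (cf.\ also \cite[Section~1.1]{Zi08}). Two differences should be flagged. First, the paper does not use a plain exponential $\alpha(x)=c\,\me^{-a|x|}$; it uses the resolvent $\alpha:=\sum_{k\ge 0}(\sigma')^{-k}p^{*k}$ built from $p(x)=c_-\mathbbm{1}_{|x|=1}$ with $\sigma'>2dc_-$, whose fixed-point relation $\sigma'\alpha=\sigma'\delta_0+p*\alpha$ is what lets Royer's per-site iteration absorb the nearest-neighbour interaction; this is also why the intermediate bound there carries $\alpha(0)$ on the left and $\alpha*\alpha$ on the right, rather than being a global $\ell^2(\alpha)$ Gronwall inequality. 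Second, and more seriously, your commutator step does not hold.

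After summation by parts against $\alpha$, the commutator you need to control is
\begin{align*}
\sum_{x\sim y}\big(\alpha(x)-\alpha(y)\big)\,\psi_t(y)\,\Big(V'(\nabla_{xy}\phi_t)-V'\big(\nabla_{xy}\phi_t^{L_n,0}\big)\Big),
\end{align*}
and you propose to estimate it ``by Cauchy--Schwarz and the gradient Lipschitz-type bound on $V'$ coming from $c_-\le V''$''. But $c_-\le V''$ gives the \emph{reverse} inequality --- monotonicity, $V'(b)-V'(a)\ge c_-(b-a)$ for $b>a$ --- not an upper Lipschitz bound. Writing the bracket as $V''(\xi_{xy})\,\nabla_{xy}\psi_t$ with $V''(\xi_{xy})$ unbounded, Young's inequality re-absorbs a piece $\tfrac{\ve}{2}\sum_{x\sim y}\alpha(x)V''(\xi_{xy})(\nabla_{xy}\psi_t)^2$ into the dissipation $-\sum_{x\sim y}\alpha(x)V''(\xi_{xy})(\nabla_{xy}\psi_t)^2$, but leaves a remainder of order $a^2\sum_{x\sim y}\alpha(y)V''(\xi_{xy})\psi_t(y)^2$ which still carries the unbounded factor $V''$ and cannot be dominated by $c\,\Vert\psi_t\Vert^2_{\ell^2(\bbZ^d,\alpha)}$. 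So, contrary to your closing paragraph, the obstruction from unbounded $V''$ is \emph{not} confined to the boundary layer $\{|x|\ge n\}$: it is produced by the commutator on every bond where $\alpha$ is non-constant, i.e.\ throughout the bulk, and it is exactly this bulk term that the resolvent weight and iteration in \cite{Roy07} are built to handle. (A secondary problem: controlling the boundary contribution by $1+\bbE_\mu[\phi_0(0)^2]$ as stated would require a moment bound on $V''(\nabla\phi)$ that is not available at this stage, and invoking Proposition~\ref{prop:moments} for it would be circular, since that proposition is proven \emph{using} the present lemma.)
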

\begin{proof}
By the same arguments as the Ising model case in \cite{Roy07} (see (4.2.14), (5.1.5) and the proof of Corollary 5.1.4 therein) we have for any $c_{22}>1+8dc_-$,
\begin{align*}
\alpha(0)\,\bbE_\mu\Big[\big(f\big(\phi_t\big)-f\big(\phi_t^{L_n,0}\big)\big)^2\Big]\,
 \leq \,\me^{c_{22} t}\, \norm{f}_{\lip,\alpha}^2 \bbE_\mu\Big[ \sum_{x \not\in L_n} \alpha*\alpha(x)\,\phi_0(x)^2+c\abs{\alpha}\alpha(x)\Big],
\end{align*}
where in our setting $\alpha:=\sum_{k=0}^\infty (\sigma')^{-k}(*p)^k$ is constructed from the sequence $p(x)=c_-\mathbbm{1}_{\abs{x}=1}$ for any $\sigma'\,> \, 2dc_-$. Then by the shift-invariance of $\mu$ and exponential decay of $\alpha$ and $\alpha*\alpha$ on $\bbZ^d$, we get \eqref{eq:sde estimate 1} (cf.\ also \cite[Section~1.1]{Zi08}).
\end{proof}

\begin{proof}[Proof of Proposition \ref{prop:moments}]
Since $\mu$ is stationary and shift-invariant it suffices to show that $\bbE_{\mu}\big[\abs{\phi_0(0)}^p\big]<\infty$ for all $p>0$. By Jensen's inequality it is enough to consider $p >  2$. For any  $M>1$ let $f_M(\phi):=\big(\abs{\phi(0)}\wedge M\big)^{p/2}$ which is Lipschitz continuous on $l^2(\bbZ^d,\alpha)$ with  $\norm{f_M}_{\lip,\alpha}^2\,\leq \, c\,M^{p-2}$. For arbitrary $t\, >\, 0$ and $n\, \in\, \bbN$,
\begin{align}
\bbE_{\mu}\left[f_M(\phi_t)^2\right]
\label{eq:moment triangle}
&\leq \, \bbE_{\mu}\Big[\Big(f_M\big(\phi_t\big)-f_M\big(\phi_t^{L_n,0}\big)\Big)^{\!2} \Big]+\bbE_{\mu}\left[f_M\big(\phi_t^{L_n,0}\big)^2\right].
\end{align}
To control the first term on the right hand side of \eqref{eq:moment triangle}, we  fix $\eps >  0$. As argued in \cite[Theorem 5.1.3]{Roy07}, for arbitrary $\lambda> 0 $ we introduce an increasing sequence of boxes $L_{n(t)}$ such that $c_{24}t\,\leq\, n(t)\, \leq \, c_{24}(t+1)$ where $c_{24}>0$ is chosen such that $c_{22}t-c_{23}n(t)\,<\,-\lambda t$, with $c_{22},\,c_{23}$ as in Lemma \ref{lemma:sde estimate}. Therefore, by \eqref{eq:sde estimate 1} and Corollary~\ref{cor:variance decay}-(ii), there exists $T_{\eps,\,M} > 0$ such that
\begin{align}\label{eq:p moment sde term}
\bbE_{\mu}\Big[\Big(f_M\big(\phi_t\big)-f_M\big(\phi_t^{L_{n(t)},0}\,\big)\Big)^{\!2}\Big] \, \leq \, c\,M^{p-2}\Big(1+\bbE_{\mu}\left[\phi_0(0)^2\right]\Big)\me^{-\lambda t} \leq\, \eps
\end{align}
for all $t> T_{\eps,\,M}$. For the latter term in \eqref{eq:moment triangle}, the constant zero boundary condition allows us, via the DLR equation \eqref{eq:DLR}, to reduce the expectation to that over a finite Gibbs measure as defined in \eqref{eq:finite gibbs},
\begin{equation}
\bbE_{\mu}\Big[f_M \big(\phi_t^{L_n,0}\big)^2\Big]\, = \, \bbE_{\mu_{L_n}^0}\Big[f_M \big(\phi_t^{L_n,0}\big)^2\Big].
\end{equation}
Now, the finite volume process $\phi^{L_n,0}$ is stationary with respect to $\mu_{L_n}^0$ so by the Brascamp-Lieb inequality, as argued in Theorem~\ref{thm:phi gibbs existence},
\begin{equation}
\label{eq:p moment fin term}
\sup_{n\in\bbN,\,M>0,\,t>0}\bbE_{\mu_{L_n}^0}\Big[f_M \big(\phi_t^{L_n,0}\big)^2\Big]=\sup_{n\in\bbN,\,M>0}\bbE_{\mu_{L_n}^0}\Big[f_M\big(\phi_0^{L_n,0}\big)^2\Big]<\infty.
\end{equation}
Substituting \eqref{eq:p moment fin term} and \eqref{eq:p moment sde term} into \eqref{eq:moment triangle} gives
\begin{equation}
\label{eq:moment M bound}
\bbE_{\mu}\Big[\big(\abs{\phi_t(0)}\wedge M\big)^p\Big] \, < \, \eps + c,
\end{equation}
for all $t\, > \, T_{\eps, M}$, with the constant $c$ independent of $M$. However, $\phi_t$ is stationary with respect to $\mu$ so \eqref{eq:moment M bound} in fact holds for all $t\, \geq \, 0$. We conclude by the monotone convergence theorem, letting $M\uparrow \infty$, that
$\bbE_{\mu}\big[\abs{\phi_0(0)}^p\big]  < \infty$.
\end{proof}

\subsubsection*{Acknowledgements}
We thank the referee for the careful reading and constructive suggestions. We also thank Alberto Chiarini and Martin Slowik for discussions.

\bibliographystyle{abbrv}
\bibliography{refs}

\end{document}